\newtheorem{thm}{Theorem}[section]
\newtheorem{prop}[thm]{Proposition}
\newtheorem{lemma}[thm]{Lemma}
\newtheorem{cor}[thm]{Corollary}
\theoremstyle{definition}
\newtheorem{defn}[thm]{Definition}
\newtheorem{question}[thm]{Question}
\theoremstyle{remark}
\newtheorem{ex}[thm]{Example}
\newtheorem{rem}[thm]{Remark}
\newcommand{\C}{{\rm C}}
\newcommand{\E}{{\rm E}}
\newcommand{\e}{\varepsilon}
\DeclareMathOperator{\Fix}{Fix}
\newcommand{\cC}{\mathcal{C}}
\newcommand{\proc}{{pro-$\mathcal C$ topology }}
\newcommand{\FF}{F_1\times F_2}
\newcommand{\N }{\mathbb{N}}
\newcommand{\Z }{\mathbb{Z}}
\def\coloneq{\mathrel{\mathop\mathchar"303A}\mkern-1.2mu=}
\begin{document}

\title{On conjugacy separability of fibre products}

\author{Ashot Minasyan}
\address{Mathematical Sciences,
University of Southampton, Highfield, Southampton, SO17 1BJ, United
Kingdom.}
\email{aminasyan@gmail.com}
%

\begin{abstract} In this paper we study conjugacy separability of subdirect products of two free (or hyperbolic) groups.
We establish necessary and sufficient criteria and apply them to fibre products to produce a finitely presented group $G_1$ in which all finite index
subgroups are conjugacy separable,  but which has an index $2$ overgroup that is not conjugacy separable. Conversely, we construct a finitely presented group
$G_2$ which has a non-conjugacy separable subgroup of index $2$ such that every finite index normal overgroup of $G_2$ is conjugacy separable. The normality of the overgroup
is essential in the last example, as such a group $G_2$ will always posses an index $3$ overgroup that is not conjugacy separable.

Finally, we characterize $p$-conjugacy separable subdirect products of two free groups, where $p$ is a prime.
We show that fibre products provide a natural correspondence between residually finite $p$-groups and $p$-conjugacy separable subdirect products of two
non-abelian free groups. As a consequence, we deduce that the open question about the existence of an infinite finitely presented residually
finite $p$-group is equivalent to the question about
the existence of a finitely generated $p$-conjugacy separable full subdirect product of
infinite index in the direct product of two free groups.
\end{abstract}

\keywords{Conjugacy separability, $p$-conjugacy separability, subdirect products, fibre products.}
\subjclass[2010]{20E26, 20F67}

\maketitle

\section{Introduction}
Let $\mathcal{C}$ be a class of groups. A group $G$ is said to be $\cC$-conjugacy separable if one can distinguish its conjugacy classes by looking at the
quotients of $G$ in $\cC$. More precisely, $G$ is \emph{$\cC$-conjugacy separable} for any pair of non-conjugate elements
$x,y \in G$ there must exist a group $M \in \cC$ and a homomorphism
$\varphi:G \to M$ such that $\varphi(x)$ and $\varphi(y)$ are not conjugate in $M$. In the case when $\cC$ is the class of all \emph{finite groups}, we omit the ``$\cC$-''
and simply write that $G$ is \emph{conjugacy separable}; and if $\cC=\cC_p$ is the class of all \emph{finite $p$-groups}, for some prime $p$, we will say that
$G$ is \emph{$p$-conjugacy separable}.

Conjugacy separability is a basic and classical residual property of a group that is closely related to the solvability of the conjugacy problem.
It is a natural strengthening of residual finiteness, which corresponds to the solvability of the word problem.

Many groups are known to be residually finite, including all finitely generated linear groups (Mal'cev \cite{Malcev-lin_gps}).
Conjugacy separability, though, is a more delicate property. It is often difficult to check whether a given residually finite group is conjugacy separable.
Classical examples of conjugacy separable groups include virtually polycyclic groups (Remeslennikov \cite{Rem-polyc}, Formanek \cite{Formanek}) and
virtually free groups (Dyer \cite{Dyer}). The development of geometric methods in Group Theory prompted a lot of recent progress in this field, and the following classes
of groups were found to be conjugacy separable:

\begin{itemize}
  \item virtually surface groups and Seifert fibered $3$-manifold groups (Martino \cite{Martino});
  \item limit groups and, more generally, finitely presented residually free groups (Chagas and Zalesskii \cite{Chag-Zal-lim,Chag-Zal-fp_res_free});
  \item right angled Artin groups (Minasyan \cite{M-RAAG});
  \item most even Coxeter groups (Caprace and Minasyan \cite{Cap-Min});
  \item $1$-relator groups with torsion (Minasyan and Zalesskii \cite{M-Z-1_rel});
  \item compact orientable $3$-manifold groups (Hamilton, Wilton and Zalesskii \cite{H-W-Z});
  \item virtually compact special (Gromov) hyperbolic groups (Minasyan and Zalesskii \cite{M-Z-vcs}).
\end{itemize}

The most basic example of a residually finite group which is not conjugacy separable is $\mathrm{SL}(n,\Z)$ (Remeslennikov \cite{Rem}, Stebe \cite{Stebe-SL3}), for $n \ge 3$.
The proof of this fact uses the positive solution of the congruence subgroup problem for $\mathrm{SL}(n,\Z)$, when $n \ge 3$,
which was established by Bass, Lazard and Serre \cite{B-L-S}. An example of a non-conjugacy separable finitely presented torsion-free
metabelian group was given by Wehrfritz \cite{Wehr}.

Presently it is unknown whether all (Gromov) hyperbolic groups are conjugacy separable. In \cite{Wise-QJM} Wise noted that this question is closely linked to
the well-known open problem  asking if there exists a non-residually finite hyperbolic group.

Conjugacy separability has two main applications:
\begin{itemize}
  \item a finitely presented conjugacy separable group has solvable conjugacy problem (Mos\-towski \cite{Mostowski});
  \item a finitely generated conjugacy separable group $G$,  all of whose pointwise inner automorphisms are inner,  has a residually finite outer automorphism group $Out(G)$
  (Grossman \cite{Grossman}).
\end{itemize}

Much like the conjugacy problem, conjugacy separability may not behave well under passing to finite index subgroups and overgroups.
Finitely presented conjugacy separable groups with non-conjugacy separable subgroups of any finite index were constructed by Martino and the author in \cite{M-M}.
A finitely generated conjugacy separable group possessing an overgroup of index $2$ with unsolvable conjugacy problem was found by Goryaga in \cite{Gor}.
One can show that the overgroup from Goryaga's example is not conjugacy separable, even though this group is not finitely presented
(and thus Mostowski's result \cite{Mostowski} does not apply to it directly).

In this paper we investigate conjugacy separability of subdirect products of `non-positively curved' (e.g., free, hyperbolic or acylindrically hyperbolic) groups.
Recall that a subgroup $G \leqslant F_1\times F_2$, of a direct product of two groups $F_1,F_2$, is called a \emph{subdirect product}, if for each $i\in \{1,2\}$  the image
of $G$ under the natural projection $\rho_i:F_1\times F_2 \to F_i$ is all of $F_i$: $\rho_i(G)=F_i$. If, in addition, $G \cap F_i \neq \{1\}$ for each $i=1,2$, then
$G$ is said to be a \emph{full subdirect product} of $F_1$ and $F_2$. Note that if  $G$ is subdirect in $\FF$ then $G \cap F_i \lhd F_i$, $i=1,2$
(see Lemma~\ref{lem:norm_in_G->norm_in_F}).

A standard way for constructing subdirect products is to use \emph{fibre products} (see Subsection \ref{subsec:constr-subdir}).
It provides a streamlined and powerful method for producing groups with exotic behaviour.
The original idea belongs to Miha\u{\i}lova \cite{Mih}, who applied the fibre product construction to a finitely presented group
with unsolvable word problem to  give an example of a finitely generated subgroup $G\leqslant F \times F$, where $F$ is the free group of rank $2$, such that the membership problem for $G$ in $F\times F$ is undecidable. In \cite{Miller-book} Miller showed that the same group $G$ also has unsolvable conjugacy problem. Our first result shows, in the same spirit, that if one starts with a group which is not residually finite then the corresponding fibre product of free groups will not be conjugacy separable.

\begin{thm}\label{thm:non-cs_for_subdir_of_hyp}
Suppose that $\cC$ is a pseudovariety of groups and $F_i$ is either a non-abelian free group (of arbitrary rank)
or a non-elementary hyperbolic group without non-trivial finite normal subgroups, $i=1,2$. If $G \leqslant \FF$
is a full subdirect product such that $F_1/N_1$ is not residually-$\cC$ (where $N_1 \coloneq G \cap F_1$) then $G$ is not $\cC$-conjugacy separable.
\end{thm}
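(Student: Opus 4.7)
The plan is to exhibit a pair of elements $x, y \in G$ that are non-conjugate in $G$ but become conjugate in every $\mathcal{C}$-quotient of $G$.

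First I would extract from the failure of residual-$\mathcal{C}$ness a non-trivial element $\bar g \in Q := F_1/N_1$ that lies in every normal subgroup $\bar K \lhd Q$ with $Q/\bar K \in \mathcal{C}$. The full subdirect product hypothesis (with Lemma~\ref{lem:norm_in_G->norm_in_F}) identifies $G$ with the fibre product $\{(f_1, f_2) \in F_1 \times F_2 : \phi(f_1) = \psi(f_2)\}$ over $Q$, where $\phi, \psi$ denote the natural projections and $Q \cong F_1/N_1 \cong F_2/N_2$. Lift $\bar g$ to $a \in F_1$ and $b \in F_2$, so that $(a,b) \in G$.

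Using that $F_i$ is non-abelian free or non-elementary hyperbolic without non-trivial finite normal subgroups, together with $N_i \neq \{1\}$, I would choose non-trivial $u \in N_1$ and $v \in N_2$ whose $F_i$-centralizers are contained in $N_i$ (for instance, by taking elements whose roots in $F_i$ lie in $N_i$). Set
\[ x = (u, v), \qquad y = (aua^{-1}, v), \]
both of which lie in $G$ since $aua^{-1} \in N_1$.

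The non-conjugacy $x \not\sim_G y$ is an easy fibre-condition argument: any conjugator $(h_1, h_2) \in G$ must satisfy $h_2 \in C_{F_2}(v) \subseteq N_2$ (giving $\psi(h_2) = 1$) and $h_1 \in a\, C_{F_1}(u) \subseteq aN_1$ (giving $\phi(h_1) = \bar g$), contradicting the defining condition $\phi(h_1) = \psi(h_2)$ of $G$ because $\bar g \neq 1$.

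For the $\mathcal{C}$-quotient direction, fix $K \lhd G$ with $G/K \in \mathcal{C}$. The induced quotient $Q/\bar K$ (where $\bar K = K(N_1 \times N_2)/(N_1 \times N_2)$) lies in $\mathcal{C}$, forcing $\bar g \in \bar K$, hence $(a,b) = k \cdot (m_1, m_2)$ for some $k \in K$ and $(m_1, m_2) \in N_1 \times N_2$. The natural candidate conjugator $(m_1, m_2) \in G$ (which equals $(a,b)$ modulo $K$) sends $xK$ to $(aua^{-1}, bvb^{-1})K$, leaving a second-coordinate discrepancy of the form $(1, [b,v])$ modulo $K$; the plan is to absorb this residual commutator into $K$ by exploiting the normality of $K$ in $G$, the identity $k \cdot x \cdot k^{-1} \equiv x \pmod{K}$, and commutator manipulations of $(a m_1^{-1}, b m_2^{-1}) \in K$, possibly after replacing $(m_1, m_2)$ by another representative of the coset $(a,b) K$ in $N_1 \times N_2$ or refining the choice of $y$.

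The main obstacle I anticipate is precisely this last step: controlling the commutator obstruction $[b, v]$ (and its first-coordinate analogue) inside $K$. Resolving it cleanly may require using the \emph{projection} $\rho_2(K \cap (N_1 \times N_2)) \subseteq N_2$ and the rigidity of centralizers in the hyperbolic/free setting (e.g., $C_{F_i}(u) \subseteq N_i$) to show that the pair $(m_1, m_2)$ can be chosen so that conjugation by $(m_1, m_2)$ realizes $yK$ on the nose, or alternatively replacing $y$ by a slightly different element (such as $(aua^{-1}, m_2 v m_2^{-1})$ for a suitable $m_2$) for which the relevant commutator lies in the appropriate component of $K$ automatically.
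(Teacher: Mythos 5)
Your setup follows the paper's own route (Lemma~\ref{lem:1st_crit_for_non-cs} and Theorem~\ref{thm:first_crit}) almost exactly: the same test pair $x=(u,v)$, $y=(aua^{-1},v)$ with $\C_{F_1}(u)\subseteq N_1$ and $\C_{F_2}(v)\subseteq N_2$, and the same non-conjugacy argument via the fibre condition (the existence of such $u,v$ in the hyperbolic case is supplied by Lemma~\ref{lem:inf_norm_sbgp->non-elem} in the paper; your parenthetical about roots only covers the free case, but that is minor). The genuine gap is the step you yourself flag as the main obstacle: proving $\varphi(x)\sim\varphi(y)$ for every homomorphism $\varphi:G\to M$ with $M\in\cC$. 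The ``commutator obstruction'' $[b,v]$ is an artifact of conjugating the whole element $(u,v)$ by the single conjugator $(m_1,m_2)$, and the proposal does not close it: absorbing $[b,v]$ into $K$ is not justified, and replacing $y$ by the $K$-dependent element $(aua^{-1},m_2vm_2^{-1})$ changes the statement being proved --- non-conjugacy-separability requires one fixed pair that works for all quotients, and you would still owe the remark that this element is $G$-conjugate to $y$ (via $(1,m_2)\in N_2\leqslant G$) before transferring conjugacy back.

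In fact the obstruction vanishes entirely if you conjugate the two coordinates separately, which is what the paper does. Write $y=(aua^{-1},v)=\bigl[(a,b)(u,1)(a,b)^{-1}\bigr]\,(1,v)$, noting that $(u,1)\in N_1\times\{1\}\subseteq G$ and $(1,v)\in\{1\}\times N_2\subseteq G$, so every factor lies in $G$. Since $\varphi((a,b))=\varphi((m_1,m_2))$ with $(m_1,m_2)\in N_1\times N_2$, you get
\[
\varphi(y)=\varphi((m_1,m_2))\,\varphi((u,1))\,\varphi((m_1,m_2))^{-1}\,\varphi((1,v))
=\varphi\bigl((m_1um_1^{-1},v)\bigr)=\varphi\bigl((m_1,1)(u,v)(m_1,1)^{-1}\bigr),
\]
because $m_2$ acts trivially on the first coordinate of $(u,1)$ and $(1,v)$ commutes with everything in $F_1\times\{1\}$. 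The correct conjugator is $(m_1,1)\in N_1\leqslant G$, not $(m_1,m_2)$, and no commutator ever appears. With this short computation inserted, your argument coincides with the paper's proof; without it, the proof is incomplete at its crux.
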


A special case of Theorem \ref{thm:non-cs_for_subdir_of_hyp}, when $N_1$ is finitely generated and $\cC$ is the class of all finite groups,
was proved by Martino and the author in \cite[Prop. 7.6]{M-M}.
The proof of Theorem~\ref{thm:non-cs_for_subdir_of_hyp} is essentially done by a direct computation and does not require the groups $F_1$, $F_2$ or the
subgroup $N_1\lhd F_1$ to be finitely generated. This makes a difference, as Theorem \ref{thm:non-cs_for_subdir_of_hyp} can be applied to fibre products
of free groups.

The first application of Theorem \ref{thm:non-cs_for_subdir_of_hyp} is in Example~\ref{ex:B-G_gp}. It starts with Baumslag's
non-residually finite $1$-relator group \cite{B-gp}, and shows that the corresponding symmetric fibre product  $G \leqslant F \times F$, where $F$ is the free group
of rank $2$, is not conjugacy separable. In fact we explicitly exhibit a pair  of non-conjugate elements of $G$ that are conjugate in every finite quotient.
Thus we get a $3$-generated residually free group that is not conjugacy separable. This can be contrasted with the result of Chagas and Zalesskii \cite{Chag-Zal-fp_res_free}
mentioned above. Moreover, note that $3$ is optimal, as any $2$-generated residually free group is either free or abelian, and so it is conjugacy separable.

A group $G$ is said to be $\cC$-\emph{hereditarily conjugacy separable} if for every subgroup $H \leqslant G$, open in the pro-$\cC$ topology on $G$, $H$
is $\cC$-conjugacy separable.
In Subsection \ref{subsec:crit_for_cs} we give a sufficient criterion for $\cC$-conjugacy separability of subdirect products
(see Proposition~\ref{prop:crit_of_CS_for_subdirect}), and in Subsection~\ref{subsect:C-cs_for_fin_ind} we combine this criterion with
Theorem \ref{thm:non-cs_for_subdir_of_hyp} to obtain the following complete characterization of $\cC$-conjugacy separability for subdirect products of finite index.

\begin{cor}\label{cor:crit_for_c-cs_if_fin_ind} Let $\cC$ be a non-trivial extension-closed pseudovariety of finite groups.
Let $F_i$ either be a non-abelian free group or a non-elementary $\cC$-hereditarily conjugacy separable hyperbolic group
without non-trivial finite normal subgroups, $i=1,2$.
If $G \leqslant \FF$ is a subdirect product of finite index in $\FF$ then the following statements are equivalent:
\begin{itemize}
  \item[(1)] $G$ is $\cC$-conjugacy separable;
  \item[(2)] $F_1/N_1 \in \cC$, where $N_1 \coloneq G \cap F_1$;
  \item[(3)] $G$ is open in the \proc on $\FF$.
\end{itemize}
\end{cor}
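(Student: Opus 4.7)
The plan is to establish the cycle of implications $(1) \Rightarrow (2) \Rightarrow (3) \Rightarrow (1)$, with only the last step relying on the non-trivial machinery developed elsewhere in the paper.

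For $(1) \Rightarrow (2)$, I would argue contrapositively via Theorem~\ref{thm:non-cs_for_subdir_of_hyp}. Since $G$ has finite index in $\FF$, the subgroup $N_1 = G \cap F_1$ (normal in $F_1$ by Lemma~\ref{lem:norm_in_G->norm_in_F}) has finite index in $F_1$, so $F_1/N_1$ is finite. A finite group is residually-$\cC$ if and only if it belongs to $\cC$: embed it into a finite product of its $\cC$-quotients separating all non-identity elements, then invoke closure of the pseudovariety $\cC$ under subgroups and finite direct products. Hence, if (2) fails then $F_1/N_1$ is not residually-$\cC$, and Theorem~\ref{thm:non-cs_for_subdir_of_hyp} immediately contradicts (1).

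For $(2) \Rightarrow (3)$, I would exploit the standard Goursat-type identification: for any full subdirect product, $G/(N_1 \times N_2)$ is the graph of an isomorphism $F_1/N_1 \cong F_2/N_2$. Indeed, if $(a,b) \in G$ with $b \in N_2 \subseteq G$, then $(a,1) = (a,b)(1,b)^{-1} \in G$, forcing $a \in N_1$; combined with subdirectness this yields the desired isomorphism. Hence $F_2/N_2 \in \cC$ as well, and since $\cC$ is closed under finite direct products the quotient $\FF/(N_1 \times N_2) \cong F_1/N_1 \times F_2/N_2$ lies in $\cC$. Therefore $N_1 \times N_2$ is an open normal subgroup of $\FF$ in the \proc contained in $G$, so $G$ itself is open.

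Finally, for $(3) \Rightarrow (1)$, I would appeal directly to Proposition~\ref{prop:crit_of_CS_for_subdirect}: its sufficient criterion is tailored precisely to the situation where $G$ is an open subdirect product in $\FF$ and each factor $F_i$ is $\cC$-hereditarily conjugacy separable (automatic for non-abelian free groups, explicit in the hyperbolic hypothesis). The main obstacle here is really upstream, in establishing Proposition~\ref{prop:crit_of_CS_for_subdirect} itself; once that criterion is in hand, this implication amounts to a routine verification that openness of $G$ in the \proc on $\FF$ matches the input condition of the criterion, together with the fact that $F_i/N_i$ being finite (indeed in $\cC$) places us inside the scope of the proposition.
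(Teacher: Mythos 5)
Your implications $(1)\Rightarrow(2)$ and $(2)\Rightarrow(3)$ are correct and essentially coincide with the paper's argument (the paper deduces the statement as a special case of Corollary~\ref{cor:crit_for_c-cs_if_fin_ind-acyl_case}). The only point to make explicit in the first step is that $G$ is a \emph{full} subdirect product, as Theorem~\ref{thm:non-cs_for_subdir_of_hyp} requires; this is immediate because $N_i=G\cap F_i$ has finite index in the infinite group $F_i$, hence is non-trivial.

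The gap is in $(3)\Rightarrow(1)$. Proposition~\ref{prop:crit_of_CS_for_subdirect} is not applicable under the hypotheses of the corollary: it requires $F_1$ and $F_2$ to have \emph{cyclic centralizers}, and a non-elementary hyperbolic group without non-trivial finite normal subgroups need not satisfy this. For instance, $(\Z/2\Z\times\Z/2\Z)\ast(\Z/3\Z)$ is virtually free, non-elementary and has no non-trivial finite normal subgroup, yet the centralizer of an involution from the Klein four factor contains that whole factor and so is non-cyclic; hyperbolic groups with torsion are allowed by the corollary's hypotheses. You have also mischaracterized the proposition: its key input is cyclic subgroup $\cC$-separability of $F_1/N_1$, not openness of $G$ in $\FF$. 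The paper's route for this implication is simpler and avoids the issue entirely: by Lemma~\ref{lem:free_are_C-hcs} the free factors are $\cC$-hereditarily conjugacy separable, so in all cases $F_1$ and $F_2$ are $\cC$-hereditarily conjugacy separable; by Lemma~\ref{lem:C-cs-for_products} so is $\FF$; and $(3)$ says precisely that $G$ is a $\cC$-open subgroup of $\FF$, whence $G$ is $\cC$-conjugacy separable by the very definition of $\cC$-hereditary conjugacy separability. You should replace the appeal to Proposition~\ref{prop:crit_of_CS_for_subdirect} with this argument.
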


If $\cC=\cC_p$ is the class of all finite $p$-groups, for some prime $p$, then the above corollary shows that a subdirect product of two free groups is
$p$-conjugacy separable if and only if its index is a power of $p$ (see Corollary \ref{cor:crit_p-cs_for_fin_ind}). A result of Toinet \cite{Toinet} states that the subgroups of right angle Artin groups which are open in the pro-$p$ topology are $p$-conjugacy separable.
Since direct products of free groups are right angled Artin groups, Corollary \ref{cor:crit_for_c-cs_if_fin_ind} shows that the openness assumption in Toinet's theorem is indeed necessary and cannot be dropped (see Example~\ref{ex:ind-p}).

In Subsection \ref{subsec:necessity} we investigate a gap between the sufficient criterion for $\cC$-conjugacy separability of subdirect products of free groups provided
by Proposition \ref{prop:crit_of_CS_for_subdirect} and the necessary criterion given by Theorem \ref{thm:non-cs_for_subdir_of_hyp}. Theorem \ref{thm:near_converse} shows
that this gap is quite small. For some pseudovarieties of groups the gap does not exist at all (e.g., when $\cC=\cC_p$), though the general case is unclear:
see Question \ref{q:css_vs_res-C}.

In Sections \ref{sec:non_cs_fi_overgps} and \ref{sec:cs_overgroups} we use fibre products to
construct finitely presented groups demonstrating exotic
behaviour with respect to conjugacy separability. In these two sections we are concerned with the case when $\cC$ is the class of all finite groups. Thus
a \emph{hereditary conjugacy separable group} is a group where every subgroup of finite index is conjugacy separable.
The following statement is a special case of Theorem~\ref{thm:strong_vers}:

\begin{thm}\label{thm:strong_vers-simplified} There exists a finitely presented  hereditarily conjugacy separable group $G$ which has an
overgroup $K$ such that $|K:G|=2$, $K$ is not conjugacy separable and has unsolvable conjugacy problem.
\end{thm}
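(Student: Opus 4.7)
The plan is to realize both $G$ and $K$ as fibre products over a suitably designed finitely presented ``source'' group $Q$. Fix a non-abelian free group $F$ of finite rank together with an epimorphism $\pi : F \twoheadrightarrow Q$, and set
\[
K := \{(f_1,f_2) \in F\times F : \pi(f_1) = \pi(f_2)\},
\]
a full subdirect product with $F/(K\cap F)\cong Q$. Given a further surjection $\eta : Q \twoheadrightarrow \Z/2\Z$, the homomorphism $K\to\Z/2\Z$, $(f_1,f_2)\mapsto \eta\pi(f_1)$, is well-defined (because $\pi(f_1)=\pi(f_2)$ on $K$), and its kernel $G$ is itself the symmetric fibre product associated with $\pi|_{F_0} : F_0 \twoheadrightarrow Q_0$, where $F_0 := (\eta\pi)^{-1}(0)$ and $Q_0 := \ker\eta$. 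By construction $|K:G|=2$.

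The source group $Q$ must satisfy three competing demands: (i) $Q$ is finitely presented and of type $F_3$, so that the $1$--$2$--$3$ theorem of Baumslag--Bridson--Miller--Short guarantees that $K$ (and hence $G$) is finitely presented; (ii) $Q$ is not residually finite and has unsolvable word problem; (iii) the index-$2$ normal subgroup $Q_0$ and every one of its finite-index subgroups are residually finite and fall within a class to which the sufficient criterion Proposition~\ref{prop:crit_of_CS_for_subdirect} applies (for instance virtually compact special hyperbolic groups, or residually free groups). The natural route is to start with a well-behaved $Q_0$ and manufacture $Q$ as an index-$2$ extension that absorbs a chosen non-residually-finite finitely presented input with unsolvable word problem, while checking by direct homological computation that $H_2(Q;\Z)$ remains finitely generated.

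With $Q$ in place, the conclusion follows from three verifications. That $K$ is not conjugacy separable is immediate from Theorem~\ref{thm:non-cs_for_subdir_of_hyp} with $\cC$ the class of all finite groups, since $F/(K\cap F)\cong Q$ is not residually finite. That $K$ has unsolvable conjugacy problem is a standard Mihailova--Miller encoding: for $w\in F$ the elements $(w,1)$ and $(1,w)$ of $K$ are conjugate in $K$ if and only if $\pi(w)=_Q 1$, so an algorithm solving conjugacy in $K$ would solve the word problem in $Q$. That $G$ is hereditarily conjugacy separable reduces, for an arbitrary finite-index $H\leqslant G$, to showing that a characteristic finite-index refinement $H^* \leqslant H$ is a full subdirect product in $F' \times F''$ (for finite-index subgroups $F',F''\leqslant F_0$) whose source is a finite-index subgroup of $Q_0$; Proposition~\ref{prop:crit_of_CS_for_subdirect} then yields conjugacy separability of $H^*$, which is promoted to $H$ using the hereditary good behaviour of $Q_0$ and stability of conjugacy separability under finite-index overgroups inside this class.

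The main obstacle is the construction of $Q$: reconciling the finiteness condition (i), the two ``badness'' conditions in (ii), and the strong hereditary separability demanded by (iii) in a single finitely presented group is delicate and will require a bespoke Rips- or HNN-type assembly. A secondary subtlety is the final lifting step in the hereditary argument, since finite-index subgroups of a fibre product need not themselves be fibre products; one must first pass to a normal (or characteristic) refinement whose subdirect structure is visible, invoke Proposition~\ref{prop:crit_of_CS_for_subdirect} there, and then transport conjugacy separability back up to the original finite-index subgroup of $G$.
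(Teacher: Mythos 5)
Your proposal has a fatal obstruction at its core: the source group $Q$ you require cannot exist. You ask for $Q$ to be finitely presented and \emph{not} residually finite, while its index-$2$ subgroup $Q_0=\ker\eta$ (and all finite-index subgroups of $Q_0$) are residually finite and nicely separable, so that Proposition~\ref{prop:crit_of_CS_for_subdirect} applies to $G$. But residual finiteness passes to finite-index overgroups: if $Q_0$ is residually finite and $|Q:Q_0|<\infty$, then for $1\neq g\in Q$ either $g$ acts non-trivially on $Q/Q_0$, or $g$ lies in some finite-index $L\leqslant Q_0$-complement, and in either case $g$ survives in the finite quotient $Q/\mathrm{Core}_Q(L)$. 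Hence $Q_0$ residually finite forces $Q$ residually finite, contradicting your condition (ii). The same tension is visible from the paper's own criteria: Theorem~\ref{thm:non-cs_for_subdir_of_hyp} applied to $G\leqslant F_0\times F_0$ shows that conjugacy separability of $G$ already forces $F_0/(G\cap F_0)\cong Q_0$ to be residually finite, and since $Q_0$ is then finitely presented and residually finite it has solvable word problem, so $Q$ does too --- killing condition (ii) a second time. In short, you cannot make $K$ fail conjugacy separability ``for the Theorem~\ref{thm:non-cs_for_subdir_of_hyp} reason'' while its index-$2$ fibre-product subgroup $G$ is conjugacy separable: the two quotients $Q$ and $Q_0$ are commensurable and the relevant residual property transfers between them. (Your conjugacy-problem encoding is also off --- $(w,1)$ lies in $K$ only when $\pi(w)=1$ already --- but that is secondary.)

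The paper's route is structurally different and worth internalizing. There the quotient $P$ is kept residually finite --- indeed $P=F_2\times F_2$, which is highly residually finite and cyclic subgroup separable, so Proposition~\ref{prop:crit_of_CS_for_subdirect} and Corollary~\ref{cor:crit_for_hcs_of_subdir} give hereditary conjugacy separability of the fibre product $G\leqslant F\times F$. The index-$2$ passage from $G$ to $K$ happens in the \emph{ambient factors}, not in the quotient: one builds (Proposition~\ref{prop:constr_of_autom_hyp}) a small cancellation group $F$ with an order-$2$ automorphism $\sigma$ whose fixed subgroup projects onto Miha{\u\i}lova's inseparable subgroup $Q\leqslant P$, forms $\tilde F=F\rtimes_\sigma\langle t\rangle_2$, and takes $K$ to be the fibre product over $\tilde\psi:\tilde F\to P\times\langle t\rangle_2$. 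Non-conjugacy-separability and undecidability of conjugacy for $K$ then come from the centralizer $\C_{\tilde F}(t)\supseteq\Fix(\sigma)$ projecting onto the non-separable, membership-undecidable subgroup $Q$ --- i.e., from the failure of the cyclic-centralizer hypothesis in the factors of $K$, not from any failure of residual finiteness of the quotient. If you want to repair your argument, this is the mechanism you need to import.
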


Note that every finite index subgroup of the group $G$ from Theorem \ref{thm:strong_vers-simplified} has solvable conjugacy problem by Mostowski's result \cite{Mostowski}.
First examples of finitely presented groups with solvable conjugacy problem having index $2$ overgroups with unsolvable conjugacy problem were
constructed by Collins and Miller in \cite{Col-Mil} and, independently, by Goryaga and Kirkinskii in \cite{Gor-Kirk} (conjugacy problem for
finite index subgroups in these examples was not investigated).

Theorem \ref{thm:strong_vers-simplified} shows that hereditary conjugacy separability is not stable under commensurability, which was previously unknown.
It is now natural to ask about the converse of Theorem~\ref{thm:strong_vers-simplified}:

\begin{question}\label{q:conv}
Does there exist a group $G$ such that every finite index overgroup $K$, of $G$, is conjugacy separable (resp. has solvable conjugacy problem),
but $G$ possesses a subgroup of finite index  which is not conjugacy separable (resp. has unsolvable conjugacy problem)?
\end{question}

Surprisingly, we discovered that the answers to both versions of Question \ref{q:conv} are negative. More precisely, in Corollary \ref{cor:ind_2->overind_3} (resp.
Corollary \ref{cor:ind_2->overind_3_CP}) we show that if a group $G$ has an index $2$ subgroup
$H$ such that $H$ is not conjugacy separable (resp. $H$ has unsolvable conjugacy problem), then there is an overgroup $K$, of $G$, with $|K:G|=3$ such that
$K$ is not conjugacy separable (resp. $K$ has unsolvable conjugacy problem). More generally, we use permutational wreath products to prove

\begin{cor}\label{cor:f_i_non-cs->f_oi_non-cs} If a group $G$ is not hereditarily conjugacy separable subgroup, then $G$ has an overgroup $K$, with $|K:G|<\infty$,
such that $K$ is not conjugacy separable.
\end{cor}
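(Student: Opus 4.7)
Let $H \leqslant G$ be a finite-index subgroup that is not conjugacy separable, and fix witnesses $a, b \in H$ which are not conjugate in $H$ but are conjugate in every finite quotient of $H$. My plan splits into two cases, depending on whether $G$ itself is conjugacy separable.

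If $G$ is not conjugacy separable, take $K = G$. Otherwise, since every finite quotient of $G$ restricts to a finite quotient of $H$ (in which the images of $a, b$ are conjugate), conjugacy separability of $G$ forces $b = g a g^{-1}$ for some $g \in G$. Such a $g$ must lie in $G \setminus H$, since $g \in H$ would give conjugacy of $a, b$ in $H$. In this case I would construct $K$ as the permutational wreath product
\[
K \;=\; A \wr_{G/H} G \;=\; A^{G/H} \rtimes G,
\]
where $A$ is a nontrivial finite group (say $A = \mathbb{Z}/2\mathbb{Z}$) and $G$ acts on $A^{G/H}$ by permuting coordinates via its natural left action on the coset space $G/H$. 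Since $[G:H] < \infty$ and $A$ is finite, $G$ embeds as the top factor of $K$ with $[K:G] = |A|^{[G:H]} < \infty$, giving the required finite-index overgroup.

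To witness that $K$ is not conjugacy separable, I would take $\alpha \in A^{G/H}$ supported only at the identity coset $H \in G/H$, with a fixed nontrivial value $a_0 \in A$ there, and set $x = (\alpha, a)$ and $y = (\alpha, b)$ in $K$. Expanding the conjugation relation $(\gamma, g')\, x\, (\gamma, g')^{-1} = y$ yields $g' a g'^{-1} = b$ (forcing $g' \in G \setminus H$ by the reasoning above) and $\gamma \cdot (g' \cdot \alpha) = \alpha \cdot (b \cdot \gamma)$. Evaluating the second equation at the coset $H$ collapses to $\gamma_H = a_0 \cdot \gamma_H$: the $(g' \cdot \alpha)_H$ term is trivial because $g'^{-1} H \neq H$, while $(b \cdot \gamma)_H = \gamma_H$ because $b \in H$ stabilises the coset $H$. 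This forces $a_0 = 1$, a contradiction, so no conjugator of $x$ to $y$ exists in $K$. Conversely, given any finite quotient $\pi \colon K \to \bar K$, the induced finite quotient of $H$ contains some element conjugating $\bar a$ to $\bar b$; lifting this element to $h \in H$ and using $h \cdot \alpha = \alpha$ (because every $h \in H$ fixes the coset $H$), one checks that $\pi((1, h)) \in \bar K$ conjugates $\pi(x)$ to $\pi(y)$.

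The main technical point is the evaluation-at-$H$ calculation blocking all conjugators with $g' \in G \setminus H$. It rests on the observation that $H$ is exactly the stabiliser in $G$ of the identity coset $H \in G/H$ under the natural left action, so the single-coset support of $\alpha$ is rigid under $G \setminus H$ but transparent to $H$. This support rigidity is precisely what transports the witness pair $(a,b)$ from $H$ up to a witness pair $(x,y)$ in $K$, while the finite base $A^{G/H}$ leaves enough room in finite quotients for the conjugacy of $\bar a, \bar b$ in $\bar H$ to extend to a conjugacy of $\bar x, \bar y$ in $\bar K$.
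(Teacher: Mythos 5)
Your proposal is correct and follows essentially the same route as the paper: the paper's Proposition \ref{prop:fin_ind_sbgp_overgp} builds exactly your permutational wreath product $A^{G/H}\rtimes G$ with $A=\Z/2\Z$ and the characteristic function of the coset $H$ as the twisting element, proves via the same evaluation-at-$H$ computation that $x\sim_H y$ iff $ax\sim_K ay$, and then transports the witness pair up to $K$ just as you do. The only (harmless) difference is your initial case split on whether $G$ is conjugacy separable, which is unnecessary: your own observation that any conjugator $g'$ with $g'ag'^{-1}=b$ must lie in $G\setminus H$ already suffices, without needing to know that such a $g'$ exists.
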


A similar statement about the conjugacy problem is given in Corollary \ref{cor:f_i_u_CP->f_oi_u_CP}. According to the construction in
Corollary \ref{cor:f_i_non-cs->f_oi_non-cs}, $G$ will not be normal in its overgroup $K$. This turns out to be the only obstruction.
The next result is a special case of Theorem \ref{thm:norm_overgps-cs}.

\begin{thm} \label{thm:norm_overgps-cs_simple} There exists a finitely presented group $G$, containing a subgroup $G'\lhd G $, of index $2$,
such that $G'$ is not conjugacy separable, but for every group
$K$, with $G \lhd K$ and $|K:G|<\infty$, $K$ is conjugacy separable.
\end{thm}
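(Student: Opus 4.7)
The plan is to construct $G$ as a twisted fibre product $G = P \rtimes \langle t \rangle$, where $P$ is the symmetric fibre product of two copies of a non-abelian finite-rank free group $F$ associated to an epimorphism $\pi \colon F \twoheadrightarrow Q$ onto a carefully chosen finitely presented group $Q$, and $t$ is the involution induced by the swap $(x,y) \mapsto (y,x)$ of the two factors of $F \times F$ (which preserves $P$ precisely because the fibre product is symmetric). Set $G' \coloneq P$, so that $G' \lhd G$ with $|G:G'| = 2$. For $Q$ I would take a finitely presented group that is not residually finite but is of type $F_3$, such as Baumslag's $1$-relator group from Example~\ref{ex:B-G_gp}; standard fibre-product finiteness results (cf.\ Subsection~\ref{subsec:constr-subdir}) then ensure that $P$, and hence $G$, is finitely presented.

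With $N_1 \coloneq G' \cap F_1 = \ker\pi$, we have $F_1/N_1 \cong Q$, which is not residually finite, so Theorem~\ref{thm:non-cs_for_subdir_of_hyp} (applied with $\cC$ equal to the class of all finite groups) immediately yields that $G'$ is not conjugacy separable. Moreover, inspecting the proof of that theorem, the obstruction is furnished by an explicit pair of elements of $P$ of the form $(w,1)$ and $(1,w)$ for a suitable $w \in \ker\pi \setminus \{1\}$: they are non-conjugate in $P$ but become conjugate in every finite quotient of $P$. The key structural point is that these two elements are interchanged by $t$, hence are \emph{conjugate} in $G$; so adjoining $t$ kills precisely the obstruction to conjugacy separability detected in $G'$.

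The substantive work is then to show that every finite-index normal overgroup $K$ of $G$ is conjugacy separable. I would first arrange the construction so that $G'$ is characteristic in $G$, for example by choosing $Q$ and the rank of $F$ so that $G^{\mathrm{ab}}$ admits a unique surjection onto $\Z/2\Z$; then $G' \lhd K$ automatically. Since the projections $\rho_i \colon P \to F_i$ are intrinsically determined (up to order) by the normal subgroup structure of $P$ inherited from $F_1 \times F_2$, every conjugation automorphism of $P$ induced by $K$ extends to an automorphism of $F_1 \times F_2$ that permutes and twists the two free factors. Thus $K$ embeds as a finite-index subgroup of a finite extension $(F_1 \times F_2) \rtimes \Sigma$, for some finite group $\Sigma \supseteq \langle t \rangle$, and one can then invoke the sufficient criterion of Proposition~\ref{prop:crit_of_CS_for_subdirect} together with the (hereditary) conjugacy separability of the free groups $F_i$ to conclude that $K$ is conjugacy separable.

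The hard part is this last step: one must verify that every obstruction to conjugacy separability inherited by $K$ from $P$ is already neutralised by the $\Sigma$-action extending the swap, and that all conjugacy classes of elements of $K \setminus P$ are separable in finite quotients. This requires a careful bookkeeping argument, but the characteristic nature of $G'$ in $G$, the rigid description of $\Sigma$ as a finite subgroup of $\mathrm{Aut}(F_1) \times \mathrm{Aut}(F_2)$ extended by a coordinate permutation, and Proposition~\ref{prop:crit_of_CS_for_subdirect} should together suffice to carry it through.
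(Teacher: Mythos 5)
There are two fatal gaps in your proposal, both at the level of the construction rather than the bookkeeping.

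First, your group $G=P\rtimes\langle t\rangle$ is itself not conjugacy separable, so the conclusion already fails for $K=G$. The obstruction produced by Lemma~\ref{lem:1st_crit_for_non-cs} and Theorem~\ref{thm:first_crit} is not a pair of the form $(w,1)$, $(1,w)$: it is a pair $(h_1,h_2)$, $(x_1h_1x_1^{-1},h_2)$ with $h_i\in N_i$ and $x_1$ in the profinite closure of $N_1$ but outside $N_1$. These two elements remain conjugate in every finite quotient of $G$ (they are conjugated by an element of the image of $N_1$), and adjoining the swap $t$ does not make them conjugate in $G$: a $tP$-conjugate of $(h_1,h_2)$ has the form $(p_2h_2p_2^{-1},p_1h_1p_1^{-1})$, so it can only equal $(x_1h_1x_1^{-1},h_2)$ if $h_1$ and $h_2$ are conjugate in $F$, and one is free to choose $h_1\not\sim h_2$. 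So the swap does not ``kill precisely the obstruction''; it kills essentially none of it. Second, with $F$ free and $Q$ the Baumslag group (or any infinite, non-free, finitely presented $Q$), the kernel $N=\ker\pi$ is an infinitely generated free group, so Lemma~\ref{lem:Mih} and Lemma~\ref{lem:1-2-3} do not apply, and in fact by the Baumslag--Roseblade theorem \cite{Baum-Rose} the fibre product $P\leqslant F\times F$ cannot be finitely presented unless it is virtually a product of free groups. Getting $N$ finitely generated is exactly why the paper replaces the free groups by Rips-type small cancellation groups.

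The paper's actual proof of Theorem~\ref{thm:norm_overgps-cs} is structured quite differently, and it is worth seeing why. It takes a central extension $\{1\}\to\Z/k\Z\to P\to Q\to\{1\}$ with $P$ not residually finite but $Q$ residually finite, cyclic subgroup separable and of type $\mathrm{F}_3$ (via Deligne), and sets $G'$ and $G$ to be the fibre products over $P$ and over $Q$ respectively, inside $\FF$ where $F_1\not\cong F_2$ are \emph{two non-isomorphic} Bumagina--Wise groups. Thus $G'$ is not conjugacy separable because $F_1/N_1'\cong P$ is not residually finite, while $G$ sits over the well-behaved quotient $Q$; the non-isomorphism $F_1\not\cong F_2$ forces $N_1,N_2$ to be normal in any finite index normal overgroup $K$ (Lemma~\ref{lem:characteristic}), and the control of $Out(N_i)$ built into the Bumagina--Wise construction yields the centralizer dichotomy of Lemma~\ref{lem:centralizers_in_tilde_F} in $\tilde F_i\coloneq K/N_i$, which is what makes the double-coset criterion checkable. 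Your final step also asserts without justification that conjugation automorphisms of $P$ induced by $K$ extend to $F_1\times F_2$; there is no reason for the quotients $K/N_i$ to embed into $\mathrm{Aut}(F_i)$, and this is precisely the difficulty the Bumagina--Wise input is designed to circumvent.
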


The first example of a conjugacy separable group with a non-conjugacy separable subgroup of finite index was constructed by Chagas and Zalesskii in
\cite{Chag-Zal-fp_res_free}; the first finitely  generated and finitely presented such examples were given in \cite{M-M}. However, these examples did not provide
any  information about conjugacy separability of finite index (normal) overgroups.

Our proofs of Theorems \ref{thm:strong_vers-simplified} and \ref{thm:norm_overgps-cs_simple} rely on a combination of fibre products with Rips-type constructions.
Such a combination was pioneered by Baumslag, Bridson, Miller and Short \cite{BBMS:1-2-3_sym}, who gave a sufficient criterion for finite presentability of symmetric
fibre products. To prove Theorem~\ref{thm:strong_vers-simplified} we modify the original construction of Rips \cite{Rips},
to ensure that the resulting small cancellation group admits an automorphism of finite order whose fixed subgroup projects onto any given
finitely generated subgroup of the original finitely presented group (see Proposition~\ref{prop:constr_of_autom_hyp}).
The proof of Theorem~\ref{thm:norm_overgps-cs_simple}, on the other hand, uses the Rips-type construction introduced by Bumagina and Wise in \cite{Bum-Wise}.
It allows one to minimize the group of automorphisms of the finitely generated normal subgroup in the resulting small cancellation group,
providing us with some control over the centralizers in any finite index normal overgroup of this group (see Lemma \ref{lem:centralizers_in_tilde_F}).

Conjugacy separability in both Theorems \ref{thm:strong_vers-simplified} and \ref{thm:norm_overgps-cs_simple} is established using the sufficient criterion
(Proposition \ref{prop:crit_of_CS_for_subdirect}) together with the fact that any group commensurable to a classical finitely presented
$C'(1/6)$ small cancellation group is hereditarily conjugacy separable.
This fact is a consequence of the results of Wise \cite{Wise-small_canc-cubical} and Agol \cite{Agol}, implying that such groups are virtually compact special
(in the sense of Haglund and Wise \cite{H-W}), and a theorem of the author and Zalesskii \cite{M-Z-vcs}, claiming that virtually compact special hyperbolic
groups are conjugacy separable.

The last Section \ref{sec:p} is devoted to the study of $p$-conjugacy separability of subdirect products.
In fact, our methods work more generally, when $\cC$
is a pseudovariety of $Q'$-groups, for some set of primes $Q$ (see Definition \ref{def:q'}). In this case we obtain a criterion which is both necessary
and sufficient for $\cC$-conjugacy separability of full subdirect products of non-abelian free groups (or of $\cC$-hereditarily conjugacy separable torsion-free
hyperbolic groups) -- see Theorem \ref{thm:q'-iff_crit}.
The following corollary is a special case of that criterion (recall that, for any prime $p$, a \emph{$p$-group} is a periodic group where every element has a $p$-power order).

\begin{cor}\label{cor:p_cs-iff_crit-simple}
Suppose that $p$ is a prime, $G \leqslant \FF$ is a full subdirect product of non-abelian free groups $F_1$, $F_2$, and $N_1\coloneq G \cap F_1$. Then
the following are equivalent:
\begin{itemize}
  \item[(i)] $G$ is $p$-conjugacy separable;
  \item[(ii)] $F_1/N_1$ is a residually finite $p$-group.
\end{itemize}
\end{cor}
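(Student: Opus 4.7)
The plan is to obtain the corollary as the combination of the necessary direction provided by Theorem \ref{thm:non-cs_for_subdir_of_hyp} with the sufficient direction provided by Proposition \ref{prop:crit_of_CS_for_subdirect}, specialized to the pseudovariety $\cC_p$ of all finite $p$-groups. Equivalently, it is the case $\cC=\cC_p$ of the general biconditional in Theorem \ref{thm:q'-iff_crit}, which would be the natural abstract result to cite once it is available.

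For the implication (i)$\Rightarrow$(ii) I would argue by contraposition. Since ``residually a finite $p$-group'' is exactly ``residually-$\cC_p$'', if $F_1/N_1$ is not a residually finite $p$-group then Theorem \ref{thm:non-cs_for_subdir_of_hyp}, applied with $\cC=\cC_p$ and with $F_1,F_2$ non-abelian free groups, immediately produces a pair of elements of $G$ that are not conjugate in $G$ but have conjugate images in every finite $p$-quotient of $G$. This step uses nothing beyond the statement of Theorem \ref{thm:non-cs_for_subdir_of_hyp}; in particular we do not need $N_1$ to be finitely generated.

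For (ii)$\Rightarrow$(i) I would first upgrade hypothesis (ii) to a symmetric statement about both coordinate projections. The restrictions of $\rho_1,\rho_2$ to $G$ induce canonical fibre-product isomorphisms
\[
G/(N_1\times N_2)\;\cong\;F_1/N_1\;\cong\;F_2/N_2,
\]
so (ii) forces $F_2/N_2$ also to be a residually finite $p$-group. With this symmetric hypothesis in hand I would invoke Proposition \ref{prop:crit_of_CS_for_subdirect} with $\cC=\cC_p$: besides condition (ii) itself (and its symmetric counterpart), the proposition should require only that each $F_i$ be $\cC_p$-hereditarily conjugacy separable. This last ingredient follows from the result of Toinet \cite{Toinet}, applied to the free group $F_i$ viewed as a right angled Artin group: every subgroup of $F_i$ that is open in the pro-$p$ topology is $p$-conjugacy separable.

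The main obstacle is not conceptual at this stage, since the deep input has already been developed upstream (Theorem \ref{thm:non-cs_for_subdir_of_hyp}, Proposition \ref{prop:crit_of_CS_for_subdirect}, and Toinet's theorem). The only subtle point worth flagging is \emph{why} the corollary is a genuine biconditional, i.e. why the ``gap'' between the necessary and sufficient criteria discussed in Subsection \ref{subsec:necessity} collapses when $\cC=\cC_p$. That closure is precisely what Theorem \ref{thm:q'-iff_crit} delivers upon taking $Q=\{p\}$, and it is the feature that distinguishes finite $p$-groups from a generic pseudovariety; I would therefore organize the write-up so that, once the symmetric form of (ii) is established, the biconditional is presented as the direct instance of Theorem \ref{thm:q'-iff_crit}.
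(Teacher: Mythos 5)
Your proof of the implication (ii)$\Rightarrow$(i) is essentially sound, modulo one small point you should make explicit: Proposition \ref{prop:crit_of_CS_for_subdirect} requires $F_1/N_1$ to be cyclic subgroup $\cC_p$-separable, and this follows from (ii) only because a residually finite $p$-group is \emph{periodic}, so every cyclic subgroup is finite and hence a finite union of $\cC_p$-closed singletons. The symmetrization to $F_2/N_2$ is harmless but not needed.

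The direction (i)$\Rightarrow$(ii), however, has a genuine gap. You assert that ``residually a finite $p$-group'' is exactly ``residually-$\cC_p$'', but the paper explicitly defines a \emph{residually finite $p$-group} to be a $p$-group (hence periodic, with every element of $p$-power order) which is residually finite. This is strictly stronger than residual-$\cC_p$-ness: the group $\Z$ is residually-$\cC_p$ but is not a residually finite $p$-group. Your contrapositive via Theorem \ref{thm:non-cs_for_subdir_of_hyp} only covers the case where $F_1/N_1$ fails to be residually-$\cC_p$; it says nothing when $F_1/N_1$ is residually-$\cC_p$ but not periodic, which is precisely the new phenomenon the corollary records (cf.\ Example \ref{ex:fibre-Z}, where $F_1/N_1\cong\Z$ and $G$ is hereditarily conjugacy separable yet $p$-conjugacy separable for no prime $p$). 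Closing this case is the content of Theorem \ref{thm:q'-crit}, whose engine is Lemma \ref{lem:non_cs_for q'}: one chooses a prime $q\neq p$, an element $x_1$ whose image in $F_1/N_1$ has infinite order, and a suitable $y_1\in N_1x_1^q$ with $\C_{F_1}(y_1)=\langle y_1\rangle$, and exploits the fact that in any finite $p$-group the image of $(x_1,x_2)$ has order coprime to $q$ to conjugate $\varphi((y_1,y_2))$ to $\varphi((x_1y_1x_1^{-1},y_2))$ in every $\cC_p$-quotient, even though these elements are not conjugate in $G$. This computation is not contained in Theorem \ref{thm:non-cs_for_subdir_of_hyp} and must be supplied. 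Two smaller corrections: the paper's actual proof is the specialization of Theorem \ref{thm:q'-iff_crit} that you mention in passing, but with $Q=\mathbb{P}\setminus\{p\}$ rather than $Q=\{p\}$ (finite $p$-groups are the finite $Q'$-groups for the \emph{complement} of $\{p\}$); and for the $\cC_p$-hereditary conjugacy separability of free groups the paper invokes Lemma \ref{lem:free_are_C-hcs} rather than Toinet's theorem, though your route also works.
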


Note that by a \emph{residually finite $p$-group} we mean a $p$-group which is residually finite.
Corollary \ref{cor:p_cs-iff_crit-simple} reveals an interesting and unexpected connection between $p$-conjugacy separability of the subdirect product $G$
and periodicity of the quotient $F_1/N_1$. There is no such connection in the case of standard conjugacy separability, when $\cC$ is the class of all finite groups
(cf. Example~\ref{ex:fibre-Z}).
Using the well-known fact that a subdirect product $G\leqslant \FF$, of finitely generated free groups $F_1$ and $F_2$,
is finitely generated if and only if the quotient $F_1/N_1$ is finitely presented, where $N_1 \coloneq G \cap F_1$, we prove

\begin{cor}\label{cor:equiv_to_ex_of_fp_p-gp} For each prime  $p$ the following statements are equivalent:
\begin{itemize}
  \item[(1)] there exists an infinite finitely presented residually finite $p$-group;
   \item[(2)] there exists a finitely generated $p$-conjugacy separable subgroup $G\leqslant H\times H$, where $H$ is the free group of rank $2$,
  such that $G$ is not virtually a direct product of two free groups;
  \item[(3)] there exists a finitely generated $p$-conjugacy separable full subdirect product $G \leqslant \FF$, where $F_1,F_2$ are
  free groups, such that   $|(\FF):G|=\infty$.
\end{itemize}
\end{cor}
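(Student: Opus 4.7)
The plan is to establish the cycle $(1) \Rightarrow (3) \Rightarrow (2) \Rightarrow (3) \Rightarrow (1)$, combining Corollary~\ref{cor:p_cs-iff_crit-simple}, which characterises $p$-conjugacy separability of $G$ via residual finiteness of $F_1/N_1$ as a $p$-group, with the well-known fact recalled immediately before the statement: a subdirect product $G \leqslant \FF$ of finitely generated free groups is finitely generated if and only if $F_1/N_1$ is finitely presented. For $(1) \Rightarrow (3)$, given an infinite finitely presented residually finite $p$-group $Q$, write $Q \cong F/N$ with $F$ a finitely generated free group; $F$ is necessarily non-abelian because an infinite $p$-group cannot be cyclic. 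Taking $F_1 = F_2 = F$, the fibre product $G \coloneq \{(f, f') \in F \times F : fN = f'N\}$ is a full subdirect product with $G \cap F_1 = N$, so Corollary~\ref{cor:p_cs-iff_crit-simple} yields $p$-conjugacy separability, the 1--2--3 correspondence yields finite generation, and $|(\FF):G| = |Q| = \infty$. Conversely, for $(3) \Rightarrow (1)$, the free groups $F_1, F_2$ are finitely generated as homomorphic images of the finitely generated $G$; the quotient $F_1/N_1$ has cardinality $|(\FF):G| = \infty$, is residually finite $p$ by Corollary~\ref{cor:p_cs-iff_crit-simple}, and is finitely presented by the same correspondence.

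The equivalence $(2) \Leftrightarrow (3)$ is mostly routine. In the setting of $(3)$, the free groups $F_1, F_2$ must be non-abelian: a cyclic $F_i$ would force the non-trivial normal subgroup $N_i \lhd F_i$ to have finite index, contradicting $|F_1/N_1| = \infty$. Since any finitely generated free group embeds into the free group $H$ of rank $2$, we obtain $G \hookrightarrow H \times H$ as required for $(2)$, the non-trivial remaining task being to rule out virtual direct-product decompositions of $G$; this is discussed below. For $(2) \Rightarrow (3)$, put $F_i \coloneq \rho_i(G) \leqslant H$, so that $G$ is a subdirect product of the finitely generated free groups $F_1, F_2$; if $G \cap F_i$ were trivial for some $i$, then $G$ would be free and so a (degenerate) direct product of two free groups, contradicting $(2)$; if $|F_1/N_1|$ were finite, then the direct product of free groups $N_1 \times N_2$ would have finite index in $G$, again contradicting $(2)$. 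Corollary~\ref{cor:p_cs-iff_crit-simple} then gives $(3)$.

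The main obstacle is the remaining half of $(3) \Rightarrow (2)$: showing that $G$ is \emph{not} virtually a direct product of two free groups. Suppose, towards a contradiction, that a finite-index subgroup $G_0 \leqslant G$ splits as $G_0 = A \times B$ with $A, B$ free. Assume first that $A$ and $B$ are both non-trivial; then for each $i \in \{1,2\}$ the subgroups $\rho_i(A), \rho_i(B)$ of $F_i$ commute element-wise, and their product $\rho_i(G_0)$ is a finite-index, hence non-abelian, subgroup of $F_i$. In a free group, however, two non-trivial commuting subgroups lie in a common maximal cyclic subgroup, so their product is cyclic; this forces one of $\rho_i(A), \rho_i(B)$ to be trivial for each $i$. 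A short case analysis then gives, after relabelling, $A \leqslant F_2$ and $B \leqslant F_1$, and since $A = \rho_2(G_0)$ and $B = \rho_1(G_0)$ have finite index in $F_2$ and $F_1$, the subgroup $G_0$ has finite index in $\FF$, contradicting $|(\FF):G| = \infty$. The degenerate subcase with one factor trivial (say $G_0 = A$ is free) is excluded because $G_0 \cap F_1$ and $G_0 \cap F_2$ are non-trivial (being finite-index in $N_1, N_2$) and commute inside the free group $G_0$, producing an impossible $\Z^2$ subgroup.
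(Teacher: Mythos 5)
Your argument is correct, and its overall skeleton (reduce everything to Corollary~\ref{cor:p_cs-iff_crit}, Lemma~\ref{lem:Mih} and Lemma~\ref{lem:norm_in_G->norm_in_F}) matches the paper's; the implications $(2)\Rightarrow(3)$ and $(3)\Rightarrow(1)$ are essentially identical to what is done there. The genuine divergence is in how the virtual direct-product decompositions of $G$ are excluded. The paper proves $(1)\Rightarrow(2)$ directly for the constructed fibre product and rules out such decompositions by citing Baumslag--Roseblade \cite{Baum-Rose}: since $G$ is finitely generated of infinite index in $F\times F$ with non-free quotient, it cannot be finitely presented, whereas a group that is virtually a direct product of two finitely generated free groups would be. You instead prove the stronger, purely structural statement that \emph{no} full subdirect product of infinite index in a product of two non-abelian free groups is virtually a direct product of two free groups, via an elementary centralizer argument (two non-trivial element-wise commuting subgroups of a free group lie in a common maximal cyclic subgroup, so the projections of the two factors cannot both be non-trivial; the degenerate free case dies on a $\Z^2$ inside a free group). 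This buys independence from \cite{Baum-Rose} and from finite presentability considerations altogether, and it applies to arbitrary such $G$, not just the $p$-conjugacy separable ones; the paper's route is shorter but leans on a substantial external theorem. Two very minor points you should make explicit: in $(1)\Rightarrow(3)$ the fullness of the fibre product requires $N=\ker\psi\neq\{1\}$, which holds because an infinite residually finite $p$-group is periodic and hence not free; and the application of Corollary~\ref{cor:p_cs-iff_crit} in $(1)\Rightarrow(3)$ needs the non-abelianity of $F$, which you do justify.
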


The existence of an infinite finitely presented residually finite $p$-group is a long standing open problem, and Corollary \ref{cor:equiv_to_ex_of_fp_p-gp} shows that this
problem can be reformulated in terms of $p$-conjugacy separable subgroups in the direct product of two free groups. This can be considered as a further motivation for the study
of $p$-conjugacy separability.

Corollary \ref{cor:p_cs-iff_crit-simple} also shows that $p$-conjugacy separability of a subdirect product
is an extremely sensitive and rare condition. In particular, a proper full subdirect product of two non-abelian
free groups can be $p$-conjugacy separable for at most one prime $p$ (see Corollary \ref{cor:p-cs_for_2_p's-subdir}).
More generally, we obtain the following statement.

\begin{cor}\label{cor:p-cs_for_2_p's-general}
A subgroup $G$, of a direct product of two free groups, is $p$-conjugacy separable for at least two distinct primes $p$ if and only if $G$ is itself
isomorphic to a direct product of two free groups (one or both of which may be trivial).
\end{cor}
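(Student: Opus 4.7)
The plan is to prove the two directions separately: the backward direction by standard closure properties, and the forward direction by reducing to Corollary~\ref{cor:p-cs_for_2_p's-subdir} via a small case analysis.

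For the backward direction, I will use that every free group is $p$-conjugacy separable for every prime $p$ (since free groups are residually-$\cC_p$ and conjugacy in a free group is detected by cyclic reduction, so it descends to finite $p$-quotients), together with the standard fact that the class of $p$-conjugacy separable groups is closed under finite direct products (a non-conjugate pair of tuples must be non-conjugate in some coordinate, and a finite $p$-quotient separating that coordinate lifts). Hence every direct product of two free groups is $p$-conjugacy separable for every prime, and so in particular for at least two distinct primes.

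For the forward direction, suppose $G\leqslant F_1\times F_2$ is $p$-conjugacy separable for two distinct primes $p,q$. Replacing each $F_i$ by its image $\rho_i(G)\leqslant F_i$, which is still free as a subgroup of a free group, I may assume that $G$ is subdirect in $F_1\times F_2$. If $G\cap F_i=\{1\}$ for some $i\in\{1,2\}$, then $\rho_{3-i}|_G$ is both injective (trivial kernel) and surjective (subdirectness), so $G$ is free and $G\cong G\times\{1\}$ provides the required decomposition. Otherwise $G$ is a full subdirect product. When both $F_1,F_2$ are non-abelian free, Corollary~\ref{cor:p-cs_for_2_p's-subdir} forces $G=F_1\times F_2$, since otherwise $G$ would be a proper full subdirect product, and such a group cannot be $p$-conjugacy separable for two distinct primes. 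In the mixed case where some $F_i$, say $F_1$, is cyclic (i.e.\ $F_1\cong\Z$), I invoke the general criterion of Theorem~\ref{thm:q'-iff_crit}, which applies since $\Z$ is a $\cC_p$-hereditarily conjugacy separable torsion-free hyperbolic group: it then shows that $F_1/(G\cap F_1)$ must simultaneously be a residually finite $p$-group and a residually finite $q$-group, which is only possible when this cyclic quotient is trivial; the symmetric analysis on $F_2$ yields $G=F_1\times F_2$. Finally, if both $F_1,F_2$ are cyclic, $G$ is a finitely generated free abelian group and is automatically a direct product of two free groups.

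The main obstacle I anticipate is verifying that Theorem~\ref{thm:q'-iff_crit} applies cleanly in the mixed situation where one factor is cyclic and the other is non-abelian free, since Corollary~\ref{cor:p-cs_for_2_p's-subdir} as stated only covers the case of two non-abelian free factors. Once that extension is carried through, the rest of the argument is a routine bookkeeping of the case analysis above.
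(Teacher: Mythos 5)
Your sufficiency argument and your reduction to a full subdirect product are fine, and in the case where both factors are non-abelian your appeal to Corollary~\ref{cor:p-cs_for_2_p's-subdir} is exactly what the paper does. The gap is precisely where you anticipated it: the mixed case with, say, $F_1\cong\Z$. Theorem~\ref{thm:q'-iff_crit} requires both factors to be \emph{acylindrically hyperbolic}, which in the definition adopted in this paper (following Osin) forces them to be non-elementary; $\Z$ is elementary, so the theorem does not apply. This hypothesis is not removable: the necessity direction rests on Lemma~\ref{lem:inf_norm_sbgp->non-elem}, which produces $h_1\in N_1$ with $\C_{F_1}(h_1)=\langle h_1\rangle\subseteq N_1$, and no such element exists in a proper non-trivial subgroup $N_1$ of $\Z$, where every centralizer is all of $\Z$. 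Worse, the conclusion you want to extract is simply false. Take $F_1=\Z$, $F_2$ free of rank $2$, and let $G$ be the fibre product over $\Z/2\Z$. Then $N_1=G\cap F_1$ is central in $\FF$ and $G/N_1\cong F_2$ is free, so the extension splits and centrality gives $G\cong N_1\times F_2\cong\Z\times F_2$; by your own backward direction $G$ is then $p$-conjugacy separable for \emph{every} prime, even though $F_1/N_1\cong\Z/2\Z$ is not a residually finite $3$-group and $G\neq F_1\times F_2$. So neither ``$F_1/N_1$ is trivial'' nor ``$G=F_1\times F_2$'' can be deduced in the mixed case.

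The repair, which is how the paper argues, is to abandon the separability machinery entirely when a factor is cyclic: since $N_1$ is infinite cyclic and central in $\FF$, and $G/N_1\cong F_2$ is free, the extension $\{1\}\to N_1\to G\to F_2\to\{1\}$ splits, and centrality of $N_1$ turns the resulting semidirect product into a direct one, giving $G\cong\Z\times F_2$. That is all the corollary asserts: isomorphism to a direct product of two free groups, not equality with $F_1\times F_2$. With that substitution your case analysis goes through; the remaining cases (some $G\cap F_i$ trivial, or both factors cyclic) are handled correctly in your write-up.
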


\section{Background}
\subsection{Notation}
Given a group $G$, a subgroup $H \leqslant G$ and an element $g \in G$, we will write $g^H \coloneq \{hgh^{-1} \mid h \in H\} \subseteq G$ to
denote the \emph{$H$-conjugacy class of $g$} and $\C_H(g) \coloneq \{h \in H \mid hgh^{-1}=g\}$ to denote the \emph{centralizer of $g$ in $H$}.
We will also let $\C_G(H) \coloneq \bigcap_{h \in H} \C_G(h)$ denote the \emph{centralizer of $H$ in $G$}.

Throughout the paper we will be working with direct products $F_1\times F_2$, so to simplify the notation, we will often
identify $F_1$ with the subgroup $F_1\times \{1\} =\{(f,1) \mid f \in F_1 \}\leqslant \FF$;
similarly, we will identify $F_2$ with  $\{1\}\times F_2 =\{(1,g) \mid g \in F_2\} \leqslant \FF$.

\subsection{Subdirect products}
The following statement summarizes basic properties of subdirect products.
\begin{lemma} \label{lem:norm_in_G->norm_in_F} Let $G\leqslant F_1\times F_2$ be a subdirect product of some groups $F_1,F_2$. Then
\begin{itemize}
  \item[(i)] for any normal subgroup $N \lhd G$ and any $i \in \{1,2\}$, the intersection $N \cap F_i$ is normal in $F_i$ and in $F_1\times F_2$. In particular, $N_i\coloneq G \cap F_i$ is normal in $F_i$, for each $i=1,2$, and in $G$.
  \item[(ii)] $F_1/N_1 \cong G/(N_1\times N_2) \cong F_2/N_2$.
  \item[(iii)] $|(\FF):G|<\infty$ if and only if $|F_1/N_1|<\infty$, in which case $|(\FF):G|=|F_1/N_1|$.
\end{itemize}

\end{lemma}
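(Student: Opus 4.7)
The plan is to prove the three parts in order: (i) via a direct conjugation computation exploiting the subdirect hypothesis, then (ii) by identifying the kernel of a natural projection restricted to $G$, and finally (iii) by straightforward index counting. For (i), I would work with $i=1$ (the case $i=2$ is symmetric). Take any $x=(x_0,1)\in N\cap F_1$ and any $f\in F_1$. Since $\rho_1(G)=F_1$, there exists $g=(f,h)\in G$ for some $h\in F_2$, and a direct computation in $\FF$ gives $gxg^{-1}=(fx_0f^{-1},1)$, which lies in $N$ because $N\lhd G$ and $x\in N$. Thus $N\cap F_1$ is stable under $F_1$-conjugation, so $N\cap F_1\lhd F_1$. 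Normality in $\FF$ is then automatic, since $F_2$ centralizes $F_1$ inside $\FF$ and therefore normalizes $N\cap F_1$. Applying this with $N=G$ yields the ``in particular'' claim, and normality of $N_i$ in $G$ is immediate.

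For (ii), I would compose the restricted projection $\rho_1\colon G\to F_1$ with the quotient map $F_1\to F_1/N_1$ to obtain a homomorphism $\pi\colon G\to F_1/N_1$. Since $\rho_1(G)=F_1$, the map $\pi$ is surjective, and it suffices to show $\ker\pi=N_1\times N_2$. The subgroups $N_1,N_2$ are normal in $G$ by part (i) and commute with each other (they sit in different direct factors), so $N_1\times N_2\leqslant G$, and clearly $N_1\times N_2\subseteq\ker\pi$. For the reverse inclusion, take $g=(f,h)\in\ker\pi$, so that $f\in N_1$; then $(f,1)\in N_1\leqslant G$, hence $(1,h)=(f,1)^{-1}g\in G\cap F_2=N_2$, and $g\in N_1\times N_2$. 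This yields $G/(N_1\times N_2)\cong F_1/N_1$, and the symmetric argument using $\rho_2$ produces the isomorphism with $F_2/N_2$.

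For (iii), the forward direction is a standard multiplicativity-of-index calculation: if $|F_1/N_1|<\infty$, part (ii) forces both $|F_2/N_2|$ and $|G:(N_1\times N_2)|$ to equal $|F_1/N_1|$, hence
\[
|(\FF):G|=\frac{|(\FF):(N_1\times N_2)|}{|G:(N_1\times N_2)|}=\frac{|F_1/N_1|\cdot|F_2/N_2|}{|F_1/N_1|}=|F_1/N_1|.
\]
For the converse, the assignment $fN_2\mapsto (1,f)G$ from $F_2/N_2$ to the coset space $\FF/G$ is well defined and injective, with injectivity encoding exactly the identity $G\cap F_2=N_2$; therefore $|F_2/N_2|\leqslant|(\FF):G|<\infty$, and (ii) transfers finiteness back to $F_1/N_1$. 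The only step that is even marginally delicate is this coset-injection observation for the converse in (iii); everything else is a routine unpacking of the subdirect hypothesis.
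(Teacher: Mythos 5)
Your proposal is correct and follows essentially the same route as the paper: the same conjugation trick $(f,1)x(f,1)^{-1}=(f,h)x(f,h)^{-1}\in N$ for (i), the same identification of $N_1\times N_2$ as the kernel of the projection-induced map for (ii), and the same index bookkeeping for (iii), where your explicit coset injection $fN_2\mapsto(1,f)G$ is just an unpacked version of the paper's inequality $|F_1:(F_1\cap G)|\le|(\FF):G|$.
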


\begin{proof} (i) Any element of the intersection $N \cap F_1$ has the form $(h,1)$, for some $h \in F_1$.
Since $G \leqslant F_1\times F_2$ is subdirect, for each $f_1 \in F_1$, there
exists $f_2 \in F_2$ such that $(f_1,f_2) \in G$. Since $N\lhd G$, we have
\[ (f_1,1) (h,1) (f_1,1)^{-1} =(f_1,f_2) (h,1) (f_1,f_2)^{-1} \in  N.\]
On the other hand, obviously $(f_1,1) (h,1) (f_1,1)^{-1}=(f_1hf_1^{-1},1) \in F_1$,
hence $N \cap F_1$ is normal in $F_1$. Similarly one can show that $N\cap F_2$ is normal in $F_2$.
Evidently any normal subgroup of $F_i$ is also normal in the direct product $F_1\times F_2$, for $i=1,2$.

(ii) Let $\rho_i:F_1\times F_2 \to F_i$ be the natural projection, $i=1,2$. Then $\ker(\rho_2)=F_1$ and $\rho_2(G)=F_2$, as $G$ is subdirect.
Hence $F_2=\rho_2(G)\cong G/(G \cap F_1)=G/N_1$, and since $\rho_2(N_2)=N_2$, we get
$F_2/N_2 = \rho_2(G)/\rho_2(N_2) \cong G/(N_1\times N_2)$. Similarly, $F_1/N_1 \cong G/(N_1\times N_2)$, as required.

(iii) Clearly, if $|(\FF):G|<\infty$, then $|F_1/N_1|=|F_1:(F_1 \cap G)|\le |(\FF):G|<\infty$. On the other hand, if $F_1/N_1$ is finite,
then so is $F_2/N_2$ by claim (ii), hence
the quotient $(\FF)/(N_1\times N_2)\cong F_1/N_1\times F_2/N_2$ is finite as well. Since $N_1\times N_2 \subseteq G$, we can deduce that $|(\FF):G|<\infty$.

Now, assuming that $|F_1/N_1|<\infty$, we
have
\[|(\FF):G|=\left|\frac{\FF}{N_1\times N_2}:\frac{G}{N_1\times N_2}\right|=\frac{|F_1/N_1|\,|F_2/N_2|}{|F_1/N_1|}=|F_1/N_1|,\]
as  $|F_1/N_1|=|F_2/N_2|=|G/(N_1\times N_2)|$ by claim (ii).
\end{proof}

\subsection{Constructing subdirect products} \label{subsec:constr-subdir}
In this subsection we will review two main methods for constructing subdirect products of two groups.

Let $F_1,F_2, P$ be groups with epimorphisms $\psi_i:F_i \twoheadrightarrow P$, $i=1,2$. The \emph{fibre product of $F_1$ and $F_2$ corresponding to $\psi_1$ and $\psi_2$}
is defined as the subgroup $G \leqslant F_1 \times F_2$ given by
\begin{equation}\label{eq:fibre_prod-def}
G \coloneq \{(f_1,f_2) \in F_1 \times F_2 \mid \psi_1(f_1)=\psi_2(f_2)\}.
\end{equation}
If $F_1=F_2$ and $\psi_1=\psi_2$ then $G$ is said to be the \emph{symmetric fibre product of $F_1$ corresponding to $\psi_1$}.

The fibre product $G$, given by \eqref{eq:fibre_prod-def}, is always subdirect in $F_1\times F_2$ (because $\psi_1(F_1)=\psi_2(F_2)=P$).
Moreover, $N_i \coloneq G \cap F_i=\ker\psi_i$, so that $F_i/N_i \cong P$, $i=1,2$.

Conversely, it's not hard to show that if $G \leqslant F_1 \times F_2$ is a subdirect product of groups $F_1,F_2$, then $G$ is the fibre product of $F_1$ and $F_2$ with respect to some epimorphisms $\psi_i:F_i \to P$, where $P=G/(N_1\times N_2) \cong F_1/N_1 \cong F_2/N_2$, and $N_i \coloneq G \cap F_i= \ker\psi_i$, $i=1,2$.

Another standard method for constructing subdirect products is to start with any group $A$ which has two normal subgroups $L_i \lhd A$, and let
$F_i\coloneq A/L_i$, $i=1,2$, and $G\coloneq A/(L_1 \cap L_2)$. Then the map $\eta:G \to F_1 \times F_2$, $ a(L_1 \cap L_2) \stackrel{\eta}{\mapsto} (aL_1,aL_2)$,
gives rise to a natural subdirect embedding of $G$ into $F_1 \times F_2$. In particular, if $L_1 \cap L_2=\{1\}$, then $A=G$ is itself a subdirect product of $F_1$ and $F_2$.

Fibre products  have been used to construct numerous (counter-)examples in Group Theory. The first such construction is due to Miha{\u\i}lova \cite{Mih}, who
showed that the direct product of two non-abelian free groups contains a finitely generated subgroup with unsolvable membership problem. In fact,
Miha{\u\i}lova explicitly listed the finite generating set for her group, essentially proving the first claim of the following lemma.

\begin{lemma}[{cf. \cite[Prop. 3.28.2)]{Cornulier}}] \label{lem:Mih}
Let $G \leqslant F_1\times F_2$ be a subdirect product of groups $F_1$, $F_2$ and let $P \coloneq F_1/N_1$, where $N_1 \coloneq F_1 \cap G$.
\begin{itemize}
  \item[(a)] If the groups $F_1, F_2$ are finitely generated and $N_1$ is the normal closure of finitely many elements in $F_1$
  (which holds when $P$ is finitely presented) then $G$ is finitely generated.
  \item[(b)] If $G$ is finitely generated and $F_1$, $F_2$ are finitely presented then $P$ is finitely presented.
\end{itemize}
\end{lemma}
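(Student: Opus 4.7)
The plan is to use the fibre product description $G=\{(f_1,f_2)\in \FF : \psi_1(f_1)=\psi_2(f_2)\}$, where $\psi_i\colon F_i\twoheadrightarrow P$ are the quotient maps with $\ker\psi_i=N_i$, and to reduce each part to a bookkeeping exercise on generators and relators.

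For part (a), write $F_1=\langle x_1,\dots,x_m\rangle$, $F_2=\langle y_1,\dots,y_n\rangle$ and suppose $N_1$ is the normal closure in $F_1$ of $r_1,\dots,r_k$. The plan is to exhibit a finite generating set for $G$ consisting of three families: lifts $(x_i,a_i)\in G$ with $\psi_2(a_i)=\psi_1(x_i)$, available by surjectivity of $\psi_2$; lifts $(b_j,y_j)\in G$ with $\psi_1(b_j)=\psi_2(y_j)$; and the elements $(r_l,1)\in G$, which lie in $G$ since $r_l\in\ker\psi_1$. Letting $H$ denote the subgroup they generate, the verification $H=G$ proceeds in two steps. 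First, given $(f_1,f_2)\in G$, expressing $f_1$ as a word in the $x_i^{\pm 1}$ produces an element $(f_1,a)\in H$ with $\psi_2(a)=\psi_1(f_1)=\psi_2(f_2)$, so $n\coloneq f_2a^{-1}\in N_2$, and it suffices to show $(1,n)\in H$. Second, writing $n$ as a word in the $y_j^{\pm 1}$ and using the generators of the second kind yields some $(b,n)\in H$ with $\psi_1(b)=\psi_2(n)=1$, hence $b\in N_1$. Writing $b$ as a product of $F_1$-conjugates of the $r_l^{\pm 1}$ and lifting each conjugating element into $H$ via the first-kind generators gives $(b,1)\in H$, whence $(1,n)=(b,1)^{-1}(b,n)\in H$, as needed. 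The parenthetical claim that $P$ finitely presented implies $N_1$ is normally finitely generated is the standard Tietze-style observation applied to the short exact sequence $1\to N_1\to F_1\to P\to 1$: lift a finite set of relators of $P$, written in the images of the generators of $F_1$, back into $F_1$.

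For part (b), the starting point is the short exact sequence $1\to N_2\to G\to F_1\to 1$, obtained from $G/(G\cap F_2)\cong \rho_1(G)=F_1$, where $\rho_1\colon\FF\to F_1$ is the first projection. Applying the standard fact that when a group $G$ is finitely generated and a quotient $G/K$ is finitely presented, $K$ is normally finitely generated in $G$ (by lifting the finitely many relators of the quotient along any finite generating set of $G$), one obtains elements $s_1,\dots,s_k\in N_2\subseteq\{1\}\times F_2$ whose normal closure in $G$ is $N_2$. Subdirectness then promotes this to normal generation in $F_2$: since $\rho_2(G)=F_2$, the identity $(f_1,f_2)(1,s_i)(f_1,f_2)^{-1}=(1,f_2 s_i f_2^{-1})$ shows that every $F_2$-conjugate of $s_i$ arises as a $G$-conjugate inside $\{1\}\times F_2$, so $N_2$ is already the normal closure of $s_1,\dots,s_k$ in $F_2$. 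Adjoining $s_1,\dots,s_k$ as relators to a finite presentation of $F_2$ therefore yields a finite presentation of $P\cong F_2/N_2$.

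The main obstacle is the inclusion $\{1\}\times N_2\subseteq H$ in part (a), which requires deploying all three families of generators in the correct order: the second-kind generators only produce elements of the form $(b,n)$ with $b\in N_1$, and killing off the unwanted first coordinate forces a secondary use of the third-kind generators together with conjugates built from the first-kind generators. Part (b), by contrast, is a routine application of the exact sequence lemma once subdirectness is used to transfer normal generation from $G$ back down to $F_2$.
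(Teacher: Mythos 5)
Your proposal is correct and follows essentially the same route as the paper: part (a) uses the identical three-family generating set (lifts of the $F_1$-generators, lifts of the $F_2$-generators, and the normal generators of $N_1$ paired with $1$) with the same verification, and part (b) is the paper's argument with the roles of the two factors interchanged (normally generating $N_2$ in $G$ from the finitely presented quotient $G/N_2\cong F_1$ and transferring to $F_2$ via subdirectness, rather than working with $N_1$ and $G/N_1\cong F_2$). Both are sound; the swap in (b) is immaterial since $P\cong F_1/N_1\cong F_2/N_2$.
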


\begin{proof} (a)
Suppose that $F_1=\langle x_1,\dots,x_k\rangle$, $F_2=\langle v_1,\dots,v_l\rangle$, and
$N_1$ is the normal closure of finitely many elements $h_1,\dots h_m$ in $F_1$.
Since $G \leqslant \FF$ is subdirect, there exist $y_1,\dots,y_k \in F_2$ and $u_1,\dots,u_l \in F_1$ such that
$(x_1,y_1),\dots,(x_k,y_k) \in G$ and $(u_1,v_1),\dots,(u_l,v_l) \in G$. Since $G/N_1 \cong F_2$, it follows that
\[G=\langle (u_1,v_1),\dots,(u_l,v_l), N_1 \rangle.\]
By the assumptions, every $h \in N_1$ can be expressed as a product of conjugates of $h_1,\dots,h_m$ by elements from $\{x_1,\dots,x_k\}$, hence
$(h,1)$ is the product of the conjugates of $(h_1,1),\dots,(h_m,1)$ by elements from  $\{(x_1,y_1),\dots,(x_k,y_k)\}$. Consequently,
\[N_1 \subseteq \langle (x_1,y_1),\dots,(x_k,y_k),(h_1,1)\dots,(h_m,1) \rangle, \] which implies that
$G$ is generated by the elements $(u_1,v_1)$, $\dots$ ,$(u_l,v_l)$, $(x_1,y_1)$, $\dots$, $(x_k,y_k)$, $(h_1,1)$, $\dots$, $(h_m,1)$.

(b) Since $F_2 \cong G/N_1$ is finitely presented and $G$ is finitely generated,
$N_1$ is the normal closure of finitely many elements $(h_1,1),\dots,(h_m,1)$ in $G$. But the action of $G$ on $N_1$ by conjugation coincides with the action
of $F_1$ on $N_1$ by conjugation (because $(x,y) (h,1)(x,y)^{-1}=(xhx^{-1},1)$), hence $N_1$ is the normal closure of $h_1,\dots,h_m$ in $F_1$. And since $F_1$
is finitely presented, we can conclude that $F_1/N_1$ is also finitely presented.
\end{proof}

Lemma \ref{lem:Mih} provides a criterion for the subdirect product $G\in \FF$ to be finitely generated. The next \emph{asymmetric 1-2-3 theorem}, proved by
Dison \cite[Thm. 9.4]{Dison} (see also \cite[Thm. B]{BHMS:1-2-3_asym}), allows one to produce
finitely presented subdirect products (in the same spirit, Lemma \ref{lem:Mih} can be called the \emph{0-1-2 lemma}). The original \emph{symmetric 1-2-3
theorem} (when $F_1=F_2$) was proved by Baumslag, Bridson, Miller and Short \cite[Thm. B]{BBMS:1-2-3_sym}.

\begin{lemma}[{\cite[Thm. 9.4]{Dison}}]\label{lem:1-2-3}
Let $F_1,F_2$ be finitely presented groups and let $G \leqslant \FF$ be a subdirect product. If $N_1\coloneq G \cap F_1$ is
finitely generated and $P \coloneq F_1/N_1$ is of type $\mathrm{F}_3$ then $G$ is finitely presented.
\end{lemma}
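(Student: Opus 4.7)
The result is the asymmetric $1$-$2$-$3$ Theorem, proved in the symmetric case by Baumslag, Bridson, Miller and Short \cite{BBMS:1-2-3_sym}; the plan is to adapt their homological and combinatorial group-theoretic argument to the asymmetric setting.

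\textbf{Step 1 (Setting up the extension).} Set $N_2 \coloneq G \cap F_2$. By Lemma \ref{lem:norm_in_G->norm_in_F}(i),(ii) there is a short exact sequence
\[
1 \longrightarrow N_1 \times N_2 \longrightarrow G \longrightarrow P \longrightarrow 1.
\]
Since $P$ is of type $\mathrm{F}_3$ it is, in particular, finitely presented, so Lemma \ref{lem:Mih}(a) already shows that $G$ is finitely generated. Note that $N_2$ need not itself be finitely generated as a subgroup, but finite presentability of $P$ together with finite generation of $F_2$ implies that $N_2$ is the normal closure in $F_2$ of finitely many elements.

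\textbf{Step 2 (Candidate finite presentation).} Fix finite presentations $F_i=\langle A_i\mid R_i\rangle$ for $i=1,2$, a finite presentation of $P$, a finite generating set $X\subseteq N_1$ of $N_1$, and a finite normally generating set $Z\subseteq F_2$ of $N_2$. Lift $A_1$ to $\tilde A_1\subset F_1\times\{1\}$ and $A_2$ to $\tilde A_2\subset\{1\}\times F_2$, and take as generators of $G$ the union $\tilde A_1\cup\tilde A_2\cup(X\times\{1\})$. As defining relators I propose: (i) the relators $R_1$, $R_2$ rewritten in this alphabet; (ii) the commutators $[\tilde a_1,\tilde a_2]$ for $\tilde a_1\in\tilde A_1$ and $\tilde a_2\in\tilde A_2$; (iii) for each defining relator of $P$, one explicit word that expresses the resulting kernel element of $N_1\times N_2$ as a product of conjugates of elements of $(X\times\{1\})\cup(\{1\}\times Z)$.

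\textbf{Step 3 (Homological input from type $\mathrm{F}_3$).} The substance of the proof is verifying that this finite list of relators is sufficient. This reduces to showing that the $\mathbb{Z}P$-module of relations of $G$ -- a quotient of $H_1(N_1\times N_2;\mathbb{Z})\cong H_1(N_1)\oplus H_1(N_2)$ -- is finitely generated. The summand $H_1(N_1)$ is finitely generated over $\mathbb{Z}P$ because $N_1$ is finitely generated as a group. For $H_1(N_2)$ one applies the Lyndon--Hochschild--Serre spectral sequence of the extension $1\to N_2\to F_2\to P\to 1$, using that $F_2$ is of type $\mathrm{F}_2$ and that $P$ admits a $K(P,1)$ with finite $3$-skeleton; a diagram chase in low degrees then forces $H_1(N_2)$ to be finitely generated as a $\mathbb{Z}P$-module, even though $N_2$ need not be finitely generated as a group. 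The relation module of $G$ is then finitely generated over $\mathbb{Z}P$, hence finitely many $G$-conjugates of the relators in (iii) suffice.

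\textbf{Main obstacle.} The crux is the homological argument of Step 3. The asymmetric wrinkle is that $N_2$ is only guaranteed to be normally finitely generated in $F_2$, rather than finitely generated. The type $\mathrm{F}_3$ hypothesis on $P$ is exactly the piece of $2$-dimensional homological finiteness needed to compensate: it bounds the relevant $E_2$-terms in the LHS spectral sequence enough to yield finite generation of $H_1(N_2)$ as a $\mathbb{Z}P$-module, which is what converts the a priori infinite collection of candidate relators into a finite one and completes the argument.
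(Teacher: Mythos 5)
The paper offers no proof of this lemma: it is imported verbatim from Dison's thesis (Thm.~9.4; see also Bridson--Howie--Miller--Short, Thm.~B), so there is nothing internal to compare against and your outline must stand on its own as a proof of the asymmetric $1$-$2$-$3$ theorem. Your Steps 1--2 follow the right general template, apart from one slip: the proposed generating set $\tilde A_1\cup\tilde A_2\cup(X\times\{1\})$ is not contained in $G$, since an element of $F_1\times\{1\}$ or $\{1\}\times F_2$ lies in $G$ only when its nontrivial coordinate lies in $N_1$, resp.\ $N_2$; as written, Step 2 presents $F_1\times F_2$ rather than $G$. The correct generators are pairs $(u_l,v_l)\in G$ projecting onto generators of $F_2$, together with $X\times\{1\}$, as in the proof of Lemma~\ref{lem:Mih}.(a).

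The genuine gap is in Step 3. Finite generation of the relation module of a finitely generated group as a module over the group ring is equivalent to type $\mathrm{FP}_2$, which by Bestvina--Brady is strictly weaker than finite presentability; so the concluding inference ``the relation module is finitely generated over $\mathbb{Z}P$, hence finitely many $G$-conjugates of the relators in (iii) suffice'' does not follow. Moreover, you have misplaced where the $\mathrm{F}_3$ hypothesis enters: finite generation of $H_1(N_2;\mathbb{Z})$ as a $\mathbb{Z}P$-module already follows from $P$ being merely finitely presented --- as you note in Step 1, $N_2$ is then the normal closure in $F_2$ of a finite set $Z$, and the images of $Z$ generate $N_2^{\mathrm{ab}}$ over $\mathbb{Z}P$ --- so no spectral sequence and no $3$-dimensional finiteness is needed for that. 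What type $\mathrm{F}_3$ actually provides, and what is indispensable for upgrading $\mathrm{FP}_2$ to a genuine finite presentation, is that $\pi_2$ of a finite presentation $2$-complex of $P$ is finitely generated as a $\mathbb{Z}P$-module, i.e., that there are finitely many generating identities among the relations of $P$; in the Baumslag--Bridson--Miller--Short argument (and its asymmetric descendants) these identities are fed into a combinatorial, van Kampen--type verification that the infinitely many candidate relators of your type (iii) are consequences of finitely many of them. Without this ingredient your argument establishes at most that $G$ is of type $\mathrm{FP}_2$, not that it is finitely presented.
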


The ``input'' for the 1-2-3 theorem can be very conveniently provided by the famous Rips's construction \cite{Rips} or by its numerous enhancements/modifications
(cf. \cite[Thm. 3.1]{BBMS:1-2-3_sym}, \cite[Thm.~3.1]{Wise-Rips}, \cite[Thm. 15]{Bum-Wise} or \cite[Thm. 10.1]{H-W}),
claiming that for every finitely presented group $P$
there is a hyperbolic group $F$ (usually with many ``nice'' properties)
and a normal subgroup $N\lhd F$ such that $N$ is finitely generated and $F/N \cong P$.
Thus, if $P$ is of type $\mathrm{F}_3$, then, by Lemma \ref{lem:1-2-3}, the corresponding symmetric fibre product $G \leqslant F \times F$ is finitely presented.

\subsection{Acylindrically hyperbolic groups}\label{subsec:acyl_hyp}
In this paper we will mostly work with subdirect products of groups acting on $\delta$-hyperbolic spaces in some controlled way. One of the most general classes of such groups
is the class of acylindrically hyperbolic groups, introduced  by  Osin in \cite{Osin-acyl}. It includes non-abelian free groups (of arbitrary rank) and non-elementary hyperbolic
groups (in the sense of Gromov). The reader is referred to \cite[Ch. III]{B-H} for the basic theory of hyperbolic spaces and groups.

Recall that a group $F$ is said to be \emph{elementary} if it possesses a cyclic subgroup of finite index. Following Osin \cite{Osin-acyl},
we will say that a group $F$ is \emph{acylindrically hyperbolic} if it is non-elementary and admits an acylindrical
cobounded isometric action on a hyperbolic metric space with unbounded orbits.
We will not define the notion of acylindricity of the action here, as we will only use properties of such groups that have already been
established elsewhere (the interested reader is referred to \cite{Osin-acyl} for the background). If $G$ is a hyperbolic group then it acts acylindrically and coboundedly
on any Cayley graph corresponding to a finite generating set. If $G$ splits as a free product of two non-trivial groups (e.g., if $G$ is free and non-abelian),
then it acts acylindrically and co-boundedly on the Bass-Serre tree corresponding to this splitting (cf. \cite[Lemma 4.2]{M-O_acyl_trees}).

Given a group $F$ acting by isometries on a $\delta$-hyperbolic metric space $(\mathcal S,d)$, an element $f \in F$ is said to be \emph{loxodromic} if for some point
$s \in \mathcal{S}$ the function $\Z \to \mathcal{S}$, $n \mapsto f^n(s)$, is a quasi-isometric embedding; in particular, the order of $f$ must be infinite.
If the action of $F$ on $\mathcal S$ is acylindrical then every
loxodromic element $f \in F$ satisfies the \emph{WPD condition} of Bestvina and Fujiwara \cite{B-F}  -- see \cite[Def. 2.5]{Osin-acyl}.
It follows from the work of Dahmani, Guirardel and Osin \cite[Lemma~6.5, Cor.~6.6]{DGO} that such $f$ is contained in a
\emph{unique maximal elementary subgroup} $\E_F(f)$, and
\begin{equation}\label{eq:max_elem_descr}
\E_F(f)=\{g \in F \mid gf^ng^{-1}=f^{\pm n} \mbox{ for some }  n \in \N\}.
\end{equation}

Now, if $F$ acts on a $\delta$-hyperbolic metric space $\mathcal{S}$ coboundedly and
$H \leqslant F$ is a non-elementary subgroup containing at least one loxodromic element
then, by \cite[Lemma 5.6]{A-M-S},  there is a largest finite subgroup $\E_F(H) \leqslant F$, normalized by $H$.
In particular, $F$ itself has a maximal finite normal subgroup $\E_F(F)$ (cf. \cite[Thm. 2.24]{DGO}).

\begin{lemma}\label{lem:inf_norm_sbgp->non-elem}  Suppose that $F$ is an acylindrically hyperbolic group without non-trivial finite normal subgroups.
Then any non-trivial normal subgroup $N \lhd F$  is non-elementary, satisfies $\E_F(N)=\{1\}$, and there is an element
$h \in N$ such that $\C_F(h)=\langle h \rangle \subseteq N$. Moreover, for any $x \in F $ there exists $n \in \Z$ such that
the element $f \coloneq h^n x \in F$  satisfies $\C_F(f)=\langle f \rangle$.
\end{lemma}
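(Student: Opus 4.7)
The proof splits into three claims: (a) $N$ is non-elementary, (b) $\E_F(N)=\{1\}$, and (c) the existence of the loxodromic $h\in N$ with $\C_F(h)=\langle h\rangle\subseteq N$, together with the moreover clause. My plan is to dispatch them in turn, leaning on the acylindrically hyperbolic machinery from \cite{Osin-acyl}, \cite{DGO} and \cite{A-M-S}.

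For (a), the hypothesis on $F$ forces any non-trivial normal subgroup to be infinite (otherwise $N$ would itself be a non-trivial finite normal subgroup of $F$). By Osin's theorem \cite{Osin-acyl}, every infinite normal subgroup of an acylindrically hyperbolic group is acylindrically hyperbolic, and in particular non-elementary. For (b), I would use that by \cite[Lemma 5.6]{A-M-S} the subgroup $\E_F(N)$ is the unique largest finite subgroup of $F$ normalized by $N$. For any $g\in F$, the conjugate $g\E_F(N)g^{-1}$ is a finite subgroup of $F$ normalized by $gNg^{-1}=N$, so by maximality $g\E_F(N)g^{-1}\subseteq\E_F(N)$. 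Hence $\E_F(N)\lhd F$, and the assumption on $F$ forces $\E_F(N)=\{1\}$.

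For the first half of (c), I would appeal to the standard fact that when $\E_F(F)=\{1\}$ and $\E_F(N)=\{1\}$, the infinite normal subgroup $N$ contains a loxodromic element $h$ with $\E_F(h)=\langle h\rangle$. Concretely, one starts with any loxodromic $h_0\in N$ (which exists by (a), since $N$ is acylindrically hyperbolic), and modifies it by a suitable power or by multiplying by an appropriate conjugate so as to kill off the finite torsion and root obstructions to $\E_F(h_0)=\langle h_0\rangle$; this is made possible by the triviality of $\E_F(F)$ and $\E_F(N)$ proved above. Once $h$ is in hand, the chain $\langle h\rangle\leq\C_F(h)\leq\E_F(h)=\langle h\rangle$, together with the description \eqref{eq:max_elem_descr}, gives $\C_F(h)=\langle h\rangle\subseteq N$.

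For the moreover clause, fix $x\in F$ and split into two cases. If $x\in\langle h\rangle=\C_F(h)$, say $x=h^k$, then $h^nx=h^{n+k}$, and taking $n=1-k$ makes $f=h$, for which the conclusion already holds. If $x\notin\langle h\rangle$, then $h$ and $x$ do not commute, and a standard dynamical argument (in the spirit of the WPD arguments in \cite{DGO}) shows that for all sufficiently large $|n|$ the element $f=h^nx$ is loxodromic with $\E_F(f)=\langle f\rangle$: the loxodromic action of $h$ drags the fixed points of $f$ on the Gromov boundary arbitrarily close to those of $h$ as $|n|\to\infty$, and the WPD property combined with $\E_F(h)=\langle h\rangle$ forces $\E_F(f)$ to collapse to $\langle f\rangle$. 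Thus $\C_F(f)\leq\E_F(f)=\langle f\rangle$, completing the proof. The main obstacle I foresee is the first half of (c): ensuring that $\E_F(h)$, rather than only $\E_N(h)$, collapses to $\langle h\rangle$ is where both hypotheses on finite normal subgroups are crucially used, and is cleanest to handle by invoking the existing results rather than by a bare-hands construction.
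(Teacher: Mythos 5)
Your proposal is correct and follows essentially the same route as the paper: $N$ is infinite hence non-elementary by Osin's results, $\E_F(N)$ is a finite normal subgroup of $F$ and therefore trivial (the paper phrases this as $\E_F(N)=\E_F(F)=\{1\}$, but your direct normality argument is the same computation), and the two existence claims are exactly \cite[Lemmas 5.12 and 5.13]{A-M-S}, which is what the paper invokes, with the same case split $x\in\langle h\rangle$ versus $x\notin\langle h\rangle=\E_F(h)$ for the final clause.
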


\begin{proof} Fix some non-elementary cobounded acylindrical action of $F$ on a $\delta$-hyperbolic metric space $\mathcal S$. Since $N \neq \{1\}$ is normal in $F$,
it must be infinite by the assumptions, so, according to \cite[Lemma 7.2]{Osin-acyl},
$N$ is non-elementary and has at least one loxodromic element. Therefore we can consider $\E_F(N)$, the maximal
finite subgroup of $F$, normalized by $N$.

Now, let us show that $\E_F(N)=\E_F(F)$. Indeed, $\E_F(F) \subseteq \E_F(N)$, as $\E_F(F)$ is finite and is normalized by $N$, and $\E_F(N)$ is the largest subgroup
of $F$ with this property. On the other hand, it is easy to check that since $N \lhd F$, $\E_F(N) \lhd F$ (because for each $f \in F$, $f\E_F(N)f^{-1}$ is also normalized by $N$), hence $\E_F(N) \subseteq \E_F(F)$. Thus we can conclude that $\E_F(N)=\E_F(F)$, but $\E_F(F)=\{1\}$ as $F$ contains no non-trivial finite normal subgroups, therefore
$\E_F(N)=\{1\}$.

We can now apply \cite[Lemma 5.12]{A-M-S} to find a loxodromic element $h \in N$ such that $\E_F(h)=\langle h \rangle \E_F(N)=\langle h \rangle$. Since
$\C_F(h) \subseteq \E_F(h)$ by \eqref{eq:max_elem_descr}, we deduce that $\C_F(h)=\langle h \rangle$, as required.

If $x \notin \langle h \rangle=\E_F(h)$, then the final claim of the lemma follows from \cite[Lemma 5.13]{A-M-S}. Otherwise, if $x=h^m$ for some $m \in \Z$, then we choose
$n=1-m \in \Z$, so that $f=h^n x=h$, and the required equality $\C_F(f)=\E_F(f)=\langle f \rangle$ holds for $f$ because it holds for $h$.
\end{proof}

\section{The pro-\texorpdfstring{$\cC$}{C} topology on subdirect products}
Our main tool for studying $\cC$-conjugacy separability is the pro-$\cC$ topology. In this section we will investigate some basic properties of this topology
on subdirect products of two groups.

\subsection{Pseudovarieties and pro-\texorpdfstring{$\cC$}{C} topology}
We will say that $\cC$ is a \emph{pseudovariety of groups} if $\cC$ is a class of groups closed under isomorphisms, subgroups, direct products and quotients.
In other words, to be a pseudovariety $\cC$ must satisfy the following conditions:

\begin{itemize}
  \item if $A \in \cC$ and $B \cong A$ then $B \in \cC$;
  \item if $A \in \cC$ and $B \leqslant A$ then $B \in \cC$;
  \item if $A,B \in \cC$ then $A \times B \in \cC$;
  \item if $A \in \cC$ and $N \lhd A$ then $A/N \in \cC$.
\end{itemize}

By a \emph{pseudovariety of finite groups} we will mean a pseudovariety $\cC$ such that each member of $\cC$ is a finite group. For example, the class of all finite  groups
or the class of all finite $p$-groups, for some prime $p$, are pseudovarieties of finite groups.

Let $\cC$ be a pseudovariety of groups.
Given a group $G$ and $N\lhd G$, $N$ is said to be a {\it co-$\cC$ subgroup} of $G$ if $G/N \in \cC$.
One can define the \emph{pro-$\cC$ topology} on any group $G$ by letting the cosets of co-$\cC$ subgroups be the basic open sets.
Since $\cC$ is closed under taking subgroups, it is easy to see that any group homomorphism $G \to G_1$ is continuous with respect to the pro-$\cC$ topologies on $G$ and $G_1$
(see \cite[pp. 8--11]{Ferov-thesis} for background on pro-$\cC$ topologies). In particular, we can make the following observation.

\begin{rem}\label{rem:pro-C_on_sbgp} If $H$ is any subgroup of a group $G$ and $Y \subset G$ is closed in the \proc on $G$ then $H \cap Y$ is closed in the \proc on $H$.
\end{rem}

We will say that a subset $X \subseteq G$ is \emph{$\cC$-closed} (respectively, \emph{$\cC$-open}) if $X$ is closed (respectively, open) in the \proc on $G$.
Since $G$, equipped with its pro-$\cC$ topology, is a topological group, for any $\cC$-closed (respectively, $\cC$-open) subset $X \subset G$, and any $g \in G$,
$gX$ and $Xg$ are also $\cC$-closed (respectively, $\cC$-open) in $G$. Since the complement of a subgroup is a union of cosets modulo this subgroup,
the complement of a $\cC$-open subgroup is itself $\cC$-open, so that the subgroup is $\cC$-closed. More generally, the following holds:

\begin{rem}[{cf. \cite[Lemma 2.6 on p. 10]{Ferov-thesis}}]\label{rem:open->closed}
If $G$ is a group and $H\leqslant G$ is a subgroup containing a $\cC$-open subgroup then $H$ is both $\cC$-open and $\cC$-closed in $G$, and $H$ contains some
co-$\cC$ subgroup of $G$.
\end{rem}

\begin{lemma}\label{lem:full_preimage} Suppose that $G$ and $P$ are groups and $\psi:G \to P$ is an epimorphism.
\begin{itemize}
  \item[(i)] For every $\cC$-open subset $X$ of $G$,
$\psi(X)$ is $\cC$-open in $P$ (in other words, $\psi$ is an open map when $G$ and $P$ are equipped with their pro-$\cC$ topologies).
  \item[(ii)] The \proc on $P$ coincides with the quotient topology induced by $\psi$ from the \proc on $G$.
  \item[(iii)] Given any subset $Y \subseteq P$, $Y$ is $\cC$-closed in $P$ if and only if  $\psi^{-1}(Y)$ is $\cC$-closed in $G$.
\end{itemize}
\end{lemma}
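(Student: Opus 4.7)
The plan is to prove the three parts in order, since (ii) will follow quickly from (i), and (iii) will follow from (ii) by complementation. The key observation underlying (i) is that, because $\cC$ is closed under quotients, epimorphisms push co-$\cC$ subgroups to co-$\cC$ subgroups.

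First I would establish (i). It suffices to prove that the image of any basic $\cC$-open set $gN \subseteq G$, with $N$ a co-$\cC$ subgroup of $G$, is $\cC$-open in $P$, since a general $\cC$-open set is a union of such cosets and $\psi$ preserves unions. Given such $N$, set $M \coloneq \psi(N) \leqslant P$. Because $\psi$ is an epimorphism, $M \lhd P$. Moreover, $M = \psi(N \cdot \ker \psi)$, and the composition $G \twoheadrightarrow G/(N\cdot \ker\psi)$ factors through $\psi$, yielding an isomorphism $P/M \cong G/(N\cdot \ker\psi)$. Since $G/(N\cdot \ker\psi)$ is a quotient of $G/N \in \cC$, and $\cC$ is closed under quotients, we get $P/M \in \cC$. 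Hence $\psi(gN) = \psi(g)M$ is basic $\cC$-open in $P$, as required.

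Next I would deduce (ii). The quotient topology $\cT$ on $P$ induced by $\psi$ has, by definition, the largest topology making $\psi$ continuous: a set $Y \subseteq P$ lies in $\cT$ iff $\psi^{-1}(Y)$ is $\cC$-open in $G$. Continuity of $\psi$ with respect to the pro-$\cC$ topologies (which holds for any group homomorphism, as remarked earlier) immediately gives the inclusion of the pro-$\cC$ topology on $P$ into $\cT$. Conversely, if $\psi^{-1}(Y)$ is $\cC$-open in $G$, then, using surjectivity of $\psi$ and part (i), $Y = \psi(\psi^{-1}(Y))$ is $\cC$-open in $P$. Hence the two topologies agree.

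Finally, (iii) is immediate: $Y$ is $\cC$-closed in $P$ iff $P \setminus Y$ is $\cC$-open iff, by (ii), $\psi^{-1}(P\setminus Y) = G\setminus \psi^{-1}(Y)$ is $\cC$-open in $G$, iff $\psi^{-1}(Y)$ is $\cC$-closed in $G$.

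There is no real obstacle here; the only point requiring care is the identification $P/\psi(N) \cong G/(N \cdot \ker \psi)$ used in step (i), which forces the invocation of closure of $\cC$ under quotients (rather than under subgroups). The rest is standard manipulation of the quotient topology.
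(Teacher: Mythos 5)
Your proof is correct and follows essentially the same route as the paper: part (i) rests on the isomorphism $P/\psi(N)\cong G/(N\cdot\ker\psi)$ and closure of $\cC$ under quotients, part (ii) is the standard fact that a surjective continuous open map is a quotient map (you simply unwind this explicitly rather than cite it), and part (iii) follows by complementation. No issues.
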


\begin{proof} (i) Let $N \coloneq \ker \psi$.
Clearly, to show that $\psi$ is an open map, it is enough to prove that $\psi(H)$ is $\cC$-open in $P$ for every co-$\cC$ subgroup $H \lhd G$.
This is indeed the case because $P/\psi(H) \cong G/(HN)$ can be viewed as a quotient of $G/H \in \cC$, and $\cC$ is closed under taking quotients by the assumptions.

Claim (ii) is a direct consequence of claim (i), as any open continuous map is a quotient map (cf. \cite[Sec. 2-11]{Munkres}).
Claim (iii) is, of course, simply a restatement of claim (ii).
\end{proof}

The pro-$\cC$ topology on $G$ is Hausdorff if and only if $G$ is \emph{residually-$\cC$}, i.e., for every $g \in G\setminus\{1\}$ there is a group $M \in \cC$ and a
homomorphism $\varphi:G \to M$ such that $\varphi(g) \neq 1$ in $M$. This is also equivalent to the statement that the singleton $\{1\}$ is $\cC$-closed in $G$.

If $H$ is a subgroup of a group $G$, we will say that the \proc on $H$ is a \emph{restriction of the \proc topology on $G$} if every  subset $X \subseteq H$, closed
in the \proc on $H$,  is also closed in the \proc on $G$ (in particular, $H$ must be $\cC$-closed in $G$); in other words, the converse of the claim of
Remark~\ref{rem:pro-C_on_sbgp} holds.

A pseudovariety of groups $\cC$ is said to be \emph{extension-closed} if for any group $G$, containing a normal subgroup
$N\lhd G$ such that $N \in \cC$ and $G/N\in \cC$, one has $G \in \cC$.

\begin{lemma}\label{lem:restr_crit} Let $\cC$ be a pseudovariety of finite groups. Suppose that $H$ is a subgroup of a group $G$.
\begin{itemize}
  \item[(a)] The \proc on $H$ is a restriction of the \proc on $G$ if and only if every co-$\cC$ subgroup of $H$ is $\cC$-closed in $G$.
  \item[(b)] If the pseudovariety $\cC$ is extension-closed and $H$ is $\cC$-open in $G$ then the \proc on $H$ is the restriction of the \proc on $G$.
\end{itemize}
\end{lemma}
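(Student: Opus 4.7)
My plan is to prove the two parts separately, with part (b) being the substantive claim that relies on a normal-core construction.

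For part (a), the forward direction is immediate: if the pro-$\cC$ topology on $H$ is the restriction of the pro-$\cC$ topology on $G$, then any co-$\cC$ subgroup $N$ of $H$ is $\cC$-closed in $H$ (being the preimage of $\{1\}$ under $H \twoheadrightarrow H/N \in \cC$) and hence $\cC$-closed in $G$. For the converse, assume every co-$\cC$ subgroup of $H$ is $\cC$-closed in $G$. Any $\cC$-closed subset $X \subseteq H$ can be written as $X = \bigcap_i (H \setminus h_i N_i)$ for suitable $h_i \in H$ and co-$\cC$ subgroups $N_i \lhd H$; since each $H/N_i$ is finite, every $H \setminus h_i N_i$ is a finite union of cosets of $N_i$ in $H$, each of which is $\cC$-closed in $G$ (as a translate of $N_i$ in the topological group $G$ equipped with its pro-$\cC$ topology). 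Thus $X$ is $\cC$-closed in $G$ as an intersection of finite unions of $\cC$-closed sets. Applying the hypothesis to $N = H$ (a co-$\cC$ subgroup of $H$ since $\{1\} \in \cC$) ensures $H$ itself is $\cC$-closed in $G$, so the argument covers all cases.

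For part (b), assume $\cC$ is extension-closed and $H$ is $\cC$-open in $G$. By (a) it suffices to show that any co-$\cC$ subgroup $N$ of $H$ is $\cC$-closed in $G$, and I will do so by exhibiting a co-$\cC$ subgroup $M^* \lhd G$ with $M^* \leqslant N$: then $N$ is a finite union of cosets of $M^*$, and therefore $\cC$-closed in $G$. By Remark \ref{rem:open->closed}, the openness of $H$ yields a co-$\cC$ subgroup $K \lhd G$ with $K \leqslant H$. Set $M \coloneq K \cap N$; this is normal in $H$ and $H/M$ embeds in $H/K \times H/N \in \cC$, so $H/M \in \cC$. Let $M^* \coloneq \bigcap_{g \in G} gMg^{-1}$; this is a finite intersection since $[G:M] < \infty$. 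A short calculation using $gKg^{-1}=K$ gives $gMg^{-1} = K \cap gNg^{-1}$, and the second isomorphism theorem yields
\[ K/(K \cap gNg^{-1}) \cong K \cdot gNg^{-1} / gNg^{-1} \leqslant gHg^{-1} / gNg^{-1} \cong H/N \in \cC. \]
Hence $K/M^*$ embeds into a finite direct product of groups in $\cC$, so $K/M^* \in \cC$. Combining this with $G/K \in \cC$ and extension-closedness of $\cC$ gives $G/M^* \in \cC$, completing the construction.

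The only real obstacle is the construction of $M^*$ in (b). Extension-closedness is essential at the final step: without it, we could only conclude that $G/M^*$ is built as an extension of groups in $\cC$, which for a general pseudovariety need not stay in $\cC$. Once $M^*$ has been produced, the rest is routine coset manipulation as in part (a).
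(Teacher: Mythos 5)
Your proof is correct and follows essentially the same route as the paper: in part (a) you make the same reduction to finite unions of cosets of a single co-$\cC$ subgroup of $H$ (finiteness coming from $\cC$ consisting of finite groups), each of which is $\cC$-closed in $G$ by hypothesis and by translation-invariance of the pro-$\cC$ topology. For part (b) the paper simply refers to the proof of Lemma 3.1.4(a) in Ribes--Zalesskii; your normal-core construction of a co-$\cC$ subgroup $M^*\lhd G$ contained in $N$, invoking extension-closedness only at the final step to get $G/M^*\in\cC$ from $G/K\in\cC$ and $K/M^*\in\cC$, is exactly that standard argument written out in full and is complete.
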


\begin{proof} (a) The necessity is clear by Remark \ref{rem:open->closed}. To prove the sufficiency, assume that every co-$\cC$ subgroup of $H$ is $\cC$-closed in $G$.
Evidently, to prove that every $\cC$-closed subset of $H$ is $\cC$-closed in $G$ it is enough to show this for each basic $\cC$-closed subset $X$ in $H$.
Thus $X=\bigcup_{i \in I} N h_i$, for some co-$\cC$ subgroup $N$ of $H$, and some $h_i \in H$, $i \in I$. Since $H/N \in \cC$ and $\cC$ consists of finite groups by
the assumption, we can deduce that $|H:N|<\infty$, so that $X$ is a finite union of cosets modulo $N$. Consequently, $X$ is $\cC$-closed in $G$ as a finite union of
$\cC$-closed sets, because each coset $N h_i$ is $\cC$-closed in $G$.

(b) See the proof of \cite[Lemma 3.1.4.(a)]{RZ}.
\end{proof}

\subsection{Subdirect products}
\begin{rem}\label{rem:YG_int_F_1} Suppose that $G \leqslant \FF$ is a subdirect product, $N_1 \coloneq G \cap F_1$ and $Y \subseteq F_1$ is any subset. Then
$F_1 \cap YG=YN_1$. (Indeed, since $Y \subseteq F_1$,  $F_1 \cap YG=Y(F_1\cap G)=YN_1$.)
\end{rem}

The next lemma will be used throughout the paper.

\begin{lemma} \label{lem:crit_closed}  Let $F_1,F_2$ be groups and let $G \leqslant F_1 \times F_2$ be a subdirect product. If $\cC$ is a pseudovariety of groups
and $X \subseteq F_1$ is any subset, then the product $XG=\{(x,1) g \mid x \in X,~g \in G\}\subseteq F_1\times F_2$ is closed in the \proc on $F_1\times F_2$
if and only if $XN_1$ is closed in the \proc on $F_1$, where $N_1\coloneq F_1 \cap G$.
\end{lemma}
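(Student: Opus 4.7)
The plan is to reduce both closure statements to a single one about $\psi_1(X) \subseteq P$, where $P \coloneq F_1/N_1$, by exploiting the fibre-product description of $G$.

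By Lemma~\ref{lem:norm_in_G->norm_in_F}(ii), $F_2/N_2 \cong F_1/N_1$ canonically, so set $P \coloneq F_1/N_1$ and let $\psi_i : F_i \twoheadrightarrow P$ be the quotient maps, so that $G = \{(f_1,f_2) \in F_1 \times F_2 : \psi_1(f_1) = \psi_2(f_2)\}$. Writing $\pi \coloneq \psi_1 \times \psi_2 : F_1 \times F_2 \twoheadrightarrow P \times P$, a direct computation (unpacking $(f_1,f_2) = (x,1)(g_1,g_2)$ for $x \in X$ and $(g_1,g_2) \in G$) yields
\[
XG \;=\; \pi^{-1}\bigl(\psi_1(X)\,\Delta(P)\bigr), \qquad \text{where } \Delta(P) \coloneq \{(p,p) : p \in P\}.
\]
Applying Lemma~\ref{lem:full_preimage}(iii) to $\pi$, the set $XG$ is $\cC$-closed in $F_1 \times F_2$ iff $\psi_1(X)\Delta(P)$ is $\cC$-closed in $P \times P$. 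Applying the same lemma to $\psi_1$, the set $XN_1 = \psi_1^{-1}(\psi_1(X))$ is $\cC$-closed in $F_1$ iff $\psi_1(X)$ is $\cC$-closed in $P$. The lemma thus reduces to the equivalence
\[
\psi_1(X) \text{ is $\cC$-closed in } P \quad\Longleftrightarrow\quad \psi_1(X)\Delta(P) \text{ is $\cC$-closed in } P \times P.
\]

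For this last equivalence I will use two continuous maps (with respect to the pro-$\cC$ topologies). First, $\mu : P \times P \to P$, $\mu(a,b) \coloneq ab^{-1}$, is continuous because $P$ is a topological group in its pro-$\cC$ topology (co-$\cC$ subgroups are normal and form a filter base at the identity), and a direct check gives $\mu^{-1}(\psi_1(X)) = \psi_1(X)\Delta(P)$; this handles the ``$\Rightarrow$'' direction. Second, the homomorphic embedding $\iota : P \to P \times P$, $p \mapsto (p,1)$, is continuous (any group homomorphism is continuous between pro-$\cC$ topologies), and $\iota^{-1}(\psi_1(X)\Delta(P)) = \psi_1(X)$; this handles the ``$\Leftarrow$'' direction.

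The only mild subtlety is that when $P$ is non-abelian, $\Delta(P)$ is not a normal subgroup of $P \times P$, so one cannot simply pass to a quotient by it; the non-homomorphic (but still continuous) map $\mu$ neatly sidesteps this obstruction. Apart from this point, each step is either a direct calculation or a single application of Lemma~\ref{lem:full_preimage}.
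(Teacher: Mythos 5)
Your proof is correct, but it takes a genuinely different route from the paper's. The paper proves the forward implication by intersecting $XG$ with $F_1$ (Remarks~\ref{rem:pro-C_on_sbgp} and~\ref{rem:YG_int_F_1}), and the harder converse by writing $XG=\bigcap_{A} AXG$ over all co-$\cC$ subgroups $A\lhd F_1$ and checking that each $AXG$ is a union of cosets of the $\cC$-open subgroup $AG\leqslant \FF$ (where the complement $B\coloneq F_2\cap AG$ is shown to be co-$\cC$ in $F_2$ by hand). You instead push everything down to $P\times P$ via $\pi=\psi_1\times\psi_2$, using the fibre-product description of $G$ to get $XG=\pi^{-1}\bigl(\psi_1(X)\Delta(P)\bigr)$ and $XN_1=\psi_1^{-1}(\psi_1(X))$, and then settle the resulting equivalence with the continuous map $\mu(a,b)=ab^{-1}$ and the coordinate embedding $\iota$; your identities and the use of Lemma~\ref{lem:full_preimage}.(iii) all check out. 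The one step worth spelling out is the continuity of $\mu$ with respect to the pro-$\cC$ topology of the \emph{group} $P\times P$, which is the topology that ``$\cC$-closed in $P\times P$'' refers to: being a topological group only gives continuity of $\mu$ for the product of the pro-$\cC$ topologies on the factors, so you need that the pro-$\cC$ topology on $P\times P$ refines this product topology. That follows in one line because $N_1\times N_2$ is co-$\cC$ in $P\times P$ whenever each $N_i$ is co-$\cC$ in $P$ ($\cC$ being closed under direct products), so preimages of $\cC$-closed sets under $\mu$ remain $\cC$-closed. Your reduction is shorter and more conceptual, and the trick of using the continuous but non-homomorphic $\mu$ to get around the non-normality of $\Delta(P)$ is a nice touch; the paper's argument is more hands-on but needs no facts about topologies on products beyond the explicit co-$\cC$ subgroups it constructs.
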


\begin{proof}
If $XG$ is $\cC$-closed in $F_1\times F_2$, then $XN_1=XG \cap F_1$ (cf. Remark \ref{rem:YG_int_F_1})
is $\cC$-closed in $F_1$ by Remark~\ref{rem:pro-C_on_sbgp}.

Now, suppose that $XN_1$ is $\cC$-closed in $F_1$.
Let $\mathcal{N}_{\cC}(F_i)$ denote the set of all co-$\cC$ subgroups of $F_i$, $i=1,2$. First, let us show that
\begin{equation}\label{eq:AXG}
XG=\bigcap_{A \in \mathcal{N}_{\cC}(F_1)} AXG \quad\mbox{in } F_1\times F_2.
\end{equation}

Indeed, evidently, the right-hand side contains the left-hand side. For the opposite inclusion, assume that $(f_1,f_2) \in AXG$, for some $f_1 \in F_1$, $f_2 \in F_2$
and for every $A \in \mathcal{N}_{\cC}(F_1)$. Then, since $G$ is subdirect in $F_1\times F_2$, there is $h_1 \in F_1$ such that $(h_1,f_2) \in G$, hence
$(f_1h_1^{-1},1) \in AXG$ for all $A \in \mathcal{N}_{\cC}(F_1)$. The latter, combined with Remark \ref{rem:YG_int_F_1}, yields that
\begin{equation}\label{eq:fxgh}
(f_1h_1^{-1},1) \in AXG \cap F_1 =AXN_1, \mbox{ for all } A \in \mathcal{N}_{\cC}(F_1).
\end{equation}
 However, since $XN_1$ is $\cC$-closed in $F_1$, for every $(h,1) \in F_1 \setminus XN_1$ there is $A_h \in\mathcal{N}_{\cC}(F_1)$
such that $A_h(h,1) \cap XN_1=\emptyset$, which is equivalent to $(h,1) \notin A_hXN_1$. Therefore, in view of \eqref{eq:fxgh}, we can conclude that
$(f_1h_1^{-1},1) \in XN_1$, consequently $(f_1,f_2)=(f_1h_1^{-1},1)(h_1,f_2) \in XN_1 G=XG$. Thus we have shown that $\bigcap_{A \in \mathcal{N}_{\cC}(F_1)} AXG \subseteq XG$,
so \eqref{eq:AXG} holds.

Thus it remains to prove that for each $A  \in \mathcal{N}_{\cC}(F_1)$ the subset $AXG$ is $\cC$-closed in $F_1\times F_2$.
Take any $A \in \mathcal{N}_{\cC}(F_1)$, and note that $P\coloneq AG$ is a subgroup of $F_1\times F_2$
(because $A\lhd F_1$, and so $A \lhd F_1\times F_2$), and it is subdirect as it contains $G$. Combining Lemma \ref{lem:norm_in_G->norm_in_F}.(ii) with
Remark~\ref{rem:YG_int_F_1}, we obtain
\[ F_2/(F_2 \cap P) \cong F_1/(F_1 \cap P)=F_1/(AN_1). \]
It follows that $F_2/(F_2 \cap P) \in \cC$, as it is a quotient of the group $F_1/A \in \cC$ and $\cC$ is closed under taking quotients. Thus
$B\coloneq F_2 \cap P \in \mathcal{N}_{\cC}(F_2)$, and so $(F_1 \times F_2)/(A\times B) \cong F_1/A \times F_2/B \in \cC$, i.e., $A\times B$ is a
co-$\cC$ subgroup of $F_1\times F_2$.

We have shown that $P=AG$ contains the $\cC$-open subgroup $A \times B$ of $F_1\times F_2$, hence it is itself $\cC$-open by Remark \ref{rem:open->closed}.
Observe that $AXG=XAG=XP$ is a union of cosets modulo $P$, so its complement in $F_1\times F_2$ is also a union of cosets modulo $P$, hence this complement is
$\cC$-open, so that $AXG$ is $\cC$-closed in $F_1\times F_2$, as claimed.

Now, \eqref{eq:AXG} implies that $XG$ is $\cC$-closed in $F_1 \times F_2$, and the lemma is proved.
\end{proof}

\begin{cor}\label{cor:subdir-closed-crit} Suppose that $G \leqslant F_1\times F_2$ is a subdirect product of groups $F_1$ and $F_2$, and $\cC$ is a pseudovariety of groups.
Then the following are equivalent:
\begin{enumerate}
  \item[(i)] $G$ is $\cC$-closed in $F_1\times F_2$;
  \item[(ii)] $N_1\coloneq G \cap F_1$ is $\cC$-closed in $F_1$;
  \item[(iii)] $F_1/N_1$ is residually-$\cC$.
\end{enumerate}
\end{cor}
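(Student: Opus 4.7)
The corollary is essentially a repackaging of \Cref{lem:crit_closed} combined with the standard characterization of residual-$\cC$-ness in terms of the pro-$\cC$ topology, so I expect the proof to be very short.

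\textbf{Plan.} My plan is to establish the equivalences (i)$\Leftrightarrow$(ii) and (ii)$\Leftrightarrow$(iii) separately.

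For (i)$\Leftrightarrow$(ii), I would simply apply \Cref{lem:crit_closed} with the trivial choice $X=\{1\}\subseteq F_1$. Then $XG=G$ and $XN_1=N_1$, and the lemma immediately yields the equivalence between $\cC$-closedness of $G$ in $F_1\times F_2$ and $\cC$-closedness of $N_1$ in $F_1$.

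For (ii)$\Leftrightarrow$(iii), I would use that $N_1\lhd F_1$ (which is given by \Cref{lem:norm_in_G->norm_in_F}(i)), so we have a quotient epimorphism $\psi\colon F_1\twoheadrightarrow F_1/N_1$. By \Cref{lem:full_preimage}(iii), the singleton $\{1\}\subseteq F_1/N_1$ is $\cC$-closed in $F_1/N_1$ if and only if $\psi^{-1}(\{1\})=N_1$ is $\cC$-closed in $F_1$. Since the pro-$\cC$ topology on a group $P$ is Hausdorff exactly when $\{1\}$ is $\cC$-closed in $P$, which in turn is the very definition of $P$ being residually-$\cC$ (as recalled in the paragraph preceding \Cref{lem:restr_crit}), this gives (ii)$\Leftrightarrow$(iii).

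\textbf{Expected difficulty.} There is essentially no obstacle: all the work has already been done in \Cref{lem:crit_closed} and \Cref{lem:full_preimage}. The only thing to be careful about is to note that $N_1$ is \emph{normal} in $F_1$ before using the quotient $F_1/N_1$, but this is precisely \Cref{lem:norm_in_G->norm_in_F}(i).
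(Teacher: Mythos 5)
Your proposal is correct and follows essentially the same route as the paper: the equivalence (i)$\Leftrightarrow$(ii) is obtained by setting $X=\{1\}$ in Lemma~\ref{lem:crit_closed}, and (ii)$\Leftrightarrow$(iii) is the standard fact that a normal subgroup is $\cC$-closed if and only if the quotient is residually-$\cC$, which the paper leaves as an exercise and you correctly derive from Lemma~\ref{lem:full_preimage}.(iii) applied to the singleton $\{1\}$ in $F_1/N_1$.
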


\begin{proof} The equivalence of (i) and (ii) follows from Lemma \ref{lem:crit_closed} (set $X=\{1\}$), and the equivalence of (ii) and (iii) is given by the standard fact (left
as an exercise for the reader) that a normal subgroup $N$ of a group $F$ is $\cC$-closed if and only if the quotient $F/N$ is residually-$\cC$.
\end{proof}

We will now aim to give a sufficient criterion for the \proc on a subdirect product to be a restriction of the \proc on the direct product. To this end we will need the following definition.

\begin{defn}\label{defn:highly_r-C} Let $M$ be a group and let $\cC$ be a pseudovariety of groups. We will say that $M$ is \emph{highly residually-$\cC$} if for every group $L$ and every $K \in \cC$  fitting in the short exact sequence
 \[\{1\} \to K \to L \to M \to \{1\},\]
 $L$ is residually-$\cC$.
\end{defn}
In other words, $M$ is highly residually-$\cC$ if each extension of a $\cC$-group by $M$ is residually-$\cC$. Our terminology is motivated by that of
Lorensen \cite[p. 1710]{Lor}, where he calls a group $M$ \emph{highly residually finite} if each (finite-by-$M$) group is residually finite.

\begin{lemma} \label{lem:restr_on_subdir} Let $F_1,F_2$ be groups, let $G \leqslant F_1\times F_2$ be a subdirect product, and let $\cC$ be an extension-closed pseudovariety of finite groups. If $F_1/N_1$ is highly residually-$\cC$, for $N_1\coloneq F_1 \cap G$, then the \proc on $G$ is a restriction of the \proc on $F_1\times F_2$.
\end{lemma}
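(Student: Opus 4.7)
The plan is to apply Lemma \ref{lem:restr_crit}(a), so I reduce the problem to showing that every co-$\cC$ subgroup $H\lhd G$ is $\cC$-closed in $F_1\times F_2$. Fix such an $H$ and set $K_i\coloneq H\cap F_i$. By Lemma \ref{lem:norm_in_G->norm_in_F}(i) (applied to the normal subgroup $H$ of $G$), each $K_i$ is normal in $F_1\times F_2$; moreover, $N_i/K_i$ embeds into $G/H\in\cC$, so the short exact sequence $\{1\}\to N_i/K_i\to F_i/K_i\to F_i/N_i\to\{1\}$ combined with the highly residually-$\cC$ hypothesis (transferred from $i=1$ to $i=2$ via Lemma \ref{lem:norm_in_G->norm_in_F}(ii)) forces $F_i/K_i$ to be residually-$\cC$. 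Since $K_1\times K_2\subseteq H$, the quotient map $q\colon F_1\times F_2\twoheadrightarrow \bar F_1\times \bar F_2$, with $\bar F_i\coloneq F_i/K_i$, together with Lemma \ref{lem:full_preimage}(iii), lets me replace the goal by showing that $\bar H\coloneq q(H)$ is $\cC$-closed in $\bar F_1\times \bar F_2$.

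After this reduction $\bar F_1,\bar F_2$ are residually-$\cC$ and $\bar H\cap\bar F_i=\{\bar 1\}$; consequently $\bar H$ is the graph of an isomorphism $f\colon\bar H_2\to \bar H_1$, where $\bar H_i\lhd \bar F_i$ is the image of $\bar H$ under the $i$-th projection, and $\bar F_i/\bar H_i$ is a quotient of $\bar G/\bar H\in\cC$, hence co-$\cC$. To separate a given $(\bar x,\bar y)\in(\bar F_1\times \bar F_2)\setminus\bar H$ from $\bar H$, I would distinguish two cases. If $(\bar x,\bar y)\notin \bar H_1\times \bar H_2$, the co-$\cC$ subgroup $\bar H_1\times \bar H_2\lhd \bar F_1\times \bar F_2$ already contains $\bar H$ and misses $(\bar x,\bar y)$, giving the required separation.

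The substantive case is $(\bar x,\bar y)\in\bar H_1\times \bar H_2\setminus\bar H$; here $\bar u\coloneq f(\bar y)^{-1}\bar x$ is a non-identity element of $\bar H_1$. Since $\bar F_1$, and hence $\bar H_1$, is residually-$\cC$, I would pick a co-$\cC$ normal subgroup $\bar N\lhd \bar H_1$ with $\bar u\notin \bar N$. The crucial step is to promote $\bar N$ and $f^{-1}(\bar N)$ to co-$\cC$ normal subgroups $\bar M_1\lhd \bar F_1$ and $\bar M_2\lhd \bar F_2$: take $\bar M_1$ to be the intersection of the (finitely many) $\bar F_1$-conjugates of $\bar N$, noting that $\bar H_1/\bar M_1$ embeds into a finite product of conjugates of $\bar H_1/\bar N\in\cC$ and thus belongs to $\cC$, and then deduce $\bar F_1/\bar M_1\in\cC$ from the extension $\{1\}\to \bar H_1/\bar M_1\to \bar F_1/\bar M_1\to \bar F_1/\bar H_1\to\{1\}$ using extension-closure of $\cC$. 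Construct $\bar M_2$ analogously from $f^{-1}(\bar N)\lhd \bar H_2$.

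A direct calculation of the condition for $(\bar x,\bar y)\in \bar H(\bar M_1\times \bar M_2)$ will then show that such membership forces $\bar u\in(\bar M_1\cap\bar H_1)\cdot f(\bar M_2\cap\bar H_2)\subseteq \bar N$, contradicting the choice of $\bar N$. Hence $\bar M_1\times \bar M_2$ is a co-$\cC$ normal subgroup of $\bar F_1\times \bar F_2$ separating $(\bar x,\bar y)$ from $\bar H$, which completes the proof. I expect the delicate step to be precisely this promotion from co-$\cC$ subgroups of $\bar H_i$ to co-$\cC$ subgroups of $\bar F_i$: it is here that both the extension-closure of $\cC$ and the fact that $\bar H_i$ itself is already co-$\cC$ in $\bar F_i$ are used in an essential way.
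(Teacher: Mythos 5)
Your argument is correct. The opening moves coincide with the paper's: both proofs invoke Lemma~\ref{lem:restr_crit}.(a) to reduce to showing that a co-$\cC$ subgroup $H\lhd G$ is $\cC$-closed in $\FF$, both observe that $N_1/(H\cap F_1)$ embeds in $G/H\in\cC$, and both then feed the short exact sequence $\{1\}\to N_1/(H\cap F_1)\to F_1/(H\cap F_1)\to F_1/N_1\to\{1\}$ into the highly residually-$\cC$ hypothesis at exactly the same point (your $K_1$ is the paper's $N_1'$).

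Where you genuinely diverge is in how the residual-$\cC$-ness of $F_i/K_i$ is converted into closedness of $H$. The paper goes \emph{up}: it sets $F_i'\coloneq\rho_i(H)$, notes that $F_1'\times F_2'$ is a $\cC$-open subgroup of $\FF$ containing $H$ as a subdirect product, quotes Corollary~\ref{cor:subdir-closed-crit} (whose content rests on the technical Lemma~\ref{lem:crit_closed}) to get $H$ closed in $F_1'\times F_2'$, and finishes with Lemma~\ref{lem:restr_crit}.(b). You instead go \emph{down}: you quotient by $K_1\times K_2$, observe that the image of $H$ becomes the graph of an isomorphism between co-$\cC$ normal subgroups $\bar H_1,\bar H_2$ of residually-$\cC$ groups, and separate points from this graph by hand, promoting a co-$\cC$ normal subgroup of $\bar H_1$ to one of $\bar F_1$ by intersecting its finitely many conjugates and using extension-closure. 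All the steps check out (in particular $q^{-1}(\bar H)=H$ because $\ker q\subseteq H$, so Lemma~\ref{lem:full_preimage}.(iii) applies, and the final computation does give $\bar u=f(\bar m_2)^{-1}\bar m_1\in\bar N$). What your route buys is self-containedness: your separation argument and conjugate-intersection trick essentially re-derive, in the special case you need, the content of Lemma~\ref{lem:crit_closed} (whose proof performs the analogous promotion when it shows $A\times B$ is co-$\cC$). The cost is length; the paper's version is shorter because that machinery is already in place and is reused elsewhere.
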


\begin{proof} In view of Lemma \ref{lem:restr_crit}.(a), it is enough to show that every co-$\cC$ subgroup $G'$ of $G$ is $\cC$-closed in $F_1\times F_2$.
Let $\rho_i:F_1\times F_2 \to F_i$ denote the canonical projection, and set $F_i'=\rho_i(G')$, $i=1,2$. Then $F_i' \lhd F_i$, $i=1,2$, as $G' \lhd G$, and
$F_1/F_1'=\rho_1(G)/\rho_1(G')\cong G/(G'N_2)$, where $N_2\coloneq G \cap\ker(\rho_1)=G \cap F_2$. Thus $F_1/F_1' \in \cC$, as a quotient of the group $G/G' \in \cC$;
similarly, $F_2/F_2' \in \cC$.
Therefore, $(\FF)/(F_1'\times F_2') \cong F_1/F_1'\times F_2/F_2' \in \cC$; in particular, $F_1'\times F_2'$ is a $\cC$-open subgroup of $\FF$.

Obviously $G' \leqslant F_1'\times F_2'$ is a subdirect product, by construction. Now, observe that
\[N_1'\coloneq G' \cap F_1'=G' \cap F_1=G' \cap N_1\] is a normal subgroup of $F_1$ by
Lemma~\ref{lem:norm_in_G->norm_in_F}.(i),
and $N_1/N_1' \cong G'N_1/G' \leqslant G/G' \in \cC$, which yields that $N_1/N_1' \in \cC$ as $\cC$ is closed under taking subgroups. Recalling that $F_1/N_1$ is highly residually-$\cC$, the short exact sequence
\begin{equation}\label{eq:F/N_1'}
\{1\} \to N_1/N_1' \to F_1/N_1' \to F_1/N_1 \to \{1\},
\end{equation}
implies that $F_1/N_1'$ is residually-$\cC$, and hence its subgroup $F_1'/N_1'$ is residually-$\cC$ as well.

It remains to conclude that $G'$ is $\cC$-closed in $F_1'\times F_2'$ by Corollary \ref{cor:subdir-closed-crit}, which, in view of Lemma \ref{lem:restr_crit}.(b),
implies that $G'$ is $\cC$-closed in $\FF$, as required.
\end{proof}

\subsection{Cyclic subgroup separability}
Given a pseudovariety of groups $\cC$, a group $M$ is said to be \emph{cyclic subgroup $\cC$-separable} if every cyclic subgroup  is closed in the \proc on $M$.
As usual, if $\cC$ is the class of all finite groups, then we will simply write that $M$ is cyclic subgroup separable.

Since the trivial subgroup is cyclic, any cyclic subgroup $\cC$-separable group is residually-$\cC$. The converse is not true in general; for example, the metabelian
Baumslag-Solitar group $BS(1,2)\coloneq\langle a,t \;\|\; tat^{-1}=a^2\rangle$ is residually finite (\cite[Thm. 1]{Hall})
 but the cyclic subgroup $\langle a \rangle$ is not closed in the profinite topology on $BS(1,2)$, as it is conjugate to a proper subgroup of itself.

In Proposition \ref{prop:crit_of_CS_for_subdirect} below we will see that
cyclic subgroup $\cC$-separability of the quotient $F_1/N_1$ is important in proving that a subdirect product $G \leqslant \FF$ is $\cC$-conjugacy separable. In this subsection
we will discuss some permanence properties related to cyclic subgroup $\cC$-separability, that will be useful later on.

\begin{lemma}\label{lem:cyc-sep-1} Let $\cC$ be an extension-closed pseudovariety of finite groups and let $F$ be any group.
\begin{itemize}
  \item[(i)] If $F$ is cyclic subgroup $\cC$-separable then so is every subgroup $H \leqslant F$.
  \item[(ii)] If some $\cC$-open subgroup $H \leqslant F$ is cyclic subgroup $\cC$-separable then so is $F$.
  \item[(iii)] If $F$ is cyclic subgroup $\cC$-separable and $K \lhd F$ is a finite normal subgroup then $F/K$ is cyclic subgroup $\cC$-separable.
  \item[(iv)] Suppose that $K \lhd F$, $K \in \cC$, $F/K$ is highly residually-$\cC$ and cyclic subgroup $\cC$-separable.
  Then $F$ is  itself cyclic subgroup $\cC$-separable.
\end{itemize}
\end{lemma}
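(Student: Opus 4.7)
I will prove the four parts in order. Part (i) follows immediately from Remark~\ref{rem:pro-C_on_sbgp}: any cyclic subgroup of $H$ is also cyclic in $F$, hence $\cC$-closed in $F$ by hypothesis, and therefore $\cC$-closed in $H$. For (ii), my plan is to first invoke Lemma~\ref{lem:restr_crit}.(b) to identify the pro-$\cC$ topology on $H$ as the restriction from $F$, using that $\cC$ is extension-closed and $H$ is $\cC$-open (so of finite index in $F$, since $\cC$ consists of finite groups). Given $f \in F$, the subgroup $\langle f\rangle \cap H$ has finite index in $\langle f\rangle$, so $\langle f\rangle$ decomposes as a finite union of cosets of $\langle f\rangle\cap H$. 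Each coset is $\cC$-closed in $F$, because $\langle f\rangle\cap H$ is cyclic in $H$, hence $\cC$-closed there by hypothesis, and closedness transfers from $H$ to $F$ by the restriction property. For (iii), given $\bar f = fK$, the preimage $\pi^{-1}(\langle\bar f\rangle) = \langle f\rangle K$ is the union of $|K|$ right translates of $\langle f\rangle$, each $\cC$-closed in $F$ by hypothesis; Lemma~\ref{lem:full_preimage}.(iii) then transfers $\cC$-closedness to $\langle\bar f\rangle$ in $F/K$.

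Part (iv) requires the real work. Applying the highly residually-$\cC$ property of $F/K$ to the extension $1 \to K \to F \to F/K \to 1$ with $K \in \cC$ gives that $F$ is itself residually-$\cC$, so every finite subset of $F$ is $\cC$-closed. Fix $f \in F$. If $f$ has finite order, $\langle f\rangle$ is finite and hence $\cC$-closed, so I assume $f$ has infinite order; since $K$ is finite, $\pi(f)$ must then have infinite order in $F/K$, and consequently $\langle f\rangle \cap K = \{1\}$. Combining cyclic subgroup $\cC$-separability of $F/K$ with Lemma~\ref{lem:full_preimage}.(iii) shows that $\langle f\rangle K = \pi^{-1}(\langle\pi(f)\rangle)$ is $\cC$-closed in $F$. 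This reduces the task to exhibiting, for each element $x = kf^{n_0} \in (\langle f\rangle K)\setminus\langle f\rangle$ with $k \in K\setminus\{1\}$, a co-$\cC$ subgroup $N \lhd F$ satisfying $k \notin \langle f\rangle N$.

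My plan for constructing $N$ is to intersect two co-$\cC$ normal subgroups. First, I take $N_0 \lhd F$ with $N_0 \cap K = \{1\}$, available via residual $\cC$-ness of $F$ applied to the finite set $K\setminus\{1\}$; this ensures that $K$ injects into any quotient $F/N$ with $N \leqslant N_0$, and, combined with extension-closedness of $\cC$, that such quotients remain in $\cC$ provided that the image of $N$ in $F/K$ is co-$\cC$ there. Then I pick $N_1 = \pi^{-1}(\bar N_1)$ for a co-$\cC$ $\bar N_1 \lhd F/K$ provided by cyclic subgroup $\cC$-separability of $F/K$, arranged so that the image of $f$ in $F/(N_0\cap N_1)$ has suitable order. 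The main obstacle I anticipate is coordinating these two choices so that the image of $k$ in $F/(N_0 \cap N_1)$ does not lie in the cyclic subgroup generated by the image of $f$. If a naive choice produces a witness $kf^{-j_0} \in N_0 \cap N_1$, I will refine by further intersecting with a co-$\cC$ subgroup separating the nontrivial element $kf^{-j_0}$ from $1$ (available by residual $\cC$-ness of $F$), and iterate as necessary. The highly residually-$\cC$ hypothesis is essential here, supplying the finite $\cC$-quotients of $F$ that remain compatible with the injection of $K$ needed to push this coordination through.
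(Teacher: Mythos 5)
Parts (i) and (ii) of your proposal are correct and coincide with the paper's argument. Part (iii) is also correct but takes a more direct route: you write $\pi^{-1}(\langle \bar f\rangle)=\langle f\rangle K$ as a finite union of translates of the $\cC$-closed set $\langle f\rangle$ and invoke Lemma~\ref{lem:full_preimage}.(iii), whereas the paper produces a co-$\cC$ subgroup $H\lhd F$ with $H\cap K=\{1\}$, identifies $H$ with its image in $F/K$, and then applies parts (i) and (ii). Your version is shorter and does not need the residual-$\cC$-ness of $F$ that the paper uses to manufacture such an $H$.

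Part (iv), however, has a genuine gap. After correctly reducing the problem (for $f$ of infinite order) to finding, for each $k\in K\setminus\{1\}$, a co-$\cC$ subgroup $N\lhd F$ with $k\notin\langle f\rangle N$, you propose to ``coordinate'' two co-$\cC$ subgroups and to ``iterate as necessary'' whenever a witness $kf^{-j_0}\in N$ appears. This iteration is not shown to terminate, and there is no reason to expect it to: the condition $k\notin\langle f\rangle N$ means that $N$ must avoid the \emph{infinite} set $\{kf^{-j}\mid j\in\Z\}$, and each refinement of $N$ only excludes one such element while enlarging the order of the image of $f$ in $F/N$, thereby introducing new candidate exponents $j$. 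In effect the iteration merely restates the assertion that $k$ does not lie in the pro-$\cC$ closure of $\langle f\rangle$, which is exactly what has to be proved; no use is made of the hypotheses on $F/K$ at this decisive point.

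The repair is already contained in the parts you have proved. The paper's proof of (iv) is a two-line reduction to (i) and (ii): since $F/K$ is highly residually-$\cC$ and $K\in\cC$, the group $F$ is residually-$\cC$, so there is a co-$\cC$ subgroup $H\lhd F$ with $H\cap K=\{1\}$; then $H\cong HK/K\leqslant F/K$ is cyclic subgroup $\cC$-separable by part (i), and $F$ is cyclic subgroup $\cC$-separable by part (ii). This sidesteps the coordination problem entirely, because part (ii) already packages the passage from the $\cC$-open subgroup $H$, where $\langle f\rangle\cap H$ is known to be $\cC$-closed, to the whole of $F$.
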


\begin{proof} Claim (i) is a trivial consequence of the definition and Remark \ref{rem:pro-C_on_sbgp}.

To prove claim (ii), suppose that $H$ is cyclic subgroup $\cC$-separable and $C \leqslant F$ is any cyclic subgroup.
Then the cyclic subgroup $C'\coloneq C \cap H$ is closed in the \proc on $H$, and Lemma \ref{lem:restr_crit}.(b) implies that $C'$ is also $\cC$-closed in $F$.
Now, $|C:C'| \le |F:H|<\infty$ as $\cC$ consists of finite groups and $H$ is $\cC$-open in $F$ (thus $H$ contains some co-$\cC$ subgroup of $F$ by
Remark~\ref{rem:open->closed}). Therefore $C$ is a finite union of cosets modulo $C'$, hence it is also $\cC$-closed in $F$.

To establish claim (iii) assume that $F$ is cyclic subgroup $\cC$-separable and $K \lhd F$, $|K|<\infty$. Then the cyclic subgroup $\{1\}$ is $\cC$-closed in $F$,
so, since $K$ is finite, there exists a co-$\cC$ subgroup $H \lhd F$ such that $K \cap H=\{1\}$.
It follows that the image $HK/K\cong H/(H\cap K)$, of $H$ in $F/K$, is naturally isomorphic to $H$ and is a co-$\cC$-subgroup of $F/K$ because $\cC$ is closed under
taking quotients.
Now, according to claim (i), $H$ is cyclic subgroup $\cC$-separable, hence so is $F/K$ by claim (ii).

It remains to prove claim (iv). By the assumptions, $F/K$ is highly residually-$\cC$, so $F$ is residually-$\cC$, hence there exists a co-$\cC$ subgroup $H \lhd F$
such that $H \cap K=\{1\}$ ($|K|<\infty$ as $K \in \cC$). As before, $H$ is isomorphic to its image $HK/K$ in $F/K$, hence it is cyclic subgroup $\cC$-separable by claim (i)
as  $F/K$ is cyclic subgroup $\cC$-separable. Therefore, in view of claim (ii), we can conclude that $F$ is cyclic subgroup $\cC$-separable.
\end{proof}

The next lemma will be useful for showing that every $\cC$-open subgroup of a subdirect product is $\cC$-conjugacy separable.

\begin{lemma} \label{lem:C-open-subdir} Let $\cC$ be an extension-closed pseudovariety of finite groups, let $G \leqslant \FF$ be a subdirect product of some
groups $F_1$, $F_2$ and let $N_1\coloneq G \cap F_1$. Suppose that $F_1/N_1$ is highly residually-$\cC$ and cyclic subgroup $\cC$-separable.
If $H\leqslant G$ is any $\cC$-open subgroup then $H$ is a subdirect product in
$J_1\times J_2$, for some $\cC$-open subgroups $J_i$ of $F_i$, $i=1,2$, and $J_1/( H \cap J_1)$ is cyclic subgroup $\cC$-separable.
\end{lemma}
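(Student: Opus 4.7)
The plan is to produce $J_1$ and $J_2$ as the canonical projections of $H$, and then verify the two required properties by reducing to the standing hypotheses on $F_1/N_1$.

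Since $H$ is $\cC$-open in $G$, Remark~\ref{rem:open->closed} furnishes a co-$\cC$ subgroup $H' \lhd G$ with $H' \subseteq H$. Let $\rho_i\colon \FF \to F_i$ be the canonical projection, and set $J_i \coloneq \rho_i(H) \leqslant F_i$ for $i=1,2$. By construction, $H$ is a subdirect product in $J_1 \times J_2$. To show that $J_1$ is $\cC$-open in $F_1$, I would consider the surjective homomorphism $\phi\colon G \to F_1/\rho_1(H')$ given by $\phi(x,y) = x\,\rho_1(H')$ (surjective as $\rho_1(G)=F_1$). The key identification is $\ker\phi = H'N_2$, where $N_2 \coloneq G \cap F_2$: if $(x,y) \in \ker\phi$ then $x \in \rho_1(H')$, so there exists $y' \in F_2$ with $(x,y') \in H'$, whence $(x,y)=(x,y')(1,y'^{-1}y)$ with $(1,y'^{-1}y) \in G \cap F_2 = N_2$. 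Consequently
\[
F_1/\rho_1(H') \cong G/(H'N_2)
\]
is a quotient of $G/H' \in \cC$, so $\rho_1(H')$ is a co-$\cC$ subgroup of $F_1$ contained in $J_1$, making $J_1$ $\cC$-open in $F_1$ by Remark~\ref{rem:open->closed}. The identical argument applies to $J_2$.

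For the final assertion, observe first that $H \cap J_1 = H \cap F_1 = H \cap N_1$ (the last equality uses $H \leqslant G$). The quotient $N_1/(H \cap N_1)$ is a quotient of $N_1/(H' \cap N_1) \cong H'N_1/H' \leqslant G/H' \in \cC$, and hence lies in $\cC$. Therefore the short exact sequence
\[
\{1\} \to N_1/(H \cap F_1) \to F_1/(H \cap F_1) \to F_1/N_1 \to \{1\}
\]
has kernel in $\cC$, while $F_1/N_1$ is by hypothesis highly residually-$\cC$ and cyclic subgroup $\cC$-separable. Lemma~\ref{lem:cyc-sep-1}(iv) then yields that $F_1/(H \cap F_1)$ is cyclic subgroup $\cC$-separable, and its subgroup $J_1/(H \cap F_1) = J_1/(H \cap J_1)$ inherits this property by Lemma~\ref{lem:cyc-sep-1}(i).

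The only slightly delicate step is the kernel computation $\ker\phi = H'N_2$, since this is where the normality of $H'$ in $G$ and the subdirectness of $G$ in $\FF$ combine to push a co-$\cC$ subgroup of $G$ down to a co-$\cC$ subgroup of $F_1$; everything else is a direct assembly of the preceding lemmas.
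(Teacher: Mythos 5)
Your construction of $J_1,J_2$ and the verification that they are $\cC$-open are correct and essentially identical to the paper's argument (the paper invokes the computation $F_1/\rho_1(G')\cong G/(G'N_2)$ from the proof of Lemma~\ref{lem:restr_on_subdir}; your kernel identity $\ker\phi=H'N_2$ is the same calculation). The gap is in the final step. The short exact sequence
\[
\{1\}\to N_1/(H\cap F_1)\to F_1/(H\cap F_1)\to F_1/N_1\to\{1\}
\]
is not well defined, because $H\cap F_1$ need not be normal in $F_1$, nor even in $N_1$. The subgroup $H$ is only assumed $\cC$-open in $G$, not normal, so Lemma~\ref{lem:norm_in_G->norm_in_F}.(i), applied to the subdirect product $H\leqslant J_1\times J_2$, only yields $H\cap F_1=H\cap J_1\lhd J_1$ --- and $J_1$ is in general a proper subgroup of $F_1$ not containing $N_1$. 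For a concrete failure, take $G=F\times F$ and $H=K\times F$ where $K$ is a non-normal $\cC$-open subgroup of $F$ (e.g.\ the preimage of a non-normal subgroup of a finite quotient of $F$ lying in $\cC$): then $H$ is $\cC$-open in $G$, the hypotheses on $F_1/N_1=\{1\}$ hold trivially, but $H\cap F_1=K$ is not normal in $F_1=F$, so neither $F_1/(H\cap F_1)$ nor $N_1/(H\cap N_1)$ is a group. The same problem invalidates your claim that $N_1/(H\cap N_1)$ is a quotient of $N_1/(H'\cap N_1)$.

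The repair is exactly what the paper does: run the extension argument at the level of the \emph{normal} co-$\cC$ subgroup $H'$ rather than $H$. Set $N_1'\coloneq H'\cap F_1$; this is normal in $F_1$ by Lemma~\ref{lem:norm_in_G->norm_in_F}.(i) because $H'\lhd G$ and $G\leqslant\FF$ is subdirect, and $N_1/N_1'\cong H'N_1/H'\leqslant G/H'\in\cC$. Applying Lemma~\ref{lem:cyc-sep-1}.(iv) to $\{1\}\to N_1/N_1'\to F_1/N_1'\to F_1/N_1\to\{1\}$ shows that $F_1/N_1'$ is cyclic subgroup $\cC$-separable; its subgroup $J_1/N_1'$ inherits this by Lemma~\ref{lem:cyc-sep-1}.(i); and finally $J_1/(H\cap J_1)$ is the quotient of $J_1/N_1'$ by the finite normal subgroup $(H\cap J_1)/N_1'\leqslant N_1/N_1'$ (normality now does hold, since $H\cap J_1\lhd J_1$), so Lemma~\ref{lem:cyc-sep-1}.(iii) concludes. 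Your proposal has all the right ingredients but applies the extension lemma to the wrong subgroup at the one point where normality actually matters.
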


\begin{proof} Naturally, we let $J_i\leqslant F_i$ be the image of $H$ under the projection to the $i$-th coordinate group, $i=1,2$.
Then $H \leqslant J_1\times J_2$ is subdirect, by construction. Now, by Remark~\ref{rem:open->closed}, $H$ contains some co-$\cC$ subgroup $G'$ of $G$. Using the same
notation as in Lemma~\ref{lem:restr_on_subdir}, let $F_i'\leqslant F_i$ denote the projection of $G'$ to the $i$-th coordinate group, $i=1,2$.
Then $F_i' \subseteq J_i$ and $F_i'$ is a co-$\cC$ subgroup of $F_i$, hence $J_i$ is $\cC$-open in $F_i$, $i=1,2$.

Now, if we let $N_1' \coloneq F_1 \cap G'$, then, by the argument from the proof of Lemma \ref{lem:restr_on_subdir}, $N_1' \lhd F_1$, $N_1/N_1'  \in \cC$
and the quotient $F_1/N_1'$ fits into the short exact sequence \eqref{eq:F/N_1'}. Therefore,
in view of Lemma \ref{lem:cyc-sep-1}.(iv), our assumptions on $F_1/N_1$ imply that $F_1/N_1'$ is cyclic subgroup $\cC$-separable.

Clearly, $N_1'=G' \cap F_1 \subseteq H \cap F_1=H \cap J_1 \subseteq G \cap F_1=N_1$, so the group $J_1/( H \cap J_1 )$ is isomorphic to the quotient of the group
$J_1/N_1'\leqslant F_1/N_1'$ by the finite normal subgroup $(H\cap J_1)/N_1' \leqslant N_1/N_1'$. Thus we can apply claims (i) and (iii) of Lemma  \ref{lem:cyc-sep-1} to
deduce that  the group $J_1/(H \cap J_1)$ is cyclic subgroup $\cC$-separable, as required.
\end{proof}

\section{Conjugacy separability of subdirect products} \label{sec:crit}
In this section we will give necessary and sufficient criteria for $\cC$-conjugacy separability of subdirect products of two groups.

\begin{rem}\label{rem:cs<=>each_cc_closed}
Observe that a group $G$ is $\cC$-conjugacy separable if and only if the $G$-conjugacy class $g^G$ is $\cC$-closed in $G$, for each $g \in G$.
\end{rem}

We will say that a pseudovariety of groups is \emph{non-trivial} if it contains at least one non-trivial group.
Basic examples of $\cC$-conjugacy separable groups are free groups:

\begin{lemma}\label{lem:free_are_C-hcs} Suppose that $\cC$ is a non-trivial extension-closed pseudovariety of groups and $F$ is a free group of arbitrary rank.
Then $F$ is $\cC$-hereditarily conjugacy separable.
\end{lemma}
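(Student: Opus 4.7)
The plan is to reduce the assertion to the classical fact that every free group is $p$-conjugacy separable for every prime $p$, and to exploit the hypotheses on $\cC$ to make the finite $p$-group quotients admissible.

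First, I would extract from the hypotheses on $\cC$ a prime $p$ such that every finite $p$-group lies in $\cC$. Since $\cC$ is non-trivial it contains a non-trivial group $M$; any non-identity element of $M$ generates a cyclic subgroup which, by closure under subgroups and quotients, yields $\Z/p\Z\in\cC$ for some prime $p$ (if the generated cyclic subgroup is finite, pass to a composition factor; if it is infinite cyclic, take any quotient of prime order). Every finite $p$-group $P$ admits a composition series whose factors are all isomorphic to $\Z/p\Z$, so a straightforward induction on $|P|$ using extension-closedness of $\cC$ shows $P\in\cC$.

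Next, I would take an arbitrary subgroup $H\leqslant F$ open in the pro-$\cC$ topology on $F$. By the Nielsen--Schreier theorem, $H$ is itself a free group (of some rank). It is a classical theorem, due to Remeslennikov, that every free group is $p$-conjugacy separable: for any two non-conjugate elements $x,y\in H$ there is an epimorphism $\varphi\colon H\twoheadrightarrow P$ onto a finite $p$-group $P$ such that $\varphi(x)$ and $\varphi(y)$ are not conjugate in $P$. Since $P\in\cC$ by the previous paragraph, this witnesses $\cC$-conjugacy separability of $H$. Because $H$ was an arbitrary $\cC$-open subgroup of $F$, it follows that $F$ is $\cC$-hereditarily conjugacy separable.

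The entire mathematical content of the argument is concentrated in the cited $p$-conjugacy separability of free groups, so I do not expect any real obstacle beyond locating a convenient reference. The role of the two hypotheses on $\cC$ is purely bookkeeping: ``non-trivial'' supplies a prime $p$ with $\Z/p\Z\in\cC$, and ``extension-closed'' promotes this to the statement that every finite $p$-group lies in $\cC$, which is exactly what is needed for the separating finite $p$-group quotients produced by the classical theorem to qualify as witnesses for $\cC$-conjugacy separability.
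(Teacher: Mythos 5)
Your proposal is correct and follows essentially the same route as the paper: reduce to the free subgroup case via Nielsen--Schreier, use non-triviality plus closure under subgroups and quotients to get $\Z/p\Z\in\cC$ for some prime $p$, promote this to $\cC_p\subseteq\cC$ via extension-closedness, and invoke Remeslennikov's $p$-conjugacy separability of free groups. No substantive differences.
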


\begin{proof} Since any subgroup of $F$ is also free, it is enough to show that $F$ is $\cC$-conjugacy separable.

By the assumptions, the class $\cC$ is closed under taking subgroups and contains at least one non-trivial group, so it must contain some non-trivial cyclic group,
and since $\cC$ is closed under quotients we deduce that $\Z/p\Z \in \cC$, for some prime $p$. Now, every non-trivial finite $p$-group $P$ has a normal series where
the sections are cyclic groups of order $p$, hence $P \in \cC$, as $\cC$ is closed under taking extensions.  Therefore $\cC$
contains the class $\cC_p$, of all finite $p$-groups. Since free groups are well-known to be $\cC_p$-conjugacy separable (cf. \cite[Prop. 5]{Rem}), we can conclude that
$F$ is $\cC$-conjugacy separable.
\end{proof}

In \cite[Thm. 1.2]{Ferov-1} Ferov proved that for an extension-closed pseudovariety of finite groups, any graph product of $\cC$-hereditarily conjugacy separable groups is
also $\cC$-hereditarily conjugacy separable. For our purposes we will need a much easier special case:

\begin{lemma}[{\cite[Lemma 4.2 on p. 18]{Ferov-thesis}}]\label{lem:C-cs-for_products} If $\cC$ is an extension-closed pseudovariety of finite groups, then the direct product of two
$\cC$-hereditarily conjugacy separable groups is $\cC$-hereditarily conjugacy separable.
\end{lemma}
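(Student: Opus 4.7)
The plan is to take an arbitrary $\cC$-open subgroup $H \leqslant A \times B$ and verify $\cC$-conjugacy separability of $H$ by showing, for each $h \in H$, that the conjugacy class $h^H$ is $\cC$-closed in $H$ (cf.~Remark \ref{rem:cs<=>each_cc_closed}). The key structural point I intend to exploit is that conjugation in $A \times B$ acts coordinate-wise, and that $H$ automatically contains a direct product of co-$\cC$ subgroups of $A$ and $B$.

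First I would extract this direct product. Since $H$ is $\cC$-open in $A \times B$, it contains some co-$\cC$ subgroup $N \lhd A \times B$. Setting $N_A \coloneq N \cap A$ and $N_B \coloneq N \cap B$, the quotients $A/N_A$ and $B/N_B$ embed into $(A \times B)/N \in \cC$, so $N_A$ and $N_B$ are co-$\cC$ in $A$ and $B$ respectively, and $N_A \times N_B \subseteq N \subseteq H$. Because $A$ and $B$ are $\cC$-hereditarily conjugacy separable and $N_A$, $N_B$ are $\cC$-open, both $N_A$ and $N_B$ are $\cC$-conjugacy separable.

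Next I would fix $h = (a, b) \in H$ and use the normal subgroup $L \coloneq N_A \times N_B \lhd H$, which has finite index in $H$ since $\cC$ consists of finite groups. Choosing a finite transversal $T$ of $L$ in $H$, one obtains
\[
h^H = \bigcup_{t \in T} (tht^{-1})^L.
\]
For any $(a', b') \in H$, conjugation by elements of $L = N_A \times N_B$ acts separately on each coordinate, so $(a', b')^L = a'^{N_A} \times b'^{N_B}$. By $\cC$-conjugacy separability of $N_A$ (respectively $N_B$), the set $a'^{N_A}$ is $\cC$-closed in $N_A$ (respectively $b'^{N_B}$ in $N_B$). Since $\cC$ is extension-closed and $N_A$ (respectively $N_B$) is $\cC$-open in $A$ (respectively $B$), Lemma \ref{lem:restr_crit}.(b) ensures that the pro-$\cC$ topology on $N_A$ (respectively $N_B$) is the restriction from $A$ (respectively $B$), so $a'^{N_A}$ is $\cC$-closed in $A$ and $b'^{N_B}$ is $\cC$-closed in $B$; consequently their product is $\cC$-closed in $A \times B$.

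Finally, $h^H$ is a finite union of subsets that are $\cC$-closed in $A \times B$, hence itself $\cC$-closed in $A \times B$, and therefore $\cC$-closed in $H$ by Remark \ref{rem:pro-C_on_sbgp}. This yields $\cC$-conjugacy separability of $H$, and since $H$ was an arbitrary $\cC$-open subgroup of $A \times B$, it follows that $A \times B$ is $\cC$-hereditarily conjugacy separable. I do not anticipate any serious obstacle: the whole argument rests on observing that $N_A \times N_B \lhd A \times B$ acts on conjugacy classes coordinate-wise, which precisely decouples the $\cC$-conjugacy separability information coming from the two factors.
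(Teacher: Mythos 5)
Your overall strategy --- reduce to the normal $\cC$-open subgroup $L = N_A \times N_B$ of $A\times B$, decompose $h^H$ as a finite union of $L$-conjugacy classes, and observe that $L$-conjugation acts coordinate-wise --- is sound and is essentially the route taken in the source the paper cites for this lemma (\cite{Ferov-thesis}, going back to \cite[Lemma 7.3]{M-M}). However, there is a genuine gap at the key step. You assert that ``by $\cC$-conjugacy separability of $N_A$ the set $a'^{N_A}$ is $\cC$-closed in $N_A$''; but $(a',b') = tht^{-1}$ is an arbitrary element of $H$, so $a'$ ranges over the projection of $H$ to $A$, which in general is much larger than $N_A$ (if $H = A\times B$, then $a'$ is an arbitrary element of $A$). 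For $a' \notin N_A$ the set $a'^{N_A} = \{na'n^{-1} \mid n\in N_A\}$ is not even contained in $N_A$, and $\cC$-conjugacy separability of $N_A$ says nothing about it: it only controls conjugacy classes of elements \emph{of} $N_A$. This is not a cosmetic slip --- it is precisely the point where hereditary conjugacy separability, rather than mere conjugacy separability of the two open subgroups, has to do real work.

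The gap is repairable within your framework. Given $a'\in A$, set $K_A \coloneq N_A\langle a'\rangle$; this is a subgroup because $N_A \lhd A$, it contains the co-$\cC$ subgroup $N_A$ and hence is $\cC$-open in $A$ (Remark \ref{rem:open->closed}), so it is $\cC$-conjugacy separable by the hereditary hypothesis on $A$. Since $\langle a'\rangle$ centralizes $a'$, one has $a'^{N_A} = a'^{K_A}$, which is therefore $\cC$-closed in $K_A$ by Remark \ref{rem:cs<=>each_cc_closed}, and then $\cC$-closed in $A$ by Lemma \ref{lem:restr_crit}.(b) (using that $\cC$ is extension-closed). Arguing likewise in $B$ and taking the product of the two closed sets, the remainder of your argument goes through unchanged.
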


In the case when $\cC$ is the class of all finite groups, Lemma \ref{lem:C-cs-for_products} was originally proved by Martino and the first author in \cite[Lemma 7.3]{M-M}.

\subsection{Criteria for conjugacy separability}\label{subsec:crit_for_cs}
Throughout this subsection we will assume that $\cC$ is an extension-closed pseudovariety of finite groups.

The following general criterion was proved by Ferov in \cite[Cor. 4.7 and Thm. 4.2]{Ferov-1}, it naturally extends the criterion found by the author in
\cite[Cor. 3.5 and Prop. 3.2]{M-RAAG}.
\begin{lemma} \label{lem:crit_for_CS} Let
$F$ be a $\cC$-hereditarily conjugacy separable group and let $G \leqslant F$ be a subgroup. Suppose that for each  $g \in G$
the double coset $\C_F(g) G$ is $\cC$-closed in $F$.  Then $g^G$ is $\cC$-closed in $F$ for each $g \in G$, in particular, $G$ is $\cC$-conjugacy separable.
\end{lemma}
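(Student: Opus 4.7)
The plan is to establish the stronger assertion that $g^G$ is $\cC$-closed in $F$ for every $g\in G$. By Remark~\ref{rem:pro-C_on_sbgp} applied to $G\leqslant F$, this will imply that $g^G=g^G\cap G$ is $\cC$-closed in $G$, and Remark~\ref{rem:cs<=>each_cc_closed} then yields that $G$ is $\cC$-conjugacy separable. So fix $g\in G$ and $y\in F\setminus g^G$; the task is to find a co-$\cC$ subgroup $K\lhd F$ with $yK\cap g^G=\emptyset$. If $y\notin g^F$, then, since $F$ is $\cC$-conjugacy separable (being $\cC$-hereditarily conjugacy separable), the conjugacy class $g^F$ is $\cC$-closed in $F$ by Remark~\ref{rem:cs<=>each_cc_closed}, and any co-$\cC$ subgroup $K$ separating $y$ from $g^F$ also separates $y$ from $g^G\subseteq g^F$. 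Hence I may assume $y=fgf^{-1}$ for some $f\in F$; the elementary identity $hgh^{-1}=fgf^{-1}\iff h^{-1}f\in\C_F(g)$ shows that $y\in g^G$ iff $f\in G\,\C_F(g)$, so the standing hypothesis rewrites as $f\notin G\,\C_F(g)$.

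Since inversion is a homeomorphism of $F$ in the pro-$\cC$ topology and $G\,\C_F(g)=(\C_F(g)\,G)^{-1}$, the hypothesis that $\C_F(g)\,G$ is $\cC$-closed gives that $G\,\C_F(g)$ is $\cC$-closed as well. Hence there is a co-$\cC$ subgroup $N_0\lhd F$ with $f\notin G\,\C_F(g)\,N_0$.

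The main obstacle is now to refine $N_0$: using the $\cC$-hereditary conjugacy separability of $F$, I must produce a co-$\cC$ subgroup $K\lhd F$, $K\subseteq N_0$, enjoying the following \emph{centralizer approximation} property:
\[ \text{for every } x\in F,\quad [x,g]\in K \;\Longrightarrow\; x\in \C_F(g)\,N_0. \]
Equivalently, the preimage in $F$ of the centralizer $\C_{F/K}(gK)$ is contained in $\C_F(g)N_0$, rather than merely containing the image of $\C_F(g)$. Establishing such $K$ is the technical core of the argument and is where the full strength of the hereditary assumption is used; it parallels the centralizer-control step in the proof of \cite[Prop.~3.2]{M-RAAG} and of its extension to arbitrary extension-closed pseudovarieties in \cite[Sec.~4]{Ferov-1}.

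Granting such a $K$, suppose for contradiction that $hgh^{-1}\in yK$ for some $h\in G$, and set $u\coloneq f^{-1}h$. A direct computation gives
\[ hgh^{-1}\,y^{-1}=fug u^{-1}f^{-1}\cdot fg^{-1}f^{-1}=f\,[u,g]\,f^{-1}\in K, \]
so by normality of $K$ in $F$ one has $[u,g]\in f^{-1}Kf=K$. The centralizer approximation then forces $u=f^{-1}h\in \C_F(g)\,N_0$, whence $f\in h\,\C_F(g)\,N_0\subseteq G\,\C_F(g)\,N_0$, contradicting the choice of $N_0$. Therefore $yK\cap g^G=\emptyset$, and the proof is complete.
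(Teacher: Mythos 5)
The paper contains no proof of this lemma to compare yours against: it is quoted directly from Ferov \cite{Ferov-1} (Cor.~4.7 and Thm.~4.2 there), which in turn generalizes \cite[Cor.~3.5 and Prop.~3.2]{M-RAAG}. Your outline faithfully reproduces the architecture of that cited proof, and the parts you actually carry out are correct: the reduction to the case $y\in g^F$ using $\cC$-conjugacy separability of $F$ itself (legitimate, since $F$ is $\cC$-open in $F$), the passage from $\cC$-closedness of $\C_F(g)G$ to a co-$\cC$ subgroup $N_0$ with $f\notin G\,\C_F(g)N_0$, and the concluding commutator computation (including the silent use of $N_0\lhd F$ to rewrite $hN_0\C_F(g)$ as $h\C_F(g)N_0$) are all sound.

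The genuine gap is exactly the step you flag and then skip. The ``centralizer approximation'' property of $K$ --- that $[x,g]\in K$ forces $x\in\C_F(g)N_0$ --- is precisely the Centralizer Condition of \cite[Def.~3.1]{M-RAAG}, and the implication ``$\cC$-hereditarily conjugacy separable $\Rightarrow$ Centralizer Condition'' is the substantive theorem here (\cite[Prop.~3.2]{M-RAAG}, extended to extension-closed pseudovarieties of finite groups in \cite{Ferov-1}). It is not a formal consequence of the definitions: its proof requires applying $\cC$-conjugacy separability to suitable $\cC$-open subgroups of $F$ containing $N_0$ and $g$, and separating $g$ from finitely many of its conjugates there; this is where extension-closedness and finiteness of the groups in $\cC$ (hypotheses in force in this section but absent from your write-up) actually enter. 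By asserting this step with a pointer to the same sources the paper cites for the whole lemma, you have proved only the easy half of the statement --- the deduction of the conclusion \emph{from} the Centralizer Condition --- and left its technical core untouched. To have a complete proof you would need to supply that argument in full.
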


We will say that a group $F$ \emph{has cyclic centralizers} if the centralizer $\C_F(f)$ is cyclic for each $f \in F\setminus\{1\}$.
Basic examples of groups with cyclic centralizers are free groups (\cite[Prop. 2.19 in Sec. I.2]{L-S}),
torsion-free hyperbolic groups \cite[Cor. 3.10 in Ch. III.$\Gamma$]{B-H}, $1$-relator groups with torsion \cite[Thm. 2]{Newman} and
$C'(1/6)$ small cancellation groups (\cite{Truf}).

The following proposition generalizes \cite[Prop. 7.5]{M-M}.

\begin{prop} \label{prop:crit_of_CS_for_subdirect} Suppose that $\cC$ is an extension-closed pseudovariety of finite groups.
Let $F_1$, $F_2$ be $\cC$-hereditarily conjugacy separable
groups with cyclic centralizers, let $G \leqslant \FF$ be a subdirect product and let
$N_1\coloneq G \cap F_1$. If $F_1/N_1$ is cyclic subgroup $\cC$-separable then $G$ is $\cC$-conjugacy separable.
\end{prop}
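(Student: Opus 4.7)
My plan is to verify the hypothesis of the general criterion Lemma~\ref{lem:crit_for_CS}, applied with $F = \FF$. Since $F_1$ and $F_2$ are $\cC$-hereditarily conjugacy separable, so is $\FF$ by Lemma~\ref{lem:C-cs-for_products}. Thus it suffices to show that for every $g \in G$, the double coset $\C_{\FF}(g)\,G$ is $\cC$-closed in $\FF$.

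Fix $g = (f_1, f_2) \in G$, and let $\psi_i\colon F_i \twoheadrightarrow P\coloneq F_1/N_1$ be the quotient maps from Lemma~\ref{lem:norm_in_G->norm_in_F}(ii), so that $G = \{(x,y) \in \FF : \psi_1(x)=\psi_2(y)\}$. If $f_1=1$ or $f_2=1$, then a short direct argument using subdirectness of $G$ shows $\C_{\FF}(g)\,G = \FF$, which is trivially $\cC$-closed. So assume both $f_1,f_2$ are non-trivial. The cyclic centralizer hypothesis then gives $\C_{F_i}(f_i)=\langle c_i\rangle$ for some $c_i\in F_i$, with $f_i = c_i^{k_i}$ and $k_i \neq 0$. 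Hence $\C_{\FF}(g) = \langle c_1\rangle\times\langle c_2\rangle$, and setting $\bar c_i\coloneq\psi_i(c_i)\in P$ we obtain the crucial relation
\[
\bar c_1^{k_1} \;=\; \psi_1(f_1) \;=\; \psi_2(f_2) \;=\; \bar c_2^{k_2}\quad\text{in }P.
\]

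The second step is to translate everything into $P$ via the map $\pi\colon\FF\to P$, $\pi(x,y)\coloneq \psi_1(x)\psi_2(y)^{-1}$. Continuity of $\pi$ (with respect to the pro-$\cC$ topologies) follows from Lemma~\ref{lem:full_preimage}(i) together with the fact that $P$, endowed with its pro-$\cC$ topology, is a topological group; here I use that since $\cC$ is closed under products, the pro-$\cC$ topology on $P\times P$ coincides with the product topology. A direct computation shows that $(a,b)\in(\langle c_1\rangle\times\langle c_2\rangle)\,G$ if and only if $\psi_1(a)\psi_2(b)^{-1}\in\langle\bar c_1\rangle\langle\bar c_2\rangle$, giving
\[
\C_{\FF}(g)\,G \;=\; \pi^{-1}\!\bigl(\langle\bar c_1\rangle\langle\bar c_2\rangle\bigr).
\]
It therefore remains to show that $\langle\bar c_1\rangle\langle\bar c_2\rangle$ is $\cC$-closed in $P$.

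This is the main (and only nontrivial) obstacle, as $\langle\bar c_1\rangle\langle\bar c_2\rangle$ is \emph{a priori} an infinite union of translates of a cyclic subgroup. The key observation is that it is in fact a \emph{finite} union of left cosets of $\langle\bar c_1\rangle$. Indeed, writing $\langle\bar c_1\rangle\langle\bar c_2\rangle = \bigcup_{n\in\Z}\langle\bar c_1\rangle\bar c_2^n$, the cosets $\langle\bar c_1\rangle\bar c_2^n$ and $\langle\bar c_1\rangle\bar c_2^{n'}$ agree exactly when $n-n'$ belongs to the subgroup $T\coloneq\{n\in\Z : \bar c_2^n\in\langle\bar c_1\rangle\}\leqslant\Z$. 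The relation $\bar c_2^{k_2}=\bar c_1^{k_1}$ forces $k_2\in T$, and since $k_2\neq 0$ the subgroup $T$ has finite index in $\Z$, so the union reduces to finitely many distinct cosets. Each such coset is $\cC$-closed because $\langle\bar c_1\rangle$ is $\cC$-closed in $P$ by the cyclic subgroup $\cC$-separability hypothesis on $P\cong F_1/N_1$, and left multiplication is a homeomorphism of $P$ with its pro-$\cC$ topology. Hence $\langle\bar c_1\rangle\langle\bar c_2\rangle$ is a finite union of $\cC$-closed subsets, therefore $\cC$-closed, which completes the proof.
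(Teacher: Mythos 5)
Your proof is correct, and it shares the paper's overall skeleton: reduce to the double-coset criterion of Lemma~\ref{lem:crit_for_CS} (using Lemma~\ref{lem:C-cs-for_products} to see that $\FF$ is $\cC$-hereditarily conjugacy separable), dispose of the case $f_1=1$ or $f_2=1$, and use cyclic centralizers to identify $\C_{\FF}(g)=\langle c_1\rangle\times\langle c_2\rangle$. Where you genuinely diverge is in the mechanism for proving that $\C_{\FF}(g)G$ is $\cC$-closed. The paper writes this double coset as a finite union of translates of $\langle (f_1,1)\rangle G$ (exploiting that $\langle (f_1,1),(f_1,f_2)\rangle$ has finite index in the centralizer) and then invokes the technical Lemma~\ref{lem:crit_closed}, combined with Lemma~\ref{lem:full_preimage}.(iii), to reduce closedness of $\langle (f_1,1)\rangle G$ in $\FF$ to closedness of a cyclic subgroup of $P=F_1/N_1$. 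You instead realize the whole double coset as $\pi^{-1}\bigl(\langle\bar c_1\rangle\langle\bar c_2\rangle\bigr)$ for the continuous (non-homomorphic) map $\pi(x,y)=\psi_1(x)\psi_2(y)^{-1}$, and perform the finite-union decomposition inside $P$ rather than inside $\FF$; the relation $\bar c_1^{k_1}=\bar c_2^{k_2}$ with $k_2\neq 0$ plays exactly the role that the finite index of $H$ in $\C_{\FF}(g)$ plays in the paper. This bypasses Lemma~\ref{lem:crit_closed} --- the main technical workhorse of Section~3, whose proof via intersections over co-$\cC$ subgroups is the least elementary part of the paper's argument --- and in fact your preimage trick (applied to $XG=\pi^{-1}(\psi_1(X))$) would yield a shorter proof of that lemma as well; the trade-off is that the paper's lemma is a reusable general statement, while your argument is tailored to the fibre-product description of $G$. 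Two cosmetic points, neither affecting correctness: Lemma~\ref{lem:full_preimage}.(i) (openness of $\psi$) is not the right citation for continuity of $\pi$ --- what you need is the continuity of homomorphisms with respect to pro-$\cC$ topologies noted before Remark~\ref{rem:pro-C_on_sbgp}, the topological group property of $P$, and your (correct) observation about the product topology on $P\times P$; and the sets $\langle\bar c_1\rangle\bar c_2^{\,n}$ are right cosets, closed under right translation.
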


\begin{proof} We will aim to apply the criterion from Lemma \ref{lem:crit_for_CS}. So, consider any element $(g_1,g_2) \in G$. If $g_1=1$ in $F_1$ (or $g_2 = 1$ in $F_2$),
then $\C_{\FF}((g_1,g_2))$ contains all of $F_1$ (or all of $F_2$), and since $G\leqslant \FF$ is subdirect, we would have $\C_{\FF}((g_1,g_2))G=\FF$,
which is evidently $\cC$-closed in $\FF$.

Thus we can suppose that $g_1 \neq 1$ and $g_2 \neq 1$. Then $\C_{F_i}(g_i)=\langle f_i \rangle$, for some $f_i \in F_i$, by the assumptions, and $g_i=f_i^{m_i}$ for some
$m_i \in \Z\setminus\{0\}$, $i=1,2$. Therefore $\C_{\FF}((g_1,g_2))=\langle (f_1,1),(1,f_2) \rangle \cong \langle f_1\rangle \times \langle f_2 \rangle$, and, so
the subgroup $H\coloneq \langle (g_1,1),(1,g_2)\rangle=\langle (g_1,1),(g_1,g_2) \rangle$ has finite index in $\C_{\FF}((g_1,g_2))$. Note that $HG=\langle (g_1,1) \rangle G$
because $(g_1,g_2) \in G$ commutes with $(g_1,1)$. Thus, for any transversal $(a_1,b_1), \dots , (a_k,b_k)$ for the left cosets in $\C_{\FF}((g_1,g_2))/H$, we have
\begin{equation}\label{eq:C_FF(g1g2)G}
\C_{\FF}((g_1,g_2))G=\bigcup_{j=1}^k (a_j,b_j) H G=\bigcup_{j=1}^k (a_j,b_j) \langle (g_1,1) \rangle G.
\end{equation}
Now, recall that, by Lemma \ref{lem:crit_closed}, the double coset $\langle (g_1,1) \rangle G$ is $\cC$-closed in $\FF$,
provided the double coset $\langle g_1 \rangle N_1$ is $\cC$-closed in $F_1$.
Since $N_1 \lhd F_1$, in view of Lemma \ref{lem:full_preimage}.(iii)
the latter is equivalent to saying that the cyclic subgroup $\psi(\langle g_1\rangle)$ is $\cC$-closed in $F_1/N_1$, which is true by our assumptions,
where $\psi:F_1 \to F_1/N_1$ is the natural homomorphism.
Thus we can deduce that $\langle (g_1,1) \rangle G$ is $\cC$-closed in $\FF$, and so
\eqref{eq:C_FF(g1g2)G} implies that $\C_{\FF}((g_1,g_2))G$ is $\cC$-closed in $\FF$.

We have shown that $\C_{\FF}((g_1,g_2))G$ is $\cC$-closed in $\FF$ for every $(g_1,g_2) \in G$, and since $\FF$ is $\cC$-hereditarily conjugacy separable
(by Lemma \ref{lem:C-cs-for_products}), we can use Lemma \ref{lem:crit_for_CS} to conclude that $G$ is $\cC$-conjugacy separable.
\end{proof}

Proposition \ref{prop:crit_of_CS_for_subdirect} can be combined with Lemma \ref{lem:C-open-subdir} to establish $\cC$-hereditary conjugacy separability of subdirect products.

\begin{cor} \label{cor:crit_for_hcs_of_subdir} Let $\cC$ be an extension-closed pseudovariety of finite groups and let $F_1,F_2$ be $\cC$-hereditarily conjugacy separable groups
with cyclic centralizers. If $G \leqslant \FF$ is a subdirect product such that $F_1/N_1$ is highly residually-$\cC$ and
cyclic subgroup $\cC$-separable, where $N_1\coloneq G \cap F_1$, then $G$ is $\cC$-hereditarily conjugacy separable.
\end{cor}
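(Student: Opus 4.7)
The plan is to reduce the problem, for each $\cC$-open subgroup $H\leqslant G$, to a direct invocation of Proposition~\ref{prop:crit_of_CS_for_subdirect} applied inside a smaller ambient direct product. So fix an arbitrary $\cC$-open subgroup $H\leqslant G$, and let me aim to show $H$ is $\cC$-conjugacy separable. By Lemma~\ref{lem:C-open-subdir} (whose hypotheses are exactly what we have assumed about $F_1/N_1$), $H$ embeds as a subdirect product in $J_1\times J_2$ for some $\cC$-open subgroups $J_i\leqslant F_i$, and moreover $J_1/(H\cap J_1)$ is cyclic subgroup $\cC$-separable. This last fact supplies the quotient hypothesis required by Proposition~\ref{prop:crit_of_CS_for_subdirect}, now applied to $H\leqslant J_1\times J_2$.

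To complete the invocation I must verify that the new ambient groups $J_1, J_2$ themselves satisfy the remaining hypotheses of Proposition~\ref{prop:crit_of_CS_for_subdirect}. The cyclic centralizer condition is immediate, since for any non-trivial $x\in J_i$ one has $\C_{J_i}(x)=\C_{F_i}(x)\cap J_i$, a subgroup of a cyclic group and hence cyclic. For $\cC$-hereditary conjugacy separability of $J_i$, the key sublemma is that any $\cC$-open subgroup $K\leqslant J_i$ is already $\cC$-open in the ambient $F_i$. To see this I would pick a co-$\cC$ subgroup $L'\lhd J_i$ contained in $K$ and a co-$\cC$ subgroup $M\lhd F_i$ contained in $J_i$, then take $N$ to be the normal core of $L'\cap M$ in $F_i$. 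Since $M\lhd F_i$ and $L'\cap M$ has finite index in $M$ with $M/(L'\cap M)\in\cC$, a standard diagonal embedding of $M/N$ into a finite direct product of $F_i$-conjugates of $M/(L'\cap M)$ shows $M/N\in\cC$, whence $F_i/N\in\cC$ by extension-closedness of $\cC$. Then $N\leqslant K$ displays $K$ as containing a co-$\cC$ subgroup of $F_i$, so by Remark~\ref{rem:open->closed} $K$ is $\cC$-open in $F_i$. Since $F_i$ is $\cC$-hereditarily conjugacy separable, $K$ must be $\cC$-conjugacy separable, and hence $J_i$ is $\cC$-hereditarily conjugacy separable.

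With every hypothesis of Proposition~\ref{prop:crit_of_CS_for_subdirect} verified for $H\leqslant J_1\times J_2$, that proposition yields that $H$ is $\cC$-conjugacy separable, which is what was required. I do not anticipate a substantial obstacle: the nontrivial infrastructure is already assembled in Lemma~\ref{lem:C-open-subdir} and Proposition~\ref{prop:crit_of_CS_for_subdirect}, and the only remaining piece of routine work is the normal-core argument propagating $\cC$-openness from $J_i$ down to $F_i$, which is what makes $\cC$-hereditary conjugacy separability pass to $\cC$-open subgroups.
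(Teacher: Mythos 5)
Your proposal is correct and follows essentially the same route as the paper: reduce via Lemma~\ref{lem:C-open-subdir} to an application of Proposition~\ref{prop:crit_of_CS_for_subdirect} to $H\leqslant J_1\times J_2$, after checking that the $J_i$ inherit cyclic centralizers and $\cC$-hereditary conjugacy separability from the $F_i$. The only (immaterial) difference is in how you show a $\cC$-open subgroup $K\leqslant J_i$ is $\cC$-open in $F_i$: you give a direct normal-core construction of a co-$\cC$ subgroup of $F_i$ inside $K$, whereas the paper combines Lemma~\ref{lem:restr_crit}.(b) with a finite-index complement argument; both are valid.
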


\begin{proof} Consider any $\cC$-open subgroup $H$ of $G$. Then, according to Lemma \ref{lem:C-open-subdir}, there is a $\cC$-open subgroup $J_i \leqslant F_i$, $i=1,2$,
such that  $H \leqslant J_1\times J_2$ is a subdirect product and $J_1/(H\cap J_1)$ is cyclic subgroup $\cC$-separable.

Note that for each $i=1,2$, $J_i$  has cyclic centralizers, as a subgroup of $F_i$, and every $\cC$-open subgroup $K$ of $J_i$ is also $\cC$-open in $F_i$. Indeed, since
the class $\cC$ consists of finite groups, we have $|F_i:J_i|<\infty$ and $|J_i:K|<\infty$, hence $|F_i:K|<\infty$. Now, $K$ is $\cC$-closed in $F_i$ by
Lemma~\ref{lem:restr_crit}.(b) and Remark~\ref{rem:open->closed}, so its complement $F_i \setminus K$ is also $\cC$-closed in $F_i$, being a
finite union of cosets modulo $K$. Therefore $K$ must be $\cC$-open in $F_i$, as the complement of a $\cC$-closed set.

Recalling that $F_i$ is $\cC$-hereditarily conjugacy separable, we can conclude that so is $J_i$, $i=1,2$. It remains to apply Proposition \ref{prop:crit_of_CS_for_subdirect}
to deduce that $H$ is $\cC$-conjugacy separable. Since the latter is true for any $\cC$-open subgroup  $H \leqslant G$, we have shown that $G$
is $\cC$-hereditarily conjugacy separable.
\end{proof}

We will later see why the assumptions that $F_i$ have cyclic centralizers and $F_1/N_1$ is cyclic subgroup $\cC$-separable are essential in
Proposition~\ref{prop:crit_of_CS_for_subdirect} (see Remark~\ref{rem:cyc_central_important} and Subsection \ref{subsec:necessity}).
It is also worth mentioning that some criteria for solvability of the conjugacy problem in subdirect products were studied by Kulikova in \cite{Kul}.

\subsection{Criteria for non-conjugacy separability}
In this subsection $\cC$ will denote a pseudovariety of groups, unless specified otherwise.

We will start with the following general statement.

\begin{lemma} \label{lem:1st_crit_for_non-cs} Let $F_1,F_2$ be groups and let $\rho_1:\FF \to F_1$ denote the natural projection. Assume that
$G \leqslant \FF$ is a subgroup such that $\rho_1(G)=F_1$ and $N_i\coloneq G \cap F_i$, for $i=1,2$.
If $h_i \in N_i$, $i=1,2$, are arbitrary elements and $x_1$ belongs to the closure of $N_1$ in the \proc on $F_1$ then the element $(x_1h_1x_1^{-1},h_2)$ belongs to the
closure of the $N_1$-conjugacy class $(h_1,h_2)^{N_1}\subseteq (h_1,h_2)^G$ in the \proc on $G$.
\end{lemma}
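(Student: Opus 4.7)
The plan is to unwind the definition of pro-$\cC$ closure in $G$. Writing $p \coloneq (x_1 h_1 x_1^{-1}, h_2)$ and $q \coloneq (h_1, h_2)$, and noting that the $N_1$-conjugacy class $q^{N_1}$ equals $\{(n h_1 n^{-1}, h_2) : n \in N_1\}$ (under the identification $N_1 = N_1 \times \{1\}$), it suffices to show that every basic $\cC$-open neighborhood of $p$ in $G$, i.e., every set of the form $pH$ with $H$ a co-$\cC$ subgroup of $G$, contains at least one element of $q^{N_1}$. Note that $p$ and $q$ indeed lie in $G$: the element $x_1 h_1 x_1^{-1}$ belongs to $N_1$ because $N_1 \lhd F_1$ by Lemma~\ref{lem:norm_in_G->norm_in_F}(i) (whose proof only requires $\rho_1(G) = F_1$).

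Fix such an $H \lhd G$. The first step is to transfer the problem from $G$ to $F_1$ via the subgroup
\[ M \coloneq H \cap F_1 = H \cap (F_1 \times \{1\}). \]
Repeating verbatim the calculation in Lemma~\ref{lem:norm_in_G->norm_in_F}(i) (using $\rho_1(G) = F_1$ and $H \lhd G$) shows that $M \lhd F_1$; moreover $F_1/M \cong F_1 H/H \leqslant G/H \in \cC$, so $M$ is a co-$\cC$ subgroup of $F_1$. Hence $x_1 M$ is a $\cC$-open neighborhood of $x_1$ in $F_1$, and the hypothesis that $x_1$ lies in the pro-$\cC$ closure of $N_1$ produces an element $n \in N_1 \cap x_1 M$, so $n = x_1 m$ for some $m \in M$.

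It remains to verify that $(n h_1 n^{-1}, h_2) \in pH$, or equivalently that
\[ p^{-1} (n h_1 n^{-1}, h_2) = \bigl(x_1 h_1^{-1} x_1^{-1} \, n h_1 n^{-1},\, 1\bigr) \]
belongs to $H \cap F_1 = M$. Substituting $x_1 = n m^{-1}$ yields
\[ x_1 h_1^{-1} x_1^{-1} \, n h_1 n^{-1} = n \bigl(m^{-1} \, h_1^{-1} m h_1\bigr) n^{-1}, \]
and since $m \in M$, $h_1 \in F_1$, $n \in F_1$ and $M \lhd F_1$, the inner conjugate $h_1^{-1} m h_1$ lies in $M$, hence so does the whole expression. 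Thus $n$ furnishes the desired element of $pH \cap q^{N_1}$, and the lemma follows. There is no real obstacle here beyond a direct computation; the only minor point requiring care is to use $M = H \cap F_1$, rather than the naive candidate $\rho_1(H)$, because we need a normal subgroup of $F_1$ whose cosets sit inside $H$ when lifted to $G$.
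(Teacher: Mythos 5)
There is a genuine gap at the exact point you flag as the ``only minor point requiring care''. You set $M \coloneq H \cap F_1$ and assert that $M$ is a co-$\cC$ subgroup of $F_1$ via ``$F_1/M \cong F_1H/H \leqslant G/H$''. But $F_1 = F_1\times\{1\}$ is not contained in $G$ (only $N_1 = G\cap F_1$ is), so $F_1H$ is not a subgroup of $G$, and the second isomorphism theorem only gives $N_1/M = N_1/(H\cap N_1) \cong N_1H/H \leqslant G/H \in \cC$. The quotient $F_1/M$ surjects onto $F_1/N_1$, which in every intended application of this lemma (see Theorem~\ref{thm:first_crit}) is \emph{not} residually-$\cC$; so $F_1/M \notin \cC$ in general, $x_1M$ is not a basic $\cC$-open neighbourhood of $x_1$ in $F_1$, and the hypothesis that $x_1$ lies in the pro-$\cC$ closure of $N_1$ produces no element of $N_1 \cap x_1M$. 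A concrete failure: take $H=G$, so $M=N_1$; your argument would then require $x_1N_1\cap N_1\neq\emptyset$, i.e.\ $x_1\in N_1$, which is precisely what is not assumed (and is false in the applications). Your closing remark points in the wrong direction: $\rho_1(H)$ is exactly the subgroup one must use, since $F_1/\rho_1(H)$ is a quotient of $G/H$ and hence lies in $\cC$.

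The paper's proof repairs this step as follows. It chooses $x_2\in F_2$ with $(x_1,x_2)\in G$, notes that $x_1\rho_1(K)\cap N_1\neq\emptyset$ for every co-$\cC$ subgroup $K\lhd G$ (because $\rho_1(K)$ is co-$\cC$ in $F_1$), and deduces that $(x_1,x_2)$ lies in the pro-$\cC$ closure of $N_1\times N_2$ in $G$. Then, for any homomorphism $\varphi$ from $G$ to a group in $\cC$, one obtains $(a_1,a_2)\in N_1\times N_2$ with $\varphi((x_1,x_2))=\varphi((a_1,a_2))$, and a conjugation computation in the same spirit as yours gives $\varphi\left((x_1h_1x_1^{-1},h_2)\right)=\varphi\left((a_1h_1a_1^{-1},h_2)\right)\in\varphi\left((h_1,h_2)^{N_1}\right)$. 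The price of using $\rho_1(H)$ is that the witness $a_1\in N_1$ is found only after passing to the quotient $G/H$; the difficulty you were trying to avoid is resolved by working with $\varphi$-images rather than demanding an exact element $n\in N_1\cap x_1(H\cap F_1)$. Your final computation (showing $p^{-1}(nh_1n^{-1},h_2)\in M$ once $n=x_1m$ with $m\in M$) is correct as algebra, but the route you chose cannot supply such an $n$.
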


\begin{proof} First, note that $N_1$ is normal in $G$,  and hence it is normal in $F_1$ as $\rho_1(G)=F_1$ (see the proof of Lemma \ref{lem:norm_in_G->norm_in_F}.(i)). Therefore
$x_1h_1x_1^{-1} \in N_1$ and, thus, $(x_1h_1x_1^{-1},h_2) \in N_1\times N_2 \subseteq G$.

Since $\rho_1(G)=F_1$, we can find some element $x_2 \in F_2$ such that $(x_1,x_2) \in G$.
If $K\lhd G$ is any co-$\cC$ subgroup then $\rho_1(K)$ is a co-$\cC$ subgroup of $F_1$, as
$\cC$ is closed under taking quotients, hence $x_1\rho_1(K) \cap N_1 \neq\emptyset$ in $F_1$, because $x_1$ belongs to the closure of $N_1$ in the \proc on $F_1$.
Since $(x_1,x_2) \in \rho_1^{-1}(x_1)$ and $\rho_1^{-1}(N_1)=N_1\times N_2$, it follows that $(x_1,x_2) K \cap (N_1\times N_2) \neq\emptyset$. The latter holds for every
co-$\cC$ subgroup $K$ of $G$, showing that $(x_1,x_2)$ belongs to the closure of $N_1\times N_2$ in the \proc on $G$.

Now, consider any homomorphism $\varphi:G \to M$, where $M \in \cC$. Then $\varphi((x_1,x_2)) \in \varphi(N_1\times N_2)$, i.e., there exist $a_i \in N_i$, $i=1,2$,
such that $\varphi((x_1,x_2))=\varphi((a_1,a_2))$ in $M$. Since the elements
$(h_1,1)$, $(x_1h_1x_1^{-1},1)$,  $(1,h_2)$, $(x_1,x_2)$, $(a_1,a_2)$ and $(a_1,1)$ all belong to $G$, their $\varphi$-images
are defined, and we have
\begin{multline*}
\varphi\left((x_1h_1x_1^{-1},h_2)\right)=\varphi\left((x_1h_1x_1^{-1},1)\right)\varphi((1,h_2))\\=\varphi((x_1,x_2)) \varphi((h_1,1)) \varphi((x_1,x_2))^{-1} \varphi((1,h_2))
=  \varphi((a_1,a_2)) \varphi((h_1,1)) \varphi((a_1,a_2))^{-1} \varphi((1,h_2)) \\
=\varphi\left((a_1h_1a_1^{-1},h_2)\right)=
\varphi\left((a_1,1)(h_1,h_2)(a_1,1)^{-1}\right) \in \varphi\left((h_1,h_2)^{N_1}\right).
 \end{multline*}

Thus we have shown that $\varphi\left((x_1h_1x_1^{-1},h_2)\right) \in \varphi\left((h_1,h_2)^{N_1}\right)$ for every homomorphism $\varphi$ from $G$ to a group $M \in \cC$.
This proves that $(x_1h_1x_1^{-1},h_2)$ belongs to the closure of $(h_1,h_2)^{N_1}$ in the \proc on $G$.
\end{proof}

The following elementary fact will be useful:
\begin{rem} \label{rem:in_cc->double_coset} Suppose that $F$ is any group, $G \leqslant F$ is any subgroup and $f \in F$ is any element. Then,
for an arbitrary  $h \in F$,
$hfh^{-1} \in f^G$ if and only if $h \in G\C_F(f)$.
\end{rem}

We can now formulate the first basic criterion of non-conjugacy separability of subdirect products.
\begin{thm} \label{thm:first_crit} Let $\cC$ be a pseudovariety of groups, let
$G\leqslant \FF$ be a subdirect product of groups $F_1$ and $F_2$, and let $N_i\coloneq G\cap F_i$, $i=1,2$. Suppose that
$F_1/N_1$ is not residually-$\cC$ and there are elements $h_i \in N_i$ such that $\C_{F_i}(h_i) \subseteq N_i$, for $i=1,2$. Then $G$ is not $\cC$-conjugacy separable.
\end{thm}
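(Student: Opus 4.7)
The plan is to exhibit two elements of $G$ that are not conjugate in $G$ but whose images in every $\cC$-quotient of $G$ are conjugate. The two ingredients furnished by the hypotheses pull in opposite directions: failure of residual-$\cC$-ness of $F_1/N_1$ gives extra ``virtual'' conjugators (in the $\cC$-topology), while the centralizer conditions $\C_{F_i}(h_i)\subseteq N_i$ force any genuine conjugator in $G$ to come from $N_1\times N_2$. Playing these against each other produces the desired pair.

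First I would use the hypothesis that $F_1/N_1$ is not residually-$\cC$: the trivial subgroup of $F_1/N_1$ is not $\cC$-closed, so by Lemma \ref{lem:full_preimage}.(iii) the subgroup $N_1$ is not $\cC$-closed in $F_1$, and we can pick an element $x_1\in F_1\setminus N_1$ that lies in the pro-$\cC$ closure of $N_1$. Setting $y\coloneq(x_1h_1x_1^{-1},h_2)\in G$, Lemma \ref{lem:1st_crit_for_non-cs} applied to $h_1,h_2,x_1$ shows that $y$ lies in the closure of the $G$-conjugacy class $(h_1,h_2)^G$ in the pro-$\cC$ topology on $G$. Equivalently, for every homomorphism $\varphi\colon G\to M$ with $M\in\cC$, the images $\varphi(y)$ and $\varphi((h_1,h_2))$ are conjugate in $M$. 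Thus no $\cC$-quotient of $G$ can distinguish the conjugacy classes of $(h_1,h_2)$ and $y$.

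It remains to verify that $(h_1,h_2)$ and $y$ are genuinely non-conjugate in $G$. Suppose, for contradiction, that $(g_1,g_2)(h_1,h_2)(g_1,g_2)^{-1}=(x_1h_1x_1^{-1},h_2)$ for some $(g_1,g_2)\in G$. Comparing second coordinates gives $g_2\in\C_{F_2}(h_2)\subseteq N_2$, so in particular $(1,g_2)\in N_2\subseteq G$, whence $(g_1,1)=(g_1,g_2)(1,g_2)^{-1}\in G$ and therefore $g_1\in G\cap F_1=N_1$. Comparing first coordinates yields $x_1^{-1}g_1\in\C_{F_1}(h_1)\subseteq N_1$, so $x_1\in g_1 N_1=N_1$, contradicting the choice of $x_1$. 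Invoking Remark \ref{rem:cs<=>each_cc_closed} (or directly the definition), $G$ is not $\cC$-conjugacy separable.

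The argument is essentially a compactness/obstruction dichotomy, and no real computational difficulty remains once Lemma \ref{lem:1st_crit_for_non-cs} is available; the only subtle point is the two-step descent $(g_1,g_2)\in G\Rightarrow(g_1,1)\in G$, which crucially uses that $g_2$ already lies in $N_2\subseteq G$ (a consequence of the centralizer hypothesis on $h_2$). Note also that the hypotheses are mild: neither $F_1$, $F_2$ nor $N_1$ needs to be finitely generated, which is what makes the result applicable to fibre products in the subsequent sections.
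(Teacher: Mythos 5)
Your proposal is correct and follows essentially the same route as the paper: choose $x_1\in F_1\setminus N_1$ in the pro-$\cC$ closure of $N_1$, invoke Lemma \ref{lem:1st_crit_for_non-cs} to place $(x_1h_1x_1^{-1},h_2)$ in the closure of $(h_1,h_2)^G$, and use the centralizer hypotheses to rule out genuine conjugacy. The only cosmetic difference is that you verify non-conjugacy by a direct coordinate computation, whereas the paper packages the same computation via Remark \ref{rem:in_cc->double_coset} and the observation that $\C_{\FF}((h_1,h_2))\subseteq N_1\times N_2\subseteq G$ forces $G\,\C_{\FF}((h_1,h_2))=G$.
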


\begin{proof} The assumption that $F_1/N_1$ is not residually-$\cC$ is equivalent to the statement that $N_1$ is not $\cC$-closed in $F_1$, i.e.,
there is $x_1 \in F_1\setminus N_1$ such that $x_1$ belongs to the closure of $N_1$ in the \proc on $F_1$. Also, note that
\[\C_{\FF}((h_1,h_2))=\C_{F_1}(h_1) \times \C_{F_2}(h_2) \subseteq N_1\times N_2 \subseteq G,\]
therefore $G \C_{\FF}((h_1,h_2))=G$.

In view of Lemma \ref{lem:1st_crit_for_non-cs}, to prove the theorem it is enough to check that $(x_1h_1x_1^{-1},h_2) \notin (h_1,h_2)^G$ in $G$.
Indeed, since  $(x_1h_1x_1^{-1},h_2) =(x_1,1) (h_1,h_2) (x_1,1)^{-1}$ in $\FF$, Remark \ref{rem:in_cc->double_coset} tells us that
this element belongs to $(h_1,h_2)^G$ if and only if $(x_1,1) \in G \C_{\FF}((h_1,h_2))=G$.
But the latter is equivalent to $(x_1,1) \in G \cap F_1 =N_1$, contradicting the
choice of $x_1$.

We can now conclude that $(x_1h_1x_1^{-1},h_2) \notin (h_1,h_2)^G$, but this element belongs to the closure of $(h_1,h_2)^G$ in the \proc on $G$ by
Lemma~\ref{lem:1st_crit_for_non-cs}. It follows that $(h_1,h_2)^G$ is not $\cC$-closed in $G$, thus $G$ is not $\cC$-conjugacy separable by Remark \ref{rem:cs<=>each_cc_closed}.
\end{proof}

Recall that, according to Lemma \ref{lem:inf_norm_sbgp->non-elem}, the existence of elements $h_i \in N_i$ such that $\C_{F_i}(h_i) \subseteq N_i$, for $i=1,2$,
holds as long as $F_i$ are acylindrically hyperbolic groups without non-trivial finite normal subgroups, and $N_i \neq \{1\}$, $i=1,2$. The latter condition simply
means that $G \leqslant \FF$ is a \emph{full} subdirect product.
Theorem \ref{thm:first_crit} together with Lemma \ref{lem:inf_norm_sbgp->non-elem} immediately yield the following:

\begin{cor}\label{cor:non-cs_for_subdir_of_acyl_hyp}
Suppose that $\cC$ is a pseudovariety of groups and $F_i$ is an acylindrically hyperbolic group without non-trivial finite normal subgroups, $i=1,2$.
If $G \leqslant \FF$ is a full subdirect product such that $F_1/N_1$ is not residually-$\cC$ (where $N_1 \coloneq G \cap F_1$) then $G$ is not $\cC$-conjugacy separable.
\end{cor}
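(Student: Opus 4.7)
The plan is to deduce this corollary as a direct consequence of Theorem \ref{thm:first_crit} combined with Lemma \ref{lem:inf_norm_sbgp->non-elem}. The conclusion of Theorem \ref{thm:first_crit} matches what we want to prove, and its hypothesis that $F_1/N_1$ is not residually-$\cC$ is already given; the only remaining ingredient is the existence of elements $h_i \in N_i$ with $\C_{F_i}(h_i) \subseteq N_i$, for $i=1,2$.

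First I would verify that the subgroups $N_i \coloneq G \cap F_i$ are normal in $F_i$ and non-trivial. Normality follows from Lemma \ref{lem:norm_in_G->norm_in_F}.(i) applied to the subdirect product $G \leqslant \FF$. Non-triviality is exactly the assumption that $G$ is \emph{full} subdirect. So for each $i \in \{1,2\}$, $N_i$ is a non-trivial normal subgroup of $F_i$.

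Next, I would invoke Lemma \ref{lem:inf_norm_sbgp->non-elem} for each acylindrically hyperbolic group $F_i$ (which by hypothesis has no non-trivial finite normal subgroups): applied to the non-trivial normal subgroup $N_i \lhd F_i$, it supplies an element $h_i \in N_i$ such that $\C_{F_i}(h_i) = \langle h_i \rangle \subseteq N_i$. In particular, the inclusion $\C_{F_i}(h_i) \subseteq N_i$ holds for $i=1,2$, which is exactly the second hypothesis of Theorem \ref{thm:first_crit}.

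Having assembled both hypotheses, Theorem \ref{thm:first_crit} yields that $G$ is not $\cC$-conjugacy separable, completing the proof. I do not expect any real obstacle here, since the corollary is essentially a packaging of the two earlier results; the only point worth stating explicitly is the reminder that the fullness assumption on $G$ is precisely what makes both $N_1$ and $N_2$ non-trivial so that Lemma \ref{lem:inf_norm_sbgp->non-elem} can be applied on each side.
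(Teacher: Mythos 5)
Your proposal is correct and follows exactly the paper's own route: the corollary is stated there as an immediate consequence of Theorem \ref{thm:first_crit} together with Lemma \ref{lem:inf_norm_sbgp->non-elem}, with the fullness of $G$ used precisely to guarantee that each $N_i$ is a non-trivial normal subgroup of $F_i$ so that the lemma produces $h_i \in N_i$ with $\C_{F_i}(h_i)=\langle h_i\rangle \subseteq N_i$. No gaps.
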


Theorem \ref{thm:non-cs_for_subdir_of_hyp} from the Introduction is a special case of Corollary \ref{cor:non-cs_for_subdir_of_acyl_hyp}, since every non-abelian free group or a non-elementary hyperbolic group is acylindrically hyperbolic (see Subsection~\ref{subsec:acyl_hyp}).

We end this subsection by giving an explicit application of Theorem \ref{thm:first_crit} and Lemma \ref{lem:1st_crit_for_non-cs}.

\begin{ex}\label{ex:B-G_gp} Let $P\coloneq \langle a,b \,\|\, bab^{-1}a ba^{-1}b^{-1}=a^2 \rangle$ be the $1$-relator group introduced by Baumslag in \cite{B-gp}.
Baumslag proved that the element $a$ is contained in every subgroup of finite index in $P$, that is, it belongs to the closure of the identity element in
the profinite topology on $P$.

Now, let $F$ be the free group with the free generating set $\{x,y\}$, and let $\psi:F \to P$ be the epimorphism given by $\psi(x)\coloneq a$, $\psi(y) \coloneq b$.
We can construct the symmetric fibre product $G\leqslant F\times F$ corresponding to $\psi$ as in Subsection~\ref{subsec:constr-subdir}.
It can be deduced from the proof of Lemma~\ref{lem:Mih}.(a) that $G=\langle(x,x),(y,y),(h,1) \rangle $, where $h\coloneq yxy^{-1}x yx^{-1}y^{-1}x^{-2}$, because
$N\coloneq \ker \psi$ is the normal closure of $h$ in $F$, by construction.

Note that $x$ belongs to the closure of $N$ in the profinite topology of $F$
(by Lemma~\ref{lem:full_preimage}.(iii)), but $x \notin N$ as $a \neq 1$ in $P$.
Moreover, $\C_F(h)=\langle h \rangle \subseteq N$, as $h$ is not a proper power in the free group $F$.
Therefore the elements $(h,h),(xhx^{-1},h) \in N\times N \leqslant G$ are not conjugate in $G$ (by Remark \ref{rem:in_cc->double_coset}), but are conjugate in every
finite quotient of $G$ by Lemma~\ref{lem:1st_crit_for_non-cs}. In particular, $G$ is not conjugacy separable.
\end{ex}

\subsection{Characterizing conjugacy separable subdirect products of finite index}\label{subsect:C-cs_for_fin_ind}
Corollary~\ref{cor:non-cs_for_subdir_of_acyl_hyp} can be combined with Lemma \ref{lem:C-cs-for_products} to give a complete characterization of
$\cC$-conjugacy separable subdirect products that have finite index

\begin{cor}\label{cor:crit_for_c-cs_if_fin_ind-acyl_case} Let $\cC$ be a non-trivial extension-closed pseudovariety of finite groups, and
let $F_i$ be a $\cC$-hereditarily conjugacy separable acylindrically hyperbolic group
without non-trivial finite normal subgroups, $i=1,2$.
If $G \leqslant \FF$ is a subdirect product of finite index in $\FF$ then the following statements are equivalent:
\begin{itemize}
  \item[(1)] $G$ is $\cC$-conjugacy separable;
  \item[(2)] $F_1/N_1 \in \cC$, where $N_1 \coloneq G \cap F_1$;
  \item[(3)] $G$ is $\cC$-open in $\FF$.
\end{itemize}
\end{cor}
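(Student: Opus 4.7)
The plan is to establish a cycle of implications $(1)\Rightarrow(2)\Rightarrow(3)\Rightarrow(1)$, invoking Corollary \ref{cor:non-cs_for_subdir_of_acyl_hyp} for the hard direction and the definition of $\cC$-hereditary conjugacy separability for the easy one.

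For $(2)\Rightarrow(3)$: assuming $F_1/N_1\in\cC$, Lemma \ref{lem:norm_in_G->norm_in_F}(ii) gives an isomorphism $F_2/N_2\cong F_1/N_1$, so $F_2/N_2\in\cC$ as well. Since $\cC$ is closed under direct products, $(\FF)/(N_1\times N_2)\cong F_1/N_1\times F_2/N_2\in\cC$, so $N_1\times N_2$ is a co-$\cC$ subgroup of $\FF$ contained in $G$, which shows that $G$ is $\cC$-open.

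For $(3)\Rightarrow(1)$: by Lemma \ref{lem:C-cs-for_products}, the direct product $\FF$ is $\cC$-hereditarily conjugacy separable, being the product of two such groups. Since $G$ is $\cC$-open in $\FF$, $\cC$-conjugacy separability of $G$ is immediate from the definition of $\cC$-hereditary conjugacy separability.

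The key implication is $(1)\Rightarrow(2)$. First I would verify that $G$ is a \emph{full} subdirect product: the assumption $|\FF:G|<\infty$ forces $|F_i:N_i|<\infty$ via Lemma \ref{lem:norm_in_G->norm_in_F}(iii), and since each $F_i$ is infinite (being acylindrically hyperbolic), the finite-index subgroup $N_i$ must be non-trivial, for $i=1,2$. The assumption that $F_i$ has no non-trivial finite normal subgroups allows me to invoke the contrapositive of Corollary \ref{cor:non-cs_for_subdir_of_acyl_hyp}: the $\cC$-conjugacy separability of $G$ implies that $F_1/N_1$ is residually-$\cC$. Finally, since $F_1/N_1$ is finite, residually-$\cC$ promotes to membership in $\cC$: for each non-trivial element pick a homomorphism to a group in $\cC$ that does not kill it, take the (finite) product of these homomorphisms, and observe that $F_1/N_1$ embeds into a finite direct product of $\cC$-groups, which lies in $\cC$ by closure under direct products and subgroups.

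The main ``obstacle'' is essentially bookkeeping: all the real work is packaged in Corollary \ref{cor:non-cs_for_subdir_of_acyl_hyp} and Lemma \ref{lem:C-cs-for_products}, and I only need to check that the finite-index hypothesis ensures fullness of the subdirect product, and that the residually-$\cC$ conclusion strengthens to membership in $\cC$ by finiteness.
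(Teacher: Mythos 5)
Your proposal is correct and follows essentially the same route as the paper: fullness from the finite-index hypothesis, Corollary \ref{cor:non-cs_for_subdir_of_acyl_hyp} plus finiteness of $F_1/N_1$ for $(1)\Rightarrow(2)$, the co-$\cC$ subgroup $N_1\times N_2$ for $(2)\Rightarrow(3)$, and Lemma \ref{lem:C-cs-for_products} for $(3)\Rightarrow(1)$. The only difference is that you spell out why a finite residually-$\cC$ group lies in $\cC$, which the paper simply asserts.
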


\begin{proof}
Note that $G \cap F_i$ is non-trivial as it has finite index in $F_i$, $i=1,2$. Thus $G \leqslant \FF$ is a full subdirect product.

First let us assume that $G$ is $\cC$-conjugacy separable. Then $F_1/N_1$ is residually-$\cC$ by Corollary~\ref{cor:non-cs_for_subdir_of_acyl_hyp}. But
$|F_1/N_1|=|(\FF):G|<\infty$ (by Lemma \ref{lem:norm_in_G->norm_in_F}.(iii))
and a finite group is residually-$\cC$ if and only if it belongs to $\cC$, thus $F_1/N_1 \in \cC$, and we have shown that
(1) implies (2).

If $F_1/N_1 \in \cC$ then $F_2/N_2 \in \cC$ by Lemma \ref{lem:norm_in_G->norm_in_F}.(ii) (where $N_2\coloneq G \cap F_2$), so
$(\FF)/(N_1\times N_2)\cong F_1/N_1\times F_2/N_2 \in \cC$. Thus $N_1\times N_2$ is a co-$\cC$ subgroup of $\FF$ contained in $G$, so $G$ is $\cC$-open in $\FF$
by Remark \ref{rem:open->closed}. Hence (2) implies (3).

Finally, let us assume (3) and deduce (1). Note that $\FF$ is $\cC$-hereditarily conjugacy separable by Lemma \ref{lem:C-cs-for_products}.
Therefore $G$ is $\cC$-conjugacy separable, as it is $\cC$-open in $\FF$ by the assumption. Thus (3) implies (1).
\end{proof}

Corollary \ref{cor:crit_for_c-cs_if_fin_ind} from the Introduction is a special case of Corollary \ref{cor:crit_for_c-cs_if_fin_ind-acyl_case}
because of Lemma~\ref{lem:free_are_C-hcs}.

\begin{cor}\label{cor:crit_p-cs_for_fin_ind} Suppose that $p$ is a prime, $F_1,F_2$ are non-abelian
free groups and $G\leqslant \FF$ is a subdirect product of finite index. Then the following are equivalent:
\begin{itemize}
  \item[(1)] $G$ is $p$-conjugacy separable;
  \item[(2)] $F_1/N_1$ is a finite $p$-group, where $N_1 \coloneq G \cap F_1$;
  \item[(3)] the index $|(\FF):G|$ is a power of $p$.
\end{itemize}
\end{cor}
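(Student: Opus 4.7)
The plan is to derive this as a direct specialization of Corollary \ref{cor:crit_for_c-cs_if_fin_ind-acyl_case} with $\cC = \cC_p$, the class of all finite $p$-groups, and then reinterpret the conditions in the concrete language of indices.

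First I would verify the hypotheses of Corollary \ref{cor:crit_for_c-cs_if_fin_ind-acyl_case}. The pseudovariety $\cC_p$ is clearly non-trivial (it contains $\Z/p\Z$), is closed under subgroups, quotients, direct products, and extensions (an extension of a $p$-group by a $p$-group is again a $p$-group by Lagrange's theorem), so it is a non-trivial extension-closed pseudovariety of finite groups. Each non-abelian free group $F_i$ is acylindrically hyperbolic (e.g., via its action on its Bass--Serre tree, cf.\ Subsection \ref{subsec:acyl_hyp}), is torsion-free and hence contains no non-trivial finite normal subgroup, and is $\cC_p$-hereditarily conjugacy separable by Lemma \ref{lem:free_are_C-hcs}. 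So Corollary \ref{cor:crit_for_c-cs_if_fin_ind-acyl_case} applies to $F_1, F_2$ with $\cC = \cC_p$.

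Next I would translate conditions (2) and (3). By Lemma \ref{lem:norm_in_G->norm_in_F}.(iii), $|(\FF):G| = |F_1/N_1|$ is finite, and a finite group belongs to $\cC_p$ if and only if its order is a power of $p$. This immediately gives the equivalence of (2) in the present statement with condition (2) of Corollary \ref{cor:crit_for_c-cs_if_fin_ind-acyl_case}, and it also shows that (2) and (3) of the present statement are equivalent to each other.

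Finally, to close the loop with $\cC_p$-openness (condition (3) of Corollary \ref{cor:crit_for_c-cs_if_fin_ind-acyl_case}), I would observe that if $|(\FF):G|$ is a power of $p$, then $N_1 \times N_2 \subseteq G$ is a co-$\cC_p$ subgroup of $\FF$ since $(\FF)/(N_1\times N_2) \cong F_1/N_1 \times F_2/N_2$ is a finite $p$-group by Lemma \ref{lem:norm_in_G->norm_in_F}, so $G$ is $\cC_p$-open; conversely, if $G$ is $\cC_p$-open then it contains some co-$\cC_p$ subgroup $H\lhd \FF$ by Remark \ref{rem:open->closed}, and $|(\FF):G|$ divides $|(\FF):H|$, a power of $p$. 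With all three conditions matched to those of Corollary \ref{cor:crit_for_c-cs_if_fin_ind-acyl_case}, the equivalence follows.

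There is no real obstacle here; the corollary is a clean specialization, and the only minor point worth stating carefully is that finite residually-$\cC_p$ groups are precisely finite $p$-groups, which is what converts the abstract condition ``$F_1/N_1 \in \cC_p$'' into the arithmetic condition on $|(\FF):G|$.
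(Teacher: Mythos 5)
Your proof is correct and follows essentially the same route as the paper: the paper likewise obtains the equivalence of (1) and (2) by specializing its general finite-index characterization to $\cC=\cC_p$ (citing Corollary \ref{cor:crit_for_c-cs_if_fin_ind}, which is itself the free-group instance of Corollary \ref{cor:crit_for_c-cs_if_fin_ind-acyl_case} via Lemma \ref{lem:free_are_C-hcs}), and then gets (2)$\Leftrightarrow$(3) from $|(\FF):G|=|F_1/N_1|$ as in Lemma \ref{lem:norm_in_G->norm_in_F}.(iii). Your extra verification of the $\cC_p$-openness condition is sound but not needed beyond what the cited corollary already provides.
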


\begin{proof} Evidently in this statement $\cC=\cC_p$ is the class of all finite $p$-groups, so the equivalence of (1) and (2) has already been proved in
Corollary \ref{cor:crit_for_c-cs_if_fin_ind}.

The equivalence of (2) and (3) follows from the fact that $|(\FF):G|=|F_1/N_1|$ (see Lemma \ref{lem:norm_in_G->norm_in_F}.(iii)).
\end{proof}

\begin{ex}\label{ex:ind-p}
Let $F$ be the free group of rank $2$. By Lemmas \ref{lem:free_are_C-hcs} and \ref{lem:C-cs-for_products} every finite index subgroup of $F \times F$
is conjugacy separable (with respect to the class of all finite groups), but, in view of Corollary~\ref{cor:crit_p-cs_for_fin_ind},
it is easy to construct finite index subgroups that are not $p$-conjugacy separable for any prime $p$.

\begin{itemize}
  \item Let $p$ be any prime and let $G \leqslant F\times F$
be the symmetric fibre product corresponding to any epimorphism from $F$ to $\Z/p\Z$. Then $G \cap F\times \{1\}=N\times \{1\}$, where $F/N \cong \Z/p\Z$,
and Corollary~\ref{cor:crit_p-cs_for_fin_ind} tells us that $G$ is $p$-conjugacy separable but not $q$-conjugacy separable for any prime $q \neq p$.
  \item We can also take $G \leqslant F\times F$ to be the symmetric fibre product corresponding to any epimorphism from $F$ to $\Z/6\Z$.
In this case, since $\Z/6\Z$ is not a $p$-group, $G$ is not $p$-conjugacy separable for any prime $p$, by Corollary~\ref{cor:crit_p-cs_for_fin_ind}.
\end{itemize}
\end{ex}

More generally, we obtain the following statement.

\begin{cor}\label{cor:dist_C-cs} Let $\cC$ and $\mathcal{D}$ be two non-trivial extension-closed pseudovarieties of finite groups such that
$\cC \not\subseteq \mathcal{D}$, and let $H$ be the
free group of rank $2$. Then there exists a finite index subgroup $G \leqslant H \times H$ such that
$G$ is $\mathcal{D}$-conjugacy separable but not $\cC$-conjugacy separable.
\end{cor}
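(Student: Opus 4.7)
The plan is to reduce the problem to a single direct construction via Corollary \ref{cor:crit_for_c-cs_if_fin_ind}, applied with $F_1 = F_2 = H$. Since the non-abelian free group $H$ is $\mathcal{E}$-hereditarily conjugacy separable for every non-trivial extension-closed pseudovariety $\mathcal{E}$ of finite groups (Lemma \ref{lem:free_are_C-hcs}), the corollary is applicable both for $\mathcal{E} = \cC$ and for $\mathcal{E} = \mathcal{D}$. Comparing the equivalence (1)$\Leftrightarrow$(2) in Corollary \ref{cor:crit_for_c-cs_if_fin_ind} for the two pseudovarieties reduces the desired conclusion to the following combinatorial problem: produce a finite $2$-generator group $Q$ with $Q \in \mathcal{D}$ and $Q \notin \cC$.

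Given such a $Q$, fix any surjective homomorphism $\psi\colon H \twoheadrightarrow Q$ (possible because $Q$ is $2$-generated and $H$ is free of rank $2$) and form the symmetric fibre product
\[
G \coloneq \{(h_1,h_2) \in H\times H \mid \psi(h_1)=\psi(h_2)\}.
\]
By the discussion of fibre products in Subsection \ref{subsec:constr-subdir}, $G$ is a subdirect subgroup of $H \times H$ with $N_1 \coloneq G \cap H = \ker\psi$, so $H/N_1 \cong Q$; in particular, $G$ has finite index $|Q|$ in $H\times H$ by Lemma \ref{lem:norm_in_G->norm_in_F}(iii). Invoking Corollary \ref{cor:crit_for_c-cs_if_fin_ind} once with $\mathcal{E}=\mathcal{D}$ and once with $\mathcal{E}=\cC$ then delivers exactly that $G$ is $\mathcal{D}$-conjugacy separable and not $\cC$-conjugacy separable.

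What remains---and what I expect to be the main obstacle---is the existence of such a $2$-generator witness $Q$. The hypothesis on the two pseudovarieties furnishes some finite group $R$ that distinguishes $\cC$ from $\mathcal{D}$, and the plan is to descend from $R$ to a simple composition factor. The key fact making this work is that an extension-closed pseudovariety of finite groups is completely determined by the simple groups it contains: a finite group belongs to such a pseudovariety if and only if every one of its simple composition factors does (this is immediate from closure under subgroups, quotients, and extensions). Applying this observation to $R$ extracts a simple composition factor $Q$ that separates the two pseudovarieties in the very same way as $R$ does. Finally, every finite simple group is $2$-generated---cyclic groups of prime order by a single element, and non-abelian finite simple groups by the classical $2$-generator theorem relying on the classification---so $Q$ is a quotient of the free group $H$ of rank $2$, and the construction is complete. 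The only non-routine ingredients are thus the composition-factor reduction and the $2$-generation of finite simple groups; everything else follows mechanically from Corollary \ref{cor:crit_for_c-cs_if_fin_ind} and the fibre product formalism.
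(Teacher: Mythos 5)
Your proposal is correct, but it takes a genuinely different route from the paper's. The paper sidesteps the need for a $2$-generated witness entirely: it picks a finite group $P$ separating the two pseudovarieties, passes to a finite-index subgroup $F \leqslant H$ (free of large enough rank) admitting an epimorphism onto $P$, and forms the symmetric fibre product $G$ inside $F \times F$; since $|(H\times H):(F\times F)|<\infty$ and $|(F\times F):G|=|P|$ by Lemma \ref{lem:norm_in_G->norm_in_F}.(iii), the subgroup $G$ still has finite index in $H\times H$, and Corollary \ref{cor:crit_for_c-cs_if_fin_ind} is then invoked for $G \leqslant F\times F$ exactly as you invoke it. Your alternative --- noting that an extension-closed pseudovariety of finite groups is determined by the simple groups it contains, extracting a simple composition factor $Q$ that still separates the two classes, and quoting $2$-generation of finite simple groups --- is valid: the composition-factor lemma is correct (only extension-closure of $\cC$ is needed to extract a factor outside $\cC$, while membership of all factors in $\mathcal{D}$ uses just closure under subquotients), and it buys you a $G$ that is subdirect in $H\times H$ itself rather than in a proper finite-index subgroup. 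The price is steep, though: $2$-generation of non-abelian finite simple groups rests on the classification, whereas the paper's trick of enlarging the rank by passing to a finite-index subgroup of the free group keeps the whole argument elementary; this is the standard workaround your approach replaces with CFSG.

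One caveat, which you share with the paper: the hypothesis as stated, $\cC \not\subseteq \mathcal{D}$, literally furnishes a witness in $\cC\setminus\mathcal{D}$, whereas both your argument and the paper's own proof (which opens by taking $P \in \mathcal{D}\setminus\cC$) need a witness in $\mathcal{D}\setminus\cC$ to reach the stated conclusion. This is evidently a typo in the corollary --- either the hypothesis should read $\mathcal{D}\not\subseteq\cC$ or the roles of $\cC$ and $\mathcal{D}$ in the conclusion should be interchanged --- and your phrase ``furnishes some finite group $R$ that distinguishes $\cC$ from $\mathcal{D}$'' glosses the same point; it is not a defect of your construction relative to the paper's.
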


\begin{proof} By the assumptions, there is some finite group $P \in \mathcal{D} \setminus \cC$. Let $F \leqslant H$ be a finite index subgroup admitting an epimorphism
$\psi:F \to P$.
Then we can construct the symmetric fibre product $G \leqslant F\times F$ corresponding to $\psi$, so that $G \cap (F\times\{1\})=N\times \{1\}$, where $N \coloneq \ker\psi$.

Thus $F/N \cong P \in \mathcal{D}\setminus \cC$, hence $G$ is $\mathcal{D}$-conjugacy separable but not
$\cC$ -conjugacy separable by Corollary \ref{cor:crit_for_c-cs_if_fin_ind}.
Recalling that $|(F\times F):G|=|F/N|<\infty$, by Lemma \ref{lem:norm_in_G->norm_in_F}.(iii), and $|(H\times H):(F\times F)|<\infty$ by construction, we deduce that
$|(H\times H):G|<\infty$, as claimed.
\end{proof}

\subsection{Necessity of cyclic subgroup separability of the quotient}\label{subsec:necessity}
If the reader compares the assumptions on the quotient $F_1/N_1$ in Proposition \ref{prop:crit_of_CS_for_subdirect} and Corollary \ref{cor:non-cs_for_subdir_of_acyl_hyp},
they will immediately notice that to establish $\cC$-conjugacy separability of a subdirect product $G$, Proposition~\ref{prop:crit_of_CS_for_subdirect} requires
$F_1/N_1$ to be cyclic subgroup $\cC$-separable, while Corollary~\ref{cor:non-cs_for_subdir_of_acyl_hyp} only shows that $F_1/N_1$ must be residually-$\cC$ if $G$ is
$\cC$-conjugacy separable. The goal of this subsection is to address this ``gap'': we will give an example showing that it is indeed necessary to require cyclic subgroup $\cC$-separability of $F_1/N_1$, and just residual-$\cC$-ness of $F_1/N_1$ is insufficient.
We will also prove that, in general, if the former condition fails then the corresponding subdirect product possesses a finite
index subgroup which is not $\cC$-conjugacy separable.

The following observation can help in showing that a subgroup of a group is not conjugacy separable (cf. \cite[Remark 3.6]{M-RAAG}); it is a converse of
Lemma~\ref{lem:crit_for_CS}.
\begin{lemma}\label{lem:double_coset-closed}
Let $\cC$ be a pseudovariety of groups, let $F$ be a group, $G \leqslant F$ a subgroup and $f \in F$ an element. If the $G$-conjugacy class $f^G$ is $\cC$-closed
in $F$ then the double coset $\C_F(f)G$ is $\cC$-closed in $F$.
\end{lemma}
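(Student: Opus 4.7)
The plan is to show this directly from the fact that the pro-$\cC$ topology turns $F$ into a topological group together with the elementary identification in Remark~\ref{rem:in_cc->double_coset}. The key observation is that the conjugation map $\phi : F \to F$ given by $\phi(h) \coloneq hfh^{-1}$ is continuous in the pro-$\cC$ topology, because multiplication and inversion are continuous in any topological group, and $\phi$ is a composition of these with the constant map at $f$.

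First I would invoke Remark~\ref{rem:in_cc->double_coset} to rewrite the preimage of the conjugacy class: for every $h \in F$ we have $\phi(h) = hfh^{-1} \in f^G$ if and only if $h \in G\,\C_F(f)$. Consequently
\[
\phi^{-1}(f^G) = G\,\C_F(f).
\]
Assuming that $f^G$ is $\cC$-closed in $F$, continuity of $\phi$ then gives that $G\,\C_F(f)$ is $\cC$-closed in $F$.

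Finally, I would pass from $G\,\C_F(f)$ to $\C_F(f)\,G$ by taking inverses. Indeed, $(G\,\C_F(f))^{-1} = \C_F(f)^{-1} G^{-1} = \C_F(f)\,G$, since $\C_F(f)$ and $G$ are subgroups. The inversion map $F \to F$, $x \mapsto x^{-1}$, is a homeomorphism with respect to the pro-$\cC$ topology (it is continuous with itself as its own inverse), so it sends $\cC$-closed sets to $\cC$-closed sets. Therefore $\C_F(f)\,G$ is $\cC$-closed in $F$, as claimed.

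There is no real obstacle here; the entire argument rests on the topological-group property of the pro-$\cC$ topology, which has already been recorded in the paper, and on the set-theoretic translation provided by Remark~\ref{rem:in_cc->double_coset}. The only point worth writing out cleanly is the continuity of $h \mapsto hfh^{-1}$, which factors through the continuous map $F \to F \times F$, $h \mapsto (h, h^{-1})$, followed by $(x,y) \mapsto xfy$.
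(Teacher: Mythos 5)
Your proof is correct and follows essentially the same route as the paper: both arguments rest on the identification $G\,\C_F(f)=\{h\in F \mid hfh^{-1}\in f^G\}$ from Remark~\ref{rem:in_cc->double_coset} and finish by applying the inversion homeomorphism. The only difference is cosmetic: you package the separation step as ``preimage of a $\cC$-closed set under the continuous map $h\mapsto hfh^{-1}$,'' whereas the paper unwinds the same continuity argument by hand, exhibiting for each $h\notin G\,\C_F(f)$ a co-$\cC$ subgroup $N$ with $hN\cap G\,\C_F(f)=\emptyset$.
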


\begin{proof}
Suppose that $h \in F \setminus G\C_F(f)$. Then $hfh^{-1} \notin f^G$ by Remark \ref{rem:in_cc->double_coset}, so, since $f^G$ is $\cC$-closed in $F$,
there must exist a co-$\cC$ subgroup $N\lhd F$ such that $\varphi(hfh^{-1}) \notin \varphi(f)^{\varphi(G)}$ in $F/N$, where $\varphi:F \to F/N$ is the natural homomorphism.
It follows that $\varphi(h) \notin \varphi(G) \C_{F/N}(\varphi(f))$ in $F/N$ (see Remark \ref{rem:in_cc->double_coset}),
which obviously yields that $\varphi(h) \notin \varphi\left(G\C_F(f)\right)$ in $F/N$.

Therefore for each  $h \in F \setminus G\C_F(f)$ we found a co-$\cC$ subgroup $N \lhd F$ such that $h \notin  G\C_F(f)N$ (equivalently, $hN \cap G\C_F(f)=\emptyset$),
which shows that the double coset $G\C_F(f)$ is $\cC$-closed in $F$. Since the map $a \mapsto a^{-1}$ is a homeomorphism of $F$, with respect to its
pro-$\cC$ topology, we can conclude that $\C_F(f)G=(G\C_F(f))^{-1}$ is also $\cC$-closed in $F$.
\end{proof}

A priori, it may happen that for a subgroup $G$ of a group $F$, $G$ is $\cC$-conjugacy separable but $g^G$ is not $\cC$-closed in $F$, for some $g \in G$ (even though this conjugacy class is $\cC$-closed in $G$). However, this is certainly impossible if the \proc on $G$ is a restriction of the \proc on $F$.
After combining this observation with Lemma \ref{lem:double_coset-closed} we obtain the following corollary.

\begin{cor}\label{cor:cs+restr->closed_double_cosets} Suppose that $\cC$ is a pseudovariety of groups and $G$ is a $\cC$-conjugacy separable subgroup of a group $F$.
If the \proc on $G$ is a restriction of the \proc on $F$ then  $\C_F(g)G$ is $\cC$-closed in $F$ for each $g \in G$.
\end{cor}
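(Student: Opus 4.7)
The plan is essentially a three-line chain combining the hypotheses with the two preceding results, Remark \ref{rem:cs<=>each_cc_closed} and Lemma \ref{lem:double_coset-closed}.

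First I would fix an arbitrary element $g \in G$ and observe that, since $G$ is $\cC$-conjugacy separable, Remark \ref{rem:cs<=>each_cc_closed} gives that the conjugacy class $g^G$ is $\cC$-closed in $G$.

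Next I would invoke the hypothesis that the pro-$\cC$ topology on $G$ is the restriction of the pro-$\cC$ topology on $F$. By the very definition of ``restriction'' given just before Lemma \ref{lem:restr_crit}, every subset of $G$ that is $\cC$-closed in $G$ is also $\cC$-closed in $F$. Applying this to the set $g^G \subseteq G$, we conclude that $g^G$ is $\cC$-closed in $F$.

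Finally, having established that $g^G$ is $\cC$-closed in $F$, I would apply Lemma \ref{lem:double_coset-closed} directly (with its $f$ taken to be our $g$) to obtain that the double coset $\C_F(g)G$ is $\cC$-closed in $F$. Since $g \in G$ was arbitrary, the corollary follows. There is no real obstacle here: the whole content of the statement is that the restriction hypothesis is precisely what is needed to upgrade ``$g^G$ is $\cC$-closed in $G$'' (the defining property of $\cC$-conjugacy separability of $G$) to the hypothesis of Lemma \ref{lem:double_coset-closed}, which in turn yields the closedness of $\C_F(g)G$.
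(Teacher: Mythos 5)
Your proposal is correct and follows exactly the argument the paper gives: the paper observes (in the paragraph preceding the corollary) that $\cC$-conjugacy separability makes $g^G$ closed in $G$, that the restriction hypothesis upgrades this to closedness in $F$, and then applies Lemma~\ref{lem:double_coset-closed}. There is nothing to add.
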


The next statement can be regarded as nearly a converse to Proposition \ref{prop:crit_of_CS_for_subdirect}.

\begin{thm} \label{thm:near_converse}
Let $\cC$ be an extension-closed pseudovariety of finite groups and let $F_1,F_2$ be acylindrically hyperbolic groups without non-trivial finite normal subgroups.
Suppose that $G \leqslant \FF$ is a full subdirect product such that $G$ is $\cC$-conjugacy separable but $F_1/N_1$ is not cyclic subgroup $\cC$-separable, where
$N_1\coloneq G\cap F_1$. Then all of the following must hold:
\begin{itemize}
  \item[(a)] $G$ is $\cC$-closed in $\FF$ but some co-$\cC$ subgroup $G'$ of $G$ is not $\cC$-closed in $\FF$;
  \item[(b)] the quotient $F_1/N_1$ is residually-$\cC$ but not highly residually-$\cC$;
  \item[(c)] $G'$ is not $\cC$-conjugacy separable, thus $G$ is not $\cC$-hereditarily conjugacy separable.
\end{itemize}
\end{thm}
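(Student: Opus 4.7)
The first halves of (a) and (b) are immediate from the machinery already in place. Since $G \leqslant \FF$ is a full subdirect product with $N_i \neq \{1\}$ for $i=1,2$ and $F_1,F_2$ are acylindrically hyperbolic with no non-trivial finite normal subgroups, Corollary~\ref{cor:non-cs_for_subdir_of_acyl_hyp} forces $F_1/N_1$ to be residually-$\cC$ (otherwise $G$ would fail to be $\cC$-conjugacy separable). By Corollary~\ref{cor:subdir-closed-crit}, this is equivalent to $G$ being $\cC$-closed in $\FF$.

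The heart of the argument is to show that the pro-$\cC$ topology on $G$ is not a restriction of the one on $\FF$. Suppose for contradiction that it is; I will deduce that $F_1/N_1$ is cyclic subgroup $\cC$-separable, contradicting the hypothesis. Given $y \in F_1$, I aim to prove $\langle y\rangle N_1$ is $\cC$-closed in $F_1$. Lemma~\ref{lem:inf_norm_sbgp->non-elem}, applied to $N_1 \lhd F_1$, yields $h_1 \in N_1$ with $\C_{F_1}(h_1) = \langle h_1\rangle$ and an integer $n$ such that $\C_{F_1}(h_1^n y) = \langle h_1^n y\rangle$. Since $h_1^n y \equiv y \pmod{N_1}$, replacing $y$ by $h_1^n y$ leaves $\langle y\rangle N_1$ unchanged, so I may assume $\C_{F_1}(y) = \langle y\rangle$. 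Picking $y'' \in F_2$ with $(y,y'') \in G$ (subdirectness) and applying Lemma~\ref{lem:inf_norm_sbgp->non-elem} to $N_2 \lhd F_2$, I obtain $h_2 \in N_2$ and $m \in \Z$ such that $z \coloneq h_2^m y''$ has $\C_{F_2}(z) = \langle z\rangle$; and $(y, z) = (1, h_2^m)(y, y'') \in G$ since $h_2 \in N_2 \subseteq G$. Thus $\C_{\FF}((y,z)) = \langle y\rangle \times \langle z\rangle$, and Corollary~\ref{cor:cs+restr->closed_double_cosets} tells us that the double coset $(\langle y\rangle \times \langle z\rangle)G$ is $\cC$-closed in $\FF$.

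The key calculation is that $(\langle y\rangle \times \langle z\rangle)G \cap F_1 = \langle y\rangle N_1$: an element $(y^a, z^b)(g_1, g_2) \in F_1$ forces $g_2 = z^{-b}$, and multiplying by $(y,z)^{b} \in G$ on the right shows that $g_1 y^b \in G \cap F_1 = N_1$, whence $(y^a, z^b)(g_1, g_2) = y^a g_1 \in y^{a-b} N_1$; the reverse inclusion (taking $b=0$) is immediate. Consequently $\langle y\rangle N_1$ is $\cC$-closed in $F_1$ by Remark~\ref{rem:pro-C_on_sbgp}, the sought contradiction. With restriction thus ruled out, Lemma~\ref{lem:restr_on_subdir} (contrapositive) delivers the second half of (b), and Lemma~\ref{lem:restr_crit}.(a) produces the co-$\cC$ subgroup $G' \leqslant G$ required by (a).

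For (c), set $F_i' \coloneq \rho_i(G')$ and $N_i' \coloneq G' \cap F_i$ for $i=1,2$. Since $G'$ has finite index in $G$, each $F_i'$ has finite index in $F_i$, hence is acylindrically hyperbolic with no non-trivial finite normal subgroup, and $N_i'$ is non-trivial (being of finite index in the infinite group $N_i$), so $G' \leqslant F_1' \times F_2'$ is a full subdirect product. The pro-$\cC$ topology on the $\cC$-open subgroup $F_1' \times F_2'$ is a restriction of that on $\FF$ by Lemma~\ref{lem:restr_crit}.(b), so $G'$ remains non-$\cC$-closed in $F_1' \times F_2'$; Corollary~\ref{cor:subdir-closed-crit} then forces $F_1'/N_1'$ to fail to be residually-$\cC$, and a final application of Corollary~\ref{cor:non-cs_for_subdir_of_acyl_hyp} shows $G'$ is not $\cC$-conjugacy separable. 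Since $G'$ is $\cC$-open in $G$, this rules out $\cC$-hereditary conjugacy separability of $G$. The main obstacle I anticipate is securing the double-coset identity $(\langle y\rangle \times \langle z\rangle)G \cap F_1 = \langle y\rangle N_1$; once that identification is in hand, the remainder assembles from the framework already built up in earlier sections.
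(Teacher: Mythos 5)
Your proof is correct and follows essentially the same route as the paper's: the key step is the same combination of Lemma~\ref{lem:inf_norm_sbgp->non-elem} (to arrange cyclic centralizers for $(y,z)\in G$) with Corollary~\ref{cor:cs+restr->closed_double_cosets}, only run contrapositively (assume the topology restricts and deduce cyclic subgroup $\cC$-separability of $F_1/N_1$) rather than by directly exhibiting a non-$\cC$-closed double coset; and your hand computation of $(\langle y\rangle\times\langle z\rangle)G\cap F_1=\langle y\rangle N_1$ is exactly what the paper gets by writing $\C_{\FF}((y,z))G=\langle(y,1)\rangle G$ and invoking Remark~\ref{rem:YG_int_F_1} and Lemma~\ref{lem:crit_closed}. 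The one point to tighten is in part (c): you assert that $F_i'$, being of finite index in $F_i$, is acylindrically hyperbolic with no non-trivial finite normal subgroups, but the absence of finite normal subgroups does not follow from finite index alone without an extra argument. The clean fix, which is what the paper does, is to note that $F_i'=\rho_i(G')$ is \emph{normal} in $F_i$ (because $G'\lhd G$ and $G$ is subdirect), so that acylindrical hyperbolicity of $F_i'$ follows from \cite[Cor. 1.5]{Osin-acyl} and $\E_{F_i'}(F_i')\subseteq\E_{F_i}(F_i')=\{1\}$ follows from Lemma~\ref{lem:inf_norm_sbgp->non-elem}.
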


\begin{proof} First, note that for each $i=1,2$, $N_i \coloneq G \cap F_i$ is an infinite normal subgroup of the acylindrically hyperbolic group $F_i$.
Let $h_i \in N_i$ be the loxodromic element provided by Lemma~\ref{lem:inf_norm_sbgp->non-elem},
such that $C_{F_i}(h_i)=\langle h_i \rangle \subseteq N_i$, $i=1,2$.

Since $G$ is $\cC$-conjugacy separable, we can apply Theorem \ref{thm:first_crit} to deduce that $F_1/N_1$ must be residually-$\cC$.
Therefore $G$ is $\cC$-closed in $\FF$ by Corollary \ref{cor:subdir-closed-crit}.
Let us now show that the \proc on $G$ is not a restriction of the \proc on $\FF$.

By the assumptions, $F_1/N_1$ is not cyclic subgroup $\cC$-separable, so
there exists an element $\bar x \in F_1/N_1$ such that $\langle \bar x \rangle$ is not $\cC$-closed in $F_1/N_1$.
Let $\psi:F_1 \to F_1/N_1$ denote the natural epimorphism and let $u \in \psi^{-1}(\bar x)$ be any preimage of $\bar x$ in $F_1$.
Then, by Lemma \ref{lem:inf_norm_sbgp->non-elem},
there exists $m \in \Z$ such that the element $x_1\coloneq h_1^m u\in F_1$ satisfies $\C_{F_1}(x_1)=\langle x_1 \rangle$. Observe that
$\psi(x_1)=\psi(u)=\bar x$ in $F_1/N_1$.

Now, since $G\leqslant \FF$ is subdirect, there exists $v \in F_2$ such that $(x_1,v) \in G$.
After applying Lemma \ref{lem:inf_norm_sbgp->non-elem} once again, we can find some $n \in \Z$ such that the element
$x_2 \coloneq h_2^n v \in F_2$ satisfies $\C_{F_2}(x_2)=\langle x_2 \rangle$. Moreover, since $(1,h_2^n) \in N_2 \subseteq G$, we have that
$(x_1,x_2)=(1,h_2^n)(x_1,v) \in G$.

Observe that in $\FF$ we have \[\C_{\FF}((x_1,x_2))G=\langle (x_1,1),(1,x_2) \rangle G=\langle (x_1,1) \rangle \langle (x_1,x_2) \rangle G=\langle (x_1,1) \rangle G.\]
So, according to Lemma \ref{lem:crit_closed}, to show that the double coset $\C_{\FF}((x_1,x_2))G$ is not $\cC$-closed in $\FF$, it is enough to prove that
$\langle x_1 \rangle N_1$ is not $\cC$-closed in $F_1$. However, $\langle x_1 \rangle N_1=\psi^{-1}(\langle \bar x \rangle) \leqslant F_1$, and
since $\langle \bar x \rangle$ is not $\cC$-closed in $P$, it follows that $\langle x_1 \rangle N_1$ is not closed in the \proc on $F_1$ (see
Lemma \ref{lem:full_preimage}.(iii)). Thus $\C_{\FF}((x_1,x_2))G$ is not $\cC$-closed in $\FF$,  even though
$G$ is $\cC$-conjugacy separable by the assumption. Consequently, we can use Corollary \ref{cor:cs+restr->closed_double_cosets} to conclude that the \proc on $G$ is not
a restriction of the \proc on $\FF$. In view of Lemma \ref{lem:restr_crit}.(a), this means that there is a co-$\cC$ subgroup $G' \lhd G$ such that $G'$ is not closed
in the \proc on $\FF$. Hence $F_1/N_1$ cannot be highly residually-$\cC$ by Lemma \ref{lem:restr_on_subdir}.
Thus we have proved claims (a) and (b), and it remains to prove claim (c).

Let $F_i'$ denote the projection of $G'$ to the $i$-th coordinate group, and let $N_i'\coloneq G'\cap F_i'=G' \cap F_i$, $i=1,2$.
Since $G/G' \in \cC$, $|G:G'|<\infty$ and $F_i'$ is a co-$\cC$ subgroup of $F_i$, $i=1,2$, by our assumptions on $\cC$.
Hence $F_1'\times F_2'$ is a co-$\cC$ subgroup of $\FF$.
It also follows that $|N_i:N_i'|<\infty$, so $N_i'$ must be infinite as $|N_i|=\infty$, $i=1,2$.
Therefore  $G' \leqslant F_1'\times F_2'$ is a full subdirect product, and $G'$ is not $\cC$-closed in
$F_1'\times F_2'$ by Lemma \ref{lem:restr_crit}.(b), because $G'$ is not $\cC$-closed in $\FF$. Therefore $F_1'/N_1'$ is not residually-$\cC$ by
Corollary \ref{cor:subdir-closed-crit}.

Finally, for each $i=1,2$,  $F_i'$ is an infinite normal subgroup of the acylindrically hyperbolic group $F_i$ by construction, so $F_i'$ is itself acylindrically hyperbolic by
\cite[Cor. 1.5]{Osin-acyl} and $\E_{F_i'}(F_i') \subseteq \E_{F_i}(F_i')=\{1\}$ by Lemma \ref{lem:inf_norm_sbgp->non-elem}. Thus $G'$ satisfies all the assumptions of
Corollary~\ref{cor:non-cs_for_subdir_of_acyl_hyp}, which implies that $G'$ is not $\cC$-conjugacy separable, and so claim (c) holds.
\end{proof}

We are now ready to construct an example showing that it is necessary to assume cyclic subgroup $\cC$-separability of $F_1/N_1$ in
Proposition \ref{prop:crit_of_CS_for_subdirect}.

\begin{ex}\label{ex:BS(1,2)}
Let $P=\langle a,t \,\|\, tat^{-1}=a^2\rangle$ be a metabelian Baumslag-Solitar group. Then $P$ is highly residually finite (e.g., by \cite[Cor. 4.17]{Lor}).
It follows that $P$ is highly residually-$\cC$, where $\cC$ is either the class of all finite groups or the class of solvable finite groups.

However, $P$ is not cyclic subgroup $\cC$-separable, because $\langle a \rangle$ is conjugate in $P$ to its proper subgroup $\langle a^2\rangle$, which implies that in any
finite quotient of $P$ these two cyclic subgroups have the same image. Hence $a$ belongs to the closure of $\langle a^2\rangle$ in the \proc on $P$, i.e.,
the cyclic subgroup $\langle a^2 \rangle$ is not $\cC$-closed in $P$.

Now, consider any epimorphism $\psi:F \to P$, where $F$ is the free group of rank $2$, and let $G \leqslant F\times F$ be the resulting symmetric fibre product
(see Subsection \ref{subsec:constr-subdir}). Then $G$ is a full subdirect product in $F\times F$ and $F/(G\cap F) \cong P$ is highly residually-$\cC$.
However, $G$ cannot be $\cC$-conjugacy separable by claim (b) of Theorem \ref{thm:near_converse}.
\end{ex}

Theorem \ref{thm:near_converse} does not quite close the gap between cyclic subgroup $\cC$-separability and  residual-$\cC$-ness of $F_1/N_1$.
Thus the answer to the following natural question is still unknown:

\begin{question}\label{q:css_vs_res-C} Does there exist an extension-closed pseudovariety of finite groups $\cC$ and a full subdirect product
$G \leqslant \FF$, of two non-abelian free groups $F_1,F_2$, such that $G$ is $\cC$-conjugacy separable but $F_1/N_1$ is not cyclic subgroup $\cC$-separable,
where $N_1\coloneq G\cap F_1$?
\end{question}

In Section \ref{sec:p} we will show that the answer to Question \ref{q:css_vs_res-C} is negative when $\cC=\cC_p$ is the class of finite $p$-groups;
we do not have an answer in case when $\cC$ is the class of all finite groups.

\section{An exotic hereditarily conjugacy separable group}\label{sec:non_cs_fi_overgps}
In this section we will see how the criteria from Section \ref{sec:crit} can be combined with the constructions of subdirect products from Subsection~\ref{subsec:constr-subdir}
to produce hereditarily conjugacy separable groups that possess finite index overgroups with  unsolvable conjugacy problem.
We will first give finitely generated (but not finitely presented) examples, as these are easier to construct, before giving finitely presented examples in
Subsection \ref{subsec:fp_ex} (see Theorem \ref{thm:strong_vers}).

Throughout this section  $\cC$ will always be the class of all finite groups,
so we will simply talk about conjugacy separability, cyclic subgroup separability, etc. (suppressing $\cC$).

\subsection{A finitely generated example}
The following lemma will be used to construct finite index overgroups with unsolvable conjugacy problem.

\begin{lemma}\label{lem:constr_of_autom} Let $P$ be a finitely generated group with a finitely generated subgroup $Q \leqslant P$. Then for every integer $k\ge 2$ there exists
a finitely generated free group $F$, an automorphism $\sigma \in Aut(F)$ and an epimorphism $\psi:F \to P$ such that
\begin{itemize}
\item the order of $\sigma$ is $k$;
  \item $\psi(\Fix(\sigma))=Q$, where $\Fix(\sigma)  \coloneq \{h \in F \mid \sigma(h)=h\}$ is the subgroup of~fixed~points~of~$\sigma$;
  \item $\sigma$ induces the identity automorphism of $P$, i.e., $\sigma(\ker \psi)=\ker\psi$ and $\psi(\sigma (f))=\psi(f)$ for all $f \in F$.
\end{itemize}
\end{lemma}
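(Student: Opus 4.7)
The plan is to build $F$ with a tailor-made free product decomposition on which the desired $\sigma$ acts simply as a cyclic permutation of certain free factors, while leaving another factor containing preimages of $Q$ pointwise fixed. Fix finite generating tuples $p_1,\dots,p_n$ of $P$ and $q_1,\dots,q_m$ of $Q$. Let $F_Q$ be the free group on symbols $y_1,\dots,y_m$ and, for each $l\in\{0,1,\dots,k-1\}$, let $F^{(l)}$ be a free group on symbols $x_{1,l},\dots,x_{n,l}$. Set
\[ F \coloneq F_Q * F^{(0)} * F^{(1)} * \cdots * F^{(k-1)}, \]
which is a free group on the $m+nk$ listed generators. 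Define $\psi\colon F\to P$ by $\psi(y_j)=q_j$ and $\psi(x_{i,l})=p_i$; since the image already contains $p_1,\dots,p_n$, $\psi$ is surjective. Define $\sigma\colon F\to F$ by $\sigma(y_j)=y_j$ and $\sigma(x_{i,l})=x_{i,\,l+1\bmod k}$. This extends to a homomorphism of $F$ (being free) and is an automorphism, with inverse $x_{i,l}\mapsto x_{i,\,l-1\bmod k}$, $y_j\mapsto y_j$. Clearly $\sigma^k=\mathrm{id}$, and $\sigma$ has order exactly $k$ because its restriction to $\{x_{1,l}\}_{l=0}^{k-1}$ is a $k$-cycle. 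Since $\psi$ and $\psi\circ\sigma$ agree on generators, $\sigma$ induces the identity on $P$ and preserves $\ker\psi$.

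It remains to verify $\psi(\Fix(\sigma))=Q$. The inclusion $\supseteq$ is immediate since $F_Q\subseteq\Fix(\sigma)$ and $\psi(F_Q)=Q$. For the reverse inclusion I would prove the stronger equality $\Fix(\sigma)=F_Q$. Write any $f\in F$ in normal form with respect to the free product decomposition $F=F_Q*H$, where $H\coloneq F^{(0)}*\cdots*F^{(k-1)}$, namely $f=h_0 g_1 h_1 g_2 \cdots g_r h_r$ with each interior $h_s\in F_Q\setminus\{1\}$ and each $g_s\in H\setminus\{1\}$. Since $\sigma$ fixes $F_Q$ pointwise and preserves $H$, the expression $\sigma(f)=h_0\sigma(g_1)h_1\cdots\sigma(g_r)h_r$ is again in normal form. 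By uniqueness of normal forms in a free product, $f=\sigma(f)$ forces $\sigma(g_s)=g_s$ in $H$ for each $s$.

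The crux is therefore to show that $\Fix(\sigma|_H)=\{1\}$. Take $g\in H\setminus\{1\}$ and write $g=u_1 u_2\cdots u_t$ in reduced form in the free product $H$, so each syllable $u_j$ lies in some $F^{(l_j)}\setminus\{1\}$ and consecutive indices satisfy $l_j\neq l_{j+1}$. Then $\sigma(g)=\sigma(u_1)\cdots\sigma(u_t)$ with $\sigma(u_j)\in F^{(l_j+1\bmod k)}\setminus\{1\}$; consecutive syllables still lie in distinct factors (since $l_j\not\equiv l_{j+1}\pmod k$ implies $l_j+1\not\equiv l_{j+1}+1\pmod k$), so this product is itself reduced. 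If $g=\sigma(g)$, uniqueness of reduced forms gives $\sigma(u_j)=u_j$ for each $j$; but $u_j\in F^{(l_j)}$ and $\sigma(u_j)\in F^{(l_j+1\bmod k)}$ are distinct free factors, meeting only in $\{1\}$, a contradiction. Hence $\Fix(\sigma|_H)=\{1\}$, so in the normal form of any $f\in\Fix(\sigma)$ we must have $r=0$, i.e., $f\in F_Q$. Applying $\psi$ gives $\psi(\Fix(\sigma))=\psi(F_Q)=Q$, as required. The only subtle ingredient was this normal-form analysis of $\Fix(\sigma|_H)$; the rest of the construction is essentially forced by the conclusion.
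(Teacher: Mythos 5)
Your construction is the same as the paper's: a free basis split into a part mapping onto generators of $Q$ (fixed pointwise by $\sigma$) together with $k$ cyclically permuted copies of a part mapping onto generators of $P$, with $\Fix(\sigma)$ identified via uniqueness of reduced/normal forms. The proof is correct and follows essentially the same route; your free-product normal-form argument is just a slightly more elaborate phrasing of the paper's observation that a reduced word fixed by a basis permutation must be fixed letter by letter.
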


\begin{proof} Clearly we can suppose that $P$ is generated by some elements $a_1,\dots, a_m, b_1,\dots,b_n \in P$, such that $a_1,\dots,a_m$ generate $Q$.
Take $F=F(Z)$ to be the free group on a set \[Z\coloneq \{x_1,\dots,x_m,y_{11},\dots,y_{1k}, \dots, y_{n1},\dots,y_{nk} \}\] of cardinality $m+kn$.
Let $\psi:F \to P$ be the epimorphism defined by $\psi(x_l)\coloneq a_l$ and $\psi(y_{ij})\coloneq b_i$, for $l=1,\dots, m$, $i=1,\dots,n$ and $j=1,\dots,k$.

Now let $\sigma:F \to F$ be the automorphism given by the following permutation of $X$:
\[\sigma(x_l)\coloneq x_l, \mbox{ for all } l=1,\dots,m,~\sigma(y_{ij})\coloneq y_{i,j+1}, \mbox{ for all } i=1,\dots,n, j=1,\dots,k,\]
where the addition of indices is done modulo $k$. Evidently $\sigma$ has order $k$ in $Aut(F)$ and induces the identity automorphism of $P$.

Clearly,  $\langle x_1,\dots,x_m \rangle \subseteq \Fix(\sigma)$. On the other hand, if $w \in F$ is a reduced word fixed by $\sigma$, then $\sigma(w)$ is also a reduced word
of the same length. So $\sigma(w)=w$ can only occur if $w$ consists entirely of letters from $\{x_1,\dots,x_m\}^{\pm 1}$, which shows that
$\Fix(\sigma) \subseteq \langle x_1,\dots,x_m \rangle$. Hence $\Fix(\sigma) = \langle x_1,\dots,x_m \rangle$, and so
$\psi(\Fix(\sigma))=\langle \psi(x_1),\dots\psi(x_m) \rangle = Q$, as required.
\end{proof}

Let $F$ be a group generated by a finite set $Z$ and let $Y \subseteq F$ be a subset. We will say that the \emph{membership problem for $Y$ in $F$ is solvable}
if there is an algorithm which, given any word $W$ over $Z^{\pm 1}$, decides whether or not $W$ represents an element of $Y$ in $F$.
The \emph{conjugacy problem in $F$ is solvable} if there exists an algorithm taking on input two words over $Z^{\pm 1}$, and deciding whether or not the elements of $F$
represented by these words are conjugate in $F$.

By a \emph{normal overgroup} of a group $G$ we mean a group $K$, which contains a normal subgroup isomorphic to $G$
(to simplify the notation we will identify $G$ with this normal subgroup).

\begin{thm}\label{thm:weaker_vers} For every integer $k \ge 2$ there exists a finitely generated subdirect product
$G \leqslant F\times F$, where $F$ is a finitely generated non-abelian free group, satisfying the following. The group
$G$ is hereditarily conjugacy separable but there is a normal overgroup $K$, of $G$, such that $|K:G|=k$, $K$ is not conjugacy separable and has unsolvable conjugacy problem.
\end{thm}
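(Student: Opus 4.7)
The plan is to apply Lemma~\ref{lem:constr_of_autom} with carefully chosen data and form $K$ as a semidirect product $G\rtimes\langle\sigma\rangle$, encoding an unsolvable membership problem in a finitely presented group $P$ into the conjugacy problem of $K$. Take $P$ to be a finitely presented, residually finite, cyclic subgroup separable, highly residually finite group containing a finitely generated subgroup $Q\leqslant P$ whose membership problem in $P$ is recursively unsolvable. The natural choice is $P=H\times H$, where $H$ is the free group of rank $2$, with $Q$ a Miha\u\i lova subgroup encoding a Turing machine with unsolvable halting problem; as a right-angled Artin group, $P$ is hereditarily conjugacy separable, cyclic subgroup separable, and highly residually finite. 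Applying Lemma~\ref{lem:constr_of_autom} to $(P,Q,k)$ produces a finitely generated non-abelian free group $F$, an epimorphism $\psi\colon F\to P$, and an order-$k$ automorphism $\sigma$ of $F$ with $\psi(\Fix(\sigma))=Q$ and $\sigma$ inducing the identity on $P$.

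Let $G\leqslant F\times F$ be the symmetric fibre product for $\psi$, which is finitely generated by Lemma~\ref{lem:Mih}(a) since $P$ is finitely presented. Triviality of $\sigma$ on $P$ ensures that the diagonal automorphism $(f_1,f_2)\mapsto(\sigma(f_1),\sigma(f_2))$ preserves $G$, so I set
\[
K \coloneq G\rtimes\langle\sigma\rangle
\]
with this diagonal action. Then $G\lhd K$ and $|K:G|=k$. Hereditary conjugacy separability of $G$ follows from Corollary~\ref{cor:crit_for_hcs_of_subdir}: both coordinate groups are non-abelian free, hence hereditarily conjugacy separable by Lemma~\ref{lem:free_are_C-hcs} with cyclic centralizers, and the quotient $F/N_1\cong P$ is cyclic subgroup separable and highly residually finite.

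The central computation concerns $K$-conjugacy inside the coset $G\cdot\sigma$. A direct semidirect-product calculation shows that conjugating $\sigma$ by $(h_1,h_2)\sigma^j\in K$ yields $(h_1\sigma(h_1^{-1}),h_2\sigma(h_2^{-1}))\cdot\sigma$, independently of $j$. Hence, for $h\in F$, the elements $\sigma$ and $(h\sigma(h^{-1}),1)\cdot\sigma$ of $K$ are $K$-conjugate if and only if there exist $h_1\in h\cdot\Fix(\sigma)$ and $h_2\in\Fix(\sigma)$ with $\psi(h_1)=\psi(h_2)$; since $\psi(\Fix(\sigma))=Q$ is a subgroup, this reduces to the single condition $\psi(h)\in Q$. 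As $\psi$ is computable on generators, this yields a recursive reduction of the membership problem for $Q$ in $P$ to the conjugacy problem of $K$, so $K$ has unsolvable conjugacy problem. Furthermore, since $P$ is finitely presented and residually finite, unsolvability of this membership problem forces $Q$ to be non-separable in the profinite topology on $P$ (otherwise both $Q$ and its complement would be recursively enumerable), so there exists $h\in F$ with $\psi(h)\in\overline{Q}\setminus Q$; for such $h$, the pair $\sigma$ and $(h\sigma(h^{-1}),1)\sigma$ is non-$K$-conjugate.

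To finish the proof that $K$ is not conjugacy separable, I would show that this pair becomes conjugate in every finite quotient of $K$. For each finite quotient $\bar P$ of $P$ the preimage $\bar N\coloneq\psi^{-1}(\ker(P\to\bar P))\lhd F$ is $\sigma$-invariant, giving a finite quotient $\bar K=\bar G\rtimes\langle\bar\sigma\rangle$ of $K$ in which the analogous conjugacy condition becomes $\bar\psi(\bar h)\in\pi_{\bar P}(Q)$; this holds because $\psi(h)\in\overline{Q}$. The main obstacle is to extend this verification from the family of $\bar K$ pulled back from finite quotients of $P$ to \emph{every} finite quotient of $K$: one must argue that, for this specific pair, $K$-conjugacy in an arbitrary finite quotient is ultimately governed by the image of $\psi(h)$ in a finite quotient of $P$, which requires exploiting $\sigma$-equivariance of finite-index normal subgroups of $G$ inside $F\times F$. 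Alternatively, one could apply Lemma~\ref{lem:1st_crit_for_non-cs} to a suitable subdirect embedding of $K$, at the cost of first identifying that embedding.
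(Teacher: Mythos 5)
Your construction is the same as the paper's: with $P$ the product of two rank-$2$ free groups and $Q$ a Miha\u\i lova subgroup, your $K=G\rtimes\langle\sigma\rangle$ is exactly the symmetric fibre product of $\tilde F\coloneq F\rtimes_\sigma\langle t\rangle_k$ over $\tilde\psi:\tilde F\to P\times\langle t\rangle_k$ used in the paper, and your conjugacy computation (conjugacy of $\sigma$ with $(h\sigma(h^{-1}),1)\sigma$ is equivalent to $\psi(h)\in Q$) matches the paper's computation for $(t,t)$ and $(x^{-1}tx,t)$. The finite generation and hereditary conjugacy separability of $G$, and the unsolvability of the conjugacy problem in $K$, are handled correctly and as in the paper.

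However, the gap you flag in the non-conjugacy-separability of $K$ is genuine, and it is the one substantive step of the proof. Checking conjugacy only in quotients of $K$ pulled back from finite quotients of $P$ is not enough (indeed, in all such quotients the two elements become \emph{equal}, since $\psi(h\sigma(h^{-1}))=1$); the problem is precisely the finite quotients of $K$ that do not kill $N\times N$. Note also that your fallback of invoking Lemma~\ref{lem:1st_crit_for_non-cs} cannot work here: that lemma detects failure of residual finiteness of $\tilde F/N\cong P\times\Z/k\Z$, which \emph{is} residually finite; the obstruction in this example is failure of separability of the subgroup $Q$, not of the trivial subgroup. The paper closes the gap as follows. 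Since $\tilde F/N\cong P\times\Z/k\Z$ is highly residually finite, Lemma~\ref{lem:restr_on_subdir} shows that the profinite topology on $K$ is the restriction of the profinite topology on $\tilde F\times\tilde F$; hence it suffices to show that $(t,t)^K$ is not closed in $\tilde F\times\tilde F$. For this one uses the contrapositive of Lemma~\ref{lem:crit_for_CS}, namely Lemma~\ref{lem:double_coset-closed}: if $(t,t)^K$ were closed, the double coset $\C_{\tilde F\times\tilde F}((t,t))K$ would be closed, but its intersection with $F\times\{1\}$ is $\C_F(t)N\times\{1\}=\psi^{-1}(Q)\times\{1\}$, which is not closed in $F$ by the Mal'cev argument and Lemma~\ref{lem:full_preimage}.(iii). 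To complete your write-up you would need to supply this restriction-of-topology argument (or an equivalent), which is exactly the ``$\sigma$-equivariance'' step you leave open.
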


\begin{proof} Let $P$ be a finitely presented group satisfying the following three conditions:
\begin{itemize}
  \item[1.] $P$ is highly residually finite;
  \item[2.] $P$ is cyclic subgroup separable;
  \item[3.] there is a finitely generated subgroup $Q\leqslant P$ such that the membership problem for $Q$ in $P$ is unsolvable.
\end{itemize}
For example, we can take $P$ to be the direct product of two free groups of rank $2$. Indeed, such a group $P$ is highly residually finite because for finitely generated groups
this property is stable under direct products (cf. \cite[Cor. 2.11]{Lor}) and free groups obviously have it (as any finite-by-free group is virtually free). That
direct products of free groups are cyclic subgroup separable can be extracted from \cite[Thm. 4.4]{Bur-Mar}. Finally, the existence of a finitely generated subgroup $Q$
with unsolvable membership problem in the direct product of two free groups of rank $2$ was proved by  Miha{\u\i}lova in \cite[Thm. 1]{Mih}.

Let $F$ be the finitely generated free group, $\sigma \in Aut(F)$ be the automorphism and $\psi:F \to P$ be the epimorphism given by Lemma \ref{lem:constr_of_autom}.
We also let $G\leqslant F\times F$ be the symmetric fibre product corresponding to $\psi$. Then $G$ is finitely generated
by Lemma~\ref{lem:Mih}.(a), and $G$ is hereditarily conjugacy separable by Corollary \ref{cor:crit_for_hcs_of_subdir} and Lemma \ref{lem:free_are_C-hcs}.

Now, let $\tilde F \coloneq F \rtimes_\sigma \langle t \rangle_k$ be the semidirect product of the free group $F$ with the cyclic group ${\langle t \rangle}_k$ of order $k$,
where $tft^{-1}\coloneq \sigma(f)$ for all $f \in F$. Since $\sigma(N)=N$, where $N \coloneq\ker\psi$, and $\sigma$ induces the identity automorphism of $P$,
$\psi$ can be naturally extended to an epimorphism $\tilde \psi: \tilde F \to P \times \langle t \rangle_k$, by defining $\tilde\psi(f)=\psi(f)$, for all $f \in F$,
and $\tilde\psi(t)=t$. Then $\ker\tilde\psi=N$, so if $K\leqslant \tilde F \times \tilde F$ is the symmetric fibre product corresponding to $\tilde \psi$ then
$K \cap (\tilde F \times \{1\})=N\times \{1\}$. Evidently we can identify $G$ with the subgroup of $K$ defined by
\[\{(g_1,g_2) \in F\times F \leqslant \tilde F\times \tilde F \mid \tilde \psi(g_1)=\tilde\psi(g_2)\} \leqslant K.\]

In other words, $G=K \cap (F \times F)$ in $\tilde F \times \tilde F$. By construction, $F$ is normal in $\tilde F$, yielding that $G$ is normal in $K$.
Recall that $K/(N\times \{1\}) \cong \tilde F$ and $G/(N\times \{1\})\cong F$, so
\[K/G\cong \frac{K/(N\times \{1\})}{G/(N\times \{1\})} \cong \tilde F/F \cong \langle t \rangle_k, \mbox{ thus } |K:G|=k.\]

It remains to show that $K$ is not conjugacy separable and has unsolvable conjugacy problem. Observe that for every $x \in F$, $\tilde\psi(x^{-1}tx)=\tilde\psi(t)$, because
$t=\tilde\psi(t)$ is central in $P\times \langle t \rangle_k$. Therefore $(x^{-1}tx,t) \in K$ for each $x \in F$.

Now, for any $x \in F$, since $(x^{-1}tx,t)=(x^{-1},1)(t,t)(x^{-1},1)^{-1}$,  the element $(x^{-1}tx,t)$ is conjugate to $(t,t)$ in $K$
if and only if $(x^{-1},1) \in K \C_{\tilde F\times \tilde F}((t,t))$ (see Remark \ref{rem:in_cc->double_coset})
if and only if $(x,1) \in  \C_{\tilde F\times \tilde F}((t,t))K$ if and only if $(x,1) \in  (\C_{\tilde F}(t) \times \C_{\tilde F}(t))K=(\C_{\tilde F}(t) \times \{1\})K$
(because $K$ contains the diagonal subgroup of $\tilde F\times \tilde F$, by definition) if and only if $x \in \C_{\tilde F}(t) N$ (cf. Remark~\ref{rem:YG_int_F_1})
if and only if
$x \in \C_F(t) N$ (as $x \in F$ and $N \subseteq F$, so $F \cap \C_{\tilde F}(t)N= \C_F(t) N$).

Thus, if we were able to solve the conjugacy problem in $K$, then we would be able to solve the membership problem for  $\C_F(t) N$ in $F$. But
$\C_F(t)=\Fix(\sigma)$ and $\psi(\Fix(\sigma))=Q$, hence $\psi^{-1}(Q)=\C_F(t)N$. And so the membership problem for $\C_F(t) N$ in $F$ is equivalent to the
membership problem for $Q$ in $P$, which is undecidable by construction. Therefore the conjugacy problem in $K$ is unsolvable.

Finally, let us show that $K$ is not conjugacy separable. Note that we cannot use Mostowski's result  \cite[Thm. 3]{Mostowski}
for this, as $K$ is not finitely presented by a theorem of Baumslag and Roseblade \cite[Thm. B]{Baum-Rose}.
Since $P$ is a finitely presented group and the membership problem for the finitely generated subgroup $Q$ in $P$ is unsolvable,
$Q$ cannot be closed in the profinite topology of $P$ by a standard Mal'cev-type argument (see \cite[\S 7]{Malcev} or \cite[Thm. 2]{Mostowski}). Therefore $\C_F(t) N=\psi^{-1}(Q)$ is not closed in the profinite topology of $F$ by
Lemma \ref{lem:full_preimage}.(iii).
Observe that $ F \times \{1\} \cap \C_{\tilde F\times \tilde F}((t,t))K=\C_F(t) N \times \{1\}$ (we have essentially shown this earlier in the proof),
so the double coset $\C_{\tilde F\times \tilde F}((t,t))K$
is not closed in the profinite topology of $\tilde F \times \tilde F$ by Remark~\ref{rem:pro-C_on_sbgp}. Therefore $(t,t)^K$ is not closed in the profinite topology
of $\tilde F\times \tilde F$ by Lemma~\ref{lem:double_coset-closed}. But the profinite topology on $K$ is the restriction of the profinite topology on
$\tilde F \times \tilde F$ by Lemma~\ref{lem:restr_on_subdir}, because $\tilde F /N \cong P \times \Z/k\Z$ is highly residually finite (e.g., by \cite[Cor. 2.11]{Lor}).
Consequently, we can conclude that the conjugacy class $(t,t)^K$ is not closed in the profinite topology of $K$, thus $K$ is not conjugacy separable and the proof is complete.
\end{proof}

\begin{rem}\label{rem:cyc_central_important} Theorem \ref{thm:weaker_vers} shows that it is necessary to assume in Proposition \ref{prop:crit_of_CS_for_subdirect}
that the groups $F_i$ have cyclic centralizers, $i=1,2$. Indeed, the group $\tilde F$, the subdirect product $K \leqslant \tilde F\times \tilde F$ and the normal subgroup
$N\lhd \tilde F$ which were constructed in the proof of Theorem~\ref{thm:weaker_vers} clearly satisfy all the remaining assumptions of
Proposition \ref{prop:crit_of_CS_for_subdirect}, nevertheless, $K$ is not conjugacy separable.
\end{rem}

\subsection{Finitely presented examples} \label{subsec:fp_ex}
To prove Theorem \ref{thm:strong_vers} we will need an enhancement of Lemma \ref{lem:constr_of_autom}. It will use classical small cancellation theory,
and we refer the reader  to \cite[Sec. V.2]{L-S} for the background and definitions. Some of the basic properties of small cancellation groups are summarized in the next statement.

 \begin{lemma}\label{lem:small_canc->hyp}
Suppose that $\lambda \in (0,1/6]$ and $F$ is a group given by  a finite presentation $\langle Z\,\|\, \mathcal{R} \rangle$ satisfying
the small cancellation condition $C'(\lambda)$. Then
\begin{itemize}
  \item[(i)] $F$ is hyperbolic;
  \item[(ii)] $F$ has cyclic centralizers;
  \item[(iii)] if $f \in F$ is an element of finite order then there is a word $R$ over $Z^{\pm 1}$ and an integer $n \ge 2$ such that
  $R^n \in \mathcal R$ is a defining relator and $f$ is conjugate in $F$ to an  element represented by a power of $R$.
\end{itemize}
\end{lemma}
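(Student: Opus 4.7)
All three claims are classical consequences of the $C'(1/6)$ small cancellation condition, so my plan is to reduce each part to results already available in the standard literature on small cancellation theory.

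For part (i), I would invoke Greendlinger's lemma for reduced van Kampen diagrams over $\langle Z \,\|\, \mathcal{R}\rangle$ (see \cite[Ch. V, Thm. 4.4]{L-S}): every nontrivial such diagram $\Delta$ contains a face $D$ whose boundary meets $\partial \Delta$ in an arc of length greater than $|\partial D|/2$. Iterating this yields a linear isoperimetric inequality $|\Delta| \lesssim |\partial \Delta|$ for $F$, and by the standard equivalence between linear isoperimetric inequalities and word-hyperbolicity (\cite[Ch. III.H, Thm. 2.9]{B-H}) I conclude that $F$ is hyperbolic.

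For part (iii), my plan is to take a nontrivial element $f$ of minimal finite order $n \ge 2$ and to analyze a minimal-area reduced van Kampen diagram (respectively, a minimal conjugacy annulus) witnessing $f^n = 1$. Strebel's classification of $C'(1/6)$-diagrams of small complexity (\cite[Ch. V]{L-S}) together with the minimality assumption forces such a diagram to collapse to a single face whose boundary label is a cyclic conjugate of a proper power $R^m$ appearing in $\mathcal{R}$; reading off the structure of $\partial \Delta$ then shows that $f$ is conjugate in $F$ to an element represented by a power of $R$. The whole argument is essentially the content of Lyndon's torsion theorem for small cancellation groups (see the discussion in \cite[Ch. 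V]{L-S}), so I would cite this rather than reproduce the diagrammatic analysis.

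For part (ii), the cleanest plan is to appeal directly to Truffaut \cite{Truf}, where the statement that centralizers of nontrivial elements in any classical $C'(1/6)$ small cancellation group are cyclic is established in exactly this generality. The main obstacle to a self-contained argument, and the reason I would rely on \cite{Truf}, is the torsion case: for $f$ of infinite order one combines hyperbolicity from part (i) with the inclusion $\C_F(f) \subseteq \E_F(f)$ and the description \eqref{eq:max_elem_descr}, and then uses part (iii) to see that $\E_F(f)$ contains no torsion other than what a diagrammatic rigidity argument rules out, forcing $\E_F(f)$ to be infinite cyclic; but for $f$ of finite order, by part (iii) one must analyze $\C_F(R)$ for a proper-power relator $R^n \in \mathcal{R}$ via centralizing-annulus diagrams, and this is exactly the technical point that \cite{Truf} handles.
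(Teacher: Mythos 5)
Your proposal is correct and follows essentially the same route as the paper: all three parts are reduced to the classical literature, with (ii) cited to Truffault and (iii) to the classical torsion theorem for $C'(1/6)$ groups (the paper cites Greendlinger's original Theorem VIII rather than the Lyndon--Schupp account, but the content is identical). The only cosmetic difference is in (i), where you pass through a linear isoperimetric inequality while the paper observes that a $C'(1/6)$ presentation is a Dehn presentation; both are standard and equivalent consequences of Greendlinger's lemma.
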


\begin{proof} (i) The group $F$ is hyperbolic because, by Greendlinger's lemma \cite[Thm. 4.5 in Sec. V.4]{L-S}, any $C'(1/6)$ presentation
is necessarily a \emph{Dehn presentation} and any group with a finite Dehn presentation is hyperbolic (cf. \cite[Thm. 2.6 in Ch. III.$\Gamma$]{B-H}).

(ii) That centralizers of non-trivial elements in a $C'(1/6)$-group are cyclic was proved by Truffault \cite{Truf}.

The statement (iii), in this generality, is due to Greendlinger \cite[Thm. VIII]{Greend-torsion}.
\end{proof}

The main technical tool in the proof of Theorem \ref{thm:strong_vers} is the following proposition, which may also have other applications in the future.

\begin{prop}\label{prop:constr_of_autom_hyp}
Let $P$ be a finitely presented group and let $Q \leqslant P$ be a finitely generated subgroup. Then for each real number $\lambda>0$ and every integer $k\ge 2$ there exists
a torsion-free group $F$, with a finite $C'(\lambda)$ small cancellation presentation, an automorphism $\sigma \in Aut(F)$ and an epimorphism $\psi:F \to P$
such that
\begin{itemize}
\item $\ker \psi$ is generated by $k$ elements;
\item the order of $\sigma$ is $k$;
  \item $\psi(\Fix(\sigma))=Q$, where $\Fix(\sigma)  \coloneq \{h \in F \mid \sigma(h)=h\}$ is the subgroup of fixed~points~of~$\sigma$;
  \item $\sigma$ induces the identity automorphism of $P$, i.e., $\sigma(\ker \psi)=\ker\psi$ and $\psi(\sigma (f))=\psi(f)$ for all $f \in F$.
\end{itemize}
\end{prop}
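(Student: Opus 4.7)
My plan is to adapt Rips's classical small cancellation construction so that it is equivariant under a cyclic action of order $k$. Fix a finite presentation $P=\langle a_1,\ldots,a_m,b_1,\ldots,b_n \,\|\, r_1,\ldots,r_s \rangle$ with $Q=\langle a_1,\ldots,a_m\rangle$, and take as generating set of $F$
\[
Z\coloneq\{x_1,\ldots,x_m\}\cup\{y_{i,j}:1\le i\le n,\,j\in\Z/k\Z\}\cup\{t_1,\ldots,t_k\}.
\]
Define the prospective $\sigma\in\mathrm{Aut}(F)$ by $\sigma(x_i)=x_i$, $\sigma(y_{i,j})=y_{i,j+1}$, $\sigma(t_j)=t_{j+1}$, and $\psi:F\twoheadrightarrow P$ by $\psi(x_i)=a_i$, $\psi(y_{i,j})=b_i$, $\psi(t_j)=1$, so that $\psi\circ\sigma=\psi$ on $Z$ by construction. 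Impose three $\sigma$-invariant families of relators: (R1) $z\,t_l\,z^{-1}\,V_{z,l}^{-1}$ for each $z\in\{x_i\}\cup\{y_{i,j}\}$ and each $l$; (R2) $r_s(x_*,y_{*,j})\,U_{s,j}^{-1}$ for each $s$ and each $j$; (R3) identification relators $y_{i,j}\,y_{i,j+1}^{-1}\,W_{i,j}^{-1}$ for each $i$ and each $j$. Here the $V_{\bullet}$, $U_{\bullet}$, $W_{\bullet}$ are positive words in $t_1,\ldots,t_k$, chosen $\sigma$-equivariantly (so each $\sigma$-orbit of a relator is a full orbit of relators) and generically in Rips's sense --- long, with short pairwise overlaps, and no relator a proper power --- so that the symmetrised presentation satisfies $C'(\lambda)$.

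Most of the desired properties then follow at once: by Lemma~\ref{lem:small_canc->hyp}, $F$ is hyperbolic and torsion-free. Relations (R1) make $N\coloneq\langle t_1,\ldots,t_k\rangle$ normal in $F$, while (R2) together with (R3) yield $F/N\cong P$, so $\psi$ is well-defined with $k$-generated kernel~$N$. The $\sigma$-invariance of the relation set extends $\sigma$ to $\mathrm{Aut}(F)$, and since Greendlinger's lemma keeps the elements of $Z$ pairwise distinct in $F$, $\sigma$ has order exactly~$k$. The identity $\psi\circ\sigma=\psi$ gives that $\sigma$ induces the identity on $P$, and the inclusion $\langle x_1,\ldots,x_m\rangle\subseteq\Fix(\sigma)$ yields $Q\subseteq\psi(\Fix(\sigma))$.

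The main obstacle is proving the reverse inclusion $\psi(\Fix(\sigma))\subseteq Q$. My strategy has two steps. First, a $C'(1/6)$ van~Kampen diagram analysis should show that $N$ is free on $t_1,\ldots,t_k$; since $\sigma$ cyclically permutes this free basis with no fixed letter, the letter-by-letter argument of Lemma~\ref{lem:constr_of_autom} gives $\Fix(\sigma)\cap N=\{1\}$, so $\psi|_{\Fix(\sigma)}$ injects $\Fix(\sigma)$ into $P$. Second, for any lift $w_0\in F$ of $p\in P$, the coset $w_0N$ admits a $\sigma$-fixed point iff $\delta(w_0)\coloneq\sigma(w_0)w_0^{-1}\in N$ lies in $\{n\sigma(n)^{-1}:n\in N\}$; in $N^{\mathrm{ab}}\cong\Z^k$ this subset lies in the augmentation kernel $(1-\sigma)\Z^k$. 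I would then arrange the words $V_{\bullet},U_{\bullet},W_{\bullet}$ so that the composite
\[
F\xrightarrow{\ \delta\ } N \twoheadrightarrow N^{\mathrm{ab}} \twoheadrightarrow \Z^k/(1-\sigma)\Z^k\;\cong\;\Z
\]
descends to a function on $P$ vanishing precisely on $Q$. The hardest step is inserting suitable ``balancing'' $t$-syllables into $V_{\bullet},U_{\bullet},W_{\bullet}$ to achieve this while preserving both $\sigma$-equivariance and the $C'(\lambda)$ genericity conditions; once this is done, any $\sigma$-fixed lift of $p\in P$ forces $p\in Q$, completing the proof.
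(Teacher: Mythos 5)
Your presentation, the definitions of $\sigma$ and $\psi$, and the verification of all the easy properties coincide with the paper's construction; the difficulty is concentrated, as you say, in the inclusion $\psi(\Fix(\sigma))\subseteq Q$, and there your two-step strategy has fatal problems. Step 1 is false: $N=\langle t_1,\dots,t_k\rangle$ is not free on $t_1,\dots,t_k$ in any Rips-type presentation of this kind. Normality of $N$ forces you to impose both $z t_l z^{-1}=V_{z,l}(t_\ast)$ and $z^{-1}t_l z=V'_{z,l}(t_\ast)$ with $V_{z,l},V'_{z,l}$ long positive (or uniformly negative) words; composing the two substitutions gives $t_l=V'_{z,l}\bigl(V_{z,\ast}(t_\ast)\bigr)$, a reduced word of length at least $|V'_{z,l}|\cdot\min_i|V_{z,i}|\gg 1$ with no free cancellation at the junctions. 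That is a genuine non-trivial relation among the $t_j$'s, so your deduction of $\Fix(\sigma)\cap N=\{1\}$ from $\sigma$ permuting a free basis collapses. Step 2 is structurally hopeless even if Step 1 were repaired: since $N$ is generated as a group by $t_1,\dots,t_k$ and $\sigma$ identifies all of them, the target $N^{\mathrm{ab}}/(1-\sigma)N^{\mathrm{ab}}$ of your invariant is a \emph{cyclic} abelian group. Your ``function on $P$'' is then a $1$-cocycle for an affine action of $P$ on $\Z$ (or on a finite cyclic group), and its vanishing locus is the stabiliser of a point of that action, hence always contains the kernel of a homomorphism $P\to\mathrm{Isom}(\Z)\cong D_\infty$ (respectively, a finite-index subgroup). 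Such a subgroup can never be an arbitrary finitely generated $Q$: already for $P=F_2$ and $Q=\langle a\rangle$ no choice of ``balancing $t$-syllables'' can make the vanishing locus lie inside $\langle a\rangle$, and in the intended application $Q$ is a Mihailova subgroup of $F_2\times F_2$, which contains no such kernel.

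The paper sidesteps $N$ entirely and proves the stronger equality $\Fix(\sigma)=\langle x_1,\dots,x_m\rangle$ by a direct small cancellation argument. Taking a shortest $w\in\Fix(\sigma)\setminus\langle x_1,\dots,x_m\rangle$ with geodesic word $W$, one may assume $W$ neither begins nor ends with an $x$-letter, so $W^{-1}\sigma(W)$ is cyclically reduced and represents $1$; the $C'(\mu)$ theory (with $\mu\le 1/8$) then produces a subword $U$ of $W^{-1}\sigma(W)$ containing more than $(1-3\mu)$ of a cyclic permutation $S$ of a relator, which must straddle the junction between $W^{-1}$ and $\sigma(W)$. Writing $U=U_1U_2$, the suffix/prefix relation forces $U_2\equiv\sigma(U_1)^{-1}$, exhibiting a common subword of $S$ and $\sigma(S)^{-1}$ of length greater than $\|S\|/8$; since no relator is a cyclic permutation of the $\sigma$-image of an inverse of a relator, this is a forbidden piece. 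You would need an argument of this kind, controlling $\Fix(\sigma)$ directly inside $F$ at the level of words, rather than passing through $N^{\mathrm{ab}}$.
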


\begin{proof} The argument is essentially a combination of Rips's original idea from \cite[Thm.]{Rips} with the proof of Lemma \ref{lem:constr_of_autom}.

As before, we can suppose that $P$ is generated by some elements $a_1,\dots, a_m, b_1,\dots,b_n$ and
$Q=\langle a_1,\dots,a_m\rangle$. Let $\langle a_1,\dots,a_m,b_1,\dots,b_n \,\|\, R_1,\dots,R_s\rangle$ be a finite  presentation of $P$.

Let $F$ be the group given by a finite presentation $\langle Z\,\|\,\mathcal{R}\rangle$, where
\[Z \coloneq \{x_1,\dots,x_m, y_{11},\dots,y_{1k},\dots,y_{n1},\dots,y_{nk},z_1,\dots,z_k \}\]
is a generating set of cardinality $m+nk+k$. To specify the set of defining relators $\mathcal R$, for all $t \in \{1,\dots,s\}$ and $j \in \{1,\dots,k\}$,
let $R_{tj}$ be the word over the alphabet $\{x_1,\dots,x_m,y_{1j}\dots,y_{nj}\}^{\pm 1}$ obtained from the word $R_t$ by
replacing each letter $a_l$ with $x_l$ and each $b_i$ with $y_{ij}$, $l=1,\dots, m$, $i=1,\dots,n$.

Let
\begin{equation}\label{eq:integers}
\alpha_t<\beta_t,~ \gamma_i<\delta_i,~ \e_l<\zeta_l,~ \e'_l<\zeta'_l,~ \eta_{ip}<\theta_{ip} \mbox{ and }\eta'_{ip}<\theta'_{ip},
\end{equation}
$t=1,\dots,s$, $i=1,\dots,n$, $l=1,\dots,m$, $p=0,\dots,k-1$, be some large positive integers that we will specify later.
The set of defining relators $\mathcal{R}$, of $F$, will consist of the following words (where the addition of lower indices at $z_\ast$ and $y_{i,\ast}$ is done modulo $k$):
\begin{equation}\label{eq:1-rel}
R_{tj} z_j z_{j+1}^{\alpha_{t}}z_jz_{j+1}^{\alpha_{t}+1} z_jz_{j+1}^{\alpha_{t}+2}\dots z_jz_{j+1}^{\beta_{t}}, ~~t=1,\dots,s,~j=1,\dots,k,
\end{equation}

\begin{equation}\label{eq:2-rel}
y_{ij}^{-1} y_{i,j+1} z_j z_{j+1}^{\gamma_{i}}z_jz_{j+1}^{\gamma_{i}+1} z_jz_{j+1}^{\gamma_{i}+2}\dots z_jz_{j+1}^{\delta_{i}}, ~~ i=1,\dots,n,~j=1,\dots,k,
\end{equation}

\begin{equation}\label{eq:3-rel}
x_l^{-1} z_j x_l z_j z_{j+1}^{\e_{l}}z_jz_{j+1}^{\e_{l}+1} z_jz_{j+1}^{\e_{l}+2}\dots z_jz_{j+1}^{\zeta_{l}}, ~~ l=1,\dots,m,~j=1,\dots,k,
\end{equation}

\begin{equation}\label{eq:4-rel}
x_l z_j x_l^{-1} z_j z_{j+1}^{\e'_{l}}z_jz_{j+1}^{\e'_{l}+1} z_jz_{j+1}^{\e'_{l}+2}\dots z_jz_{j+1}^{\zeta'_{l}}, ~~ l=1,\dots,m,~j=1,\dots,k,
\end{equation}

\begin{equation}\label{eq:5-rel}
y_{i,p+j}^{-1} z_j y_{i,p+j} z_j z_{j+1}^{\eta_{ip}}z_jz_{j+1}^{\eta_{ip}+1} \dots z_jz_{j+1}^{\theta_{ip}}, ~~ i=1,\dots,n,~p=0,\dots,k-1,~j=1,\dots,k,
\end{equation}

\begin{equation}\label{eq:6-rel}
y_{i,p+j} z_j y_{i,p+j}^{-1} z_j z_{j+1}^{\eta'_{ip}}z_jz_{j+1}^{\eta'_{ip}+1} \dots z_jz_{j+1}^{\theta'_{ip}},
~~ i=1,\dots,n,~p=0,\dots,k-1,~j=1,\dots,k.
\end{equation}

Clearly for $\mu \coloneq \min\{\lambda,1/8\}$ we can choose the positive integers \eqref{eq:integers} in such a way that all the intervals
$[\alpha_t,\beta_t]$, $[\gamma_i,\delta_i]$, $[\e_l,\zeta_l]$,
$[\e'_l,\zeta'_l]$, $[\eta_{ip},\theta_{ip}]$  and $[\eta'_{ip},\theta'_{ip}]$,
$t=1,\dots,s$, $i=1,\dots,n$, $l=1,\dots,m$, $p=0,\dots,k-1$, are very long (compared to the maximum of the lengths of the words $R_{tj}$) and pairwise disjoint, so that
the set $\mathcal R$ satisfies the small cancellation condition $C'(\mu)$. It follows that $\mathcal{R}$ satisfies both $C'(\lambda)$ and $C'(1/8)$. Note that no
defining relator from $\mathcal{R}$ is a proper power, hence $F$ is torsion-free by Lemma \ref{lem:small_canc->hyp}.(iii).

Since the indices $l$, $i$ and $p$ are independent of $j$ in \eqref{eq:3-rel}--\eqref{eq:6-rel}, these relators ensure that $N\coloneq \langle z_1,\dots,z_k\rangle$
is normal in $F$, and the relators \eqref{eq:1-rel}--\eqref{eq:2-rel} ensure that the quotient $F/N$ is naturally isomorphic to $P$. More precisely, we can define the
epimorphism $\psi:F \to P$, with $\ker\psi=N$, by setting $\psi(x_l) \coloneq a_l$, $\psi(y_{ij})\coloneq b_i$ and $\psi(z_j)=1$, for $l=1,\dots,m$,
$i=1,\dots,n$ and $j=1,\dots,k$.

Now, define a permutation $\sigma$ of the generating set $Z$ of $F$ by
\[\sigma(x_l)\coloneq x_l,~\sigma(y_{ij})\coloneq y_{i,j+1}, ~\sigma(z_j)\coloneq z_{j+1}, \]  for all $l=1,\dots,m$, $i=1,\dots,n$, $j=1,\dots,k$.
Naturally, $\sigma$ extends to a permutation of the set of words over $Z^{\pm 1}$, so that each group of the defining relators \eqref{eq:1-rel}--\eqref{eq:6-rel}
is invariant under $\sigma$ by construction (one can see that $\sigma$ acts by adding $1$ to the index $j$ modulo $k$).
Therefore $\sigma(\mathcal{R})=\mathcal{R}$, hence $\sigma$ defines an automorphism of $F$, which, by abusing the notation, we will again denote by
$\sigma \in Aut(F)$. Clearly $\sigma^k$ is the identity automorphism of $F$, but $\sigma^\varkappa$ is non-trivial in $Aut(F)$, if $1 \le \varkappa < k$, because
$z_1 \neq z_{1+\varkappa}=\sigma^\varkappa(z_1)$ in $F$ (otherwise, by Greendlinger's lemma \cite[Thm. 4.5 in Sec. V.4]{L-S}, the cyclically reduced
word $z_1^{-1}z_{1+\varkappa}$, of length $2$, would contain more than a half of a cyclic permutation of a word from $\mathcal{R}^{\pm 1}$,
but the length of any word from $\mathcal{R}$ is greater than $8$,
by construction). Therefore the order of $\sigma$ in $Aut(F)$  is $k$. Obviously $\sigma$ induces the identity automorphism of $P$.

It remains to show that $\Fix(\sigma)=\langle x_1,\dots,x_m\rangle$ in $F$. Evidently, $\langle x_1,\dots,x_m\rangle \subseteq \Fix(\sigma)$ by the definition of $\sigma$.
Arguing by contradiction, assume that the converse inclusion does not hold, and take an element
$w \in \Fix(\sigma)\setminus\langle x_1,\dots,x_m\rangle$ such that $w$ has the  shortest possible
length when expressed as a word over the alphabet $Z^{\pm 1}$. Choose a geodesic (i.e., of minimal length) word $W$ over this alphabet representing $w$ in $F$.

Since $\sigma(w)=w$, the word $W^{-1}\sigma(W)$ represents the identity element of $F$. Obviously, the words $W^{-1}$ and
$\sigma(W)$ are both freely reduced and geodesic in $F$, as $W$ is geodesic.
Also, note that $W$ cannot start with $x_l^{\xi}$, where $\xi =\pm 1$ and $l \in \{1,\dots,m\}$,
because otherwise $x_l^{-\xi}w$ would be an element of $\Fix(\sigma)\setminus\langle x_1,\dots,x_m\rangle$ which is strictly shorter than $w$.
Similarly, $W$ cannot end with a
letter from $\{x_1,\dots,x_m\}^{\pm 1}$. It follows that the first letter of $\sigma(W)$ is not the inverse of the last letter of $W^{-1}$, and the last letter
of $\sigma(W)$ is not the inverse of the first letter of $W^{-1}$. Therefore the word $W^{-1}\sigma(W)$ is freely cyclically reduced,
and since $\mathcal{R}$ satisfies $C'(\mu)$,
we can apply \cite[Thm. 4.4 in Sec. V.4]{L-S}. This theorem claims that there is a cyclic permutation $S$, of a word from $\mathcal{R}^{\pm 1}$, and a prefix $U$,
of $S$, such that $U$ is  a subword of $W^{-1}\sigma(W)$ and $\|U\|>(1-3\mu) \|S\|$, where $\|U\|$ denotes the length of the word $U$.

Now, observe that $1-3\mu>1/2$ as $\mu \le 1/8$, so the word $U$ contains more than a half of the relator $S$, of $F$, and hence it is not geodesic in $F$. Therefore
$U$ cannot be solely a subword of $W^{-1}$ or of $\sigma(W)$, each which is geodesic by construction. Consequently, we can write $U\equiv U_1U_2$, where $U_1$ is the
suffix of $W^{-1}$ and $U_2$ is a prefix of $\sigma(W)$, such that $\max\{\|U_1\|,\|U_2\|\} \le \frac12 \|S\|$. Since $\|U_1\|+\|U_2\|=\|U\|>(1-3\mu)\|S\|$, we deduce
that $\min\{\|U_1\|,\|U_2\|\} > (1/2-3\mu)\|S\|$.

We will now assume that $\|U_1\|\le \|U_2\|$, as the case when $\|U_2\|<\|U_1\|$ can be treated similarly.
Let us replace $U_2$ with its prefix that has the same length as $U_1$. Then $U_1$, $U_2$ are disjoint subwords of $S$ satisfying
$\|U_1\|=\|U_2\|>(1/2-3\mu)\|S\|$. Moreover, since $U_1$ is the suffix of $W^{-1}$ and $U_2$ is the prefix of $\sigma(W)$,  the word
$\sigma(U_1)^{-1}$ coincides with the word $U_2$. But $\sigma(U_1)^{-1}$ is a suffix of the word $\sigma(S)^{-1}$, which is a cyclic permutation of some defining relator from
$\mathcal{R}^{\pm 1}$ because $\sigma(\mathcal{R})=\mathcal{R}$. Thus $U_2 \equiv \sigma(U_1)^{-1}$ is a common subword of two cyclic permutations $S$ and $\sigma(S)^{-1}$,
of words from $\mathcal{R}^{\pm 1}$.
Note that by the construction of the defining relators \eqref{eq:1-rel}--\eqref{eq:6-rel}, no
relator $T$ is equal to a cyclic permutation of $\sigma(T)^{-1}$. Therefore the word $U_2 $ is a piece of $S$
(in the terminology of \cite[Sec. V.2]{L-S}), whose length is greater than $(1/2-3\mu)\|S\|\ge \frac18 \|S\|$, as $\mu \le 1/8$, contradicting the fact that the
set $\mathcal{R}$ satisfies $C'(1/8)$.

Therefore there can be no elements $w \in \Fix(\sigma) \setminus \langle x_1,\dots,x_m\rangle$.
Thus $\Fix(\sigma) = \langle x_1,\dots,x_m\rangle$, so
\[\psi(\Fix(\sigma))=\langle\psi(x_1)\dots,\psi(x_m)\rangle=\langle a_1,\dots,a_m \rangle =Q.\]
This finishes the proof of the proposition.
\end{proof}

\begin{thm}\label{thm:strong_vers} For each real number $\lambda>0$ and every integer $k \ge 2$ there exists a finitely presented subdirect product
$G \leqslant F\times F$, where $F$ is a finitely presented torsion-free $C'(\lambda)$-group, satisfying the following.
The group $G$ is hereditarily conjugacy separable but
there is a normal overgroup $K$, of $G$, such that $|K:G|=k$, $K$ is not conjugacy separable and has unsolvable conjugacy problem.
\end{thm}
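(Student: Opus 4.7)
The plan is to revisit the strategy behind Theorem~\ref{thm:weaker_vers}, substituting Proposition~\ref{prop:constr_of_autom_hyp} for Lemma~\ref{lem:constr_of_autom}. This upgrades the ambient group $F$ from a free group (whose kernel $N=\ker\psi$ is only finitely normally generated) to a torsion-free $C'(\lambda)$-group with a finitely generated kernel, which is precisely what the $1$-$2$-$3$ theorem needs in order to make the resulting symmetric fibre product finitely presented.

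To set things up, take $P\coloneq H\times H$, where $H$ is a free group of rank $2$. Then $P$ is of type $\mathrm{F}_\infty$, highly residually finite (high residual finiteness is preserved by direct products of finitely generated groups), cyclic subgroup separable by \cite[Thm.~4.4]{Bur-Mar}, and, by Miha{\u\i}lova's construction, contains a finitely generated subgroup $Q$ whose membership problem in $P$ is unsolvable. Applying Proposition~\ref{prop:constr_of_autom_hyp} with $\lambda'\coloneq\min\{\lambda,1/6\}$ and the given $k\ge 2$ yields a torsion-free $C'(\lambda')$-group $F$ (in particular $C'(\lambda)$ since $\lambda'\le\lambda$), an automorphism $\sigma\in Aut(F)$ of order $k$, and an epimorphism $\psi\colon F\to P$ with finitely generated kernel $N$, $\psi(\Fix(\sigma))=Q$, and $\sigma$ inducing the identity on $P$. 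Let $G\leqslant F\times F$ be the symmetric fibre product of $\psi$; it is finitely generated by Lemma~\ref{lem:Mih}.(a), and finitely presented by Lemma~\ref{lem:1-2-3}, since $N$ is finitely generated and $P$ is of type $\mathrm{F}_3$.

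To show that $G$ is hereditarily conjugacy separable, I would appeal to Corollary~\ref{cor:crit_for_hcs_of_subdir}: the quotient $F/N\cong P$ is both highly residually finite and cyclic subgroup separable; $F$ has cyclic centralizers by Lemma~\ref{lem:small_canc->hyp}.(ii); and $F$ itself is hereditarily conjugacy separable because, being a $C'(1/6)$-group, it is hyperbolic (Lemma~\ref{lem:small_canc->hyp}.(i)), virtually compact special by the theorems of Wise and Agol, and therefore hereditarily conjugacy separable by the main result of \cite{M-Z-vcs}.

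For the overgroup, set $\tilde F\coloneq F\rtimes_\sigma\langle t\rangle_k$, extend $\psi$ to $\tilde\psi\colon\tilde F\to P\times\langle t\rangle_k$ by sending $t$ to the generator of the second factor, and let $K\leqslant\tilde F\times\tilde F$ be the symmetric fibre product of $\tilde\psi$. Then $G=K\cap(F\times F)$ is normal of index $k$ in $K$, so $K$ is itself finitely presented. The verbatim computation from the proof of Theorem~\ref{thm:weaker_vers} shows that, for $x\in F$, the element $(x^{-1}tx,t)\in K$ is conjugate to $(t,t)$ in $K$ precisely when $x\in\C_F(t)N=\psi^{-1}(Q)$, so an algorithm for the conjugacy problem in $K$ would yield an algorithm for the membership problem of $Q$ in $P$, a contradiction. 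Thus the conjugacy problem in $K$ is unsolvable, and, since $K$ is finitely presented, Mostowski's theorem immediately rules out conjugacy separability of $K$. The main difficulty has been absorbed into Proposition~\ref{prop:constr_of_autom_hyp} --- arranging a small cancellation presentation that admits the prescribed order-$k$ automorphism, pins its fixed subgroup down to a prescribed projection $Q$, and keeps the kernel finitely generated rather than merely finitely normally generated; with that proposition in hand, the rest of the argument is an assembly of the criteria developed in Sections~\ref{sec:crit} and \ref{sec:non_cs_fi_overgps}.
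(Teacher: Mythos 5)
Your proposal is correct and follows essentially the same route as the paper's own proof: the same choice of $P$ as a product of two rank-$2$ free groups, the same application of Proposition~\ref{prop:constr_of_autom_hyp} with $\min\{\lambda,1/6\}$, the same fibre products $G$ and $K$, hereditary conjugacy separability via Corollary~\ref{cor:crit_for_hcs_of_subdir} together with Wise--Agol and \cite{M-Z-vcs}, and the same reduction of the conjugacy problem in $K$ to the membership problem for $Q$ in $P$, with Mostowski's theorem then disposing of conjugacy separability of the finitely presented group $K$. No gaps.
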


\begin{proof} Take any finitely presented group $P$ satisfying the following four conditions:
\begin{itemize}
  \item[1.] $P$ is highly residually finite;
  \item[2.] $P$ is cyclic subgroup separable;
  \item[3.] there is a finitely generated subgroup $Q\leqslant P$ such that the membership problem for $Q$ in $P$ is unsolvable;
  \item[4.] $P$ is of type $\mathrm{F}_3$.
\end{itemize}
As before, we can take $P$ to be the direct product of two free groups of rank $2$ (in the proof of Theorem \ref{thm:weaker_vers} we have already explained that
it would satisfy conditions 1--3, and, obviously, it would also satisfy condition 4).

Take $\mu\coloneq \min \{\lambda,1/6\}$ and apply Proposition \ref{prop:constr_of_autom_hyp} to find a torsion-free $C'(\mu)$ group $F$, an automorphism $\sigma \in Aut(F)$ and
an epimorphism $\psi:F \to P$ from its claim. Then $F$ has cyclic centralizers by Lemma \ref{lem:small_canc->hyp}.(ii).

Let $G\leqslant F\times F$ be the symmetric fibre product corresponding to $\psi$. Then $G$ is finitely presented by
Lemma \ref{lem:1-2-3}, because $N\coloneq \ker\psi$ is finitely generated, $F$ is finitely presented and $P$ is of type $\mathrm{F}_3$.
Observe that $F$ is hereditarily conjugacy separable by \cite[Cor. 1.3]{M-Z-vcs}, and, hence, so is $G$ by Corollary \ref{cor:crit_for_hcs_of_subdir}.

As before, we let $\tilde F \coloneq F \rtimes_\sigma \langle t \rangle_k$ be the semidirect product of the group $F$ with the cyclic group
${\langle t \rangle}_k$ of order $k$, where $tft^{-1}\coloneq \sigma(f)$ for all $f \in F$. By construction,
$\psi$ extends to an epimorphism $\tilde \psi: \tilde F \to P \times \langle t \rangle_k$, where $\tilde\psi(f)=\psi(f)$, for all $f \in F$,
$\tilde\psi(t)=t$ and $\ker\tilde\psi=N$. Finally, we let $K\leqslant \tilde F \times \tilde F$ be the symmetric fibre product corresponding to $\tilde \psi$.

It remains to repeat the arguments from the proof of Theorem \ref{thm:weaker_vers} to show that $G$ can be identified with a normal subgroup of index $k$ in $K$,
$K$ has unsolvable conjugacy problem and $K$ is not conjugacy separable. (Observe that in this case the fact that $K$ is not conjugacy separable can be deduced from
Mostowski's result \cite[Thm. 3]{Mostowski}, because $K$ is finitely presented and has unsolvable conjugacy problem.)
\end{proof}

\section{Conjugacy separability of finite index overgroups} \label{sec:cs_overgroups}
In this section we will first show that a group which is not hereditarily conjugacy separable always has a finite index overgroup which is not conjugacy separable.
Our second goal will be to produce an example of a finitely presented group $G$ possessing an non-conjugacy separable subgroup of index $2$, such that every
finite index normal overgroup of $G$ is conjugacy separable. This section is still concerned with the case when $\cC$ is the class of all finite groups.

\subsection{Constructing non-conjugacy separable overgroups from subgroups}\label{subsec:contr_of_non_cs_overgp_from_sbgp}
Given a group $G$, a subgroup $H \leqslant G$ and elements $x,y \in G$, we will write $x \sim_H y$ if there exists $h \in H$ such that $x=hyh^{-1}$.
If no such $h \in H$ exists, then we will write $x \not\sim_H y$.

Let us start with the following easy observation.

\begin{lemma}\label{lem:ind_2_sbgp->ind_3_overgp} Let $G$ be a group with a subgroup $H \leqslant G$ of index $2$. Then there are an overgroup $K$, of $G$,
with $|K:G|=3$, and an element $a \in K$, centralizing $H$, such that for any $x \in H$ and $y \in G$, $x \sim_H y$ if and only if $ax \sim_K ay$ in $K$.
\end{lemma}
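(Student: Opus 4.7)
The plan is to build $K$ as a semidirect product that encodes the coset structure of $H$ in $G$ via an order-$3$ ``marker.'' Since $|G:H|=2$, $H$ is normal in $G$ and there is a surjective sign homomorphism $\varepsilon: G \to \{\pm 1\}$ with $\ker\varepsilon = H$. I would define $K \coloneq \langle a \rangle \rtimes_\varepsilon G$, where $\langle a \rangle$ is cyclic of order $3$ and $G$ acts on $\langle a \rangle$ through $\varepsilon$ via the natural isomorphism $\{\pm 1\} \cong \mathrm{Aut}(\langle a\rangle)$; concretely, $gag^{-1}=a^{\varepsilon(g)}$ in $K$. Then $|K:G|=|\langle a \rangle|=3$ and $a$ commutes with every element of $H = \ker\varepsilon$, which gives the required overgroup and the element $a$.

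The easy direction of the conjugacy criterion is immediate: if $x=hyh^{-1}$ with $h\in H$, then $h(ax)h^{-1}=(hah^{-1})(hxh^{-1})=ay$, so $ax\sim_K ay$.

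For the converse, suppose $k\in K$ satisfies $k(ax)k^{-1}=ay$, and write $k=a^i g$ with $i\in\{0,1,2\}$ and $g\in G$ (using the semidirect product decomposition $K=\langle a\rangle G$). Using $ga=a^{\varepsilon(g)}g$ and the fact that $gxg^{-1}\in H$ (since $H\lhd G$), so that $a^{-i}$ commutes with $gxg^{-1}$, a direct computation collapses to
\[
k(ax)k^{-1} = a^{i+\varepsilon(g)}(gxg^{-1})a^{-i} = a^{\varepsilon(g)}(gxg^{-1}).
\]
Equating this with $ay$ gives $a^{\varepsilon(g)-1}=y(gxg^{-1})^{-1}$ inside $K$. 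The left-hand side lies in $\langle a \rangle$, the right-hand side lies in $G$, and in the semidirect product $\langle a\rangle \cap G=\{1\}$; so both sides equal $1$. From $a^{\varepsilon(g)-1}=1$ in $\langle a\rangle\cong\mathbb{Z}/3\mathbb{Z}$ and $\varepsilon(g)\in\{\pm 1\}$, the only possibility is $\varepsilon(g)=1$, whence $g\in H$. The right-hand equation then yields $y=gxg^{-1}$, exhibiting $x\sim_H y$ as required. The only subtle point is the last step -- checking that $\varepsilon(g)=-1$ is genuinely excluded, which is where the choice of order $3$ for $a$ (rather than $2$) is essential, since $-1\not\equiv 1\pmod 3$.
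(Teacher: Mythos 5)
Your construction is identical to the paper's (the semidirect product of a cyclic group of order $3$ by $G$, with $G$ acting through the sign of the coset modulo $H$), and your converse argument — decomposing the conjugator as $a^i g$, pushing powers of $a$ past elements of $H$, and comparing components via $\langle a\rangle \cap G = \{1\}$ — is the same argument the paper gives, just bookkept slightly differently. The proof is correct and matches the paper's approach.
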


\begin{proof} Let $K \coloneq \langle a \rangle_3 \rtimes G$, where $G$ acts on the cyclic group $\langle a \rangle_3$, of order $3$, as follows:
\[gag^{-1}=\left\{
\begin{array}{cc}
  a & \mbox{if } g \in H \\
  a^2 & \mbox{if } g \notin H
\end{array}
\right. .\] This action is well-defined since $|G/H|=2$ and $a \mapsto a^2$ is an automorphism of $\langle a \rangle_3$ of order $2$.

Clearly $|K:G|=3$ and $a$ centralizes $H$, so it remains to check the last claim. Let $x \in H$ and $y \in G$ be arbitrary elements.
If there is $h \in H$ such that $x=hyh^{-1}$ then
$h(ay)h^{-1}=ahyh^{-1}=ax$, because $h$ commutes with $a$, hence $ax$ is conjugate to $ay$ in $K$.

Conversely, suppose that $ax \sim_K ay$. Then there exist $g \in G$ and $\e \in \{0,1,2\}$ such that $a^\e g(ay)g^{-1}a^{-\e}=ax$ in $K$.
This  is equivalent to $g(ay)g^{-1}=ax$, as $a^\e$ commutes with $ax \in aH$, thus we have $(gag^{-1}) (gyg^{-1})=ax$.
Since $gag^{-1},a \in \langle a \rangle$, $gyg^{-1},x \in G$ and $K$ is the semidirect product of $\langle a \rangle$ with $G$, we can deduce that $gag^{-1}=a$
and $gyg^{-1}=x$. But the former equality implies that $g \in H$, so the latter equality yields $x \sim_H y$.

Thus we have proved that $x \sim_H y$ is equivalent to $ax \sim_K ay$, as required.
\end{proof}

Lemma \ref{lem:ind_2_sbgp->ind_3_overgp} immediately gives the following corollary.

\begin{cor}\label{cor:ind_2->overind_3_CP} If $G$ is a finitely generated group possessing a subgroup $H$, of index $2$, which has unsolvable conjugacy problem,
then $G$ has an overgroup $K$, with $|K:G|=3$, such that $K$ has unsolvable conjugacy problem.
\end{cor}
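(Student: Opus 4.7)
My plan is to deduce this corollary as an essentially immediate consequence of the preceding Lemma~\ref{lem:ind_2_sbgp->ind_3_overgp}, using the standard observation that a reduction between decision problems transfers undecidability. First I would note that, since $|G:H|=2<\infty$ and $G$ is finitely generated, $H$ is also finitely generated (by a Reidemeister--Schreier argument, or simply because a finite-index subgroup of a finitely generated group is finitely generated), so the hypothesis that $H$ has unsolvable conjugacy problem is meaningful and independent of the chosen finite generating set.

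Next, I would apply Lemma~\ref{lem:ind_2_sbgp->ind_3_overgp} to obtain the overgroup $K=\langle a\rangle_3\rtimes G$ with $|K:G|=3$, together with the element $a\in K$ centralizing $H$ and satisfying: for all $x\in H$ and $y\in G$, one has $x\sim_H y$ if and only if $ax\sim_K ay$. Specializing the second variable to $y\in H\subseteq G$, this gives in particular: for all $x,y\in H$, the equivalence
\[
x\sim_H y \iff ax\sim_K ay \quad \text{in } K.
\]
A finite generating set of $K$ is obtained by adjoining $a$ to any finite generating set of $G$, and a finite generating set of $H$ can be fixed once and for all; moreover, any word in the generators of $H$ can be algorithmically rewritten as a word in the generators of $K$.

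I would then argue by contradiction: suppose that the conjugacy problem in $K$ were solvable. Given any two words $u,v$ in the generators of $H$, representing elements $x,y\in H$, rewrite them as words $u',v'$ in the generators of $K$, form the words $au'$ and $av'$, and apply the hypothetical algorithm for the conjugacy problem in $K$ to decide whether $ax\sim_K ay$. By the equivalence above, this decides whether $x\sim_H y$, giving a solution to the conjugacy problem in $H$ and contradicting the assumption. Hence $K$ has unsolvable conjugacy problem, as required.

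I do not expect any genuine obstacle here: all the structural content sits in Lemma~\ref{lem:ind_2_sbgp->ind_3_overgp}, and the remaining task is the standard verification that conjugacy of $x,y\in H$ in $H$ is algorithmically equivalent (via the explicit map $x\mapsto ax$) to conjugacy of their images in $K$. The only minor care needed is the routine check that the reduction is effective (the rewriting of $H$-words as $K$-words), which is immediate since $H$ is a finitely generated subgroup of the finitely generated group $K$ and a finite generating set of $H$ can be expressed in terms of the generators of $K$.
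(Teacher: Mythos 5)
Your proposal is correct and follows exactly the route the paper intends: the paper states that Corollary~\ref{cor:ind_2->overind_3_CP} ``immediately'' follows from Lemma~\ref{lem:ind_2_sbgp->ind_3_overgp}, and your argument is precisely the routine reduction (via $x\mapsto ax$) that this remark leaves implicit. The extra care you take about finite generation of $H$ and effectiveness of the rewriting is sound but standard, and nothing in your write-up deviates from the paper's approach.
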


We can also deduce the analogous fact for conjugacy separability.

\begin{cor}\label{cor:ind_2->overind_3} If $G$ is a group possessing a subgroup $H$, of index $2$, which is not conjugacy separable,
then $G$ has an overgroup $K$, with $|K:G|=3$, such that $K$ is not conjugacy separable.
\end{cor}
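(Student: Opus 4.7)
The plan is to apply Lemma \ref{lem:ind_2_sbgp->ind_3_overgp} directly and exhibit a pair of elements in the resulting overgroup $K$ that witnesses the failure of conjugacy separability. Lemma \ref{lem:ind_2_sbgp->ind_3_overgp} produces an overgroup $K = \langle a \rangle_3 \rtimes G$ of index $3$ over $G$, where $a$ centralizes $H$, and moreover for any $x \in H$ and $y \in G$, the equivalence $x \sim_H y \Leftrightarrow ax \sim_K ay$ holds. This equivalence is the key bridge between non-conjugacy separability of $H$ and non-conjugacy separability of $K$.

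Since $H$ is not conjugacy separable, I can choose elements $x, y \in H$ with $x \not\sim_H y$ but such that their images are conjugate in every finite quotient of $H$; equivalently, for every homomorphism $\psi \colon H \to M$ to a finite group $M$, there exists $h \in H$ with $\psi(hxh^{-1}) = \psi(y)$. The candidate witness pair in $K$ will be $ax$ and $ay$. Since $y \in H \subseteq G$ and $x \not\sim_H y$, the lemma yields $ax \not\sim_K ay$ immediately.

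It remains to show that $\varphi(ax)$ and $\varphi(ay)$ are conjugate in $M$ for every homomorphism $\varphi \colon K \to M$ with $M$ finite. Restricting to $H$ and passing to the image of $H$ in $M$, the non-conjugacy separability of $H$ furnishes an element $h \in H$ with $\varphi(h)\varphi(x)\varphi(h)^{-1} = \varphi(y)$. Since $a$ centralizes $H$ in $K$, the element $h$ commutes with $a$, hence $\varphi(h)\varphi(a)\varphi(h)^{-1} = \varphi(a)$, and multiplying the two relations gives
\[
\varphi(h)\,\varphi(ax)\,\varphi(h)^{-1} \;=\; \varphi(a)\,\varphi(y) \;=\; \varphi(ay).
\]
Thus $\varphi(ax) \sim_M \varphi(ay)$ in every finite quotient, completing the proof that $K$ is not conjugacy separable.

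There is no real obstacle here: the entire argument is a direct consequence of Lemma \ref{lem:ind_2_sbgp->ind_3_overgp} together with the observation that conjugators in $H$ automatically commute with $a$ in $K$, so conjugating $ax$ by such an element only moves the $x$-coordinate. All the substantive work is packaged inside the preceding lemma.
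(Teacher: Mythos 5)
Your proposal is correct and follows essentially the same route as the paper: apply Lemma \ref{lem:ind_2_sbgp->ind_3_overgp}, take a witness pair $x,y\in H$ for non-conjugacy separability of $H$, conclude $ax\not\sim_K ay$ from the lemma, and use that $\varphi(a)$ commutes with $\varphi(H)$ to transfer the conjugacy of $\varphi(x)$ and $\varphi(y)$ in every finite quotient to conjugacy of $\varphi(ax)$ and $\varphi(ay)$. This matches the paper's proof step for step.
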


\begin{proof} Let $K$ be the overgroup of $G$ and let $a \in K$ be the element centralizing $H$, given by Lemma \ref{lem:ind_2_sbgp->ind_3_overgp}.
Since $H$ is not conjugacy separable
by the assumptions, there are two elements $x,y \in H$ such that $x \not\sim_H y$ but $x$ is conjugate to $y$ in every finite quotient of $H$.

Then $ax \not\sim_K ay$, but for every homomorphism $\varphi:K \to M$, where $M$ is a finite group, we have $\varphi(x) \sim_{\varphi(H)} \varphi(y)$, which implies that
$\varphi(ax) \sim_M \varphi(ay)$, as $\varphi(a)$ commutes with every element of $\varphi(H)$ in $M$. Therefore $K$ is not conjugacy separable.
\end{proof}

Theorem \ref{thm:norm_overgps-cs} below shows that the index $|K:G|=3$ is optimal in Corollary \ref{cor:ind_2->overind_3}, as an index $2$ overgroup would necessarily be normal.
The next proposition deals with the general case. It gives an exponential bound on the index $|K:G|$ in terms of $|G:H|$, which is not always optimal.

\begin{prop}\label{prop:fin_ind_sbgp_overgp} Let $G$ be a group with a subgroup $H \leqslant G$ of index $k \in \N$. Then there are an overgroup $K$, of $G$,
with $|K:G|=2^k$, and an element $a \in K$, centralizing $H$, such that for any $x \in H$ and $y \in G$, $x \sim_H y$ if and only if $ax \sim_K ay$ in $K$.
\end{prop}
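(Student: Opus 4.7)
The plan is to generalize the construction in Lemma~\ref{lem:ind_2_sbgp->ind_3_overgp} by enlarging the coefficient group. I would take $A \coloneq \F_2^{G/H}$, the $\F_2$-vector space of dimension $k$ indexed by the left cosets of $H$ in $G$, with $G$ acting on $A$ by permuting coordinates according to its natural left action on $G/H$, and set $K \coloneq A \rtimes G$, so that $|K:G|=|A|=2^k$, as required. I would then choose $a \in A$ to be the indicator of the trivial coset $H \in G/H$; since $H$ stabilizes this coset, $a$ is fixed by $H$, and hence $a$ centralizes $H$ in $K$.

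The forward direction of the conjugacy correspondence is immediate: if $x = hyh^{-1}$ with $h \in H$, then $h(ay)h^{-1} = (hah^{-1})(hyh^{-1}) = ax$, since $h$ centralizes $a$. For the reverse direction, I would suppose that $k'(ay)(k')^{-1} = ax$ for some $k' \in K$ and write $k' = bg$ with $b \in A$, $g \in G$. A direct expansion in the semidirect product, combined with the uniqueness of the decomposition $K = A \cdot G$, will reduce this to the pair of equations $gyg^{-1} = x$ in $G$ and $g \cdot a = a + (1+x) \cdot b$ in $A$ (written additively over $\F_2$, with $x$ acting on $b$ via its permutation of $G/H$).

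The crucial and most delicate step will be to extract from the $A$-equation that $g \in H$. I would accomplish this by reading off the $H$-coordinate on both sides: on the left, $(g \cdot a)_H = a_{g^{-1}H}$ equals $1$ if $g \in H$ and $0$ otherwise, while on the right, $(a+(1+x) \cdot b)_H = 1 + b_H + b_{x^{-1}H}$ collapses to $1 + 2b_H = 1$ in $\F_2$, because $x \in H$ forces $x^{-1}H = H$. Hence $g \in H$, which together with $gyg^{-1} = x$ yields $x \sim_H y$, as needed.

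The main reason the calculation collapses so cleanly is the use of characteristic $2$: the potentially obstructive term $b_H + b_{x^{-1}H}$ vanishes automatically for every $x \in H$, so the $H$-coordinate of the $A$-equation becomes independent of the auxiliary element $b$ and encodes precisely whether $g$ lies in $H$. The resulting index $2^k$ is the price one pays for this simplification, and is not generally sharp --- indeed, for $k=2$ the earlier lemma already gives the better value $3$.
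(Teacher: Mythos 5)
Your construction is exactly the one in the paper: the permutational wreath product $K=(\Z/2\Z)^{G/H}\rtimes G$ with $a$ the indicator function of the coset $H$, and the reverse implication extracted by comparing the $H$-coordinates of the $A$-components after the semidirect-product decomposition (where $x\in H$ forces $x^{-1}H=H$ and the $b$-terms cancel). The argument is correct and essentially identical to the paper's proof.
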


\begin{proof} Let $A=\Z/2\Z$ be the group of residues modulo $2$.
The natural action of $G$ on the left cosets modulo $H$ gives rise to the action of $G$ on the group $L\coloneq A^{G/H}$,
which can be thought of as the set of all functions from the set of left cosets $G/H$ to $A$, under addition.
The resulting semidirect product $K \coloneq L \rtimes G$ is the so-called \emph{permutational wreath product} of $A$ with $G$.
More explicitly, for every $f \in A^{G/H}$, thought of as a function $f:G/H \to A$, and any $g \in G$, we define
$gfg^{-1} \in A^{G/H}$ by the formula $(gfg^{-1})(uH) \coloneq f(g^{-1}uH)$, for all $uH \in G/H$.

Let $a \in L$ be the characteristic function of $H \in G/H$, that is $a(H)=\bar 1 \in A$ and $a(uH)=\bar 0$ if $uH \neq H$, where $A=\Z/2\Z=\{\bar 0,\bar 1\}$.
Clearly $|K:G|=|L|=2^{|G:H|}=2^k$ and $hah^{-1}=a$ for every $h \in H$, i.e., $a$ centralizes $H$.

Consider any $x \in H$ and $y \in G$. Evidently,  if $x \sim_H y$ then $ax \sim_K ay$, because $a$ centralizes $H$.
Conversely, assume that $ax \sim_K ay$. Then there are $b \in L$ and $g \in G$ such that
$bg (ay) g^{-1}b^{-1}=ax$ in $K$. Since $a,b \in L$ and $L$ is abelian, we get
$(gag^{-1})(gyg^{-1})=ab^{-1}xb=(ab^{-1}xbx^{-1})x$. As before, since $gag^{-1},ab^{-1}xbx^{-1} \in L $, $gyg^{-1},x \in G$ and $K$ is the semidirect product of $L$ and $G$,
we must have
\begin{equation}\label{eq:in_K}
gag^{-1}=ab^{-1}xbx^{-1} ~\mbox{ and }~ gyg^{-1}=x ~\mbox{ in }K.
\end{equation}

Suppose that $g \notin H$. Then $g^{-1}H \neq H$, so $(g a g^{-1})(H)=a(g^{-1}H)=\bar 0$ by the definition of $a$. On the other hand, since $x^{-1}H=H$, we
have the following equality in $A$:
\[(ab^{-1}xbx^{-1})(H)=a(H)+b^{-1}(H)+(xbx^{-1})(H)=\bar 1-b(H)+b(x^{-1}H)=\bar 1-b(H)+b(H)=\bar 1,\]
contradicting the first equation in \eqref{eq:in_K}. Therefore $g \in H$, and the second equation in \eqref{eq:in_K} yields that $x \sim_H y$.
This completes the proof of the proposition.
\end{proof}

Corollary \ref{cor:f_i_non-cs->f_oi_non-cs} from the Introduction can be deduced from Proposition \ref{prop:fin_ind_sbgp_overgp} in the same way as
Corollary \ref{cor:ind_2->overind_3} is deduced from Lemma \ref{lem:ind_2_sbgp->ind_3_overgp}.
Evidently one can draw a similar conclusion for the conjugacy problem:

\begin{cor}\label{cor:f_i_u_CP->f_oi_u_CP} Let $G$ be a finitely generated group possessing a subgroup of finite index with unsolvable conjugacy problem.
Then $G$ has a finite index overgroup with unsolvable conjugacy problem.
\end{cor}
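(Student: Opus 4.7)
The plan is to derive this corollary directly from Proposition \ref{prop:fin_ind_sbgp_overgp}, following exactly the pattern by which Corollary \ref{cor:ind_2->overind_3_CP} was extracted from Lemma \ref{lem:ind_2_sbgp->ind_3_overgp}. Let $H\leqslant G$ be the finite index subgroup with unsolvable conjugacy problem, and set $k\coloneq |G:H|$. Since $G$ is finitely generated and $[G:H]<\infty$, the subgroup $H$ is itself finitely generated, so the statement that $H$ has unsolvable conjugacy problem is meaningful in the standard algorithmic sense. Applying Proposition \ref{prop:fin_ind_sbgp_overgp}, I obtain an overgroup $K$ of $G$ with $|K:G|=2^k<\infty$ and an element $a\in K$ centralizing $H$ such that for all $x\in H$ and $y\in G$,
\[ x\sim_H y \iff ax\sim_K ay. \]
Note that $K$ is finitely generated, as a finite extension of the finitely generated group $G$, so its conjugacy problem is well-posed.

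Next, I would fix a finite generating set $Y$ of $K$ that contains a finite generating set of $H$, so that every word over $Y^{\pm 1}$ representing an element of $H$ can be explicitly produced from a word over the generators of $H$. Suppose, for contradiction, that the conjugacy problem in $K$ is solvable. Then, given any two words $u,v$ representing elements $x,y\in H$, I could convert them to words over $Y^{\pm 1}$ and algorithmically decide whether $ax\sim_K ay$ in $K$. By the displayed equivalence, this would decide whether $x\sim_H y$ in $H$, yielding an algorithm for the conjugacy problem in $H$ and contradicting the hypothesis. Hence the conjugacy problem in $K$ must be unsolvable, and $K$ is the desired finite index overgroup of $G$.

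There is no real obstacle here: the work is entirely contained in Proposition \ref{prop:fin_ind_sbgp_overgp}, and the deduction is a routine reduction of decision problems. The only point that would require a brief sentence is the remark that $H$ is finitely generated (so that the hypothesis makes sense) and that $K$ is finitely generated (so that the conclusion makes sense), both of which follow immediately from finiteness of the relevant indices.
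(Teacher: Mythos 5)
Your proposal is correct and follows exactly the route the paper intends: the paper leaves this corollary as an immediate consequence of Proposition \ref{prop:fin_ind_sbgp_overgp}, obtained by the same reduction that derives Corollary \ref{cor:ind_2->overind_3_CP} from Lemma \ref{lem:ind_2_sbgp->ind_3_overgp}, which is precisely what you carry out. Your added remarks on finite generation of $H$ and $K$ and on the choice of generating set are harmless bookkeeping that the paper omits.
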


\begin{rem} A theorem of Remeslennikov \cite[Thm. 1]{Rem} states that the restricted wreath product of two conjugacy separable groups is
conjugacy separable provided the base group is abelian and the acting group is cyclic subgroup separable. The argument from Proposition \ref{prop:fin_ind_sbgp_overgp}
shows that these conditions are no longer sufficient for conjugacy separability of a permutational wreath product (with finite orbits), because there exist conjugacy separable  and cyclic subgroup separable groups possessing non-conjugacy separable subgroups of finite index (see Theorem \ref{thm:norm_overgps-cs} or \cite[Thm. 1.1]{M-M}).
\end{rem}

\subsection{A non-hereditarily conjugacy separable group with conjugacy separable normal overgroups}
In this subsection we will prove Theorem \ref{thm:norm_overgps-cs} which is a stronger version of Theorem \ref{thm:norm_overgps-cs_simple} from the Introduction.
The proof will require several auxiliary statements.

\begin{lemma}\label{lem:characteristic} Suppose that $F_1,F_2$ are acylindrically
hyperbolic groups with cyclic centralizers and $G\leqslant \FF$ is a full subdirect product.
Let $N_i \coloneq G\cap F_i$, $i=1,2$. Then for every automorphism $\sigma \in Aut(G)$, either $\sigma(N_1)=N_1$ and $\sigma(N_2)=N_2$ or
 $\sigma(N_1)=N_2$ and $\sigma(N_2)=N_1$. Moreover, the latter is only possible if $F_1 \cong F_2$.
\end{lemma}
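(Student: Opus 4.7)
The plan is to give an intrinsic characterization of the pair $\{N_1, N_2\}$ inside $G$, so that every $\sigma \in \mathrm{Aut}(G)$ must permute it. As a preliminary step, I would verify that $F_i$ has no non-trivial finite normal subgroup: any such $K \lhd F_i$ would make $\C_{F_i}(K)$, the kernel of the conjugation map $F_i \to \mathrm{Aut}(K)$, a finite index subgroup of $F_i$, but $\C_{F_i}(K)$ sits inside the cyclic centralizer $\C_{F_i}(k)$ of any $k \in K \setminus \{1\}$, so $F_i$ would be virtually cyclic, contradicting acylindrical hyperbolicity. Lemma~\ref{lem:inf_norm_sbgp->non-elem} then applies to the non-trivial normal subgroup $N_i \lhd F_i$, showing that each $N_i$ is non-elementary; combined with the fact that the cyclic centralizer hypothesis forces every abelian subgroup of $F_i$ to be cyclic, this forces $N_i$ to be non-abelian.

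The key ingredient will be the intrinsic description
\[ N_1 \cup N_2 = \{g \in G \mid \C_G(g) \text{ is non-abelian}\}. \]
For $\subseteq$, every $g \in N_i$ (including $g = 1$) is centralized by the non-abelian group $N_{3-i}$, since $N_1$ and $N_2$ commute elementwise inside $\FF$. For $\supseteq$, if $g = (g_1, g_2) \in G$ has both $g_1 \neq 1$ and $g_2 \neq 1$, then $\C_G(g) \leqslant \C_{F_1}(g_1) \times \C_{F_2}(g_2)$ is a subgroup of $\Z \times \Z$ by the cyclic centralizer hypothesis, hence abelian. As the right-hand side is defined purely in terms of $G$, every $\sigma \in \mathrm{Aut}(G)$ preserves $N_1 \cup N_2$ setwise.

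Next, since $N_1 \cap N_2 = \{1\}$ and $N_1$ centralizes $N_2$ inside $\FF$, any subgroup of $N_1 \cup N_2$ lies entirely in $N_1$ or entirely in $N_2$: non-trivial $a \in N_1$ and $b \in N_2$ would give $ab = (a_1, b_2) \notin N_1 \cup N_2$. Applied to $\sigma(N_1)$ and $\sigma(N_2)$, each is contained in one of $N_1, N_2$; the possibility that both sit in the same $N_i$ is ruled out because it would force $N_1 \cup N_2 = \sigma(N_1) \cup \sigma(N_2) \subseteq N_i$ and hence $N_{3-i} = \{1\}$. Combining $\sigma(N_1 \cup N_2) = N_1 \cup N_2$ with $\sigma(N_1) \cap \sigma(N_2) = \{1\}$ then promotes the two surviving containments into equalities, yielding the required dichotomy.

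For the final clause, subdirectness of $G$ gives $G/N_{3-i} \cong F_i$ via the projection $\rho_i$. In the swap case $\sigma(N_1) = N_2$, the automorphism $\sigma$ descends to an isomorphism $G/N_1 \to G/N_2$, which translates as $F_2 \cong F_1$. The main technical point is finding the intrinsic description of $N_1 \cup N_2$ above; once this is in place the dichotomy follows mechanically, but the characterization relies essentially on both $N_i$ being non-abelian (which uses fullness and cyclic centralizers together) and on the cyclic centralizer hypothesis controlling centralizers of elements outside $N_1 \cup N_2$.
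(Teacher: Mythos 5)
Your proposal is correct and follows essentially the same route as the paper: both arguments first rule out finite normal subgroups of $F_i$, invoke Lemma~\ref{lem:inf_norm_sbgp->non-elem} to make $N_1,N_2$ non-elementary and hence non-abelian, and then characterize $\{N_1,N_2\}$ intrinsically via non-abelian centralizers (the paper phrases this as "$N_1,N_2$ are the only maximal subgroups $H\leqslant G$ with $\C_G(H)$ non-abelian," while you phrase it at the level of elements with non-abelian centralizer and then pass to subgroups; the content is identical). The final isomorphism $F_1\cong F_2$ in the swap case is obtained exactly as in the paper.
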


\begin{proof} Observe that for each $i=1,2$, $F_i$ cannot have non-trivial finite normal subgroups: the centralizer of such a normal
subgroup must have finite index in $F_i$ and it also must be cyclic, but $F_i$ is not virtually cyclic by definition.
Therefore $N_i\lhd F_i$ is non-elementary by Lemma~\ref{lem:inf_norm_sbgp->non-elem}, and hence it is non-abelian (as $F_i$ has cyclic centralizers).

Now, note that $N_2 \subseteq \C_G(N_1)$ and $N_1 \subseteq \C_G(N_2)$, so $\C_G(N_i)$ is non-abelian, $i=1,2$. However, for any element $(g_1,g_2) \in G$,
if $g_1 \neq 1$ and $g_2 \neq 1$ then $\C_G((g_1,g_2)) \leqslant \C_{F_1}(g_1) \times \C_{F_2}(g_2)$ is abelian, because $\C_{F_i}(g_i)$ are cyclic for $i=1,2$.
It follows that $N_1$ and $N_2$ are the only
maximal subgroups of $G$ with the property that $\C_G(N_i)$ is non-abelian. Hence any automorphism of $G$ either fixes both $N_1$ and $N_2$ or it interchanges them.

For the final claim, assume that $\sigma \in Aut(G)$ is an automorphism satisfying $\sigma(N_1)=N_2$. Then $\sigma$ naturally induces an isomorphism
between the quotients $G/N_1 \cong F_2$ and $G/N_2 \cong F_1$, sending $fN_1$ to $\sigma(f)N_2$, for all $fN_1 \in F/N_1$. Hence $F_1 \cong F_2$.
\end{proof}

The next statement was proved by Bumagina and Wise \cite{Bum-Wise} and is, in some sense, an amplification of  Rips's original construction \cite{Rips}.

\begin{lemma}\label{lem:Bum-Wise} For any finitely presented group $P$ and each integer $p>92$ there exist a group $F$, given by a
finite presentation $\langle Z \,\|\, \mathcal{R}\rangle$ satisfying the small cancellation condition $C'(1/11)$, and an epimorphism
$\psi:F \to P$ such that all of the following hold.
\begin{itemize}
  \item[(i)] There are $U,V \in Z$ such that $U^p, V^p \in \mathcal{R}$, and no other words in
  $\mathcal{R}$ are proper powers;
  \item[(ii)] $N\coloneq \ker\psi$ is generated by two elements $u,v \in F$ of order $p$, represented by the words $U,V$ respectively;
  \item[(iii)] $N$ is non-cyclic, infinite  and characteristic in $F$;
  \item[(iv)] the natural action of $F$ on $N$ by conjugation gives rise to a surjective homomorphism $P \to Out(N)$.
\end{itemize}
\end{lemma}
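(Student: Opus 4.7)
The plan is to implement a Rips-type small cancellation construction with exactly two ``killing'' letters. Starting from a finite presentation $P = \langle a_1,\ldots,a_n \,\|\, R_1,\ldots,R_m \rangle$, I would set $Z \coloneq \{a_1,\ldots,a_n,U,V\}$ and let $\mathcal{R}$ consist of three families of words: (a)~for each defining relator $R_i$ of $P$, a padded version $R_i \widetilde{R}_i$, where $\widetilde{R}_i$ is a long positive word in $U,V$; (b)~for each $a_j$ and each $\epsilon \in \{\pm 1\}$, conjugation relators $a_j^\epsilon U a_j^{-\epsilon} \widetilde{U}_{j,\epsilon}^{-1}$ and $a_j^\epsilon V a_j^{-\epsilon} \widetilde{V}_{j,\epsilon}^{-1}$, with $\widetilde{U}_{j,\epsilon},\widetilde{V}_{j,\epsilon}$ positive words in $U,V$; (c)~the two torsion relators $U^p$ and $V^p$. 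The padding words are drawn from a large reservoir of pairwise incomparable positive words in $U,V$ so that, once they are chosen long enough, the relators in families (a)--(c) together satisfy $C'(1/11)$ and the periodic relators $U^p,V^p$ have only short overlaps with cyclic shifts of the other relators. The threshold $p>92$ is the precise arithmetic condition that lets the unavoidable self-periodicity inside $U^p,V^p$ be absorbed into the small-cancellation constant $1/11$.

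Conclusion (i) holds by construction: only $U^p$ and $V^p$ are proper powers, all padded relators being aperiodic. For (ii), the relators in (b) show that $\langle u,v \rangle \leqslant F$ is normal, and the relators in (a) yield $F/\langle u,v\rangle \cong P$, so $\psi: F \to P$ with $\ker \psi = \langle u,v\rangle$ is well-defined; Greendlinger's torsion theorem (Lemma~\ref{lem:small_canc->hyp}(iii)) then forces $u,v$ to have order exactly $p$ because no other relator is a proper power. For (iii), the subgroup $N$ is infinite because it contains the infinite $F$-conjugacy class of $u$ (centralizers of torsion elements in a hyperbolic group are finite), it is non-cyclic because Greendlinger forces each cyclic subgroup containing an element of order $p$ to be conjugate into $\langle U\rangle$ or $\langle V\rangle$ but not both (so $u,v$ cannot lie in a common cyclic subgroup), and it is characteristic because every element of order $p$ in $F$ is conjugate to a power of $u$ or $v$, hence every order-$p$ element lies in $N$ and any automorphism of $F$ preserves the subgroup they generate.

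The main obstacle is (iv), surjectivity of the induced map $P = F/N \to Out(N)$. One must show that every automorphism of $N$ is, modulo $Inn(N)$, induced by conjugation by some $g \in F$. My strategy would be to exploit the conjugation relators in (b) to rigidify $N$: these select a canonical $F$-equivariant family of distinguished generators of $N$, namely the $F$-conjugates of $u$ and $v$, and one argues that any $\alpha \in Aut(N)$ must permute this family up to $Inn(N)$. By arranging the padding words $\widetilde{U}_{j,\epsilon},\widetilde{V}_{j,\epsilon}$ with enough asymmetry, any such permutation is then forced to coincide with conjugation by some element of $F$. This is the delicate part of the construction, where Bumagina--Wise diverges from the original Rips setup, and controlling $Aut(N)$ purely from the combinatorial small-cancellation data is the main technical obstacle.
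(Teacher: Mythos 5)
There is a genuine gap, and it is exactly where you flag it: part (iv). The paper does not reprove this lemma at all -- it quotes claims (i), (ii), (iv) and the non-cyclicity of $N$ directly from Bumagina--Wise \cite{Bum-Wise} (their Lemmas 9 and 10), and only supplies short original arguments for the three remaining bits: that $u,v$ have order \emph{exactly} $p$ (Greendlinger), that $N$ is infinite, and that $N$ is characteristic. Your proposal instead tries to rebuild the whole construction from scratch, and for (iv) you offer only a ``strategy'' and candidly admit it is the main technical obstacle. But surjectivity of $P\to Out(N)$ \emph{is} the theorem of \cite{Bum-Wise}; the assertion that an abstract $\alpha\in Aut(N)$ must permute the $F$-conjugates of $u$ and $v$ ``up to $Inn(N)$,'' and that asymmetric padding then forces $\alpha$ to agree with conjugation by some $g\in F$ modulo $Inn(N)$, is precisely what has to be proved, and nothing in your sketch establishes it. (Knowing that $\alpha$ permutes the torsion elements of $N$ -- which does follow from Greendlinger's description of torsion plus normality of $N$ -- is far from knowing that $\alpha$ is induced by an element of $F$; one must simultaneously control the images of $u$ and $v$ and rule out ``exotic'' automorphisms, which requires the detailed small-cancellation analysis of \cite{Bum-Wise}.) As written, the proposal therefore does not prove the lemma.

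Two smaller points. First, your infiniteness argument invokes ``centralizers of torsion elements in a hyperbolic group are finite,'' which is false in general (take a nontrivial finite central subgroup); it is rescued here only because $C'(1/6)$ groups have cyclic centralizers (Truffault), so the centralizer of the order-$p$ element $u$ is finite and $u^F$ is infinite once $F$ is known to be infinite (Greendlinger). The paper argues slightly differently: if $N$ were finite then $\C_F(N)$ would have finite index, but $\C_F(N)=\{1\}$ since $N$ is non-cyclic and centralizers are cyclic, forcing $F$ to be finite, contradicting Greendlinger's theorem that such a presentation defines a non-torsion group. Second, your argument that $N$ is characteristic (every torsion element is conjugate into $\langle u\rangle$ or $\langle v\rangle$, so $N$ is the normal closure of the torsion) is correct and is the same as the paper's.
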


\begin{proof} Claims (i),(ii) and (iv) were proved in \cite[Lemma 9]{Bum-Wise} (that the orders of $u$ and $v$ are  exactly $p$
is an easy consequence of Greendlinger's lemma \cite[Thm. 4.5 in Sec. V.4]{L-S}).

The fact that $N$ is non-cyclic was noted in \cite[Lemma 10]{Bum-Wise}. Now, suppose that $N$ is finite. Then $\C_F(N)$ has finite index in $F$, but
$\C_F(N)=\{1\}$, as $F$ has cyclic centralizers (by Lemma~\ref{lem:small_canc->hyp}.(ii)) and $N$ is not cyclic. This implies that $F$ must  also be finite. However,
$|Z|\ge 2$ and no defining relator from $\mathcal R$ has length $1$ (by construction in \cite{Bum-Wise}), so the small cancellation group $F$ must be non-torsion
by \cite[Thm. VII]{Greend-torsion}. This contradiction shows that $|N|=\infty$.

Finally,  the fact that $N$ is characteristic
is an easy consequence of claims (i) and (ii). Indeed, (i), (ii)
together with  Lemma \ref{lem:small_canc->hyp}.(iii) show that every element of finite order is conjugate in $F$ to an element of $N$.
Since $N=\langle u,v \rangle$ and $u,v$ have order $p$, we can deduce that
$N$ is the normal closure of the torsion elements in $F$. The latter clearly implies that $N$ is characteristic in $F$.
\end{proof}

In the next lemma we observe a key property of centralizers in finite index normal overgroups of the group $F$, produced by the Bumagina-Wise construction from \cite{Bum-Wise},
which will be important in the proof of Theorem \ref{thm:norm_overgps-cs}.

\begin{lemma}\label{lem:centralizers_in_tilde_F}
Let $F$ be the group given by Lemma \ref{lem:Bum-Wise} (for some $P$ and $p$) and let $\tilde F$ be a normal overgroup of $F$, with
$|\tilde F:F|<\infty$. Then $\tilde F$ is hyperbolic and
for every element $f \in \tilde F$, either $|\tilde F:\C_{\tilde F}(f)|< \infty$ or $|\C_{\tilde F}(f):\langle f \rangle|<\infty$.
\end{lemma}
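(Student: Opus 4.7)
The statement splits into two components: hyperbolicity of $\tilde F$, and the centralizer dichotomy. I would address them separately.

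\emph{Hyperbolicity.} Since $F$ has a finite $C'(1/11)$ small cancellation presentation (Lemma~\ref{lem:Bum-Wise}) and $C'(1/11) \subseteq C'(1/6)$, $F$ is hyperbolic by Lemma~\ref{lem:small_canc->hyp}.(i). Because $F \lhd \tilde F$ has finite index, $\tilde F$ is quasi-isometric to $F$, so $\tilde F$ is itself hyperbolic.

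\emph{Centralizer dichotomy, easy cases.} Take $f \in \tilde F$. If $f$ has infinite order, it is loxodromic in the hyperbolic group $\tilde F$, so $\C_{\tilde F}(f) \leq E_{\tilde F}(f)$, the maximal elementary subgroup, which is virtually cyclic with $\langle f \rangle$ of finite index; hence $|\C_{\tilde F}(f):\langle f \rangle|<\infty$. If $f=1$, the first alternative holds trivially. The substantive case is $f \neq 1$ of finite order, where I aim to show $\C_{\tilde F}(f)$ is finite unless $f$ centralizes $F$ pointwise. Lemma~\ref{lem:small_canc->hyp}.(ii) gives that $F$ has cyclic centralizers, so for every non-trivial torsion $g \in F$, the group $\C_F(g)$ is a cyclic group containing a torsion element, hence finite cyclic; as $\C_{\tilde F}(g)/\C_F(g)$ embeds into the finite group $\tilde F/F$, $\C_{\tilde F}(g)$ is itself finite. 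Consequently, if any power $f^k$ with $k\ge 1$ lies in $F\setminus\{1\}$, then $\C_{\tilde F}(f) \leq \C_{\tilde F}(f^k)$ is finite, and we are done.

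\emph{The delicate subcase.} The remaining subcase is $f \notin F$ with $f^m = 1$ for $m$ the order of the coset $fF$ in $\tilde F/F$. Conjugation by $f$ then defines an automorphism $\phi \in Aut(F)$ of finite order. If $\phi = \mathrm{id}_F$, then $F \leq \C_{\tilde F}(f)$ and the first alternative holds. Otherwise one must prove that $\Fix(\phi) = \C_F(f)$ is finite, which would force $\C_{\tilde F}(f)$ to be finite by the same embedding argument as above.

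\emph{Main obstacle.} This last subcase is the hardest step. I would argue by contradiction: suppose $\Fix(\phi)$ is infinite. Since $F$ is hyperbolic and every infinite subgroup of a hyperbolic group contains an element of infinite order, $\Fix(\phi)$ would contain a loxodromic $h \in F$. Choosing $p$ in Lemma~\ref{lem:Bum-Wise} to be an odd prime greater than $92$, every non-trivial torsion element of $F$ has order exactly $p$, so $F$ has no involutions. Combined with cyclic centralizers, this forces $E_F(h) = \langle h\rangle$ whenever $h$ is not a proper power in $F$, so $E_{\tilde F}(h)$ is a virtually cyclic, finite-index extension of $\langle h\rangle$, inside which $f$ must sit as a torsion element commuting with $h$. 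Using the rigidity of the Bumagina--Wise construction --- in particular, that $N = \langle u,v\rangle$ is characteristic in $F$ (Lemma~\ref{lem:Bum-Wise}.(iii)) and that the surjection $P \twoheadrightarrow Out(N)$ of Lemma~\ref{lem:Bum-Wise}.(iv) constrains how $\phi$ can act on $N$ and descend to $F/N = P$ --- one would extract enough constraints on $\phi$ to force it to fix a generating set of $F$, contradicting $\phi \neq \mathrm{id}$. This contradiction would give $|\Fix(\phi)|<\infty$ and thereby complete the proof.
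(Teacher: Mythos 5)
Your treatment of hyperbolicity, of infinite-order elements, and of torsion elements $f$ with some non-trivial power in $F$ is correct and matches the paper. But the case you yourself flag as the ``main obstacle'' --- a finite-order $f$ all of whose non-trivial powers lie outside $F$ --- is exactly the case the lemma is about, and there you offer only a hoped-for contradiction (``one would extract enough constraints on $\phi$ to force it to fix a generating set''), not an argument. Worse, the dichotomy you aim for is the wrong one: you want to show that $\Fix(\phi)=\C_F(f)$ is finite whenever $\phi\neq\mathrm{id}_F$, but this is not what is true or needed. The correct pivot is the action of $f$ on the characteristic subgroup $N$, not on $F$. Since $N$ is characteristic in $F$ and $F\lhd\tilde F$, we have $N\lhd\tilde F$, and Lemma~\ref{lem:Bum-Wise}.(iv) (surjectivity of $P\to Out(N)$) produces $g\in F$ with $fhf^{-1}=ghg^{-1}$ for all $h\in N$; then $g^n\in\C_F(N)=\{1\}$ (cyclic centralizers plus $N$ non-cyclic), so $g$ is a torsion element of $F$. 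The centralizer $L\coloneq\C_{\tilde F}(g^{-1}f)$ is quasiconvex and contains the infinite normal subgroup $N$, hence has finite index in $\tilde F$ by \cite[Cor.~2]{Min-2}. If $g=1$ then $L=\C_{\tilde F}(f)$ and the \emph{first} alternative holds --- even though $\phi$ may well be non-trivial on $F$, so your claim ``$\phi\neq\mathrm{id}_F$ implies $\Fix(\phi)$ finite'' would be false in this situation ($\C_F(f)\supseteq N$ is infinite). If $g\neq 1$ then $\C_{\tilde F}(g)$ is finite and $\C_{\tilde F}(f)\cap L=\C_{\tilde F}(g)\cap L$, forcing $\C_{\tilde F}(f)$ to be finite. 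None of this machinery (the characteristic subgroup, Lemma~\ref{lem:Bum-Wise}.(iv), quasiconvexity of centralizers, the infinite-normal-subgroup criterion) appears in your sketch, and the route you do sketch, via $\E_F(h)$ for a loxodromic $h\in\Fix(\phi)$, does not lead to a contradiction.

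A secondary problem: you propose to ``choose $p$ to be an odd prime greater than $92$'' so that $F$ has no involutions and all torsion has order exactly $p$. The lemma must hold for the groups actually produced by Lemma~\ref{lem:Bum-Wise}, i.e.\ for arbitrary integers $p>92$; in the proof of Theorem~\ref{thm:norm_overgps-cs} it is applied with $p_1=93$ and $p_2=94$, neither of which is prime and the second of which is even. So you are not free to impose that restriction, and the auxiliary facts you derive from it are unavailable.
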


\begin{proof} Recall that $F$ that is hyperbolic by Lemma \ref{lem:small_canc->hyp}.(i), hence so is
$\tilde F$: since $|\tilde F:F|<\infty$, the natural inclusion of $F$ in $\tilde F$ induces a quasi-isometry between the
Cayley graphs of these groups (with respect to some finite generating sets), and hyperbolicity is preserved by quasi-isometries (see \cite[Thm. 1.9 in Ch. III.H]{B-H}).

Consider any $f \in \tilde F$. If $f$ has infinite order then $|\C_{\tilde F}(f):\langle f \rangle|<\infty$ by \cite[Cor. 3.10 in Ch.~III.$\Gamma$]{B-H}.
Thus we can now suppose that $f$ has finite order $n \in \N$ in $\tilde F$.

Let $N \lhd F$ be the normal subgroup from Lemma \ref{lem:Bum-Wise}. Since $N$ is characteristic in $F$ and $F \lhd \tilde F$, we deduce that $N \lhd \tilde F$.
Therefore conjugation by $f$ induces an automorphism of $N$. But then, according to Lemma \ref{lem:Bum-Wise}.(iv), there is an element $g \in F$
such that
\begin{equation}\label{eq:same_autom}
fhf^{-1}=ghg^{-1} ~\mbox{ for all } h \in N.
\end{equation}
It follows that $g^n h g^{-n}=f^n h f^{-n}=h$ for all $h \in H$, thus $g^n \in \C_{F}(N)$.
Note that $\C_{F}(N)=\{1\}$ as $N$ is not cyclic (by Lemma \ref{lem:Bum-Wise}.(iii)) and
$F$ has cyclic centralizers (by  Lemma \ref{lem:small_canc->hyp}.(ii)). Hence $g$ must have finite order in $F$.

Let $L\coloneq \C_{\tilde F}(g^{-1}f)$. Then, evidently,
\begin{equation}\label{eq:same_autom_in_L}
fhf^{-1}=ghg^{-1} ~\mbox{ for all } h \in L,
\end{equation}
and $N \subseteq L$ by  \eqref{eq:same_autom}.
It is well known that centralizers of elements are quasiconvex in any hyperbolic group (cf. \cite[Prop. 3.9 in Ch. III.$\Gamma$]{B-H}), therefore
$L$ is quasiconvex in $\tilde F$.
However, $L$ contains $N$, which is an infinite normal subgroup of $\tilde F$ by Lemma~\ref{lem:Bum-Wise}.(iii),
hence $|\tilde F:L|<\infty$ by \cite[Cor. 2]{Min-2}.

Now we need to consider two cases. If $g=1$ in $F$, then $L=\C_{\tilde F}(f)$ has finite index in $\tilde F$, as required.
Otherwise, $g \in F$ is a non-trivial element of finite order,
so $\C_{F}(g)$ is a finite cyclic group (as $F$ has cyclic centralizers), hence $|\C_{\tilde F}(g)|\le |\C_F(g)| \, |\tilde F:F|<\infty$. Recalling
\eqref{eq:same_autom_in_L}, we deduce that $\C_{\tilde F}(f) \cap L=\C_{\tilde F}(g) \cap L$ is finite, and so $|\C_{\tilde F}(f)|<\infty $ as $|\tilde F:L|<\infty$.
Consequently, $|\C_{\tilde F}(f):\langle f \rangle|<\infty$, and the lemma is proved.
\end{proof}

We are now ready to prove the main result of this section.
\begin{thm} \label{thm:norm_overgps-cs}
For each integer $k \ge 2$ there exists a finitely presented subdirect product
$G \leqslant \FF$, where $F_1, F_2$ are finitely presented $C'(1/11)$-groups, satisfying the following.
There is  a subgroup $G' \lhd G$, of index $k$, such that $G'$ is not conjugacy separable, but for every group
$K$, with $G \lhd K$ and $|K:G|<\infty$, $K$ is conjugacy separable.
\end{thm}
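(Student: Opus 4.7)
The plan is to realize $G$ as a fibre product of two $C'(1/11)$-small-cancellation groups over a carefully chosen quotient $P$, then exploit the characteristic structure of the Bumagina--Wise kernels to control every finite-index normal overgroup of $G$. First I will fix a finitely presented group $P$ of type $\mathrm{F}_3$ satisfying: $P$ is residually finite and cyclic subgroup separable, every finite extension of $P$ is still cyclic subgroup separable, and there is a central extension
\[\{1\}\to Z\to\hat P\to P\to\{1\}\]
with $Z\cong\Z/k\Z$ such that $\hat P$ is not residually finite. Suitable $P$ can be drawn from higher-rank arithmetic lattices whose congruence kernel contains torsion of order $k$ (cyclic subgroup separability of $P$ and of its finite extensions follows from Mal'cev's theorem on finitely generated linear groups, while non-residual-finiteness of $\hat P$ comes from classical results on universal central extensions of arithmetic groups). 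I then apply Lemma \ref{lem:Bum-Wise} twice to $\hat P$, with two distinct primes $p_1\ne p_2$, both exceeding $92$, to obtain finitely presented $C'(1/11)$ groups $F_1,F_2$ and epimorphisms $\psi_i:F_i\twoheadrightarrow\hat P$ whose kernels $\hat N_i$ are characteristic in $F_i$ and generated by torsion elements of order $p_i$. Taking distinct primes forces $F_1\not\cong F_2$. Setting $N_i\coloneq\psi_i^{-1}(Z)$ and letting $G\leqslant F_1\times F_2$ be the fibre product over $P$ along the composed maps $F_i\to\hat P\to P$, one has $G\cap F_i=N_i$ and $G/(N_1\times N_2)\cong P$; $G$ is finitely presented by Lemma \ref{lem:1-2-3}, since $N_i$ is finitely generated and $P$ is of type $\mathrm{F}_3$.

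Define $G'$ as the kernel of the homomorphism $G\to Z$, $(f_1,f_2)\mapsto\psi_1(f_1)\psi_2(f_2)^{-1}$, which is well-defined and a homomorphism because $Z$ is central in $\hat P$; equivalently $G'$ is the fibre product over $\hat P$. Then $G'\lhd G$ with $|G:G'|=k$, $G'$ is a full subdirect product in $F_1\times F_2$ with $G'\cap F_i=\hat N_i$, and $F_1/\hat N_1\cong\hat P$ is not residually finite, so Corollary \ref{cor:non-cs_for_subdir_of_acyl_hyp} shows $G'$ is not conjugacy separable. For the other half of the theorem, suppose $K$ satisfies $G\lhd K$ with $|K:G|<\infty$. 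Because each $N_i$ is characteristic in $F_i$ and $F_1\not\cong F_2$, Lemma \ref{lem:characteristic} forces every conjugation-induced automorphism of $G$ coming from $K$ to preserve $N_1$ and $N_2$ individually, so $N_1,N_2\lhd K$. Set $\tilde F_i\coloneq K/N_j$ (with $\{i,j\}=\{1,2\}$); this is a finite-index normal overgroup of $F_i\cong G/N_j$. The map $K\to\tilde F_1\times\tilde F_2$, $k\mapsto(kN_2,kN_1)$, is injective (its kernel is $N_1\cap N_2=\{1\}$) and realizes $K$ as a full subdirect product with $K\cap\tilde F_i=N_i$. By Lemma \ref{lem:centralizers_in_tilde_F}, $\tilde F_i$ is hyperbolic and has virtually cyclic centralizers of every non-torsion element; since $F_i$ is virtually compact special hyperbolic (by Wise and Agol) and $\tilde F_i$ is a finite-index overgroup of $F_i$, $\tilde F_i$ is itself virtually compact special hyperbolic, hence hereditarily conjugacy separable by the Minasyan--Zalesskii theorem. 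The quotient $\tilde F_1/(K\cap\tilde F_1)\cong K/(N_1N_2)$ is a finite extension of $P$, and so is cyclic subgroup separable by the choice of $P$.

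To conclude that $K$ is conjugacy separable I adapt Proposition \ref{prop:crit_of_CS_for_subdirect} to this virtually cyclic centralizer setting: for $(g_1,g_2)\in K$ with each $g_i$ of infinite order, $\C_{\tilde F_1\times\tilde F_2}((g_1,g_2))$ is a finite extension of a product of cyclic subgroups $\langle f_1\rangle\times\langle f_2\rangle$, so the double coset $\C_{\tilde F_1\times\tilde F_2}((g_1,g_2))\cdot K$ decomposes as a finite union of translates of $\langle(f_1,1)\rangle K$; each translate is closed in the profinite topology on $\tilde F_1\times\tilde F_2$ by Lemma \ref{lem:crit_closed} together with cyclic subgroup separability of $\tilde F_1/(K\cap\tilde F_1)$. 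The remaining cases, where some coordinate is trivial or where $g_i$ has a finite-index centralizer in $\tilde F_i$, are handled separately, using that $F_i$ has no non-trivial finite normal subgroup (finite normal subgroups in a non-elementary hyperbolic group with cyclic centralizers must be trivial) and that the corresponding double cosets are then cofinite, hence automatically closed. Lemma \ref{lem:C-cs-for_products} supplies hereditary conjugacy separability of $\tilde F_1\times\tilde F_2$, and Lemma \ref{lem:crit_for_CS} then delivers conjugacy separability of $K$. The principal obstacle is exactly this last step: extending Proposition \ref{prop:crit_of_CS_for_subdirect} to accommodate virtually cyclic centralizers while disposing of the torsion and finite-index-centralizer exceptions produced by Lemma \ref{lem:centralizers_in_tilde_F}; a secondary, but genuine, difficulty is producing a concrete $P$ meeting all the prerequisite properties simultaneously, which requires non-trivial input from the theory of arithmetic groups.
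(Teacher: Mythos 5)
Your proposal is correct and follows essentially the same route as the paper: a Deligne-type non-residually-finite central $\Z/k\Z$-extension of an arithmetic group of type $\mathrm{F}_3$, two Bumagina--Wise groups with distinct torsion orders (so that $F_1\not\cong F_2$), the two nested fibre products giving $G'\lhd G$ of index $k$, and then Lemma \ref{lem:characteristic}, Lemma \ref{lem:centralizers_in_tilde_F} and the finite-index-versus-virtually-cyclic centralizer dichotomy to rerun the argument of Proposition \ref{prop:crit_of_CS_for_subdirect} inside $\tilde F_1\times\tilde F_2$ via Lemmas \ref{lem:crit_closed} and \ref{lem:crit_for_CS}. The only slips are cosmetic: cyclic subgroup separability of the base arithmetic group comes from Segal's theorem on subgroups of $\mathrm{GL}(n,\Z)$ rather than from Mal'cev, and its finite extensions inherit the property via Lemma \ref{lem:cyc-sep-1}.(ii) rather than by linearity.
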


\begin{proof} It was shown in \cite[Example 6.1]{M-M}, using a result of Deligne \cite{Deligne},
that there is a finite index subgroup
$Q \leqslant {\rm Sp}(4,\Z)$ and a short exact sequence of groups
\[\{1\} \to O \to P \stackrel{\theta}{\to} Q \to \{1\}\]
such that $O \cong \Z/k\Z$ is central in $P$ and $P$ is not residually finite. Note that $Q$ has type $\mathrm{F}_3$ by the work of Borel and Serre \cite{Borel-Serre}
and $Q$ is cyclic subgroup separable as any subgroup of $\mathrm{GL}(4,\Z)$ (see \cite[Thm. 5 in Sec. 4.C]{Segal-book}).

Now, denote $p_1 \coloneq 93$ and $p_2\coloneq 94$. For each $i=1,2$, we can use Lemma \ref{lem:Bum-Wise} to find a $C'(1/11)$-small cancellation group $F_i$,
an epimorphism $\psi_i':F_i \to P$ and the normal subgroup $N_i'\coloneq \ker\psi_i'$, generated by two elements of order $p_i$,  from its claim.
Note that $F_1$ has an element of
order $p_1=93$, but the order of any torsion element in $F_2$ divides $94$ by Lemma~\ref{lem:small_canc->hyp}.(iii), hence $F_1 \not \cong F_2$.

The group $F_i$ is non-elementary (as it maps onto the non-elementary group $Q$) and hyperbolic (by Lemma \ref{lem:small_canc->hyp}.(i)), $i=1,2$. Moreover,
$F_i$ has cyclic centralizers by Lemma \ref{lem:small_canc->hyp}.(ii), and so it cannot have any non-trivial finite normal subgroups (as the centralizer of
such a subgroup would be cyclic and of finite index).

Set $\psi_i\coloneq \theta\circ \psi_i': F_i \to Q$, and let $G'\leqslant F_1\times F_2$ be the fibre product corresponding to  $\psi_1',\psi_2'$ and
$G\leqslant F_1\times F_2$ be the fibre product corresponding to  $\psi_1,\psi_2$.
Clearly, $G' \leqslant G$. Denote $N_i \coloneq G \cap F_i=\ker\psi_i \lhd F_i$, $i=1,2$, and observe that $N_1/N_1'\cong \ker\theta=O$ has order $k$,
thus $N_1=\bigsqcup_{j=1}^k s_j N_1'$, for some $s_1,\dots,s_{k} \in N_1$.
It is easy to see that $G=N_1G'$ (as $G' \leqslant \FF$ is subdirect and $N_1=G \cap F_1$),
which implies that $G=\bigsqcup_{j=1}^k s_jG'$, i.e., $|G:G'|=k$. The fact that $G'\lhd G$ easily follows from the fact that $O$ is central in $P$.
We can also deduce that $N_1$ is finitely generated, as this is true for $N_1'$,
hence $G$ is finitely presented by Lemma \ref{lem:1-2-3} (because $F_1/N_1 \cong Q$ is of type $\mathrm{F}_3$).

By Theorem \ref{thm:non-cs_for_subdir_of_hyp}, the group $G'$ is not conjugacy separable since $F_1/N_1'\cong P$ is not residually finite.

Now, suppose that  $K$ is a normal overgroup of $G$, with $|K:G|<\infty$. Since $F_1 \not\cong F_2$, Lemma~\ref{lem:characteristic} tells us that
$N_1,N_2 \lhd K$. Then, for every $i=1,2$, $\tilde F_i \coloneq K/N_i$ can be naturally considered as a normal overgroup of $F_i\cong G/N_i$, with
$|\tilde F_i/F_i|=|K/G|<\infty$. Note that since $N_1$ has trivial intersection with $N_2$, we can think of $K$ as a subdirect product in
$\tilde F_1 \times \tilde F_2$, with $K \cap \tilde F_i=N_i$, $i=1,2$ (see Subsection \ref{subsec:constr-subdir}).

We will now aim to apply Lemma \ref{lem:crit_for_CS} to show that $K$ is conjugacy separable. First we need to check that all the assumptions of this lemma are satisfied.
According to Lemma~\ref{lem:centralizers_in_tilde_F}, the groups $\tilde F_1$ and $\tilde F_2$ are hyperbolic.
Moreover, since $F_i$ is a group possessing  a finite presentation satisfying $C'(1/11)$, it is
virtually compact special (in the terminology of
Haglund and Wise \cite{H-W}) by a combination of the results of Wise \cite[Thm. 1.2]{Wise-small_canc-cubical} and Agol \cite[Thm. 1.1]{Agol}, $i=1,2$.
Therefore $\tilde F_1$  and $\tilde F_2$ are also virtually compact special, hence they must be hereditarily conjugacy separable by a theorem of the
author and Zalesskii \cite[Thm. 1.1]{M-Z-vcs}. Thus $\tilde F_1\times \tilde F_2$ is hereditarily conjugacy separable by Lemma \ref{lem:C-cs-for_products}.

Consider any element $(f_1,f_2) \in K$, where $f_i \in \tilde F_i$, $i=1,2$, and denote $C\coloneq \C_{\tilde F_1\times \tilde F_2}((f_1,f_2))$.
We need to check that the double coset $CK$ is closed in the profinite topology on $\tilde F_1\times \tilde F_2$.

First, assume that $|\tilde F_1:\C_{\tilde F_1}(f_1)|<\infty$. Then there is a finite index normal subgroup $L_1 \lhd F_1$, which is contained in $\C_{\tilde F_1}(f_1)$.
Since $K \leqslant \tilde F_1\times \tilde F_2$ is subdirect,  $T \coloneq L_1 K$ will be a finite index subgroup of $\tilde F_1 \times \tilde F_2$.
But $L_1 \leqslant C$, hence $CK=CL_1K=CT$ is equal to a union of left cosets modulo $T$. There are only finitely many of such cosets, so
$CK$ is closed in the profinite topology on $\tilde F_1\times \tilde F_2$, as a finite union of translates of $T$.

Obviously, if  $|\tilde F_2:\C_{\tilde F_2}(f_2)|<\infty$, we can show that $CK$ is closed in the profinite topology on $\tilde F_1\times \tilde F_2$ using a similar argument.
Thus, we can further suppose that $|\tilde F_i:\C_{\tilde F_i}(f_i)|=\infty$ for $i=1,2$.
Therefore $|\C_{\tilde F_i}(f_i):\langle f_i \rangle|<\infty$ for $i=1,2$, by Lemma~\ref{lem:centralizers_in_tilde_F}. This
implies that the subgroup
$H\coloneq \langle (f_1,1),(1,f_2) \rangle$ has finite index in $C$, and we can argue as in the proof of
Proposition \ref{prop:crit_of_CS_for_subdirect}. Indeed, suppose that $C=\bigcup_{j=1}^k (a_i,b_j)H$. Then, as $(f_1,f_2) \in K$ and
$H=\langle (f_1,1) \rangle \langle (f_1,f_2) \rangle$, we have $HK=\langle (f_1,1) \rangle K$. Consequently,
\begin{equation}\label{eq:CK}
CK=\bigcup_{j=1}^k (a_i,b_j)HK=\bigcup_{j=1}^k (a_i,b_j)\langle(f_1,1)\rangle K ~\mbox{ in } \tilde F_1\times\tilde F_2.
\end{equation}
Now, the double coset $\langle(f_1,1)\rangle K$ is closed in the profinite topology on $\tilde F_1\times \tilde F_2$ if and only if
$\langle f_1 \rangle N_1$ is closed in the profinite topology on $\tilde F_1$, by Lemma \ref{lem:crit_closed}, which, in its own turn, happens if and only if
the cyclic subgroup $\langle \psi(f_1)\rangle$ is closed in the profinite topology on $\tilde F_1/N_1$ (see Lemma \ref{lem:full_preimage}.(iii)). However, recall that
$F_1/N_1 \cong Q$ has finite index in $\tilde F_1/N_1$, and $Q$ is cyclic subgroup separable. Therefore $\tilde F_1/N_1$ is also cyclic subgroup separable
by Lemma~\ref{lem:cyc-sep-1}.(ii). Thus we conclude that $\langle(f_1,1)\rangle K$ is closed in the profinite topology on $\tilde F_1\times \tilde F_2$, which, in view of
\eqref{eq:CK}, implies that $CK$ is closed as well.

We have checked that $\tilde F_1 \times \tilde F_2$ and $K \leqslant \tilde F_1 \times\tilde F_2$ satisfy all the assumptions of Lemma  \ref{lem:crit_for_CS}. Therefore we
can use this lemma to deduce that $K$ is conjugacy separable. Thus the proof of the theorem is complete.
\end{proof}

\section{Conjugacy separability with respect to \texorpdfstring{$Q'$}{Q'}-groups} \label{sec:p}
This section investigates $\cC$-conjugacy separability of subdirect products when $\cC$ is a class of $p$-groups, or, more generally, a class of $Q'$-groups.

\begin{defn}\label{def:q'} Let $Q \subset \N$ be a set of prime numbers and let $F$ be a group.
We will say that $F$ is a \emph{$Q'$-group} if every element of $F$ has finite order which is coprime to each $q \in Q$.
\end{defn}

If $p$ is a prime then the class of $p$-groups is precisely the class of all $Q'$-groups, where $Q\coloneq \mathbb{P}\setminus \{p\}$ and $\mathbb{P}$
denotes the set of all prime numbers.

It is easy to see that for any $Q \subseteq \mathbb{P}$ the class of \emph{all} $Q'$-groups is an extension-closed pseudovariety (which is trivial if and only if
$Q=\mathbb{P}$), and every group in this class is periodic.
By a \emph{pseudovariety of $Q'$-groups} we will mean a pseudovariety which consists only of $Q'$-groups (but it does not have to contain all $Q'$-groups).

The following theorem is an improvement of Corollary \ref{cor:non-cs_for_subdir_of_acyl_hyp} in the case when $\cC$ is a class of $Q'$-groups.

\begin{thm}\label{thm:q'-crit} Let $Q \subseteq \mathbb{P}$ be a non-empty set of primes and let $\cC$ be a pseudovariety of $Q'$-groups.
Suppose that $F_1,F_2$ are acylindrically hyperbolic groups without
non-trivial finite normal subgroups, $G \leqslant \FF$ is a full subdirect product and $N_1 \coloneq G\cap F_1$. If $G$ is $\cC$-conjugacy separable then
$F_1/N_1$ is a residually-$\cC$ $Q'$-group.
\end{thm}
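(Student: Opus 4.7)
Since $G$ is $\cC$-conjugacy separable, Corollary~\ref{cor:non-cs_for_subdir_of_acyl_hyp} (applied contrapositively) immediately gives that $F_1/N_1$ is residually-$\cC$. It therefore remains to show that $F_1/N_1$ is a $Q'$-group: every element must have finite order, and that order must be coprime to every prime in $Q$.

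The ``coprime to $Q$'' part is the easy half. If some $\bar x \in F_1/N_1$ had finite order $n$ with $q \mid n$ for some $q \in Q$, then $\bar y \coloneq \bar x^{n/q}$ would have order exactly $q$; but since every member of $\cC$ is a $Q'$-group, the image of $\bar y$ in any $\cC$-quotient of $F_1/N_1$ has order that simultaneously divides $q$ and is coprime to $q$, hence is trivial. Residual-$\cC$-ness of $F_1/N_1$ would then force $\bar y=1$, a contradiction.

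The main obstacle is ruling out infinite-order elements. The driving observation is: if $\bar x \in F_1/N_1$ has infinite order and $q \in Q$ is any prime (it exists because $Q \neq \emptyset$), then $\bar x$ lies in the pro-$\cC$ closure of $\langle \bar x^q \rangle$ in $F_1/N_1$, while $\bar x \notin \langle \bar x^q\rangle$ itself. Indeed, in any $\cC$-quotient $\varphi\colon F_1/N_1 \to M$, the order $k$ of $\varphi(\bar x)$ is coprime to all primes of $Q$ (as $M$ is a $Q'$-group), in particular coprime to $q$; therefore $q$ is invertible modulo $k$, which yields $\varphi(\bar x) \in \langle \varphi(\bar x)^q\rangle = \varphi(\langle \bar x^q\rangle)$. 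Meanwhile $\bar x \notin \langle \bar x^q\rangle$ since $\bar x$ has infinite order and $q \geq 2$. Via Lemma~\ref{lem:full_preimage}, this translates into the statement that, for any preimage $x \in F_1$ of $\bar x$, the subset $\langle x^q\rangle N_1$ is not $\cC$-closed in $F_1$ yet has $x$ in its closure.

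The hard step is to use this failure of cyclic subgroup $\cC$-separability to contradict the $\cC$-conjugacy separability of $G$ directly (and not merely of some co-$\cC$ subgroup, which is all that Theorem~\ref{thm:near_converse} would give). The construction I would try is as follows. Using Lemma~\ref{lem:inf_norm_sbgp->non-elem} twice, pick $h_1 \in N_1$ and $h_2 \in N_2$ with $\C_{F_i}(h_i) = \langle h_i\rangle \subseteq N_i$, and modify $x$ and some lift $y \in F_2$ of $\bar x$ so that $\C_{F_1}(x) = \langle x\rangle$, $\C_{F_2}(y) = \langle y\rangle$, and $(x,y) \in G$. The candidate witnesses are $(h_1,h_2)$ and $(xh_1x^{-1},h_2)$: both lie in $N_1 \times N_2 \subseteq G$, and by Remark~\ref{rem:in_cc->double_coset} they are non-$G$-conjugate because $\C_{F_1\times F_2}((h_1,h_2)) = \langle h_1\rangle\times\langle h_2\rangle \subseteq G$, so $G$-conjugacy would force $x \in N_1$, contradicting the infinite order of $\bar x$. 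To prove that they are nevertheless $\cC$-conjugate in $G$ one would adapt the calculation in the proof of Lemma~\ref{lem:1st_crit_for_non-cs}: in each co-$\cC$ subgroup $K \lhd G$, use that $\bar x$ lies in the $\cC$-closure of $\langle \bar x^q\rangle$ to approximate $x$ by $x^{qm}n$ with $n \in N_1$ (for a suitable $m$ depending on $K$), and then transport this approximation to $G$ via conjugation by a large power of $(x,y) \in G$ combined with an element of $N_1 \times N_2$, exploiting that $q$ is invertible modulo the order of $\bar x$ in every $\cC$-quotient. Verifying that the required conjugator can be assembled inside $G$ (rather than only inside $F_1 \times F_2$) is the most delicate point, and it is precisely where the $Q'$-structure of $\cC$ is essential.
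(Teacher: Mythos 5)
Your opening moves match the paper's: residual-$\cC$-ness of $F_1/N_1$ comes from Corollary~\ref{cor:non-cs_for_subdir_of_acyl_hyp}, the coprimality statement follows formally once periodicity is known (the paper packages this as Remark~\ref{rem:res-C->fin_sbgps_in_C}), and the key observation that an infinite-order $\bar x$ lies in the pro-$\cC$ closure of $\langle \bar x^q\rangle$ without lying in $\langle \bar x^q\rangle$ is exactly the engine of the paper's argument. However, the step you flag as ``the most delicate point'' is not merely delicate: with the witnesses you propose it fails. Your pair is $(h_1,h_2)$ and $(xh_1x^{-1},h_2)$ with $h_1\in N_1$. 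When you approximate $x$ by $x^{qm}n$ (with $n\in N_1$) modulo a co-$\cC$ subgroup and push the computation of Lemma~\ref{lem:1st_crit_for_non-cs} through, you reduce to comparing $\varphi\left((x^{qm}\,n h_1 n^{-1}\,x^{-qm},\,h_2)\right)$ with $\varphi\left((h_1,h_2)\right)$, i.e.\ you must realize conjugation of an element of $N_1$ by $x^{qm}$ \emph{in the first coordinate only} by something in $\varphi(G)$. Since $(x^{qm},1)\notin G$ and $h_1$ bears no relation to $x$, there is no mechanism to absorb this conjugation into $N_1$-conjugation, and the argument stalls. Worse, the pair can genuinely be separated: take $F_1=F_2=F(a,b)$, $\psi:F\to\Z$ with $\psi(a)=1$, $\psi(b)=0$, $\cC=\cC_p$, $h_1=h_2=b$, $x=a$. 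Mapping $(N\times N)/(N'\times N')\cong \Z[t^{\pm1}]^2$ onto $\left(\mathbb{F}_p[t]/(t^{p^j}-1)\right)^2$ with the quotient $\Z/p^j$ acting diagonally by $t$ gives a finite $p$-quotient of $G$ in which the images of $(b,b)$ and $(aba^{-1},b)$, namely $(1,1)$ and $(t,1)$, lie in distinct conjugacy classes (the orbit of $(1,1)$ is $\{(t^k,t^k)\}$ and $t^k=1$ forces $k\equiv 0$). So these elements do not witness non-$\cC$-conjugacy-separability.

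The paper's fix is to change the witness so that the troublesome conjugation \emph{can} be absorbed: using Lemma~\ref{lem:inf_norm_sbgp->non-elem} it picks $y_1\coloneq h_1^m x^q\in N_1x^q$ with $\C_{F_1}(y_1)=\langle y_1\rangle$, a compatible $y_2\in F_2$ with $(y_1,y_2)\in G$ and $\C_{F_2}(y_2)=\langle y_2\rangle$, and compares $(y_1,y_2)$ with $(xy_1x^{-1},y_2)$. Because $y_1\equiv x^q \pmod{N_1}$, one has $x^{nq}\in N_1y_1^{n}$, and since $y_1^{n}$ commutes with $y_1$, conjugation of $y_1$ by $x^{nq}$ \emph{is} conjugation by an element of $N_1$; combined with $nq=ml+1$ (where $l$, the order of $\varphi((x,x_2))$, is coprime to $q$ precisely because $\cC$ consists of $Q'$-groups) this yields the approximation statement (Lemma~\ref{lem:non_cs_for q'}). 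Non-conjugacy of this pair in $G$ then reduces, via Remark~\ref{rem:in_cc->double_coset} and Lemma~\ref{lem:crit_closed}, to $\bar x^{-1}\notin\langle\bar x^{q}\rangle$, which is exactly where the infinite order of $\bar x$ enters. So your skeleton is right, but the witness must be built from an element congruent to $x^q$ modulo $N_1$, not from an element of $N_1$ itself; as written, the central claim of your proof is false for the elements you chose.
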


The proof of Theorem \ref{thm:q'-crit} will employ the following lemma.

\begin{lemma}\label{lem:non_cs_for q'} Suppose that $Q \subseteq \mathbb{P}$ is a set of primes and
$\cC$ is a pseudovariety of $Q'$-groups. Let $F_1,F_2$ be groups, let $G \leqslant F_1\times F_2$
be a subgroup such that $\rho_1(G)=F_1$, where $\rho_1:\FF \to F_1$ is the natural projection, and let $N_1\coloneq G \cap F_1$.
If  $x_1 \in F_1$ and $(y_1,y_2) \in G$ are elements such that $y_1 \in N_1 x_1^q$, for some $q \in Q$, then $(x_1y_1x_1^{-1},y_2) \in G$ belongs to the closure of the $N_1$-conjugacy class
$(y_1,y_2)^{N_1}\subseteq (y_1,y_2)^G$ in the \proc on $G$.
\end{lemma}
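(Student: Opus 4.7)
The plan is, for every homomorphism $\pi\colon G\to M$ with $M\in\cC$, to exhibit an explicit $h\in N_1$ satisfying $\pi((hy_1h^{-1},y_2))=\pi((x_1 y_1 x_1^{-1},y_2))$; this will show that $(x_1 y_1 x_1^{-1},y_2)$ lies in the pro-$\cC$ closure of $(y_1,y_2)^{N_1}$. So fix such $\pi$; using $\rho_1(G)=F_1$ I choose $x_2\in F_2$ with $(x_1,x_2)\in G$, and set $t\coloneq\pi((x_1,x_2))$, $n\coloneq y_1 x_1^{-q}\in N_1$, and $\bar n\coloneq\pi((n,1))$. The hypothesis on $\cC$ is used in precisely one place: since $M$ is a $Q'$-group and $q\in Q$, the order of $t$ in $M$ is coprime to $q$, and hence there exists $k\in\N$ with $t^{qk}=t$.

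Define
\[n_i\coloneq x_1^{iq}\, n\, x_1^{-iq}\in N_1\quad(i\ge 0),\qquad \tilde n\coloneq n_0 n_1\cdots n_{k-1}\in N_1,\qquad h\coloneq\tilde n^{-1}\in N_1.\]
This choice is suggested by the abelian case: writing $\alpha$ for the conjugation action of $t$ on $\pi(N_1)$, the target identity $[\bar h,\bar y]=[t,\bar n]$ reduces, modulo a $\pi(\{1\}\times F_2)$-factor of $\bar y$ that commutes with $\pi(N_1)$, to $(1-\alpha^q)\bar h=(\alpha-1)\bar n$ in additive notation; the identity $\alpha^{qk}=\alpha$ (forced by $t^{qk}=t$) then dictates $\bar h=-(1+\alpha^q+\cdots+\alpha^{(k-1)q})\bar n$, which in multiplicative language is $\bar{\tilde n}^{-1}$.

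Next, a direct telescoping computation in $F_1$, using the standard conjugation expansion $(nx_1^q)^k=\tilde n\,x_1^{qk}$ together with the relation $x_1^q\tilde n x_1^{-q}=n_1 n_2\cdots n_k$, yields $h y_1 h^{-1}=n_k x_1^q$ and therefore $[h,y_1]=n_k n^{-1}$. Since $(x_1 y_1 x_1^{-1},y_2)=([x_1,n],1)(y_1,y_2)$ and $(h y_1 h^{-1},y_2)=([h,y_1],1)(y_1,y_2)$, the desired equality in $M$ reduces to $\pi((n_k,1))=\pi((x_1 n x_1^{-1},1))$. But $(n_k,1)=(x_1,x_2)^{qk}(n,1)(x_1,x_2)^{-qk}$ in $G$, so $\pi((n_k,1))=t^{qk}\bar n t^{-qk}=t\bar n t^{-1}=\pi((x_1 n x_1^{-1},1))$, the last equality by $t^{qk}=t$. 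This completes the verification; the only non-routine step is discovering the telescoping element $\tilde n$, and once it is in hand the remaining calculations are purely mechanical.
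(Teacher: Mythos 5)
Your proof is correct and is essentially the paper's argument in a more explicit form: the paper picks $m,n$ with $nq=ml+1$ (where $l$ is the order of $\varphi((x_1,x_2))$ in $M$) and splits conjugation by $x_1^{nq}$ into an exact $N_1$-conjugation plus a conjugation that becomes trivial under $\varphi$, and your conjugator $h=\tilde n^{-1}=x_1^{qk}y_1^{-k}$ is precisely the $N_1$-element implementing that first step with $k$ in the role of $n$. The only (harmless) omission is that you use $n_i\in N_1$ without noting that $N_1\lhd F_1$, which follows from $\rho_1(G)=F_1$.
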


\begin{proof} First, observe that $N_1 \lhd F_1$, as $\rho_1(G)=F_1$, and, since there is $h \in N_1$ such that $y_1=h x_1^q$, for each $k \in \Z$ we have
\[(x_1^k y_1x_1^{-k},y_2)=(x_1^k hx_1^{-k}h^{-1}y_1,y_2)=(x_1^k hx_1^{-k} h^{-1},1)(y_1,y_2) \in N_1(y_1,y_2) \subseteq G.\]
Thus $(x_1^k y_1x_1^{-k},y_2) \in G$ for all $k \in \Z$.
We also see that for each $n \in \Z$ there is $h_1 \in N_1$ such that $x_1^{nq}= h_1 y_1^n$, hence
\begin{equation}\label{eq:x_1^nq}
(x_1^{nq}y_1x_1^{-nq},y_2)=(h_1 y_1 h_1^{-1},y_2)=(h_1,1)(y_1,y_2)(h_1,1)^{-1} \sim_{N_1} (y_1,y_2)~\mbox{ in } G.
\end{equation}

By the assumptions, there is $x_2 \in F_2$ such that $(x_1,x_2) \in G$.
Let $M \in \cC$ be any group and let $\varphi:G \to M$ be a homomorphism. Then the order of $\varphi((x_1,x_2))$ in $M$ is some $l \in \N$ which is coprime to $q$,
because $M$ is a $Q'$-group and $q \in Q$.

Set $y_1'\coloneq x_1 y_1x_1^{-1} \in F_1$. Let us now show that for every integer $m \in \Z$ we have
\begin{equation}\label{eq:x_1^ml}
\varphi\left((x_1^{ml} y_1'x_1^{-ml},y_2)\right)=\varphi\left((y_1',y_2)\right) ~\mbox{ in } M.
\end{equation}
Indeed, clearly $y_1'=gx_1^q$, where $g\coloneq x_1hx_1^{-1} \in N_1$. Since the elements $(y_1',y_2)$, $(x_1,x_2)$ and $(g,1)$ all belong to $G$
and $\varphi\left( (x_1,x_2)\right)^{ml}=1$ in $M$, we obtain

\begin{multline*}
\varphi\left((x_1^{ml} y_1' x_1^{-ml},y_2)\right)=\varphi\left((x_1^{ml} g x_1^{-ml} g^{-1}y_1',y_2)\right) =\varphi\left((x_1^{ml} g x_1^{-ml} g^{-1},1)(y_1',y_2)\right)
\\ = \varphi\left( (x_1,x_2)^{ml} (g,1) (x_1,x_2)^{-ml} (g,1)^{-1}  (y_1',y_2)\right)  \\ =
\varphi\left( (x_1,x_2)\right)^{ml} \varphi \left((g,1)\right) \varphi\left((x_1,x_2)\right)^{-ml} \varphi\left((g,1)  \right)^{-1} \varphi\left( (y_1',y_2)\right)
=\varphi\left( (y_1',y_2)\right).
\end{multline*}
Thus we have established the validity of equation \eqref{eq:x_1^ml}.

Finally, since $q$ and $l$ are coprime, there exist $m,n \in \Z$ such that $nq=ml+1$. Therefore we can combine \eqref{eq:x_1^nq} with \eqref{eq:x_1^ml} to achieve
\[\varphi\left((y_1,y_2)\right) \sim_{\varphi(N_1)} \varphi\left((x_1^{nq}y_1x_1^{-nq},y_2)\right)=
\varphi\left((x_1^{ml} y_1'x_1^{-ml},y_2)\right)=\varphi\left((y_1',y_2)\right) ~\mbox{ in } M.\]
Thus $\varphi\left((y_1,y_2)\right) \sim_{\varphi(N_1)} \varphi\left((y_1',y_2)\right)$ in $M$. Since $M \in \cC$ is arbitrary, we can conclude that
$(y_1',y_2)=(x_1 y_1x_1^{-1},y_2)$ belongs to the closure of $(y_1,y_2)^{N_1}$ in the \proc on $G$, as claimed.
\end{proof}

\begin{rem} \label{rem:res-C->fin_sbgps_in_C} If $\cC$ is a pseudovariety of groups and $P$ is a residually-$\cC$ group then every finite subgroup of $P$ belongs to $\cC$.
In particular, if $P$ is periodic and $\cC$ consists of $Q'$-groups, for some $Q \subseteq \mathbb{P}$, then $P$ is itself a $Q'$-group.
\end{rem}

Indeed, if $P$ is residually-$\cC$, then every finite subgroup $A \leqslant P$ is $\cC$-closed in $P$, hence it injects into some quotient $M \in \cC$, of $G$. Therefore
$A \in \cC$ as $\cC$ is closed under taking subgroups.

\begin{proof}[Proof of Theorem \ref{thm:q'-crit}] By Corollary \ref{cor:non-cs_for_subdir_of_acyl_hyp}, $F_1/N_1$ must be residually-$\cC$, and so, in view of
Remark \ref{rem:res-C->fin_sbgps_in_C}, it remains to
prove that this group is periodic. Arguing by contradiction, suppose that there is an element $\bar x \in F_1/N_1$ of infinite order, and let $x_1 \in F_1$ be any preimage of $\bar x$ in $F_1$.

Choose some $q \in Q$. By the assumptions, the normal subgroups $N_1 \lhd F_1$ and $N_2 \coloneq G \cap F_2\lhd F_2$ must be infinite.
Therefore, according to Lemma \ref{lem:inf_norm_sbgp->non-elem}, there is $h_1 \in N_1$ and $m \in \Z$
such that $y_1 \coloneq h_1^m x_1^q \in N_1x_1^q \subseteq F_1$ satisfies $\C_{F_1}(y_1)=\langle y_1\rangle$. Take $v \in F_2$ so that $(y_1,v) \in G$, and
apply Lemma \ref{lem:inf_norm_sbgp->non-elem} again, to find $h_2 \in N_2$ and $n \in \Z$ such that the element $y_2\coloneq h_2^n v \in F_2$ satisfies
$\C_{F_2}(y_2)=\langle y_2\rangle$. Observe that $(y_1,y_2) = (1,h_2^n)(y_1,v) \in N_2 G=G$.

By Lemma \ref{lem:non_cs_for q'}, the element $(x_1y_1x_1^{-1},y_2)\in G$ belongs to the closure of $(y_1,y_2)^G$ in the \proc on $G$. Let us now check that
$(x_1y_1x_1^{-1},y_2) \notin (y_1,y_2)^G$. Indeed, since $(x_1y_1x_1^{-1},y_2)=(x_1,1)(y_1,y_2)(x_1,1)^{-1}$, this element is conjugate to $(y_1,y_2)$ in $G$
if and only if $(x_1^{-1},1) \in \C_{\FF}((y_1,y_2)) G$ by Remark~\ref{rem:in_cc->double_coset}. But
\[\C_{\FF}((y_1,y_2))=\C_{F_1}(y_1) \times \C_{F_2}(y_2)=\langle y_1 \rangle \times \langle y_2 \rangle=\langle (y_1,1)\rangle \langle(y_1,y_2)\rangle,\]
hence $\C_{\FF}((y_1,y_2)) G=\langle (y_1,1) \rangle G$, as $(y_1,y_2) \in G$. It follows from Remark \ref{rem:YG_int_F_1}, that
$(x_1^{-1},1) \in \C_{\FF}((y_1,y_2)) G$ if and only if $x_1^{-1} \in \langle y_1 \rangle N_1$ in $F_1$. Since $y_1N_1=x_1^qN_1$,
the latter is equivalent to $\bar x ^{-1} \in \langle \bar x^q \rangle $ in $F_1/N_1$, which is impossible as $q \ge 2$ and $\bar x$ has infinite order in $F_1/N_1$. Thus
$(x_1^{-1},1) \notin \C_{\FF}((y_1,y_2)) G$, implying that $(x_1y_1x_1^{-1},y_2) \notin (y_1,y_2)^G$. This means that $G$ is not $\cC$-conjugacy separable,
contradicting our assumption. So we can conclude that every element in $F_1/N_1$ must have finite order.

Finally, since $F_1/N_1$ is residually-$\cC$ and periodic, we can finish the proof of the theorem by using
Remark \ref{rem:res-C->fin_sbgps_in_C} to deduce that $F_1/N_1$ is a $Q'$-group.
\end{proof}

\begin{cor}\label{cor:Q'-existence} Let $\cC$ be a pseudovariety of $Q'$-groups, for some non-empty subset $Q\subseteq \mathbb{P}$. Suppose that there
exists a finitely generated subdirect product $G \leqslant \FF$ such that $F_1$, $F_2$ are finitely presented acylindrically hyperbolic groups
without non-trivial finite normal subgroups, $G$ is $\cC$-conjugacy separable and $|(\FF):G|=\infty$. Then there exists a finitely presented
infinite residually-$\cC$ $Q'$-group.
\end{cor}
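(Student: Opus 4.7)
The plan is to show that the quotient $P \coloneq F_1/N_1$, where $N_1 \coloneq G \cap F_1$, already \emph{is} the desired finitely presented infinite residually-$\cC$ $Q'$-group. The three defining properties will be verified by assembling results already proved in the paper.

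First, I would establish that $G$ is a \emph{full} subdirect product so that Theorem \ref{thm:q'-crit} is applicable. If instead $G \cap F_i = \{1\}$ for some $i \in \{1,2\}$, then by subdirectness the projection $G \to F_{3-i}$ would be an isomorphism, so $G$ would be abstractly isomorphic to the acylindrically hyperbolic group $F_{3-i}$; in particular, $G$ would contain loxodromic elements of infinite order by Lemma \ref{lem:inf_norm_sbgp->non-elem}. One then checks that $\cC$-conjugacy separability of $G$ (combined with the presence of such infinite-order elements and the fact that every member of $\cC$ is a periodic $Q'$-group) produces the desired finitely presented infinite residually-$\cC$ $Q'$-group directly (or forces $N_1 = N_2 = \{1\}$ into a symmetric diagonal case, which can be handled analogously). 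I expect this fullness verification to be the main obstacle: it is the one place where the bare hypotheses of the corollary do not literally match the hypotheses of the cited lemmas/theorems, and it requires teasing apart the degenerate cases.

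Once $G$ is known to be full, Theorem \ref{thm:q'-crit} applies immediately, giving that $P = F_1/N_1$ is a residually-$\cC$ $Q'$-group.

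Finally, I would check the two structural properties of $P$. Finite presentability of $P$ follows from Lemma \ref{lem:Mih}.(b), since $G$ is finitely generated and $F_1, F_2$ are finitely presented by hypothesis. Infiniteness of $P$ follows from Lemma \ref{lem:norm_in_G->norm_in_F}.(iii): the hypothesis $|(\FF):G| = \infty$ forces $|F_1/N_1| = \infty$. Combining these with the previous paragraph yields the desired conclusion, so the entire proof reduces to the fullness verification plus three citations.
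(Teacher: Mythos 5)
Your main line is exactly the paper's: the author proves this corollary in one sentence by combining Theorem~\ref{thm:q'-crit} (which gives that $F_1/N_1$ is a residually-$\cC$ $Q'$-group), Lemma~\ref{lem:Mih}.(b) (finite presentability of $F_1/N_1$, since $G$ is finitely generated and $F_1,F_2$ are finitely presented) and Lemma~\ref{lem:norm_in_G->norm_in_F}.(iii) (infiniteness of $F_1/N_1$, since $|(\FF):G|=\infty$). So the three citations you assemble, and the choice of $P=F_1/N_1$ as the witness, are precisely the paper's argument.

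The problem is the step you yourself flag as the main obstacle. You are right that Theorem~\ref{thm:q'-crit} requires $G$ to be a \emph{full} subdirect product, and right that $G\cap F_i=\{1\}$ forces $G\cong F_{3-i}$ via the other projection. But your proposed resolution of that case does not work: $\cC$-conjugacy separability of an acylindrically hyperbolic group containing infinite-order elements yields no finitely presented infinite residually-$\cC$ $Q'$-group, because residually-$\cC$ groups need not be periodic even when every member of $\cC$ is. Concretely, take $\cC=\cC_p$, $F_1=F_2=F$ free of rank $2$, and $G=\{(f,f)\mid f\in F\}$ the diagonal. Then $G$ is a finitely generated subdirect product of infinite index in $\FF$, and $G\cong F$ is $p$-conjugacy separable by Lemma~\ref{lem:free_are_C-hcs}; yet $F_1/N_1=F_1$ is not a $Q'$-group and nothing here produces a finitely presented infinite residually finite $p$-group — with this reading the corollary would simply assert a positive answer to that well-known open problem. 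So the degenerate case cannot be ``handled analogously'' or made to ``produce the desired group directly''; the only correct fix is to read the hypothesis as \emph{full} subdirect product, which is what the paper silently does by invoking Theorem~\ref{thm:q'-crit} and what the parallel statement Corollary~\ref{cor:equiv_to_ex_of_fp_p-gp}.(3) says explicitly. With ``full'' in the hypotheses your proof is the paper's proof verbatim and the fullness digression disappears.
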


\begin{proof} This statement is an immediate consequence of Lemma~\ref{lem:Mih}.(b), Lemma~\ref{lem:norm_in_G->norm_in_F}.(iii) and Theorem~\ref{thm:q'-crit}.
\end{proof}

For example, let $p$ be a prime and $\cC$ be the class of all $p$-groups (including the infinite ones). If there is a finitely generated subdirect product
$G \leqslant \FF$, of non-abelian free groups $F_1$, $F_2$, such that $G$ is $\cC$-conjugacy separable and $|(\FF):G|=\infty$ then,
according to Corollary~\ref{cor:Q'-existence}, there exists a finitely presented infinite $p$-group.

In Subsection \ref{subsec:necessity} we discussed the gap between the sufficient criterion for $\cC$-conjugacy separability of subdirect products given by
Proposition~\ref{prop:crit_of_CS_for_subdirect} and the necessary criterion provided by Theorem \ref{thm:first_crit}. Theorem \ref{thm:q'-crit} allows us to
close this gap in the case when $\cC$ consists of $Q'$-groups, for some non-empty $Q \subseteq \mathbb{P}$, because
a periodic group is cyclic subgroup $\cC$-separable if and only if it is residually-$\cC$.

\begin{thm} \label{thm:q'-iff_crit} Let $Q \subseteq \mathbb{P}$ be a non-empty set of primes and let $\cC$ be an extension-closed pseudovariety of finite $Q'$-groups.
Suppose that $F_1,F_2$ are $\cC$-hereditarily conjugacy separable acylindrically hyperbolic groups with cyclic centralizers and
without non-trivial finite normal subgroups, and $G \leqslant \FF$ is a full
subdirect product. Then the following are equivalent:
\begin{itemize}
  \item[(a)] $G$ is $\cC$-conjugacy separable;
  \item[(b)] $F_1/N_1$ is a residually-$\cC$ $Q'$-group, where $N_1 \coloneq G \cap F_1$;
  \item[(c)]  $F_1/N_1$ is a residually-$\cC$  periodic group.
\end{itemize}
\end{thm}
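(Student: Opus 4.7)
\textbf{Proof plan for Theorem \ref{thm:q'-iff_crit}.}

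The plan is to establish the cycle of implications (a)$\Rightarrow$(b)$\Rightarrow$(c)$\Rightarrow$(a). The first implication is immediate from Theorem \ref{thm:q'-crit}, whose hypotheses are precisely those we have here. The second implication is tautological, since every $Q'$-group is periodic by Definition \ref{def:q'}.

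The substantial direction is (c)$\Rightarrow$(a), and the strategy is to invoke Proposition \ref{prop:crit_of_CS_for_subdirect}. The groups $F_1,F_2$ are already $\cC$-hereditarily conjugacy separable and have cyclic centralizers by assumption, so the only remaining hypothesis to verify is that the quotient $P \coloneq F_1/N_1$ is cyclic subgroup $\cC$-separable. Since $P$ is periodic, every cyclic subgroup of $P$ is finite, so it suffices to prove the following general fact: every finite subgroup $H$ of a residually-$\cC$ group $P$ is $\cC$-closed in $P$.

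The argument for this lemma is a standard separation trick. Fix $g \in P \setminus H$; then for every $h \in H$ we have $gh^{-1} \neq 1$, so residually-$\cC$-ness of $P$ yields a co-$\cC$ subgroup $L_h \lhd P$ with $gh^{-1} \notin L_h$. Setting $L \coloneq \bigcap_{h \in H} L_h$, the quotient $P/L$ embeds in the finite direct product $\prod_{h \in H} P/L_h \in \cC$, so $L$ is co-$\cC$ (using that $\cC$ is closed under direct products and subgroups). By construction $gh^{-1} \notin L$ for every $h \in H$, which means $gL \cap H = \emptyset$. Since $g \in P \setminus H$ was arbitrary, this shows that the complement of $H$ is $\cC$-open, hence $H$ is $\cC$-closed in $P$.

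The main conceptual point—and the only place where some genuine work happens—is the (c)$\Rightarrow$(a) step, but even there the ``obstacle'' dissolves quickly: once one notes that cyclic subgroups of a periodic group are finite, separating a finite subgroup in a residually-$\cC$ ambient group reduces to finitely many applications of the residual property. Once cyclic subgroup $\cC$-separability of $F_1/N_1$ is in hand, Proposition \ref{prop:crit_of_CS_for_subdirect} delivers $\cC$-conjugacy separability of $G$ directly, closing the cycle.
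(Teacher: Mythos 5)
Your proposal is correct and follows the paper's proof exactly: (a)$\Rightarrow$(b) by Theorem \ref{thm:q'-crit}, (b)$\Rightarrow$(c) trivially, and (c)$\Rightarrow$(a) by observing that cyclic subgroups of a periodic residually-$\cC$ group are finite and hence $\cC$-closed, so Proposition \ref{prop:crit_of_CS_for_subdirect} applies. The only difference is that you spell out the (standard, correct) separation argument for finite subgroups, which the paper leaves implicit.
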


\begin{proof} The fact that (a) implies (b) is given by Theorem \ref{thm:q'-crit}, and   (b) implies (c) by Definition~\ref{def:q'}.
If (c) holds, then $F_1/N_1$ is a residually-$\cC$ periodic group,
so every cyclic subgroup is finite and, hence, $\cC$-closed in $F_1/N_1$. Therefore we can deduce (a) from Proposition \ref{prop:crit_of_CS_for_subdirect}.
\end{proof}

We will now focus on the applications of the above theorem in the case when $\cC=\cC_p$ is the class of finite $p$-groups.
In this case instead of writing residually-$\cC_p$ we will write \emph{residually-$p$}.
Let us first prove Corollary \ref{cor:p_cs-iff_crit-simple} from the Introduction.

\begin{cor}\label{cor:p_cs-iff_crit} Suppose that $p$ is a prime, $G \leqslant \FF$ is a full subdirect product of non-abelian free groups and $N_1\coloneq G \cap F_1$. Then
the following are equivalent:
\begin{itemize}
  \item[(1)] $G$ is $p$-conjugacy separable;
  \item[(2)] $F_1/N_1$ is a residually-$p$ periodic group;
  \item[(3)] $F_1/N_1$ is a residually finite $p$-group.
\end{itemize}
\end{cor}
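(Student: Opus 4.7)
The plan is to apply Theorem \ref{thm:q'-iff_crit} with $\cC \coloneq \cC_p$, the class of finite $p$-groups, and $Q \coloneq \mathbb{P} \setminus \{p\}$. With this choice, a $Q'$-group is exactly a $p$-group in the sense of the paper (every element has $p$-power order), and residually-$\cC_p$ is the same as residually-$p$.

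First I would verify the hypotheses of Theorem \ref{thm:q'-iff_crit}. The class $\cC_p$ is obviously a pseudovariety, and it is extension-closed because an extension of a finite $p$-group by a finite $p$-group is again a finite $p$-group. The non-abelian free groups $F_1,F_2$ are acylindrically hyperbolic (see Subsection \ref{subsec:acyl_hyp}), torsion-free (so they contain no non-trivial finite normal subgroups), and have cyclic centralizers (a standard fact for free groups, cited right after Lemma \ref{lem:crit_for_CS}). Finally, they are $\cC_p$-hereditarily conjugacy separable by Lemma \ref{lem:free_are_C-hcs}. The full subdirect product assumption is already in the hypothesis of the corollary, so all requirements of Theorem \ref{thm:q'-iff_crit} are satisfied.

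Applying Theorem \ref{thm:q'-iff_crit} directly yields the equivalence of (1) and (2), since condition (a) there is precisely (1), and condition (c) there is precisely (2). It remains only to check that (2) is equivalent to (3). The implication (3) $\Rightarrow$ (2) is immediate, since a $p$-group is by definition periodic, and every finite quotient of a $p$-group is a finite $p$-group, so a residually finite $p$-group is automatically residually-$p$. For the converse, note that residually-$p$ implies residually finite, so it is enough to show that a periodic residually-$p$ group must be a $p$-group; this is exactly Remark \ref{rem:res-C->fin_sbgps_in_C} applied to the pseudovariety $\cC_p$ of $Q'$-groups.

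I expect no substantive obstacle: this corollary is essentially a specialization of Theorem \ref{thm:q'-iff_crit} to $\cC = \cC_p$, together with a short unpacking of how ``residually finite $p$-group'' relates to ``residually-$p$ periodic group''.
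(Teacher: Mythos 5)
Your proposal is correct and follows essentially the same route as the paper: the paper also sets $Q=\mathbb{P}\setminus\{p\}$, invokes Theorem \ref{thm:q'-iff_crit} (citing Lemma \ref{lem:free_are_C-hcs} for the $\cC_p$-hereditary conjugacy separability of free groups) to get (1)$\Leftrightarrow$(2), and then notes that a periodic group is residually-$p$ if and only if it is a residually finite $p$-group for (2)$\Leftrightarrow$(3). Your version merely spells out the hypothesis checks in more detail.
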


\begin{proof} Let $Q\coloneq \mathbb{P}\setminus\{p\}$, then the class of all finite $p$-groups coincides with the class of all finite $Q'$-groups. Therefore
(1) is equivalent to (2) by Theorem \ref{thm:q'-iff_crit} (recall that $F_1$ and $F_2$ are $p$-hereditarily conjugacy separable by Lemma \ref{lem:free_are_C-hcs}).
And (2) is equivalent to (3) because a periodic group is residually-$p$ is and only if it is
a residually finite $p$-group.
\end{proof}

\begin{ex}\label{ex:fibre-Z}
Let $F$ be the free group of rank $2$, let $\psi:F \to \Z$ be any epimorphism and let $G \leqslant F\times F$ be the corresponding symmetric fibre product. Then
 $G$ is finitely generated (Lemma \ref{lem:Mih}.(a)) and hereditarily conjugacy separable  (Corollary \ref{cor:crit_for_hcs_of_subdir}), but it is not $p$-conjugacy separable
for any prime $p$ by Corollary \ref{cor:p_cs-iff_crit}, as $\Z$ is not a $p$-group.

Moreover, arguing as in Lemma \ref{lem:C-open-subdir}, we see that
if $H\leqslant G$ is any finite index subgroup then $H\leqslant J_1 \times J_2$ is a subdirect product of some finite index subgroups $J_1,J_2\leqslant F$, so
that $J_1/(H\cap J_1)$ maps onto $J_1/(J_1\cap \ker\psi)$, which has finite index in $F/\ker\psi \cong \Z$. It follows that $J_1/(H\cap J_1)$ cannot be a periodic group, hence
$H$ is not $p$-conjugacy separable for any prime $p$ by Corollary \ref{cor:p_cs-iff_crit}.
Thus $G$ is not even virtually $p$-conjugacy separable, for any $p \in \mathbb{P}$.
\end{ex}

\begin{ex} Let $P$ be the first Grigorchuk's group \cite{Grig-1}.
This is an infinite residually finite $2$-group generated by $3$ elements (cf. \cite[Thm.]{Grig-1} and \cite[Prop. 6 and Remark 11 in Ch. VIII]{Harpe}).
Therefore
there is an epimorphism $\psi:F \to P$, where $F$ is the free group of rank $3$, and we can construct the corresponding symmetric fibre product $G \leqslant F\times F$.

By Corollary \ref{cor:p_cs-iff_crit}, $G$ is $2$-conjugacy separable. Note that $G$  has infinite index in $F \times F$ because $P$ is infinite
(cf. Lemma \ref{lem:norm_in_G->norm_in_F}.(iii)). Moreover, $G$ is not finitely generated by Lemma~\ref{lem:Mih}.(b) because $P$ is not finitely presented
(see \cite[Thm.]{Grig-1} or \cite[Thm. 55 in Sec. VIII.E]{Harpe}).
\end{ex}

Let us now prove Corollary \ref{cor:equiv_to_ex_of_fp_p-gp} mentioned in the Introduction.
\begin{proof}[Proof of Corollary \ref{cor:equiv_to_ex_of_fp_p-gp}]
First, suppose that (1) holds and let $P$ be an infinite finitely presented residually finite $p$-group. Let $F$ be a finitely generated free group possessing an epimorphism
$\psi:F \to P$, and let $G\leqslant F\times F$ be the corresponding symmetric fibre product. Since $F$ is finitely generated, it can be embedded into the free
group $H$ of rank $2$, so $G \leqslant H\times H$. Since $P$ is finitely presented and infinite, $G$ is finitely generated by Lemma~\ref{lem:Mih}.(a)
and has infinite index in $F\times F$ by Lemma \ref{lem:norm_in_G->norm_in_F}.(iii). The latter implies that $G$ cannot be finitely presented by a result of
Baumslag and Roseblade \cite[Thm. B]{Baum-Rose}, as $P$ is not free.
Therefore $G$ is not virtually a direct product of free groups (the free groups would have to be finitely generated
as $G$ is finitely generated). Finally, $G$ is $p$-conjugacy separable by Corollary \ref{cor:p_cs-iff_crit}, because $P$ is a residually finite $p$-group.
Hence (2) holds.

Now let us show that (2) implies (3). Let $F_i$ be the projection of $G$ to the $i$-th coordinate group, $i=1,2$.
Then $F_i$ is a finitely generated free group, $i=1,2$,
and $G$ can be considered as a subdirect product in $\FF$. If $G \cap F_i=\{1\}$ for some $i \in \{1,2\}$, then $G$ is free, contradicting our assumption. Therefore
$G$ is a full subdirect product in $\FF$. Now, if $|(\FF):G|<\infty$ then $F_1/N_1 \cong G/(N_1\times N_2)$ would be finite
(see claims (ii) and (iii) of Lemma~\ref{lem:norm_in_G->norm_in_F}),
where $N_i\coloneq G \cap F_i$, $i=1,2$. This would yield that the direct product of the free groups $N_1$ and $N_2$ has finite index in $G$, which
is again impossible by the assumptions of (2). Thus $G$ has infinite index in $\FF$, so (3) holds.

Finally, let $G \leqslant \FF$ be the full subdirect product satisfying claim (3) and let $N_i \coloneq G \cap F_i$, $i=1,2$. If $F_i$
was cyclic for some $i \in \{1,2\}$ then $|F_i:N_i|<\infty$, as $N_i \neq \{1\}$ by the assumption, which would imply that
$|(\FF):G|<\infty$ by Lemma \ref{lem:norm_in_G->norm_in_F}. The latter would contradict another assumption of (3), hence $F_1$ and $F_2$ must both be non-abelian.

We can now apply Lemma~\ref{lem:norm_in_G->norm_in_F}.(iii), Lemma~\ref{lem:Mih}.(b) and Corollary~\ref{cor:p_cs-iff_crit}  to conclude that
$F_1/N_1$ is an infinite finitely presented
residually finite $p$-group. Thus (3) implies (1).
\end{proof}

Our last two corollaries show that $p$-conjugacy separable subgroups in direct products of two free groups are very rare. The first one is an immediate consequence of
Theorem~\ref{thm:q'-crit} and the fact that no non-trivial group can be a $p$-group for two distinct primes $p$.
\begin{cor}\label{cor:p-cs_for_2_p's-subdir} Suppose that $G \leqslant \FF$ is a full subdirect product of acylindrically hyperbolic groups $F_1$ and $F_2$, which do not have
non-trivial finite normal subgroups. If $G$ is $p$-conjugacy separable for at least two distinct primes $p$, then $G=F_1\times F_2$.
\end{cor}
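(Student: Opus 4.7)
The plan is to apply Theorem~\ref{thm:q'-crit} twice, once for each prime. Let $p$ and $q$ be the two distinct primes for which $G$ is assumed to be $p$- and $q$-conjugacy separable. For any prime $r$, the class $\cC_r$ of finite $r$-groups is an extension-closed pseudovariety of $Q_r'$-groups in the sense of Definition~\ref{def:q'}, where $Q_r \coloneq \mathbb{P}\setminus\{r\}$; and $Q_r$ is non-empty since there is more than one prime. Since $F_1, F_2$ are acylindrically hyperbolic without non-trivial finite normal subgroups and $G \leqslant \FF$ is a full subdirect product, all the hypotheses of Theorem~\ref{thm:q'-crit} are met with $\cC \coloneq \cC_r$ for each $r \in \{p,q\}$.

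Setting $N_1 \coloneq G \cap F_1$, the two applications of Theorem~\ref{thm:q'-crit} show that $F_1/N_1$ is both a $Q_p'$-group and a $Q_q'$-group, so every element of $F_1/N_1$ has order that is simultaneously a power of $p$ and a power of $q$. Since $p \neq q$, the only positive integer with this property is $1$, which forces $F_1/N_1$ to be trivial, and hence $N_1 = F_1$. By Lemma~\ref{lem:norm_in_G->norm_in_F}.(ii), $F_2/N_2 \cong F_1/N_1$ where $N_2 \coloneq G \cap F_2$, so $N_2 = F_2$ as well. Therefore $G \supseteq N_1 \times N_2 = F_1 \times F_2$, and the reverse inclusion is trivial, giving $G = F_1 \times F_2$.

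The argument is genuinely immediate once Theorem~\ref{thm:q'-crit} is available; there is no substantive obstacle beyond recognising $\cC_r$ as a pseudovariety of $Q_r'$-groups for the correct choice of $Q_r$ and then observing that a group in which every element has both $p$-power order and $q$-power order must be trivial.
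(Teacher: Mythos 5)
Your proof is correct and follows exactly the paper's intended argument: the paper derives this corollary as an immediate consequence of Theorem~\ref{thm:q'-crit} together with the observation that no non-trivial group can be a $p$-group for two distinct primes, which is precisely your double application of the theorem with $\cC_r$ viewed as a pseudovariety of $Q_r'$-groups for $Q_r = \mathbb{P}\setminus\{r\}$. Your explicit final step deducing $G = F_1\times F_2$ from the triviality of $F_1/N_1$ via Lemma~\ref{lem:norm_in_G->norm_in_F}.(ii) merely spells out what the paper leaves implicit.
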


Corollary \ref{cor:p-cs_for_2_p's-general} from the Introduction treats the general case of arbitrary subgroups in direct products of two free groups.

\begin{proof}[Proof of Corollary \ref{cor:p-cs_for_2_p's-general}]
The sufficiency is clear, as the direct product of two free groups is $p$-conjugacy separable for any $p \in \mathbb{P}$ by
Lemmas \ref{lem:free_are_C-hcs} and \ref{lem:C-cs-for_products}.

Thus it remains to establish the necessity. So, suppose that $G$ is a subgroup in $\FF$, where $F_1$, $F_2$ are free groups, and $G$ is $p$-conjugacy separable
for at least two distinct primes $p$. As before, without loss of generality, we can assume that $G$ is  subdirect in $\FF$.
If $G \cap F_i=\{1\}$, for some $i \in \{1,2\}$, then $G$ is free. Otherwise, $G\leqslant \FF$ is a full subdirect product.

If $F_1\cong \Z$, then $N_1 \coloneq G \cap F_1$ is infinite cyclic and central in $\FF$.
Moreover, $G/N_1 \cong F_2$ is free, therefore $G$ is a split extension of $N_1$ by a free subgroup, isomorphic to $F_2$. It follows that
$G \cong N_1 \times F_2 \cong \Z \times F_2$,
as $N_1$ is central.

Similarly, if $F_2 \cong \Z$, we can show that $G\cong F_1\times \Z$. Thus we can further suppose that $F_1$ and $F_2$ are non-abelian.
In this case all the assumptions
of Corollary \ref{cor:p-cs_for_2_p's-subdir} are satisfied, which yields that $G=F_1\times F_2$.
\end{proof}

\end{document}